\newtheorem{theorem}{Theorem}[section]
\newtheorem{Theorem}{Theorem}
\newtheorem{lemma}[theorem]{Lemma}
\newtheorem{proposition}[theorem]{Proposition}
\newtheorem{Corollary}[Theorem]{Corollary}
\theoremstyle{definition}
\newtheorem{definition}[theorem]{Definition}
\theoremstyle{remark}
\newtheorem{remark}[theorem]{Remark}
\newtheorem*{ack}{Acknowledgements}
\newtheorem*{org}{Organization}
\def\Z{\mathbb{Z}}
\def\N{\mathbb{N}}
\def\R{\mathbb{R}}
\def\F{\mathbb{F}}
\def\cA{\mathcal{A}}
\def\cC{\mathcal{C}}
\def\cO{\mathcal{O}}
\def\bfx{\mathbf{x}}
\def\bfy{\mathbf{y}}
\def\CFKi{CFK^{\infty}}
\def\horz{\textup{horz}}
\def\vert{\textup{vert}}
\def\cCFK{\mathcal{CFK}}
\def\cCFKa{\mathcal{CFK}_\textup{alg}}
\def\varep{\varepsilon}
\def\min{\textup{min}}
\def\even{\textup{even}}
\def\odd{\textup{odd}}
\def\ba{\boldsymbol{a}}
\def\top{\textup{top}}
\def\varep{\varepsilon}
\title{An infinite rank summand of topologically slice knots}
\subjclass[2013]{}
\author[Jennifer Hom]{Jennifer Hom}
\thanks{The author was partially supported by NSF grant DMS-1307879.}
\address {Department of Mathematics, Columbia University, 2990 Broadway \\ New York, NY 10027}
\email{hom@math.columbia.edu}
\numberwithin{equation}{section}
\begin{document}

\begin{abstract}
Let $\cC_{TS}$ be the subgroup of the smooth knot concordance group generated by topologically slice knots. Endo showed that $\cC_{TS}$ contains an infinite rank subgroup, and Livingston and Manolescu-Owens showed that $\cC_{TS}$ contains a $\Z^3$ summand. We show that in fact $\cC_{TS}$ contains a $\Z^\infty$ summand. The proof relies on the knot Floer homology package of Ozsv\'ath-Szab\'o and the concordance invariant $\varep$.
\end{abstract}

\maketitle

\section{Introduction}
The \emph{knot concordance group}, denoted $\cC$, consists of knots in $S^3$ modulo smooth concordance, with the operation induced by connected sum. The identity is the equivalence class of \emph{slice knots}, that is, knots with are concordant to the unknot. The inverse of a knot $K$ is given by $-K$, the reverse of the mirror image of $K$. In 1969, Levine \cite{Levine1, Levine2} constructed a surjective homomorphism
\[ \cC \rightarrow \Z^\infty \oplus \Z_2^\infty \oplus \Z_4^\infty, \]
defined in terms of certain classical algebraic invariants. In particular, $\cC$ contains a summand isomorphic to $\Z^\infty$. Casson-Gordon \cite{CassonGordon2} showed that the kernel of Levine's homormophism has non-trivial kernel. However, in higher dimensions, the corresponding homomorphism is a bijection, and thus completely determines the structure of higher dimensional knot concordance groups.

If one only requires the concordance to be locally flat, then we obtain the \emph{topological concordance group}, $\cC_\top$. There is a homomorphism
\[ \cC \rightarrow \cC_\top \]
obtained by forgetting the smooth structure on the concordance.
We denote the kernel of the homomorphism $ \cC \rightarrow \cC_\top$ by $\cC_{TS}$, that is, the subgroup of the smooth knot concordance group generated by topologically slice knots. One difficulty in studying $\cC_{TS}$ is that
Levine's homomorphism factors through the topological concordance group, and thus cannot be used to study $\cC_{TS}$. Furthermore, many other of the most revealing tools in knot concordance (e.g., Casson-Gordon invariants \cite{CassonGordon2, CassonGordon}, the Cochran-Orr-Teichner filtration \cite{COT}) also vanish on $\cC_{TS}$.

Applying Donaldson's \cite{Donaldsongauge} work on the intersection forms of $4$-manifolds, Akbulut and Casson observed (see \cite{CochranGompf}) that there exist knots with trivial Alexander polynomial that are not smoothly slice. Combining this result with Freedman's  work \cite{Freedman}, which shows that any knot with trivial Alexander polynomial is topologically slice, implies that the kernel $\cC_{TS}$ is non-trivial.
In 1995, Endo \cite{Endo}, using gauge theory, showed that there is a $\Z^{\infty}$ subgroup of $\cC_{TS}$ and the fact that the subgroup $\cC_{TS}$ is highly non-trivial illustrates that there is indeed a large difference between the smooth and topological categories. In particular, the non-triviality of $\cC_{TS}$ implies the existence of exotic $\R^4$s, that is, smooth manifolds homeomorphic, but not diffeomorphic, to $\R^4$ (see, for example, \cite{GompfStipsicz}). 

A main goal in the study of knot concordance is to better understand the structure of $\cC_{TS}$. While we know that $\cC_{TS}$ contains infinitely generated subgroups \cite{Endo, HeddenKimLiv}, remarkably little is known about splitting and divisibility in $\cC_{TS}$. The first result in this direction is due to Livingston \cite{Livingstoncomp}, who, using the Ozsv\'ath-Szab\'o $\tau$ invariant \cite{OS4ball}, showed that $\cC_{TS}$ contains a summand isomorphic to $\Z$ and thus contains elements that are not divisible. 

Building upon techniques of Ozsv\'ath-Szab\'o \cite{OSabsgr}, Khovanov \cite{Khovanov}, and Rasmussen \cite{Rs}, Livingston's $\Z$-summand result can be improved to a $\Z^3$-summand \cite{Livingstondistinct, ManolescuOwens}, using the smooth concordance homomorphisms given by the Rasmussen $s$ invariant, and Manolescu-Owens $\delta$ invariant. In particular, they showed that $\tau$, $s$, and $\delta$ are linearly independent, even when restricted to topologically slice knots.
Further improvements in this direction have been severely limited by the scarcity of tractable $\Z$-valued smooth concordance homomorphisms.

In this paper, we use the $\{-1, 0, 1\}$-valued concordance invariant $\varepsilon$ associated to the knot Floer complex to prove the following.

\begin{Theorem}
\label{thm:main}
The group $\cC_{TS}$ contains a summand isomorphic to $\Z^\infty $.
\end{Theorem}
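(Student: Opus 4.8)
The plan is to produce a split surjection from $\cC_{TS}$ onto $\bigoplus_{n\ge 1}\Z$, realized by an explicit family of topologically slice knots. Concretely, I want an infinite family $\{K_n\}_{n\ge 1}$ of topologically slice knots together with homomorphisms $\phi_n\colon \cC_{TS}\to\Z$ (restrictions of homomorphisms defined on all of $\cC$) such that (i) $\phi_n(K_m)=\delta_{nm}$, and (ii) for each fixed knot $K$, $\phi_n(K)=0$ for all but finitely many $n$. Property (ii) guarantees that $\Phi:=(\phi_n)_n$ is a homomorphism $\cC_{TS}\to\bigoplus_n\Z$, not merely into the direct product; property (i) says that $\Phi(K_n)$ is the $n$th standard generator, so $\Phi$ is onto and the assignment $e_n\mapsto K_n$ extends to a section of $\Phi$. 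Then $\cC_{TS}\cong\ker\Phi\oplus\Z^\infty$, which is the theorem.

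For the family I would take Whitehead doubles of torus knots, say $K_n=D_+(T_{2,2n+1},0)$, the positively-clasped $0$-framed double of the $(2,2n+1)$ torus knot. Each $K_n$ has trivial Alexander polynomial, hence is topologically slice by Freedman's theorem \cite{Freedman}, and so lies in $\cC_{TS}$. By Hedden's computations, together with the cabling behavior of $\varepsilon$, all of these knots satisfy $\tau(K_n)=1$ and $\varepsilon(K_n)=1$; the previously available $\Z$-valued concordance homomorphisms therefore give no leverage, and one must work with the full filtered complex $\CFKi$. The first substantial task is to pin down the filtered chain homotopy type of $\CFKi(K_n)$: starting from Hedden's description of the knot Floer homology of Whitehead doubles and imposing the symmetry of $\CFKi$, the values $\tau=\varepsilon=1$, and the $d$-invariant constraints, one identifies $\CFKi(K_n)$ as a complex built from a single length-three ``staircase'' piece together with a collection of square (``box'') pieces whose number grows with $n$, and one must record precisely how these pieces are arranged---not merely the ranks of $\widehat{HFK}$.

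The heart of the argument is the construction of the $\phi_n$, and this is where $\varepsilon$ enters. The idea is to exploit the total order on the group $\cCFK$ of stable-equivalence classes of knot complexes, and the additivity of $\tau$ under $\otimes$ of complexes with $\varepsilon=1$, to run a Gram--Schmidt--type procedure. Set $\phi_1=\tau$. Assuming $\phi_1,\dots,\phi_{n-1}$ are defined, attach to a knot $K$ the complex $\CFKi(K)\otimes\bigotimes_{j<n}\CFKi(-K_j)^{\otimes\phi_j(K)}$ and let $\phi_n(K)$ be a $\tau$-type quantity extracted from the part of this complex lying ``above level $n-1$''; the defining properties of $\varepsilon$ are exactly what force the contributions of $K_1,\dots,K_{n-1}$ to cancel, so that $\phi_n$ descends to concordance classes and is additive. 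Computing $\varepsilon$ and $\tau$ for $\otimes$-products of staircases and boxes---for which the structural results on $\varepsilon$ of connected sums are the needed input---then yields $\phi_n(K_m)=0$ for $m<n$ (the lower doubles are too simple to register at level $n$), $\phi_n(K_m)=0$ for $m>n$ (their level-$n$ part vanishes), and $\phi_n(K_n)=1$. Property (ii) holds because $\CFKi(K)$ has bounded total rank, so $K$ can contribute at only finitely many levels.

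I expect the main obstacle to be precisely this construction: verifying that the ``subtract off the lower levels'' recipe descends to a genuine homomorphism on $\cC_{TS}$, and that the order and additivity properties of $\varepsilon$ control the error terms tightly enough to force the triangular pattern $\phi_n(K_m)=\delta_{nm}$. By comparison, the determination of $\CFKi(K_n)$ and of the relevant tensor products, while lengthy, should be routine bookkeeping given Hedden's work and the established properties of $\varepsilon$.
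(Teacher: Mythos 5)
Your overall framework---a family of topologically slice knots $K_n$ together with homomorphisms $\phi_n\colon \cC\to\Z$ satisfying $\phi_n(K_m)=\delta_{nm}$ and the finite-support condition, hence a split surjection of $\cC_{TS}$ onto $\bigoplus_n\Z$---is the same as the paper's, and your condition (ii) usefully makes explicit a finiteness point the paper leaves implicit. The gap is in your choice of knots. For $K_n=D_+(T_{2,2n+1},0)$, the structure you yourself describe (one trefoil-type staircase plus box summands) means that every $K_n$ is $\varepsilon$-equivalent to $T_{2,3}$: box summands are acyclic and do not change the $\varepsilon$-equivalence class, and for the double of the trefoil this is exactly \cite[Lemma 6.12]{Homsmooth}. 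Hence all the classes $[\CFKi(K_n)]$ coincide in $\cCFK$, and every invariant manufactured from $\tau$, $\varepsilon$, and tensor products of the complexes---which is all your recipe uses, so everything factors through $\cCFK$---takes identical values on all the $K_m$. The pattern $\phi_n(K_m)=\delta_{nm}$ is therefore unattainable (already $\phi_1=\tau$ equals $1$ on every $K_m$), and no $\cCFK$-level argument can even establish linear independence of these doubles, let alone a summand. This is precisely why the paper does not use Whitehead doubles themselves but the cables $K_n=D_{n,n+1}\#-T_{n,n+1}$: cabling moves the complexes into genuinely distinct, increasing Archimedean equivalence classes, $[\CFKi(K_n)]\sim_A[1,n,n,1]$ (Proposition \ref{prop:DArch}), while subtracting $T_{n,n+1}$ keeps the knots topologically slice.

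The second gap is the construction of the $\phi_n$. Your inductive ``subtract off the lower levels and extract a $\tau$-type quantity'' procedure is not shown to be well defined on concordance classes, and its additivity is exactly the sticking point: $\varepsilon$ is not a homomorphism, and truncated $\tau$-like quantities of tensor products are not additive in general. The paper's route is different and constitutes its technical heart: $\cCFK$ is a totally ordered abelian group, the Hahn Embedding Theorem gives an order-preserving embedding into $\R^X$ over the ordered set $X$ of Archimedean classes, and the coordinate projection at the class of $[1,n,n,1]$ is an integer-valued surjective homomorphism once that class is shown to satisfy Property A (Proposition \ref{prop:propertyA}). Verifying Property A for $[1,n,n,1]$ (Proposition \ref{prop:ArchA}) requires the classification of possible tuples $\ba(C)$ in Lemma \ref{lem:forms} and the tensor-product computations of Lemmas \ref{lem:m>n}--\ref{lem:last}, together with the cable computation $\ba(D_{n,n+1})=(1,n,1,n-2,\ldots)$ of Lemma \ref{lem:D}. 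Some replacement for this body of work would still be needed even after repairing the choice of knot family.
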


\noindent The knot Floer complex, defined by Ozsv\'ath-Szab\'o \cite{OSknots} and independently Rasmussen \cite{R}, associates to a knot $K$ in $S^3$ a $\Z \oplus \Z$-filtered chain complex $\CFKi(K)$. The concordance invariant $\varepsilon$ is defined in terms of certain natural maps on subquotient complexes of $\CFKi(K)$ \cite{Homcables}.

In \cite{Homsmooth}, we showed that $\varepsilon$, while itself \emph{not} a concordance homomorphism, can nonetheless be used to construct a highly nontrivial homomorphism from $\cC$ to a non-Archimedean totally ordered group. We will use this construction to define infinitely many linearly independent $\Z$-valued smooth concordance homomorphisms, which remain linearly independent when restricted to the topologically slice subgroup.
The results of this paper demonstrate the extent to which properties of this ordering vastly broaden our access into the structure of the concordance group.

There is significant work devoted to understanding the kernel $\cA$ of Levine's homomorphism
\[ \cC \rightarrow \Z^\infty \oplus \Z_2^\infty \oplus \Z_4^\infty. \]
Indeed, for nearly a decade from when the homomorphism was defined in 1969 until the advent of Casson-Gordon invariants \cite{CassonGordon2, CassonGordon}, it was unknown that $\cA$ was non-trivial. Further work (e.g., \cite{Jiang}, \cite{COT2}, \cite{CochranTeichner}) revealed deeper structure in $\cA$. However, these techniques were unable to determine anything about splitting in $\cA$. As in the case of $\cC_{TS}$, the strongest splitting result \cite{ManolescuOwens, Livingstondistinct} relies on the invariants $\tau$, $s$, and $\delta$, and splits off a $\Z^3$ summand. We have the following corollary, suggested by Livingston.

\begin{Corollary}
\label{cor:main}
The kernel of Levine's homomorphism contains a summand isomorphic to $\Z^\infty$.
\end{Corollary}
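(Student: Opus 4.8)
The plan is to obtain Corollary~\ref{cor:main} as a formal consequence of Theorem~\ref{thm:main}. The one external input is already recorded in the introduction: Levine's homomorphism $\cC \to \Z^\infty \oplus \Z_2^\infty \oplus \Z_4^\infty$ factors through $\cC_\top$, i.e.\ it is the composite of the forgetful map $\cC \to \cC_\top$ with a homomorphism $\cC_\top \to \Z^\infty \oplus \Z_2^\infty \oplus \Z_4^\infty$ (concretely, Levine's invariants are extracted from the Seifert form and Blanchfield pairing, which are topological concordance invariants). Consequently $\cC_{TS} = \ker(\cC \to \cC_\top)$ is contained in $\cA = \ker(\cC \to \Z^\infty \oplus \Z_2^\infty \oplus \Z_4^\infty)$; equivalently, every topologically slice knot is algebraically slice.

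With this in hand, the steps are as follows. By Theorem~\ref{thm:main} there is a subgroup $G \subseteq \cC_{TS}$ with $G \cong \Z^\infty$ that is a direct summand of $\cC$, say $\cC = G \oplus H$. Since $G \subseteq \cC_{TS} \subseteq \cA$, the modular law gives $\cA = G \oplus (\cA \cap H)$: indeed $G \cap (\cA \cap H) \subseteq G \cap H = \{0\}$, and given $a \in \cA$ write $a = g + h$ with $g \in G$ and $h \in H$; as $g \in G \subseteq \cA$ we get $h = a - g \in \cA$, so $h \in \cA \cap H$ and hence $a \in G + (\cA \cap H)$. Therefore $G$ is a direct summand of $\cA$ isomorphic to $\Z^\infty$, which is exactly the assertion of the corollary.

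I do not expect a genuine obstacle here: the mathematical content lies entirely in Theorem~\ref{thm:main}, and the only point requiring care is that the $\Z^\infty$ produced there is split off $\cC$ itself (not merely off $\cC_{TS}$). That is precisely what the proof of Theorem~\ref{thm:main} provides — via the infinitely many linearly independent $\Z$-valued smooth concordance homomorphisms on $\cC$ described above, whose associated surjection $\cC \to \Z^\infty$ splits the inclusion of $G$. Everything else is the elementary group theory sketched above together with the standard fact that $\cC_{TS}$ lies in the kernel of Levine's homomorphism.
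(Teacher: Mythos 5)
Your proposal is correct and is essentially the paper's argument: both rest on the two facts that the knots $K_n$ (equivalently all of $\cC_{TS}$) lie in $\cA$ because Levine's homomorphism factors through $\cC_\top$, and that the proof of Theorem \ref{thm:main} produces linearly independent $\Z$-valued homomorphisms defined on all of $\cC$ with the $K_n$ as a dual family. The paper simply restricts those homomorphisms to $\cA$, while you package the same content as ``$G$ splits off $\cC$, then intersect with $\cA$ via the modular law''---a cosmetic difference, not a different route.
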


The filtered chain homotopy type of $\CFKi(K)$ is an invariant of knot $K \subset S^3$. The set of such complexes, modulo the relation \emph{$\varepsilon$-equivalence}
\[ C_1 \sim_\varepsilon C_2 \quad \textup{ if and only if } \quad \varepsilon(C_1 \otimes C_2^*)=0, \]
forms a group, denoted $\cCFK$, with the operation induced by tensor product. (Here, $C^*$ denotes the dual of $C$.) Moreover, since
\[ \CFKi(K_1 \# K_2) \simeq \CFKi(K_1) \otimes \CFKi(K_2) \quad \textup{ and } \quad \CFKi(-K) \simeq \CFKi(K)^*, \]
it follows that there exists a group homomorphism 
\[ \cC \rightarrow \cCFK \]
defined by sending $[K]$ to $[\CFKi(K)]$. Furthermore, the group $\cCFK$ is totally ordered \cite[Proposition 4.1]{Homsmooth}, with the ordering given by
\[ C_1 > C_2 \quad \textup{ if and only if } \quad \varepsilon(C_1 \otimes C_2^*) =1.\]
This structure of a total ordering is quite rich; see, for example, \cite{HancockHomNewman} and \cite{Tobin}. In this paper, we use this structure to define a family of concordance homomorphisms.

Given a totally ordered abelian group $G$, the \emph{absolute value} of an element $g \in G$ is defined to be
\[ |g|=\left\{
	\begin{array}{ll}
		g & \text{if } g \geq \textup{id}_G \\
		-g & \text{otherwise}.
	\end{array} \right.\]
\begin{definition}
Two non-zero elements $g$ and $h$ of a totally ordered group are \emph{Archimedean equivalent}, denoted $g \sim_A h$, if there exists a natural number $N$ such that $N \cdot |g| > |h|$ and $N \cdot |h| > |g|$. If $g$ and $h$ are not Archimedean equivalent and $|g|<|h|$, we say that $h$ \emph{dominates} $g$ and we write $|g| \ll |h|$. 
\end{definition}

\noindent It is straightforward to verify that $h \sim_A h+g$ for any $|g| \ll |h|$.

The set of Archimedean equivalence classes is naturally totally ordered as follows: $[g] < [h]$ if $|g| \ll |h|$. 
The Hahn Embedding Theorem \cite{Hahn} states that a totally ordered abelian group $G$ embeds in an order preserving way into $\R^X$, where
\begin{itemize}
	\item $X$ denotes the ordered set of Archimedean equivalence classes of $G$ %(***maybe $X_G$? How does $\R^{X_G}$ look?)
	\item $\R^X$ consists of elements $(a_x) \in \prod_{x \in X} \R$ such that the set of $x$ for which $a_x$ is non-zero has a greatest element
	\item $\R^X$ is ordered with the reverse lexicographical ordering.
\end{itemize}
Thus, we may construct a homomorphism from $\cC$ to $\R$ as the composition
\begin{align}
\label{eqn:homomorphism}
 \cC \rightarrow \cCFK \hookrightarrow \R^X  \overset{\pi}\rightarrow \R,
\end{align}
where the first map sends $[K]$ to $[\CFKi(K)]$, the second map is given by the Hahn Embedding Theorem, and the third is projection onto a factor of $\R^X$. Our goal is to show that for an infinite linearly independent family of such homomorphisms, the image is isomorphic to $\Z$.

Given $(a_x) \in \R^X$ and $y \in X$, let $(a_x)_{> y}$ be the truncation of $(a_x)$ at $y$. More precisely, $(a_x)_{> y}$ is the element $(b_x) \in \R^X$, where
\[ b_x=\left\{
	\begin{array}{ll}
		a_x & \text{if } x > y \\
		0 & \text{otherwise}.
	\end{array} \right.\]	
By \cite[Theorem 3.1]{HausnerWendel} together with \cite{Clifford}, after choosing a representative\footnote{In general, $X$ may be infinite and we may have to appeal to the axiom of choice. However, for the group $\cCFK$, a selection rule might be available, for example if each Archimedean equivalence class of $\cCFK$ has a unique representative $\CFKi(K)$ of minimal rank.} $g_x \in G$ for each $x \in X$, the embedding
\[ \varphi: G \rightarrow \R^X \]
may be chosen such that:
\begin{enumerate}
	\item \label{it:char} $\varphi(g_x) \in \R^X$ is the element consisting of all zeros except in the coordinate corresponding to $x \in X$, where it has a value of one.
	\item \label{it:trun} If $(a_x) \in \varphi(G)$, then so is $(a_x)_{> y}$ for any $y \in X$.
\end{enumerate}

\noindent Note that the homomorphism $\varphi: G \rightarrow \R^X$ will depend on the choice of representative for each Archimedean equivalence class  of $G$.

%\begingroup
%\setcounter{tmp}{\value{theorem}}% store current value of theorem counter
%\setcounter{theorem}{0} %assign desired value to theorem counter
%\renewcommand\thetheorem{\Alph{theorem}}% locally redefine the representation of the theorem counter
\begin{definition}[Property A]
Let $g \in G$. The element $g$ \emph{satisifies Property A} 
if for every $h \in G$ such that $g \sim_A h$, we have that $h = k \cdot g + \mathcal{O}_A(g)$, where $k$ is an integer and $\mathcal{O}_A(g)$ consists of elements that are dominated by $g$.
\end{definition}
%\endgroup
%\setcounter{theorem}{\thetmp}

\noindent The following proposition relates Property A to homomorphisms to the integers.

\begin{proposition}
\label{prop:propertyA}
For each $g \in G$ that satisfies Property A, there exists a surjective homomorphism from $G$ to $\Z$. Furthermore, given $g_1, \ldots, g_n$ such that each $g_i$ satisfies Property A and $0< g_1 \ll \dots \ll g_n$, this collection of homomorphisms from $G$ to $\Z$ is linearly independent.
\end{proposition}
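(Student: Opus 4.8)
The plan is to realize the desired maps as single-coordinate projections of a Hahn embedding. Fix $g \in G$ satisfying Property A and set $x = [g] \in X$. Choosing the representatives of the Archimedean equivalence classes of $G$ so that $g_x = g$, let $\varphi : G \to \R^X$ be a Hahn embedding satisfying properties (\ref{it:char}) and (\ref{it:trun}) for this choice, and define $\psi = \pi_x \circ \varphi : G \to \R$, where $\pi_x$ denotes projection onto the $x$-coordinate. Then $\psi$ is a group homomorphism and $\psi(g) = 1$ by (\ref{it:char}), so the first assertion reduces to showing $\psi(G) \subseteq \Z$: the map $\psi : G \to \Z$ is then a surjective homomorphism.

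To prove $\psi(h) \in \Z$ for each $h \in G$, write $\varphi(h) = (a_z)$. By (\ref{it:trun}) there is an $h' \in G$ with $\varphi(h') = (a_z)_{>x}$, so $\tilde h := h - h'$ has $\varphi(\tilde h) = (a_z) - (a_z)_{>x}$ supported on $\{z \leq x\}$ with $x$-coordinate equal to $a_x$; in particular $\psi(\tilde h) = a_x = \psi(h)$. If $a_x = 0$ we are done. Otherwise $\varphi(\tilde h)$ has largest nonzero coordinate exactly $x$, and since $\varphi$ is an order embedding carrying the Archimedean class of an element to its largest nonzero coordinate, $[\tilde h] = x = [g]$, i.e.\ $\tilde h \sim_A g$. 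Property A now gives $\tilde h = k g + r$ with $k \in \Z$ and $r$ either zero or dominated by $g$; in either case the $x$-coordinate of $\varphi(r)$ vanishes, so $\psi(h) = \psi(\tilde h) = k \, \psi(g) + \psi(r) = k \in \Z$.

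For the linear independence statement, suppose $0 < g_1 \ll \dots \ll g_n$ with each $g_i$ satisfying Property A. The classes $x_i := [g_i]$ are then distinct and satisfy $x_1 < \dots < x_n$, so we may choose the representatives of Archimedean classes with $g_{x_i} = g_i$ for every $i$ and fix a single Hahn embedding $\varphi$ with properties (\ref{it:char})--(\ref{it:trun}); this $\varphi$ simultaneously realizes the construction above for each $g_i$. By the first part, each $\psi_i := \pi_{x_i} \circ \varphi$ is a surjective homomorphism $G \to \Z$, and by (\ref{it:char}) the value $\psi_i(g_j)$ is the $x_i$-coordinate of the unit vector supported at $x_j$, namely $\delta_{ij}$. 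Hence a relation $\sum_i c_i \psi_i = 0$ (with $c_i \in \Z$, or even $c_i \in \mathbb{Q}$), evaluated successively at $g_1, \dots, g_n$, forces $c_1 = \dots = c_n = 0$.

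The crux is the second paragraph: controlling $\psi$ on all of $G$, not merely on the Archimedean class of $g$. This is exactly where both special properties of $\varphi$ enter — truncation-closure (\ref{it:trun}) discards, at no cost to $\psi$, the part of $\varphi(h)$ supported above $x$, reducing matters to elements Archimedean-equivalent to $g$, to which Property A applies directly. I anticipate no further obstacle; the remaining ingredients (that $\varphi$ is an order embedding, that $\pi_x$ is a homomorphism, and the triangular evaluation argument) are routine.
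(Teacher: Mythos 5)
Your proposal is correct and follows essentially the same route as the paper: project the Hahn embedding (chosen with $g$, resp.\ the $g_i$, as class representatives and satisfying (\ref{it:char})--(\ref{it:trun})) onto the coordinate of $[g]$, use truncation-closure plus Property A to get integrality, and prove independence via $\pi_{x_i}\circ\varphi(g_j)=\delta_{ij}$. The only difference is cosmetic: you truncate uniformly for every $h$ and then split on whether the $x$-coordinate vanishes, whereas the paper argues the three cases $|h|\ll|g|$, $h\sim_A g$, $|h|\gg|g|$ separately.
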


Thus, in order to prove Theorem \ref{thm:main}, it is sufficient to find an infinite family of topologically slice knots $K_n$, $n \in \N$, such that
\begin{itemize}
	\item For each $n$, we have that $0< [\CFKi(K_n)] \ll [\CFKi(K_{n+1})]$.
	\item Each $[\CFKi(K_n)] \in \cCFK$ satisfies Property A.
\end{itemize}
Our knots $K_n$ will be
\[ K_n = D_{n, n+1} \# -T_{n, n+1} \]
where $D_{n, n+1}$ denotes the $(n, n+1)$-cable of the (untwisted, positively clasped) Whitehead double of the right-handed trefoil and $T_{n, n+1}$ denotes the $(n, n+1)$-torus knot. (Our convention for cabling is that the first parameter denotes the longitudinal winding.)
Such knots were used in \cite[Remark 4.9]{Homsmooth}, where the author showed that $0< [\CFKi(K_n) \ll [\CFKi(K_{n+1})]$ for each $n$. Thus, in order to complete the proof of Theorem \ref{thm:main}, we must show that each $[\CFKi(K_n)] \in \cCFK$ satisfies Property A. The body of the paper is devoted to this goal.

%\begin{remark}
%The embedding given by Hahn's theorem depends on a choice of representative for each Archimedean equivalence class of $G$. For the group $\cCFK$, the axiom of choice might not be necessary; one way to choose a representative of each Archimedean equivalence class could be to choose the element whose associated bifiltered chain complex has the smallest rank, assuming there is a unique such element.
%\end{remark}

We conclude the introduction with the proof of Proposition \ref{prop:propertyA}.

\begin{proof}[Proof of Proposition \ref{prop:propertyA}]
Let $x \in X$ denote the Archimedean equivalence class of $g$, and 
\[ \varphi: G \rightarrow \R^X \] 
be an embedding satisfying (\ref{it:char}) and (\ref{it:trun}) above, where $g$ is the chosen representative of its Archimedean equivalence class. Let 
\[ \pi_x: \R^X \rightarrow \R \]
denote the projection from $\R^X$ to the coordinate corresponding to $x$. In particular, note that $\pi_x \circ \varphi (g)=1$. 
We will show that for all $h \in G$,
\[ \pi_x \circ \varphi (h) \in \Z. \]

We first consider the case where $|h| \ll |g|$. Since $\varphi$ is an order-preserving homomorphism, it follows that $|\varphi(h)| \ll |\varphi(g)|$. Thus, the fact that $\pi_x \circ \varphi (g)=1$ implies that $\pi_x \circ \varphi(h)$ must equal zero. %More generally, we have that $\varphi \big(\cO (g)\big) =0$, where $\cO (g)$ consists of elements that are dominated by $g$.

Next, we consider the case where $g$ and $h$ are Archimedean equivalent. By hypothesis, the element $g$ satisfies Property A, and so $h=k\cdot g + \cO (g)$. Then
\begin{align*}
	\pi_x \circ \varphi (h) &= \pi_x \circ \varphi \big(k\cdot g + \cO (g)\big) \\
	&= k \cdot \pi_x \circ \varphi (g) \\
	&= k
\end{align*}
since $ \pi_x \circ \varphi (g)=1$ and $\varphi \big(\cO (g)\big) =0$. 

Finally, we consider the case where $|h| \gg |g|$. By (\ref{it:trun}) above, we have that $\varphi(h)_{>x}$ is in the image of $\varphi$. It follows that $\varphi(h)-\varphi(h)_{>x}$ is also in the image of $\varphi$, i.e., $\varphi(h')=\varphi(h)-\varphi(h)_{>x}$ for some $h'\in G$. Moreover, the element $\varphi(h')$ consists of zeros in each coordinate $y$ where $y>x$. Thus, $\varphi(h')$ is either Archimedean equivalent to $\varphi(g)$ or dominated by $\varphi(g)$, and so $h'$ is either Archimedean equivalent to $g$ or dominated by $g$. In particular, by the considerations above, $\pi_x \circ \varphi(h')$ is an integer. Moreover, we have that
\begin{align*}
	\pi_x \circ \varphi(h') &= \pi_x \big(\varphi(h)-\varphi(h)_{>x}\big) \\
	&= \pi_x \circ \varphi(h) - \pi_x \circ \varphi(h)_{>x}\\
	&= \pi_x \circ \varphi(h)
\end{align*}
since $\pi_x \circ \varphi(h)_{>x}=0$. Thus, $\pi_x \circ \varphi(h)$ is an integer as well. 

To prove the second statement in the lemma, observe that if $x_i \in X$ is the Archimedean equivalence class of $g_i$, then
\[ \pi_{x_i} \circ \varphi(g_j) =\left\{
	\begin{array}{ll}
		1 & \text{if } i=j \\
		0 & \text{otherwise},
	\end{array} \right.\]	
since $\varphi(g_j)$ is the element consisting of all zeros except in the coordinate corresponding to $x_j$.
This completes the proof of the proposition.
\end{proof}

\begin{org}
The paper is organized as follows.
In Section \ref{sec:HFK}, we recall important facts about the knot Floer complex $\CFKi$, the invariant $\varepsilon$, and the group $\cCFK$. Next, in Section \ref{sec:numinvts}, we define certain numerical invariants of the $\varepsilon$-equivalence class of $\CFKi$. In Section \ref{sec:comp}, we relate these numerical invariants to Property A, and in Section \ref{sec:knots}, we compute these invariants for the knots $K_n$. Finally, in Section \ref{sec:proof}, we complete the proof of Theorem \ref{thm:main}.
\end{org}

\begin{ack}
I would like to thank the Simons Center for its hospitality during Spring 2013 when part of this work was done, and Liam Watson for a helpful conversation. I would also like to thank Peter Horn, Charles Livingston, and Peter Ozsv\'ath for comments on earlier drafts of this paper.
\end{ack}

\section{Knot Floer homology and the invariant $\varepsilon$}
\label{sec:HFK}
We assume that the reader is familiar with knot Floer homology, defined by Ozsv\'ath-Szab\'o \cite{OSknots} and independently Rasmussen \cite{R}. We briefly recall some important properties of their construction below.

\subsection{The knot Floer complex}
To a knot $K \subset S^3$, Ozsv\'ath-Szab\'o and Rasmussen associate a $\Z$-filtered, free, finitely generated chain complex over $\F[U, U^{-1}]$, where $\F=\Z/2\Z$ and $U$ is a formal variable. This complex is denoted by $\CFKi(K)$, and the filtration is called the Alexander filtration. The filtered chain homotopy type of $\CFKi(K)$ is an invariant of the knot $K$. There is a second $\Z$-filtration on $\CFKi(K)$ induced by $-(U\textup{-exponent})$, giving $\CFKi(K)$ the structure of a $\Z \oplus \Z$-filtered chain complex. We call this second filtration the $w$-filtration, since the filtration comes from the $w$ basepoint on the Heegaard diagram. We denote the two filtrations by the pair $(w(x), A(x))$. The (partial) ordering on $\Z \oplus \Z$ is given by $(i, j) \leq (i', j')$ if $i \leq i'$ and $j \leq j'$. Multiplication by $U$ shifts each filtration down by one; that is, if $x$ has filtration level $(i, j)$, then $U \cdot x$ has filtration level $(i-1, j-1)$. The complex obtained by reversing the roles of $i$ and $j$ is filtered chain homotopic to the original complex. The knot Floer complex is endowed with a homological, or Maslov, grading. The differential lowers the Maslov grading by one, and multiplication by $U$ lowers the Maslov grading by two. Up to filtered chain homotopy, we may assume that the complex $\CFKi(K)$ is reduced; that is, that the differential strictly lowers the filtration.

The knot Floer complex is well-behaved under connected sum \cite[Theorem 7.1]{OSknots} and taking mirrors \cite[Section 3.5]{OSknots}. We have that
\[ \CFKi(K_1 \# K_2) \simeq \CFKi(K_1) \otimes_{\F[U, U^{-1}]} \CFKi(K_2) \]
and
\[ \CFKi(-K) \simeq \CFKi(K)^*, \]
where $\CFKi(K)^*$ denotes the dual of $\CFKi(K)$, i.e., $\textup{Hom}_{\F[U, U^{-1}]}(\CFKi(K), \F[U, U^{-1}])$. The knot Floer complex is not sensitive to changes in the orientation of $K$.

A \emph{filtered basis} for $\CFKi(K)$ is a basis over $\F[U, U^{-1}]$ that induces a basis for the associated graded object. Given a filtered basis, it is often convenient to visualize the complex $\CFKi(K)$ in the $(i, j)$-plane. That is, we plot a generator $x$ at the lattice point corresponding to its filtration level $(w(x), A(x))$. We depict the differential using arrows; if $y$ appears in $\partial x$, then we place an arrow from $x$ to $y$. Note that the differential always points non-strictly down and to the left, because $\CFKi(K)$ is a doubly filtered chain complex.  %We write $A(x)$ to denote the $j$-coordinate of an element $x$, since the $j$-coordinate corresponds to the Alexander grading of $x$. We write $w(x)$ to denote the $i$-coordinate of $x$, since the $i$-coordinate corresponds to the filtration coming from the $w$ basepoint on the Heegaard diagram. 
See Figure \ref{fig:CFKiex} for examples. Note that the depiction of $\CFKi(K)^*$ can be obtained from that of $\CFKi(K)$ by reversing the direction of all of the arrows as well as the direction of both filtrations.

Given a filtered basis, we will often perform a change of basis to obtain a new basis. If this new basis is also filtered, then we call the change of basis a \emph{filtered change of basis}. The most common type of filtered change of basis in this paper will be replacing a basis element $x$ with the sum of $x$ and elements of lower or equal filtration. This sum will have the same filtration level as $x$.

\begin{figure}[htb!]
\vspace{5pt}
\labellist
\small \hair 2pt
\endlabellist
\centering
\subfigure[]{
\begin{tikzpicture}
	\begin{scope}[thin, gray]
		\draw [<->] (-1, 0) -- (4, 0);
		\draw [<->] (0, -1) -- (0, 4);
	\end{scope}
	\filldraw (0, 3) circle (2pt) node[] (a){};
	\filldraw (1, 3) circle (2pt) node[] (b){};
	\filldraw (1, 1) circle (2pt) node[] (c){};
	\filldraw (3, 1) circle (2pt) node[] (d){};
	\filldraw (3, 0) circle (2pt) node[] (e){};
	\draw [very thick, <-] (a) -- (b);
	\draw [very thick, <-] (c) -- (b);
	\draw [very thick, <-] (c) -- (d);
	\draw [very thick, <-] (e) -- (d);
\end{tikzpicture}
\label{subfig:T_3,4}
}
\hspace{25pt}
\subfigure[]{
\begin{tikzpicture}
	\begin{scope}[thin, gray]
		\draw [<->] (-1, 0) -- (4, 0);
		\draw [<->] (0, -1) -- (0, 4);
	\end{scope}
	\filldraw (0, 2) circle (2pt) node[] (a){};
	\filldraw (1, 2) circle (2pt) node[] (b){};
	\filldraw (1, 0) circle (2pt) node[] (c){};
	\filldraw (0, 0) circle (2pt) node[] (d){};
	\filldraw (0, 1) circle (2pt) node[] (e){};
	\filldraw (2, 1) circle (2pt) node[] (f){};
	\filldraw (2, 0) circle (2pt) node[] (g){};
	\draw [very thick, <-] (a) -- (b);
	\draw [very thick, <-] (c) -- (b);
	\draw [very thick, <-] (d) -- (c);
	\draw [very thick, <-] (d) -- (e);
	\draw [very thick, <-] (e) -- (f);
	\draw [very thick, <-] (g) -- (f);
	\draw [very thick, <-] (e) -- (b);
	\draw [very thick, <-] (c) -- (f);
\end{tikzpicture}
}
\caption{Left, $\CFKi(T_{3,4})$. Right, $\CFKi((T_{2,3})_{2, 1})$.}
\label{fig:CFKiex}
\end{figure}

At times, we will want to consider only the part of the differential that preserves the Alexander filtration, i.e., the horizontal arrows, which we denote $\partial ^\horz$. Similarly, we may want to consider only the part of the differential that preserves the $w$-filtration, i.e., the vertical arrows, which we denote $\partial^\vert$.

We will often consider various subquotient complexes of $\CFKi(K)$. Let $S$ be a subset of $\Z \oplus \Z$. We write $C\{S\}$ to denote the set of elements whose $(i, j)$-coordinates are in $S$, together with the arrows between them. If $S$ has the property that $(i, j) \in S$ implies that $(i', j') \in S$ for all $(i', j') \leq (i, j)$, then $C\{S\}$ is a subcomplex of $\CFKi(K)$. For example, $CFK^-(K)$ is the subcomplex $C\{ i \leq 0\}$, i.e., the left half place. The complex $\widehat{CFK}(K)$ is the subquotient complex $C\{ i=0\}$; that is, it is the quotient of the subcomplex $C\{i \leq 0\}$ by $C\{i<0\}$.

\subsection{Concordance invariants}
We now define the concordance invariants $\tau$ and $\varepsilon$, both associated to the knot Floer complex and defined in terms of the (non-)vanishing of certain natural maps on homology.

The total homology of $\widehat{CFK}(K)$ is isomorphic to $\widehat{HF}(S^3) \cong \F$. The Ozsv\'ath-Szab\'o integer-valued concordance invariant $\tau(K)$ measures the minimal filtration level that supports the homology; more precisely,
\[ \tau(K) = \textup{min} \{ s \mid \iota:C\{i=0, j \leq s\} \rightarrow C\{i=0\} \textup{ induces a non-trivial map on homology}\}. \] 
The invariant $\tau$ gives a surjective homomorphism from $\cC$ to the integers. Moreover, this homomorphism is non-trivial on $\cC_{TS}$, giving a $\Z$ summand of $\cC_{TS}$ \cite{Livingstoncomp}.

The $\{-1, 0, 1\}$-valued concordance invariant $\varepsilon$ \cite{Homsmooth} is similarly defined in terms of the vanishing of certain natural maps on subquotient complexes. Let $\tau=\tau(K)$. Consider the map
\[ F: C\{i=0\} \rightarrow C\{ \textup{min}(i, j-\tau) \} \]
consisting of quotienting by $C\{ i=0, j<\tau\}$ followed by inclusion. The induced map on homology corresponds to the $2$-handle cobordism from $S^3$ to $S_{-N}^3(K)$ in a particular spin$^c$ structure for sufficiently large $N$ \cite[Section 4]{OSknots}. Similarly, we may consider the map
\[ G: C\{ \textup{max}(i, j-\tau) \} \rightarrow C\{i=0\} \]
consisting of quotienting by $C\{i <0, j=\tau\}$, followed by inclusion. Here, the induced map on homology corresponds to the $2$-handle cobordism from $S^3_N$ to $S^3$ in a particular spin$^c$ structure.

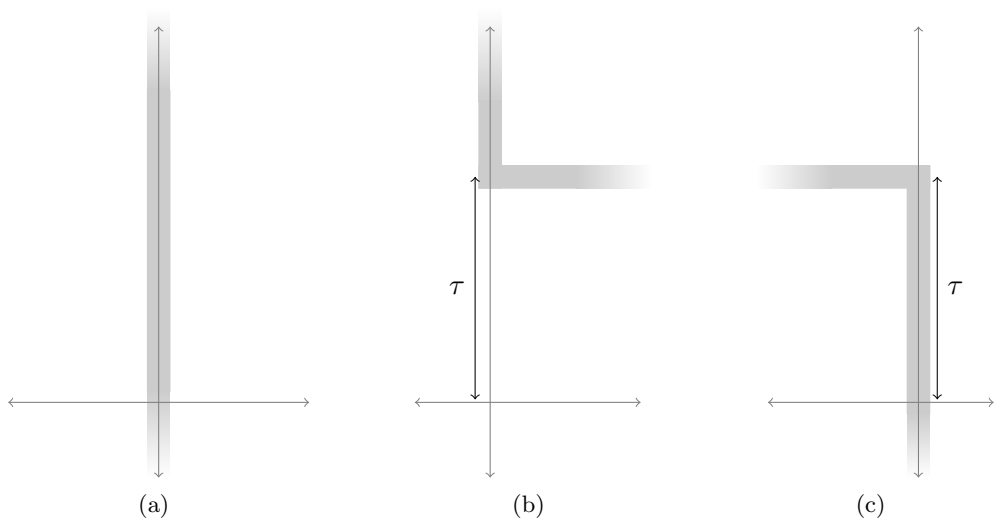
\begin{figure}[htb!]
\vspace{5pt}
\labellist
\small \hair 2pt
\endlabellist
\centering
\subfigure[]{
\begin{tikzpicture}
	\filldraw[black!20!white] (-0.15, 0.15) rectangle (0.15, 4.15);
	\shade[top color=white, bottom color=black!20!white] (-0.15, 4.15) rectangle (0.15, 5.25);
	\shade[top color=white, top color=black!20!white] (-0.15, -1) rectangle (0.15, 0.15);
	\begin{scope}[thin, gray]
		\draw [<->] (-2, 0) -- (2, 0);
		\draw [<->] (0, -1) -- (0, 5);
	\end{scope}
	%\draw [<->] (-0.2, 0.04) -- (-0.2, 3) node [midway, left] {$s$};
\end{tikzpicture}
}
\hspace{25pt}
\subfigure[]{
\begin{tikzpicture}
	\filldraw[black!20!white] (-0.15, 2.85) rectangle (0.15, 4);
	\filldraw[black!20!white] (-0.15, 2.85) rectangle (1.15, 3.15);
	\shade[top color=white, bottom color=black!20!white] (-0.15, 4) rectangle (0.15, 5.25);
	\shade[top color=white, left color=black!20!white] (1.15, 2.85) rectangle (2.15, 3.15);
	\begin{scope}[thin, gray]
		\draw [<->] (-1, 0) -- (2, 0);
		\draw [<->] (0, -1) -- (0, 5);
	\end{scope}
	\draw [<->] (-0.2, 0.04) -- (-0.2, 3) node [midway, left] {$\tau$};
\end{tikzpicture}
}
\hspace{25pt}
\subfigure[]{
\begin{tikzpicture}
	\filldraw[black!20!white] (-0.15, -0.15) rectangle (0.15, 3.15);
	\filldraw[black!20!white] (-1.15, 2.85) rectangle (0.15, 3.15);
	\shade[top color=white, top color=black!20!white] (-0.15, -1) rectangle (0.15, -0.15);
	\shade[top color=white, right color=black!20!white] (-2.15, 2.85) rectangle (-1.15, 3.15);
	\begin{scope}[thin, gray]
		\draw [<->] (-2, 0) -- (1, 0);
		\draw [<->] (0, -1) -- (0, 5);
	\end{scope}
	\draw [<->] (0.25, 0.04) -- (0.25, 3) node [midway, right] {$\tau$};
\end{tikzpicture}
}

\caption{Left, the subquotient complex $C \{ i=0 \}$. Center, the subquotient complex $C\{ \textup{min} (i, j-\tau)=0\}$. Right, the subquotient complex $C\{ \textup{max } (i, j-\tau)=0\}$.}
\end{figure}

The concordance invariant $\varepsilon(K)$ is defined \cite[Definition 3.1]{Homsmooth} to be
\begin{itemize}
	\item $\varepsilon(K)=-1$ if $G_*$ is trivial (in which case $F_*$ is necessarily non-trivial)
	\item $\varepsilon(K)=0$ if both $F_*$ and $G_*$ are non-trivial
	\item $\varepsilon(K)=1$ if $F_*$ is trivial (in which case $G_*$ is necessarily non-trivial).
\end{itemize}
The invariant $\varepsilon$ can also be defined in terms of a basis for $\CFKi$ and certain arrows between basis elements. A \emph{vertically simplified basis} is a basis $\{x_i\}$ over $\F[U, U^{-1}]$ for $\CFKi$ such that for each basis element $x_i$ exactly one of the following is true:
\begin{itemize}
	\item There is a unique outgoing vertical arrow from $x_i$ and no incoming vertical arrows.
	\item There is a unique incoming vertical arrow to $x_i$ and no outgoing vertical arrows.
	\item There are no incoming or outgoing vertical arrows to $x_i$.
\end{itemize}
We may similarly define a horizontally simplified basis. Since the homology of any column of $\CFKi$ has rank one, given a vertically simplified basis, there is a unique basis element with no incoming or outgoing vertical arrows; we call such an element the \emph{vertically distinguished element}. By \cite[Lemmas 3.2 and 3.3]{Homcables}, there exists a horizontally simplified basis such that there is a unique basis element $x_0$ that is the \emph{vertically} distinguished element of some vertically simplified basis. Then the invariant $\varepsilon$ can be defined to be
\begin{itemize}
	\item $\varepsilon(K)=-1$ if $x_0$ has a unique outgoing horizontal arrow (and no incoming horizontal arrows)
	\item $\varepsilon(K)=0$ if $x_0$ has no in or outgoing horizontal arrows
	\item $\varepsilon(K)=1$ if $x_0$ has a unique incoming horizontal arrow (and no outgoing horizontal arrows).
\end{itemize}

\begin{remark}
The examples in Figure \ref{fig:CFKiex} have bases that are simultaneously vertically and horizontally simplified. However, not every $\Z \oplus \Z$-filtered chain complex admits a simultaneously vertically and horizontally simplified basis. See Figure \ref{fig:nobasis} for an example of such a chain complex. It remains open whether or not every $\Z \oplus \Z$-filtered chain complex that arises as $\CFKi(K)$ for some knot $K \subset S^3$ admits a simultaneously vertically and horizontally simplified basis.
\end{remark}

\begin{figure}[htb!]
\vspace{5pt}
\labellist
\small \hair 2pt
\endlabellist
\centering
\begin{tikzpicture}
	\filldraw (-0.1, 2.1) circle (2pt) node[] (a){};
	\filldraw (0.1, 1.9) circle (2pt) node[] (b){};
	\filldraw (2.1, 2.1) circle (2pt) node[] (c){};
	\filldraw (1.9, 1.9) circle (2pt) node[] (d){};
	\filldraw (1, 1) circle (2pt) node[] (e){};
	\filldraw (-0.1, -0.1) circle (2pt) node[] (f){};
	\filldraw (0.1, 0.1) circle (2pt) node[] (g){};
	\filldraw (1.9, 0.1) circle (2pt) node[] (h){};
	\filldraw (2.1, -0.1) circle (2pt) node[] (i){};
	\draw [very thick, <-] (a) -- (c);
	\draw [very thick, <-] (b) -- (d);
	\draw [very thick, <-] (i) -- (c);
	\draw [very thick, <-] (h) -- (d);
	\draw [very thick, <-] (f).. controls (-0.4, 0.4) and (-0.4, 1.6) .. (a);
	\draw [very thick, <-] (g) -- (b);
	\draw [very thick, <-] (f) .. controls (0.4, -0.45) and (1.6, -0.45) .. (i);
	\draw [very thick, <-] (g).. controls (0.4, 0.2) and (1.6, 0.2) .. (h);
	\draw [very thick, <-] (f).. controls (0.4, -0.1) and (1.6, -0.1) .. (h);
	\draw [very thick, <-] (e) -- (d);
	\draw [very thick, <-] (f).. controls (0, 0.6) and (0.6, 1) .. (e);
\end{tikzpicture}
\caption{A  bifiltered chain complex $C$ without a simultaneously vertically and horizontally simplified basis. Note that $\varepsilon(C)=0$.}
\label{fig:nobasis}
\end{figure}
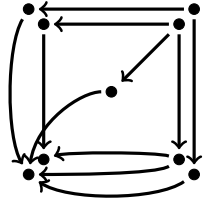

We now describe certain properties of the invariant $\varepsilon$ which will be of use in understanding knot concordance.
From \cite[Proposition 3.6]{Homcables}, we have the following:
\begin{enumerate}
	\item If $K$ is slice, then $\varepsilon(K)=0$.
	\item If $\varepsilon(K)=0$, then $\tau(K)=0$.
	\item $\varepsilon(-K) = - \varepsilon(K).$
	\item \begin{enumerate}
			\item If $\varepsilon(K_1)=\varepsilon(K_2)$, then $\varepsilon(K_1 \# K_2)=\varepsilon(K_1)$.
			\item If $\varepsilon(K_1)=0$, then $\varepsilon(K_1 \# K_2) = \varepsilon(K_2)$.
		\end{enumerate}
\end{enumerate}
We will use these properties to define a certain totally ordered group, onto which the knot concordance group surjects.

\begin{definition}
\label{defn:mathfrakC}
Let $\mathfrak{C}$ be the set of $\Z \oplus \Z$-filtered, free, finitely generated chain complexes over $\F[U, U^{-1}]$ such that
\begin{enumerate}
	\item The complex obtained by reversing the roles of the two filtrations is filtered homotopic to the original.
	\item The homology of any column has rank one.
	\item The total homology of the complex is isomorphic to $\F[U, U^{-1}]$.
\end{enumerate}
\end{definition}

\noindent Note that given a complex $C \in \mathfrak{C}$, we may define $\tau(C)$ and $\varepsilon(C)$ as above. We define the group $\cCFKa$ to be
\[ \cCFKa = \mathfrak{C} / \sim \]
where the operation, which we denote $+$, is induced by tensor product and 
\[ C_1 \sim C_2 \quad \textup{if and only if} \quad \varepsilon(C_1 \otimes C_2^*)=0 .\] 
The inverse of $[C]$ is given by $[C^*]$. We define the group $\cCFK$ to be the subgroup of $\cCFKa$ generated by the knot Floer complexes of knots in $S^3$, that is,
\[ \cCFK = \{ \CFKi(K) \mid K \subset S^3 \} / \sim. \]
There is a group homomorphism
\[ \cC \rightarrow \cCFK \]
defined to be
\[ [K] \mapsto [\CFKi(K)].\]
The fact that this is a group homomorphism follows from the behavior of $\CFKi$ under connected sum and mirroring, together with the fact that $\varepsilon(K)=0$ if $K$ is slice.

As described in the introduction, the group $\cCFKa$ (and hence $\cCFK \subset \cCFKa$ as well) is totally ordered, with the ordering given by
\[ [C_1] > [C_2] \quad \textup{if and only if} \quad \varepsilon(C_1 \otimes C_2^*)=1.\]
In the subsequent sections, we develop more refined invariants for understanding the order structure of $\cCFK$.

\section{Numerical invariants associated to the knot Floer complex}
\label{sec:numinvts}
In this section, we will recall and extend certain numerical invariants associated to $[ \CFKi(K) ]$ from \cite{Homsmooth}. The invariant $\varep$ is defined by examining whether or not certain natural maps on subquotient complexes of $\CFKi$, corresponding to certain cobordisms, vanish on homology. We extend this construction to a broader family of maps. The (non-)vanishing of these maps may be interpreted as the existence of certain arrows of prescribed lengths in the graphical depiction of $\CFKi$. Indeed, we will use these invariants to find certain bases for $\CFKi$ which will be useful for computations.

If $\varep(K)=1$, then the map on homology induced by
\[ C\{i=0\} \rightarrow C\{ \min (i, j-\tau)=0 \} \]
is trivial, and one may consider the map $H_s$ on homology induced by
\[ C\{i=0\} \rightarrow C\{ \min (i, j-\tau) = 0, i \leq s \}, \]
where $s$ is a non-negative integer. When $\varep(K) = 1$, we define $a_1(K)$ to be
\[ a_1(K) = \min \{ s \ | \ H_s \textup{ trivial} \}.\]
Note that when $s=0$, the map $H_s$ is non-trivial by the definition of $\tau$, and that when $s$ is sufficiently large and $\varep(K)=1$, the map $H_s$ is trivial. Hence $a_1(K)$ is well-defined.

One may now consider the map $H_{a_1, s}$ induced on homology by
\[ C\{i=0\} \rightarrow C\big\{ \{ \min(i, j-\tau)=0, i \leq a_1 \} \cup \{ i=a_1, \tau-s \leq j < \tau \} \big\}, \]
where again $s$ is a non-negative integer. The map $H_{a_1, 0}$ is trivial since it agrees with the map $H_{a_1}$. Define
\[ a_2(K) = \min \{ s \ | \ H_{a_1, s} \textup{ non-trivial} \}. \]
Note that $a_2$ may be undefined, as there is no reason why $H_{a_1, s}$ must become non-trivial at some point.

We may define higher $a_n$'s inductively. Assume that $a_k(K)$ exists for $1 \leq k \leq n$. Let $s$ be a non-negative integer. Define $S(a_1, \ldots, a_n, s)$ to be
\[ S(a_1, \ldots, a_n, s)=\left\{
	\begin{array}{ll}
		S(a_1, \ldots, a_n) \cup \{ j=\tau-a_\even^n, \ a_\odd^{n} < i \leq a_\odd^{n} +s \} & \textup{ if } n \textup{ is even}\\
		S(a_1, \ldots, a_n) \cup \{ i=a_\odd^n, \ \tau-a_\even^{n}-s \leq j < \tau - a_\even^{n} \} & \textup{ if }  n \textup{ is odd},
	\end{array} \right.\]
where $S(s)=\{ \min (i, j-\tau) = 0, i \leq s \}$, and 
\[ a_\even^n = \sum_{\substack{2 \leq k \leq n \\ k \ \even}} a_k \qquad  \qquad  a_\odd^n = \sum_{\substack{1 \leq k \leq n \\ k \ \odd}} a_k. \]
See Figure \ref{fig:S}.
Then consider the map $H_{a_1, \ldots, a_n, s}$ induced on homology by
\[ C\{i=0\} \rightarrow C\{ S(a_1, \ldots, a_n, s)\}, \]
where we first quotient $C \{ i=0 \}$ by $C\{ i=0, j < \tau\}$, and then include into $C\{ S(a_1, \ldots, a_n, s) \}$. Define
\[ a_{n+1}(K)=\left\{
	\begin{array}{ll}
	\min \{ s \ | \ H_{a_1, \ldots, a_{n}, s} \textup{ trivial} \} &\textup{ if } n \textup{ is even} \\
	\min \{ s \ | \ H_{a_1, \ldots, a_{n}, s} \textup{ non-trivial} \} &\textup{ if } n \textup{ is odd.}
\end{array} \right.\]
Suppose $a_n(K)$ is well-defined but $a_{n+1}(K)$ is not. Then let
\[ \ba^+(K) = \big(a_1(K), a_2(K), \ldots, a_n(K)\big). \]

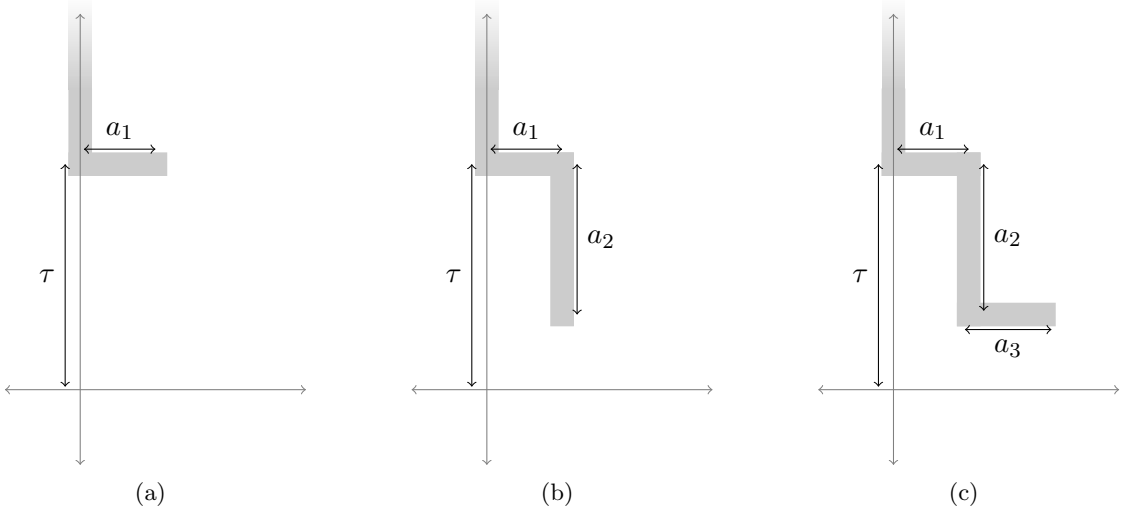
\begin{figure}[htb!]
\vspace{5pt}
\labellist
\small \hair 2pt
\endlabellist
\centering
\subfigure[]{
\begin{tikzpicture}
	\filldraw[black!20!white] (-0.15, 2.85) rectangle (0.15, 4);
	\filldraw[black!20!white] (-0.15, 2.85) rectangle (1.15, 3.15);
	\shade[top color=white, bottom color=black!20!white] (-0.15, 4) rectangle (0.15, 5.25);
	\begin{scope}[thin, gray]
		\draw [<->] (-1, 0) -- (3, 0);
		\draw [<->] (0, -1) -- (0, 5);
	\end{scope}
	\draw [<->] (-0.2, 0.04) -- (-0.2, 3) node [midway, left] {$\tau$};
	\draw [<->] (0.05, 3.2) -- (1, 3.2) node [midway, above] {$a_1$};
\end{tikzpicture}
}
\hspace{25pt}
\subfigure[]{
\begin{tikzpicture}
	\filldraw[black!20!white] (-0.15, 2.85) rectangle (0.15, 4);
	\filldraw[black!20!white] (-0.15, 2.85) rectangle (1.15, 3.15);
	\shade[top color=white, bottom color=black!20!white] (-0.15, 4) rectangle (0.15, 5.25);
	\filldraw[black!20!white] (0.85, 0.85) rectangle (1.15, 3.15);
	\begin{scope}[thin, gray]
		\draw [<->] (-1, 0) -- (3, 0);
		\draw [<->] (0, -1) -- (0, 5);
	\end{scope}
	\draw [<->] (-0.2, 0.04) -- (-0.2, 3) node [midway, left] {$\tau$};
	\draw [<->] (0.05, 3.2) -- (1, 3.2) node [midway, above] {$a_1$};
	\draw [<->] (1.2, 1) -- (1.2, 3) node [midway, right] {$a_2$};	
\end{tikzpicture}
}
\hspace{25pt}
\subfigure[]{
\begin{tikzpicture}
	\filldraw[black!20!white] (-0.15, 2.85) rectangle (0.15, 4);
	\filldraw[black!20!white] (-0.15, 2.85) rectangle (1.15, 3.15);
	\shade[top color=white, bottom color=black!20!white] (-0.15, 4) rectangle (0.15, 5.25);
	\filldraw[black!20!white] (0.85, 0.85) rectangle (1.15, 3.15);
	\filldraw[black!20!white] (0.85, 0.85) rectangle (2.15, 1.15);
	\begin{scope}[thin, gray]
		\draw [<->] (-1, 0) -- (3, 0);
		\draw [<->] (0, -1) -- (0, 5);
	\end{scope}
	\draw [<->] (-0.2, 0.04) -- (-0.2, 3) node [midway, left] {$\tau$};
	\draw [<->] (0.05, 3.2) -- (1, 3.2) node [midway, above] {$a_1$};
	\draw [<->] (1.2, 1.05) -- (1.2, 3) node [midway, right] {$a_2$};	
	\draw [<->] (.95, 0.8) -- (2.1, 0.8) node [midway, below] {$a_3$};	
\end{tikzpicture}
}

\caption{Left, the complex $C\{ S(a_1)\}$. Center, the complex $C\{ S(a_1, a_2) \}$. Right, the complex $C \{ S(a_1, a_2, a_3)\}$.}
\label{fig:S}
\end{figure}

\begin{proposition}
The tuple $\ba^+(K)$ is an invariant of the class $[\CFKi(K)]$.
\end{proposition}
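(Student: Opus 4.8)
The plan is to show that $\ba^+$ depends only on the $\varepsilon$-equivalence class of $\CFKi(K)$, i.e. only on $[\CFKi(K)]\in\cCFK$, by induction on the length of the tuple, generalizing the argument for $a_1$ in \cite{Homsmooth}. The starting point would be the functoriality of the maps $H_{a_1,\ldots,a_n,s}$. Each subset $S(a_1,\ldots,a_n,s)\subset\Z\oplus\Z$ is a difference $L_1\setminus L_2$ of downward-closed subsets (the staircase of Figure \ref{fig:S} sits inside the lattice as the outermost layer of such an $L_1$), so $C\{S(a_1,\ldots,a_n,s)\}$ is a bona fide subquotient complex, and any filtered chain map $f\colon C_1\to C_2$ induces a map $C_1\{S(a_1,\ldots,a_n,s)\}\to C_2\{S(a_1,\ldots,a_n,s)\}$ compatible with the quotient-then-include maps $C\{i=0\}\to C\{S(a_1,\ldots,a_n,s)\}$ that define $H_{a_1,\ldots,a_n,s}$. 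In particular a filtered chain homotopy equivalence identifies $H^{C_1}_{a_1,\ldots,a_n,s}$ with $H^{C_2}_{a_1,\ldots,a_n,s}$, so $\ba^+$ is automatically a filtered chain homotopy invariant; the real content is the passage to $\varepsilon$-equivalence.

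For that, I would first recall that $\tau$ and $\varepsilon$ are themselves invariants of $[\CFKi(K)]$: if $C_1\sim_\varepsilon C_2$ then the properties of $\varepsilon$ listed in Section \ref{sec:HFK}, together with the additivity of $\tau$ under $\otimes$, give $\tau(C_1)=\tau(C_2)$ and $\varepsilon(C_1)=\varepsilon(C_2)$, so $\ba^+(C_1)$ is defined precisely when $\ba^+(C_2)$ is. The main input, which I would extract from \cite{Homsmooth}, is that an $\varepsilon$-equivalence between $C_1$ and $C_2$ supplies filtered, grading-preserving chain maps $f\colon C_1\to C_2$ and $g\colon C_2\to C_1$, each inducing an isomorphism on $H_*(\widehat{CFK})\cong\F$; thus $f$ carries the generator of $H_*(C_1\{i=0\})$ to the generator of $H_*(C_2\{i=0\})$, and $g$ does the reverse. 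One then inducts on $n$: assuming $\tau,a_1,\ldots,a_n$ agree for $C_1$ and $C_2$, the region $S(a_1,\ldots,a_n,s)$ is literally the same for both complexes, so by the functoriality above the square relating $H^{C_1}_{a_1,\ldots,a_n,s}$ and $H^{C_2}_{a_1,\ldots,a_n,s}$ through $f$ commutes. Since $f$ takes generator to generator, triviality of $H^{C_1}_{a_1,\ldots,a_n,s}$ forces triviality of $H^{C_2}_{a_1,\ldots,a_n,s}$, and applying $g$ gives the converse, so the two maps are trivial for exactly the same $s$. Therefore $a_{n+1}(C_1)=a_{n+1}(C_2)$, one being defined if and only if the other is, which closes the induction and gives $\ba^+(C_1)=\ba^+(C_2)$.

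The step I expect to require the most care is the second one: pinning down exactly how an $\varepsilon$-equivalence yields a compatible pair of maps realizing the identification on $\widehat{HF}$, and then verifying the functoriality in detail — in particular that each $S(a_1,\ldots,a_n,s)$ (and the intermediate set $\{i=0,\,j\geq\tau\}$) is a difference of downward-closed subsets, so that filtered chain maps genuinely act on $C\{S(a_1,\ldots,a_n,s)\}$ and the quotient-then-include description of $H_{a_1,\ldots,a_n,s}$ is visibly natural. Once this is in place the inductive step is formal. Alternatively, one can feed in the $\varepsilon$-equivalence information in the form that $\sim_\varepsilon$ is generated by filtered chain homotopy equivalence together with addition and deletion of acyclic box summands $B$: since $\widehat{B}$ is then acyclic, the generator of $H_*((C\oplus B)\{i=0\})$ lies in the $C$ summand and each $H_{a_1,\ldots,a_n,s}$ for $C\oplus B$ splits as the corresponding map for $C$ plus an irrelevant contribution from $B$, so that $\ba^+(C\oplus B)=\ba^+(C)$ directly.
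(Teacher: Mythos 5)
Your reduction to naturality is fine as far as it goes: each $S(a_1,\ldots,a_n,s)$ is indeed a difference of downward-closed sets, the maps $H_{a_1,\ldots,a_n,s}$ are natural for filtered chain maps, and a filtered map carrying the generator of $H_*(C_1\{i=0\})$ to the generator of $H_*(C_2\{i=0\})$ pushes triviality of $H^{C_1}_{a_1,\ldots,a_n,s}$ forward to triviality of $H^{C_2}_{a_1,\ldots,a_n,s}$. The genuine gap is your ``main input'': that $\varepsilon$-equivalence of $C_1$ and $C_2$ supplies filtered, grading-preserving chain maps $f\colon C_1\to C_2$ and $g\colon C_2\to C_1$ inducing isomorphisms on the homology of the $i=0$ column. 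This is the statement that $\varepsilon$-equivalent complexes are \emph{locally equivalent}, and it is not proved in \cite{Homsmooth} (nor anywhere you can cite); the known implication goes the other way. The definition only gives $\varepsilon(C_1\otimes C_2^*)=0$, which by \cite[Lemma 3.3]{Homcables} produces a basis element $x_0$ of $C_1\otimes C_2^*$ with no incoming or outgoing \emph{vertical or horizontal} arrows --- but $x_0$ may still have diagonal arrows, so it need not be a cycle in the morphism complex and hence does not define a chain map $C_2\to C_1$. Your fallback is no better: the claim that $\sim_\varepsilon$ is generated by filtered homotopy equivalence together with adding and deleting acyclic box summands is the (strictly stronger, unestablished) assertion that $\varepsilon$-equivalence implies stable equivalence.

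The paper's proof is built precisely to avoid this. It never constructs maps between $C_1$ and $C_2$; instead it writes $[\CFKi(K_1)]=[\CFKi(K_1)\otimes\CFKi(K_2)^*\otimes\CFKi(K_2)]$, uses $\varepsilon(K_1\#-K_2)=0$ and $\varepsilon(K_2\#-K_2)=0$ to get distinguished elements $x_0\in\CFKi(K_1)\otimes\CFKi(K_2)^*$ and $y_0\in\CFKi(K_2)^*\otimes\CFKi(K_2)$ with no vertical or horizontal arrows, and then invokes the argument from the proof of Lemma 6.1 of \cite{Homsmooth} to compute $\ba^+$ of the triple tensor product from either $\{x_0\}\otimes\CFKi(K_2)$ or $\CFKi(K_1)\otimes\{y_0\}$, identifying $\ba^+(K_1)$ with $\ba^+(K_2)$. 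If you want to salvage your outline you must either prove that the relevant $H$-maps can be computed from such a distinguished-generator piece (which is exactly the deferred content of that lemma), or prove that $\varepsilon$-equivalence implies local equivalence --- a substantial claim that your proposal assumes rather than establishes.
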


\begin{proof}
The result follows from the proof of Lemma 6.1 in \cite{Homsmooth}. Specifically, if $\CFKi(K_1)$ and $\CFKi(K_2)$ are $\varep$-equivalent, then 
\[ [\CFKi(K_1)]=[\CFKi(K_2)]=[\CFKi(K_1) \otimes \CFKi(K_2)^* \otimes \CFKi(K_2)] . \]
Since $\varep(K_1 \# -K_2)=0$, by \cite[Lemma 3.3]{Homcables} there exists a basis for $\CFKi(K_1) \otimes \CFKi(K_2)^*$ with a distinguished element, say $x_0$, with no incoming or outgoing horizontal or vertical arrows. Similarly, since $\varep(K_2 \# -K_2)=0$, there is a distinguished element, say $y_0$, with no incoming or outgoing horizontal or vertical arrows. Then we may compute $\ba^+(K_1 \# -K_2 \# K_2)$ by considering either
\[ \{x_0\} \otimes \CFKi(K_2) \quad \textup{or} \quad \CFKi(K_1) \otimes \{y_0\}, \]
the former giving $\ba^+(K_1)$ and the latter giving $\ba^+(K_2)$.
\end{proof}

The tuple $\ba^+(K)$ implies the existence of a particular basis for $\CFKi(K)$ with certain arrows of certain lengths, as described in the following proposition.

\begin{proposition}
\label{prop:basis}
Suppose $\boldsymbol{a}^+(K)=(a_1, a_2, \ldots, a_n)$. Then there exists a basis for $\{ x_i \}$ over $\F[U, U^{-1}]$ for $\CFKi(K)$ with a subcollection of basis elements $x_0, x_1, \ldots, x_n$ such that
\begin{enumerate}
	\item There is a horizontal arrow of length $a_i$ from $x_i$ to $x_{i-1}$ from $i$ odd, $1\leq i \leq n$.
	\item There is a vertical arrow of length $a_i$ from $x_{i-1}$ to $x_{i}$ for $i$ even, $2 \leq i \leq n$.
	%\item There is a horizontal arrow of length $a_{2i-1}$ from $x_{2i-1}$ to $x_{2i-2}$ for $i=1, \ldots, \lceil \frac{n}{2} \rceil$.
	%\item There is a vertical arrow of length $a_{2i}$ from $x_{2i-1}$ to $x_{2i}$ for $i=1, \ldots, \lfloor \frac{n}{2} \rfloor$.
	\item There are no other horizontal or vertical arrows to or from $x_0, x_1, \ldots, x_{n-1}$.
	\item If $n$ is odd, then there are no other horizontal arrows to or from $x_n$. If $n$ is even, there are no other vertical arrows to or from $x_n$.
\end{enumerate}
\end{proposition}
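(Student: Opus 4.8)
The plan is to induct on $n$, building the subcollection $x_0, x_1, \dots, x_n$ one element at a time and reading each arrow length off directly from the definition of the corresponding $a_i$. Since $\ba^+(K) = (a_1, \dots, a_n)$ is defined, $a_1(K)$ is defined and hence $\varep(K) = 1$; by \cite[Lemmas 3.2 and 3.3]{Homcables} we fix a horizontally simplified basis $\{x_i\}$ for $\CFKi(K)$ together with a distinguished element $x_0$ that is also the vertically distinguished element of some vertically simplified basis, normalized by a power of $U$ so that $w(x_0) = 0$. Then $x_0$ represents the generator of $H_*(C\{i=0\})$, which sits at Alexander level $\tau := \tau(K)$, and this is the class used to define every map $H_{a_1,\dots,a_k,s}$.

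For the base case $n = 1$: since $\varep(K) = 1$, the element $x_0$ has a unique incoming horizontal arrow and no outgoing one; write $x_1$ for its source and $\ell = w(x_1)$ for the arrow length, so $x_1$ lies at $(\ell, \tau)$. Horizontal simplification gives that $x_1$ has no incoming and no other outgoing horizontal arrow. I would then check that in the subquotient complex $C\{\min(i,j-\tau)=0,\ i \le s\}$ the class $[x_0]$ vanishes precisely when $s \ge \ell$: for $s \ge \ell$ the part of $\partial x_1$ lying in this complex is exactly $x_0$, since every other term of $\partial x_1$ has Alexander level $< \tau$ and hence lies outside; for $s < \ell$ nothing in the complex maps onto $x_0$, because $x_0$ is vertically distinguished (excluding vertical arrows onto it) while $x_1$, its unique horizontal source, has $w(x_1) = \ell > s$ (excluding horizontal ones). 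Hence $a_1(K) = \ell$, giving conditions (1), (3), (4), with (2) vacuous.

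For the inductive step, suppose $x_0, \dots, x_k$ ($1 \le k < n$) realize arrows of lengths $a_1, \dots, a_k$, with no other horizontal or vertical arrows incident to $x_0, \dots, x_{k-1}$ and with $x_k$ under only the one-sided constraint (4). By the geometry of the staircase region, every non-staircase term of $\partial x_j$ falls outside $S(a_1,\dots,a_k)$, so inside $C\{S(a_1,\dots,a_k)\}$ the sum of the odd-indexed elements among $x_1, \dots, x_{k-1}$ has differential $x_0 + x_k$; thus $[x_0] = [x_k]$ there and $[x_k] \ne 0$, since no arrow enters $x_k$ from within that region. Take $k$ even, so the next arrow is horizontal from a new element $x_{k+1}$ to $x_k$ (the case $k$ odd is symmetric, exchanging the two filtrations). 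Extending the relevant row by $s$, the definition of $a_{k+1}$ as the least $s$ making $H_{a_1,\dots,a_k,s}$ trivial says $[x_k]$, hence $[x_0]$, becomes a boundary exactly at $s = a_{k+1}$; the basis element that first kills it as the region grows lies at horizontal distance $a_{k+1}$ from $x_k$, and a filtered change of basis — adding only basis elements of filtration level no larger than its own — puts it in standard form as the desired $x_{k+1}$, after which one clears any stray horizontal or vertical arrows at $x_k$ and the unwanted vertical arrows at $x_{k+1}$. The induction stops at $k = n$ because $a_{n+1}(K)$ is undefined, i.e.\ $H_{a_1,\dots,a_n,s}$ never changes triviality type, which is precisely the one-sided condition (4) on $x_n$.

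I expect the main obstacle to be the bookkeeping of these changes of basis: each time one simplifies the horizontal (resp.\ vertical) arrows to isolate $x_{k+1}$, one must verify that no vertical (resp.\ horizontal) arrows are reintroduced among $x_0, \dots, x_k$ and that the lengths extracted from the subquotient complexes equal the $a_i$ on the nose rather than up to error terms supported in lower filtration. As in \cite{Homcables, Homsmooth}, the remedy is to restrict to filtered changes of basis that add only elements of filtration level no larger than the element being modified, and to exploit that $x_0$ is simultaneously the vertically distinguished element — hence never the target of a vertical arrow — which keeps the two filtrations from interfering while the staircase-shaped subcollection is assembled.
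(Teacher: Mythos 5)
Your overall strategy is the same as the paper's in spirit---read a staircase-shaped subcollection off of the (non)vanishing of the maps $H_{a_1,\ldots,a_k,s}$ and then normalize by filtered changes of basis---though you organize it as an induction on $k$, whereas the paper (Lemma \ref{lem:basissub}) builds all of $x_0,\ldots,x_n$ at once from a single chain $\bfx$ with $\partial \bfx = x_0+x_n$ in $C\{S(a_1,\ldots,a_n)\}$ and only afterwards removes extraneous arrows (Lemma \ref{lem:arrowremoval}). The reorganization is fine in principle, but note that your induction fixes $x_0,\ldots,x_k$ in final form before $x_{k+1}$ is found, while the chain witnessing the triviality (or nontriviality) of $H_{a_1,\ldots,a_k,a_{k+1}}$ only gives an element whose differential equals $x_k$ \emph{up to} other terms at the corner and up to region-boundaries; making the arrow land on $x_k$ on the nose generally forces you to modify $x_k$ itself (this is exactly why the paper defines the even-indexed elements a posteriori, e.g.\ $x_{n-2}:=\partial^\horz x_{n-1}$ plus part of $\partial^\vert x_{n-3}$), and you give no argument that such modifications preserve the already-established conditions on $x_0,\ldots,x_k$, in particular that the new $x_0$ still generates the vertical homology.

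The genuine gap, however, is the step you defer to bookkeeping: ``one clears any stray horizontal or vertical arrows at $x_k$ and the unwanted vertical arrows at $x_{k+1}$.'' Your proposed remedy---restrict to filtered changes of basis that add only elements of lower or equal filtration, and use that $x_0$ is vertically distinguished---does not handle the hard case, namely an unwanted incoming vertical (resp.\ horizontal) arrow into a staircase element from a basis element $y$ whose filtration level is \emph{not} greater than that of $x_{k+1}$ (resp.\ $x_{k-1}$): there adding $x_{k+1}$ to $y$ is not a filtered change of basis. The paper's Lemma \ref{lem:arrowremoval} resolves this with a case analysis that requires a homological input: either $y$ has another arrow into the region, in which case one adds the staircase endpoint to that target, or---using the nontriviality of $H_{a_1,\ldots,a_n}$, which guarantees the endpoint is not a boundary in $C\{S(a_1,\ldots,a_n)\}$---$y$ must support an arrow of the other type, which pins $y$ to the same filtration level as the relevant staircase element and makes the change of basis filtered after all. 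This idea is absent from your proposal, and without it the cleanup step does not go through. (A smaller instance of the same issue appears already in your base case: ``no vertical arrows into $x_0$'' holds in the vertically simplified basis of \cite[Lemma 3.3]{Homcables}, not automatically in the horizontally simplified basis you compute in, and you do not rule out cancellations between vertical and horizontal contributions at the lattice point $(0,\tau)$ when arguing $H_s$ is nontrivial for $s<\ell$.)
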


\noindent A basis satisfying the conclusion of Proposition \ref{prop:basis} is called \emph{locally simplified}. See Figure \ref{fig:basis}. In order to prove this proposition, we first prove a sequence of lemmas.

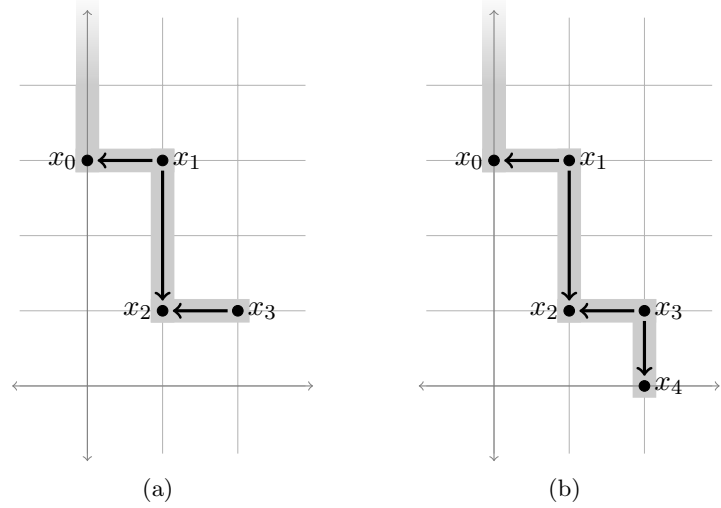
\begin{figure}[htb!]
\vspace{5pt}
\labellist
\small \hair 2pt
\endlabellist
\centering
\subfigure[]{
\begin{tikzpicture}
	\draw[step=1, black!30!white, very thin] (-0.9, -0.9) grid (2.9, 4.9);
	\filldraw[black!20!white] (-0.15, 2.85) rectangle (0.15, 4);
	\filldraw[black!20!white] (-0.15, 2.85) rectangle (1.15, 3.15);
	\shade[top color=white, bottom color=black!20!white] (-0.15, 4) rectangle (0.15, 5.25);
	\filldraw[black!20!white] (0.85, 0.85) rectangle (1.15, 3.15);
	\filldraw[black!20!white] (0.85, 0.85) rectangle (2.15, 1.15);
	\begin{scope}[thin, gray]
		\draw [<->] (-1, 0) -- (3, 0);
		\draw [<->] (0, -1) -- (0, 5);
	\end{scope}
	\filldraw (0, 3) circle (2pt) node[] (x_0){};
	\filldraw (1, 3) circle (2pt) node[] (x_1){};
	\filldraw (1, 1) circle (2pt) node[] (x_2){};
	\filldraw (2, 1) circle (2pt) node[] (x_3){};
	\draw [very thick, <-] (x_0) -- (x_1);
	\draw [very thick, <-] (x_2) -- (x_1);
	\draw [very thick, <-] (x_2) -- (x_3);
	\node [left] at (x_0) {$x_0$};
	\node [right] at (x_1) {$x_1$};
	\node [left] at (x_2) {$x_2$};
	\node [right] at (x_3) {$x_3$};
\end{tikzpicture}
}
\hspace{25pt}
\subfigure[]{
\begin{tikzpicture}
	\draw[step=1, black!30!white, very thin] (-0.9, -0.9) grid (2.9, 4.9);
	\filldraw[black!20!white] (-0.15, 2.85) rectangle (0.15, 4);
	\filldraw[black!20!white] (-0.15, 2.85) rectangle (1.15, 3.15);
	\shade[top color=white, bottom color=black!20!white] (-0.15, 4) rectangle (0.15, 5.25);
	\filldraw[black!20!white] (0.85, 0.85) rectangle (1.15, 3.15);
	\filldraw[black!20!white] (0.85, 0.85) rectangle (2.15, 1.15);
	\filldraw[black!20!white] (1.85, -0.15) rectangle (2.15, 1.15);
	\begin{scope}[thin, gray]
		\draw [<->] (-1, 0) -- (3, 0);
		\draw [<->] (0, -1) -- (0, 5);
	\end{scope}
	\filldraw (0, 3) circle (2pt) node[] (x_0){};
	\filldraw (1, 3) circle (2pt) node[] (x_1){};
	\filldraw (1, 1) circle (2pt) node[] (x_2){};
	\filldraw (2, 1) circle (2pt) node[] (x_3){};
	\filldraw (2, 0) circle (2pt) node[] (x_4){};
	\draw [very thick, <-] (x_0) -- (x_1);
	\draw [very thick, <-] (x_2) -- (x_1);
	\draw [very thick, <-] (x_2) -- (x_3);
	\draw [very thick, <-] (x_4) -- (x_3);
	\node [left] at (x_0) {$x_0$};
	\node [right] at (x_1) {$x_1$};
	\node [left] at (x_2) {$x_2$};
	\node [right] at (x_3) {$x_3$};
	\node [right] at (x_4) {$x_4$};
\end{tikzpicture}
\label{subfig:neven}
}
\caption{Left, part of the basis in Proposition \ref{prop:basis} for $n$ odd and $\ba(K)=(1, 2, 1)$. The shaded region depicts $C\{S(1, 2, 1)\}$. Right, for $n$ even and $\ba(K)=(1, 2, 1, 1)$; the shaded region depicts $C\{S(1, 2, 1, 1)\}$.}
\label{fig:basis}
\end{figure}

\begin{lemma}
\label{lem:basissub}
Suppose  $\boldsymbol{a}^+(K)=(a_1, a_2, \ldots, a_n)$ where $n$ is even. There exists a basis $\{x_j\}$ for $\CFKi(K)$ over $\F[U, U^{-1}]$ with a subset of basis elements
\[ x_0, x_1, \ldots, x_n \]
with the following non-zero differentials:
\begin{align*}
	&\partial^\horz x_i = x_{i-1}  && 1\leq i \leq n-1,\ i \textup{ odd} \\
	&\partial^\vert x_i = x_{i+1} && 1\leq i \leq n-1,\ i \textup{ odd}.
\end{align*}
\end{lemma}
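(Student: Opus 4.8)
The plan is to extract the desired basis directly from the definition of the invariants $a_1, a_2, \ldots, a_n$, translating each ``$\min\{s \mid H_{\ldots,s} \text{ (non-)trivial}\}$'' statement into the existence or non-existence of a specific arrow. I would proceed by induction on $k$, building up the subcollection $x_0, x_1, \ldots, x_k$ one element at a time, alternating between reading off a horizontal arrow (when $k$ is odd) and a vertical arrow (when $k$ is even). The base case comes from $\varepsilon(K)=1$: by the basis characterization of $\varepsilon$ recalled in Section~\ref{sec:HFK} (via \cite[Lemmas 3.2, 3.3]{Homcables}), there is a horizontally simplified basis in which the vertically distinguished element $x_0$ has a unique incoming horizontal arrow; call its source $x_1$, so $\partial^\horz x_1 = x_0$. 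The length of this arrow is exactly $a_1$: the map $H_s$ induced by $C\{i=0\} \to C\{\min(i,j-\tau)=0,\, i\le s\}$ becomes trivial precisely when $s$ is large enough to ``catch'' the far end of every horizontal arrow emanating from a cycle representing the generator of $\widehat{HF}(S^3)$, and $x_0$ (being vertically distinguished) is such a representative.

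For the inductive step, suppose $x_0, \ldots, x_k$ have been constructed with the stated arrows. The invariant $a_{k+1}$ is defined via the map $H_{a_1,\ldots,a_k,s}$ on the subquotient $C\{S(a_1,\ldots,a_k,s)\}$; the region $S(a_1,\ldots,a_k)$ is exactly the ``staircase-shaped'' region that the arrows among $x_0, \ldots, x_k$ trace out (compare Figure~\ref{fig:S}). When $k$ is odd, $a_{k+1} = \min\{s \mid H_{a_1,\ldots,a_k,s} \text{ trivial}\}$, and the fact that the map becomes trivial at $s = a_{k+1}$ forces a vertical arrow of length exactly $a_{k+1}$ out of $x_k$ — its target is the new basis element $x_{k+1}$, so $\partial^\vert x_k = x_{k+1}$ (up to a filtered change of basis absorbing lower-order terms). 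When $k$ is even, $a_{k+1} = \min\{s \mid H_{a_1,\ldots,a_k,s} \text{ non-trivial}\}$; non-triviality at $s=a_{k+1}$ means the previously-killed homology class reappears, which can only happen if there is a horizontal arrow of length $a_{k+1}$ into $x_k$, whose source is the new element $x_{k+1}$, giving $\partial^\horz x_{k+1} = x_k$. That $a_{k+1}$ is the minimal such $s$ gives that the arrow has exactly that length and that no shorter arrow of the relevant type interferes; this yields the ``no other arrows'' clauses (3) and (4) for the intermediate elements, and clause (4) for $x_n$ follows because $a_{n+1}$ is undefined — the map $H_{a_1,\ldots,a_n,s}$ never changes its (non-)triviality status, which says no further arrow of the next type exists.

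The main obstacle I anticipate is the bookkeeping needed to go from ``the map $H_{a_1,\ldots,a_k,s}$ on a subquotient complex has such-and-such (non-)vanishing behavior'' to ``there is a basis element with an arrow of precisely this length and no others.'' One has to argue that the maps in question are computed by a reduced complex in which the staircase $x_0, \ldots, x_k$ appears as an honest summand-like subconfiguration, and that performing a filtered change of basis to clean up extraneous arrows into $x_0, \ldots, x_{k-1}$ does not disturb the arrows already established — the standard move of replacing a basis element by itself plus terms of lower filtration, iterated carefully, and invoking the classification of such complexes up to filtered change of basis. Since the lemma is stated for $n$ even, I would also keep track of the parity carefully so that the induction terminates with the vertical arrow $\partial^\vert x_{n-1} = x_n$ as the last one constructed, matching the displayed differentials (note the lemma only asserts the differentials for $i \le n-1$, so the behavior of $x_n$ itself is deferred to the later lemmas leading to Proposition~\ref{prop:basis}). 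I expect the proof to lean heavily on the techniques already developed in \cite[Section 6]{Homsmooth} and \cite[Section 3]{Homcables}, citing them for the change-of-basis lemmas rather than reproving them.
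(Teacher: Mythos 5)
Your overall geometric picture (arrows at the corners of the staircase region of Figure~\ref{fig:S} detected by the maps $H_{a_1,\ldots,a_k,s}$) is the right one, but the argument as proposed has two problems, one small and one genuine. The small one: you have the defining conditions swapped. In the paper's definitions, for $k$ odd the region $S(a_1,\ldots,a_k,s)$ grows \emph{downward} and $a_{k+1}$ is the least $s$ at which $H_{a_1,\ldots,a_k,s}$ becomes \emph{non-trivial} (the vertical arrow out of $x_k$ finally reaches into the region and spoils the nullhomology of $x_0$), while for $k$ even the region grows to the right and $a_{k+1}$ is when the map becomes \emph{trivial}. Your arrow-type conclusions are correct, but the stated mechanisms are inverted.

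The genuine gap is in the inductive step itself. Triviality or non-triviality of $H_{a_1,\ldots,a_k,s}$ at $s=a_{k+1}$ only tells you that $[x_0]$ dies, or fails to die, in a subquotient; the chain realizing (or obstructing) the nullhomology may be spread over many generators in the newly added segment, and none of its terms need be a single basis element with a single arrow to or from $x_k$. Converting this homological statement into ``$\partial^\vert x_k = x_{k+1}$'' requires redefining $x_k$ (and potentially earlier elements) by adding terms of lower filtration, which is exactly the point at which your inductive hypotheses -- the arrows already established among $x_0,\ldots,x_k$ -- can be destroyed; your proposal defers this to ``bookkeeping'' and to a ``classification of such complexes up to filtered change of basis,'' which is not an available result (indeed the paper remarks that not every such complex even admits a simultaneously simplified basis). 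The paper avoids the iteration entirely: since $\ba^+(K)=(a_1,\ldots,a_n)$ with $n$ even, there is a single chain $\bfx$ in $C\{S(a_1,\ldots,a_n)\}$ whose boundary there is $x_0+x_n$; one decomposes $\bfx$ into components sitting at the outer corners of the staircase (absorbing lower-filtration terms), takes these as the odd-index $x_i$ all at once, and then \emph{defines} each even-index element as $\partial^\horz x_{i+1}$ plus the lower part of $\partial^\vert x_{i-1}$, so the displayed differentials hold by construction with nothing to re-verify. Note also that the ``no other arrows'' conclusions you try to extract from minimality of $a_{k+1}$ are not part of Lemma~\ref{lem:basissub} at all; they are established afterwards by the separate change-of-basis argument of Lemma~\ref{lem:arrowremoval}, so building them into your induction adds unnecessary burden to a step that is already unsupported.
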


\begin{proof}
We may assume that our complex is reduced; that is, that the differential structs lowers filtration. Since $\boldsymbol{a}^+(K)=(a_1, a_2, \ldots, a_n)$, there exists a chain $\bfx$ in $C\{S(a_1, \ldots, a_n)\}$ whose boundary in $C\{S(a_1, \ldots, a_n)\}$ is $x_0+x_n$, where $x_0$ is some element supported in filtration level $(0, \tau)$ and $x_n$ is in filtration level $(a_1+a_3+ \dots + a_{n-1}, \tau-a_2-a_4-\dots -a_n)$. The idea behind this proof is to decompose $\bfx$ into a sum of elements with filtration level corresponding to the local maxima of $C\{S(a_1, \ldots, a_n)\}$, and then to use a change of basis to obtain the desired differentials.

We decompose $\bfx$ as $x_{-1}+x_1+x_3+ \dots x_{n-1}$ where $x_{-1}$ has filtration level $(0, m)$ for some $m>\tau$ and each other $x_i$ (where $i$ is odd) has filtration level $(a_1+a_3+\dots +a_i, \tau-a_2-a_4-\dots -a_{i-1})$; see Figure \ref{subfig:basissubfirst}. Note that a priori $\bfx$ may include terms in other, lower filtration levels, but such terms may be absorbed into the $x_i$. 

We may assume that $\partial^\vert x_{n-1}=x_n$, i.e., that there is a vertical arrow from $x_{n-1}$ to $x_n$. We have that $\partial^\horz x_{n-1} = \partial^\vert x_{n-3}$ in $C\{S(a_1, \ldots, a_n)\}$. Let $x_{n-2}$ denote $\partial^\horz x_{n-1}$ plus the part of $\partial^\vert x_{n-3}$ that has vertical filtration level less than $\partial^\horz x_{n-1}$; see Figures \ref{subfig:basissubmid} and \ref{subfig:basissublast}. Then there is a horizontal arrow from $x_{n-1}$ to $x_{n-2}$ and a vertical arrow from $x_{n-3}$ to $x_{n-2}$. (Note that there may also be diagonal arrows from $x_{n-1}$ or $x_{n-3}$.) Continue in this manner, concluding with replacing $x_0$ with $\partial^\horz x_1$ plus the part of $\partial^\vert x_{-1}$ that has filtration level less than $\partial^\horz x_1$. Note that the new $x_0$ still generates the vertical homology, and has no incoming vertical arrows. See Figure \ref{subfig:basissublast}. 

We have formed a  subcomplex of $C\{S(a_1, \ldots, a_n)\}$ consisting of 
\[ x_0, x_1, \ldots, x_n. \]
The non-zero horizontal and vertical differentials are 
\begin{align*}
	&\partial^\horz x_i = x_{i-1}  && 1\leq i \leq n-1,\ i \textup{ odd} \\
	&\partial^\vert x_i = x_{i+1} && 1\leq i \leq n-1,\ i \textup{ odd},
\end{align*}
and we may extend $x_0, \ldots x_n$ to a basis $\{x_j\}$ over $\F[U, U^{-1}]$ for $\CFKi(K)$.
\end{proof}

\begin{figure}[htb!]
\vspace{5pt}
\labellist
\small \hair 2pt
\endlabellist
\centering
\subfigure[]{
\begin{tikzpicture}
	\filldraw[black!20!white] (-0.15, 2.85) rectangle (0.15, 4);
	\filldraw[black!20!white] (-0.15, 2.85) rectangle (1.15, 3.15);
	\shade[top color=white, bottom color=black!20!white] (-0.15, 4) rectangle (0.15, 5.25);
	\filldraw[black!20!white] (0.85, 0.85) rectangle (1.15, 3.15);
	\filldraw[black!20!white] (0.85, 0.85) rectangle (2.15, 1.15);
	\filldraw[black!20!white] (1.85, -0.15) rectangle (2.15, 1.15);
	\begin{scope}[thin, gray]
		\draw [<->] (-1, 0) -- (3, 0);
		\draw [<->] (0, -1) -- (0, 5);
	\end{scope}
	\filldraw (0, 4) circle (2pt) node[] (x_-1){};
	\filldraw (0, 3) circle (2pt) node[] (x_0){};
	\filldraw (1, 3) circle (2pt) node[] (x_1){};
	%\filldraw (1, 1) circle (2pt) node[] (x_2){};
	\filldraw (2, 1) circle (2pt) node[] (x_3){};
	\filldraw (2, 0) circle (2pt) node[] (x_4){};
	\filldraw (2, 0) circle (2pt) node[] (x_4){};
	\node [left] at (x_-1) {$x_{-1}$};
	\node [left] at (x_0) {$x_0$};
	\node [right] at (x_1) {$x_1$};
	%\node [left] at (x_2) {$x_2$};
	\node [right] at (x_3) {$x_3$};
	\node [right] at (x_4) {$x_4$};
\end{tikzpicture}
\label{subfig:basissubfirst}
}
\hspace{25pt}
\subfigure[]{
\begin{tikzpicture}
	\filldraw[black!20!white] (-0.15, 2.85) rectangle (0.15, 4);
	\filldraw[black!20!white] (-0.15, 2.85) rectangle (1.15, 3.15);
	\shade[top color=white, bottom color=black!20!white] (-0.15, 4) rectangle (0.15, 5.25);
	\filldraw[black!20!white] (0.85, 0.85) rectangle (1.15, 3.15);
	\filldraw[black!20!white] (0.85, 0.85) rectangle (2.15, 1.15);
	\filldraw[black!20!white] (1.85, -0.15) rectangle (2.15, 1.15);
	\begin{scope}[thin, gray]
		\draw [<->] (-1, 0) -- (3, 0);
		\draw [<->] (0, -1) -- (0, 5);
	\end{scope}
	\filldraw (0, 4) circle (2pt) node[] (x_-1){};
	\filldraw (0.1, 3.1) circle (2pt) node[] (w){};	
	\filldraw (-0.1, 2.9) circle (2pt) node[] (x_0){};
	\filldraw (1, 3) circle (2pt) node[] (x_1){};
	\filldraw (1, 1) circle (2pt) node[] (x_2){};
	\filldraw (2, 1) circle (2pt) node[] (x_3){};
	\filldraw (2, 0) circle (2pt) node[] (x_4){};
	\filldraw (1, 0) circle (2pt) node[] (z){};
	\filldraw (0, 1) circle (2pt) node[] (v){};
	\filldraw (0, 2) circle (2pt) node[] (u){};
	\draw [very thick, <-] (w) -- (x_-1);
	\draw [very thick, <-] (w) .. controls (0.4, 3.1) and (0.6, 3.1) ..  (x_1);
	\draw [very thick, <-] (x_0) .. controls (0.4, 2.8) and (0.6, 2.8) ..  (x_1);
	\draw [very thick, <-] (x_2) -- (x_1);
	\draw [very thick, <-] (x_2) -- (x_3);
	\draw [very thick, <-] (x_4) -- (x_3);
	\draw [very thick, <-] (z) .. controls (0.6, 1.2) and (0.6, 1.8) ..  (x_1);
	\draw [very thick, <-] (v) .. controls (0.7, 0.8) and (1.3, 0.8) ..  (x_3);
	\draw [very thick, <-] (u) .. controls (-0.25, 2.8) and (-0.25, 3.2) ..  (x_-1);
	\node [above right] at (w) {$w$};
	\node [left] at (x_-1) {$x_{-1}$};
	\node [below right] at (x_0) {$x_0$};
	\node [right] at (x_1) {$x_1$};
	\node [above right] at (x_2) {$y$};
	\node [right] at (x_3) {$x_3$};
	\node [right] at (x_4) {$x_4$};
	\node [right] at (z) {$z$};
	\node [left] at (v) {$v$};
	\node [left] at (u) {$u$};
\end{tikzpicture}
\label{subfig:basissubmid}
}
\hspace{25pt}
\subfigure[]{
\begin{tikzpicture}
	\filldraw[black!20!white] (-0.15, 2.85) rectangle (0.15, 4);
	\filldraw[black!20!white] (-0.15, 2.85) rectangle (1.15, 3.15);
	\shade[top color=white, bottom color=black!20!white] (-0.15, 4) rectangle (0.15, 5.25);
	\filldraw[black!20!white] (0.85, 0.85) rectangle (1.15, 3.15);
	\filldraw[black!20!white] (0.85, 0.85) rectangle (2.15, 1.15);
	\filldraw[black!20!white] (1.85, -0.15) rectangle (2.15, 1.15);
	\begin{scope}[thin, gray]
		\draw [<->] (-1, 0) -- (3, 0);
		\draw [<->] (0, -1) -- (0, 5);
	\end{scope}
	\filldraw (0, 4) circle (2pt) node[] (x_-1){};
	\filldraw (0.1, 3.1) circle (2pt) node[] (w){};	
	\filldraw (-0.1, 2.9) circle (2pt) node[] (x_0){};
	\filldraw (1, 3) circle (2pt) node[] (x_1){};
	\filldraw (1, 1) circle (2pt) node[] (x_2){};
	\filldraw (2, 1) circle (2pt) node[] (x_3){};
	\filldraw (2, 0) circle (2pt) node[] (x_4){};
	\filldraw (1, 0) circle (2pt) node[] (x){};
	\filldraw (0, 1) circle (2pt) node[] (v){};
	\filldraw (0, 2) circle (2pt) node[] (u){};
	\draw [very thick, <-] (w) -- (x_-1);
	\draw [very thick, <-] (x_0) .. controls (0.4, 2.9) and (0.6, 2.9) ..  (x_1);
	\draw [very thick, <-] (x_2) -- (x_1);
	\draw [very thick, <-] (x_2) -- (x_3);
	\draw [very thick, <-] (x_4) -- (x_3);
	\draw [very thick, <-] (x) -- (x_3);
	\draw [very thick, <-] (u) -- (x_1);
	\draw [very thick, <-] (v) -- (x_1);
	\draw [very thick, <-] (u) .. controls (-0.3, 2.8) and (-0.3, 3.2) ..  (x_-1);
	\node [above right] at (w) {$w$};
	\node [left] at (x_-1) {$x_{-1}$};
	\node [left] at (x_0) {$x'_0$};
	\node [right] at (x_1) {$x_1$};
	\node [left] at (x_2) {$x_2$};
	\node [right] at (x_3) {$x_3$};
	\node [right] at (x_4) {$x_4$};
	\node [right] at (x) {$z$};
	\node [left] at (v) {$v$};
	\node [left] at (u) {$u$};
\end{tikzpicture}
\label{subfig:basissublast}
}
\caption{An example of the change of basis from Lemma \ref{lem:basissub} for $n=4$. At right, $x_2=v+y+z$ and $x'_0=x_0+u+w$. Note that this figure does not include all of the generators of $\CFKi(K)$.}
\label{fig:basissub}
\end{figure}
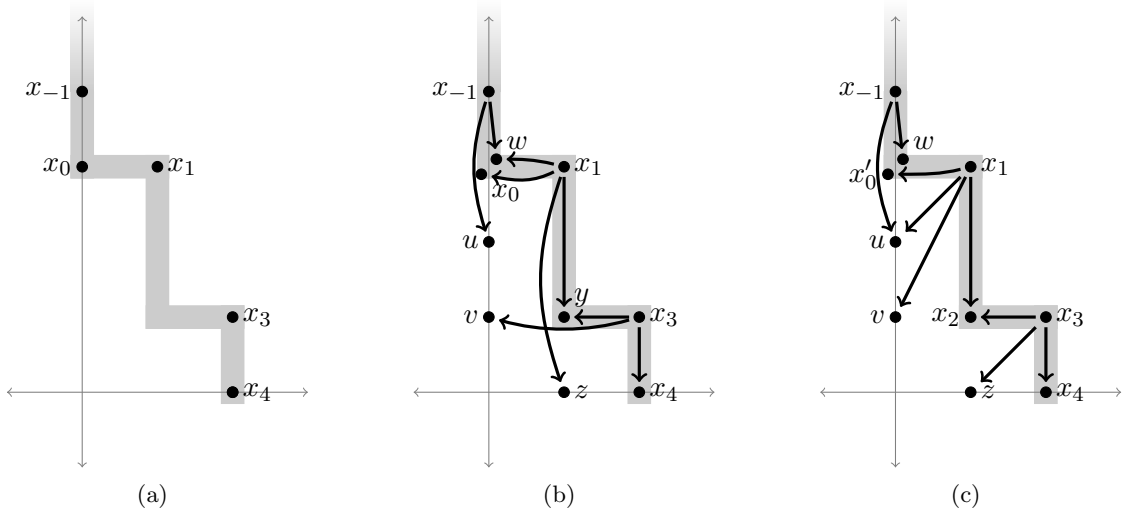

\begin{lemma}
\label{lem:arrowremoval}
Suppose  $\boldsymbol{a}^+(K)=(a_1, a_2, \ldots, a_n)$ where $n$ is even. Let $1\leq i \leq n-1$, $i$ odd. Given the basis from Lemma \ref{lem:basissub} with vertical arrows from $x_i$ to $x_{i+1}$ and horizontal arrows from $x_i$ to $x_{i-1}$, we may perform a change of basis to remove all other incoming vertical arrows to $x_{i+1}$ and all other incoming horizontal arrows to $x_{i-1}$.
\end{lemma}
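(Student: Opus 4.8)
The plan is to prove this by a short sequence of filtered changes of basis, in the spirit of the constructions of vertically and horizontally simplified bases that underlie the definition of $\varep$, but carried out locally around the single odd index $i$. Throughout I would keep the distinguished subcollection $x_0,x_1,\dots,x_n$ of Lemma~\ref{lem:basissub}---in particular the differentials $\partial^\horz x_i=x_{i-1}$ and $\partial^\vert x_i=x_{i+1}$---and modify only the remaining basis elements, rechoosing $x_i$ only within its own filtration level. First comes bookkeeping: $x_i$ sits at filtration level $(a_1+a_3+\dots+a_i,\ \tau-a_2-a_4-\dots-a_{i-1})$, so $x_{i-1}$ lies directly to its left at horizontal distance $a_i$ and $x_{i+1}$ lies directly below it at vertical distance $a_{i+1}$. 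Hence any basis element $y\neq x_i$ such that $x_{i+1}$ appears in $\partial^\vert y$ lies in the column of $x_i$ with $A(x_{i+1})<A(y)$, and any $z\neq x_i$ such that $x_{i-1}$ appears in $\partial^\horz z$ lies in the row of $x_i$ with $w(x_{i-1})<w(z)$.

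The basic move is the obvious one: to destroy an unwanted vertical arrow from $y$ to $x_{i+1}$, replace $y$ by $y+x_i$; to destroy an unwanted horizontal arrow from $z$ to $x_{i-1}$, replace $z$ by $z+x_i$. For either of these to be a \emph{filtered} change of basis, $x_i$ must not lie strictly above the element being modified, so I would first show that every offending source in fact lies at exactly the filtration level $(w(x_i),A(x_i))$ of $x_i$: there is no vertical arrow into $x_{i+1}$ from strictly above $x_i$ and no such arrow of length less than $a_{i+1}$, and symmetrically no horizontal arrow into $x_{i-1}$ from strictly to the right of $x_i$ nor one of length less than $a_i$. Granting this, the whole problem collapses to a single change of basis among the finitely many basis elements at the one filtration level $(w(x_i),A(x_i))$: within that finite $\F$-vector space one diagonalizes so that $x_i$ becomes the unique such element whose differential hits $x_{i+1}$ vertically and the unique one whose differential hits $x_{i-1}$ horizontally. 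Since every modified element shares $x_i$'s filtration level, this change of basis is filtered, and by the level bookkeeping it leaves $x_0,\dots,x_n$ and the prescribed differentials among them untouched.

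I expect the reduction claim of the previous paragraph to be the main obstacle, for two intertwined reasons. First, the vertical and horizontal simplifications genuinely interact---cancelling a vertical arrow into $x_{i+1}$ can toggle a horizontal arrow into $x_{i-1}$, and conversely---so they cannot be done independently, which is precisely why one must confine everything to the single filtration level of $x_i$. Second, one must rule out arrows shorter than $a_{i+1}$ into $x_{i+1}$ (and shorter than $a_i$ into $x_{i-1}$), since adding $x_i$ cannot cancel them. Here I would combine the extremality of $x_i$ from the proof of Lemma~\ref{lem:basissub}---$x_i$ occupies the relevant local maximum of $C\{S(a_1,\dots,a_n)\}$---with the fact that each $a_k$ is defined as a \emph{minimum}: a too-short arrow into $x_{i+1}$ or $x_{i-1}$ would exhibit a chain making one of the maps $H_{a_1,\dots}$ non-trivial, respectively trivial, one step too soon, contradicting that minimality.
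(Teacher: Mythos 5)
There is a genuine gap, and it sits exactly where you predicted: the reduction claiming that every offending source lies at the filtration level of $x_i$. The minimality in the definition of the $a_k$'s constrains the class $[x_0]$ in the subquotient complexes $C\{S(a_1,\ldots)\}$, not arbitrary arrows of the complex, so it does not rule out a generator $y$ lying in the column of $x_i$ strictly between $x_i$ and $x_{i+1}$ with $x_{i+1}$ appearing in $\partial^\vert y$. Such a $y$ is perfectly consistent with $\boldsymbol{a}^+(K)=(a_1,\ldots,a_n)$ as long as $\partial y$ has another component inside $C\{S(a_1,\ldots,a_n)\}$: then the chain $x_i+y$ does not kill $x_0$ prematurely, so none of the maps $H_{a_1,\ldots}$ changes (non)triviality ``one step too soon,'' and your appeal to minimality fails. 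For such a $y$ your basic move $y\mapsto y+x_i$ is not a filtered change of basis, and a diagonalization confined to the single filtration level of $x_i$ cannot touch this arrow at all. (Your parallel claim that no source lies strictly above $x_i$ is also unjustified, though harmless, since there the basic move is filtered and does the job.)

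The paper's proof is built precisely to cope with these low-lying sources, and it needs an idea absent from your proposal: when the source cannot be modified, modify a target instead. Concretely, if $y$ has vertical filtration $\leq$ that of $x_{n-1}$ and another vertical arrow to some $z\in C\{S(a_1,\ldots,a_n)\}$, one replaces $z$ by $z+x_n$ (filtered because $z$ lies in the region, hence no lower than $x_n$), which cancels the unwanted arrow from the target side; only when $y$ has no other vertical arrow into the region does the non-triviality of $H_{a_1,\ldots,a_n}$ force a horizontal arrow from $y$ into the region, which pins $y$ to the filtration level of $x_{n-1}$ and legitimizes $y\mapsto y+x_{n-1}$ (and, in the horizontal step, the combined move $y\mapsto y+x_{n-1}$, $z\mapsto z+x_n$ is used to avoid reintroducing vertical arrows). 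Your instinct that the vertical and horizontal cancellations interact is correct, but without the target-side changes of basis the argument does not go through as written.
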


\begin{proof}
We will perform a change of basis to remove any unwanted vertical or horizontal arrows. There is a vertical arrow from $x_{n-1}$ to $x_n$.
We will remove all other vertical arrows to $x_n$ using the following changes of basis:

\begin{enumerate}
	\item If there is a vertical arrow to $x_n$ from an element $y$ with vertical filtration level strictly greater than that of $x_{n-1}$, then we replace $y$ with $y+x_{n-1}$. This is a filtered change of basis since the filtration level of $y$ is greater than that of $x_{n-1}$. See Figures \ref{subfig:removeverta} and \ref{subfig:removevertb}.
	\item Now suppose there is a vertical arrow to $x_n$ from an element $y$ of vertical filtration less than or equal to that of $x_{n-1}$. 
	\begin{enumerate}
		\item If there is another vertical arrow from $y$ to some element $z$ contained in $C\{S(a_1, \ldots, a_n)\}$, then we replace $z$ with $z+x_n$. This is a filtered change of basis since $z$ is contained in $C\{S(a_1, \ldots, a_n)\}$ and thus must have filtration level greater than or equal to that of $x_n$. See Figures \ref{subfig:removevertc} and \ref{subfig:removevertd}.
		\item If such a vertical arrow does not exist, then there must be a horizontal arrow from $y$ to some element $z$. This is because the element $x_n$ is not in the image of of the boundary on $C\{S(a_1, \ldots, a_n)\}$ as the map $H_{a_1, \ldots a_n}$ is non-trivial. We then replace $y$ with $y+x_{n-1}$. Note that $y$ must have the same filtration level as $x_{n-1}$, since there is a horizontal arrow from $y$ to $z$ in $C\{S(a_1, \ldots, a_n)\}$. See Figures \ref{subfig:removeverte} and \ref{subfig:removevertf}.
	\end{enumerate}
\end{enumerate}
In this manner, we may remove all other incoming vertical arrows to $x_n$. %By using $\partial^2=0$ considerations, we may conclude that there are no other vertical arrows to or from $x_{n-1}$ and $x_{n}$, besides the unique arrow between them.

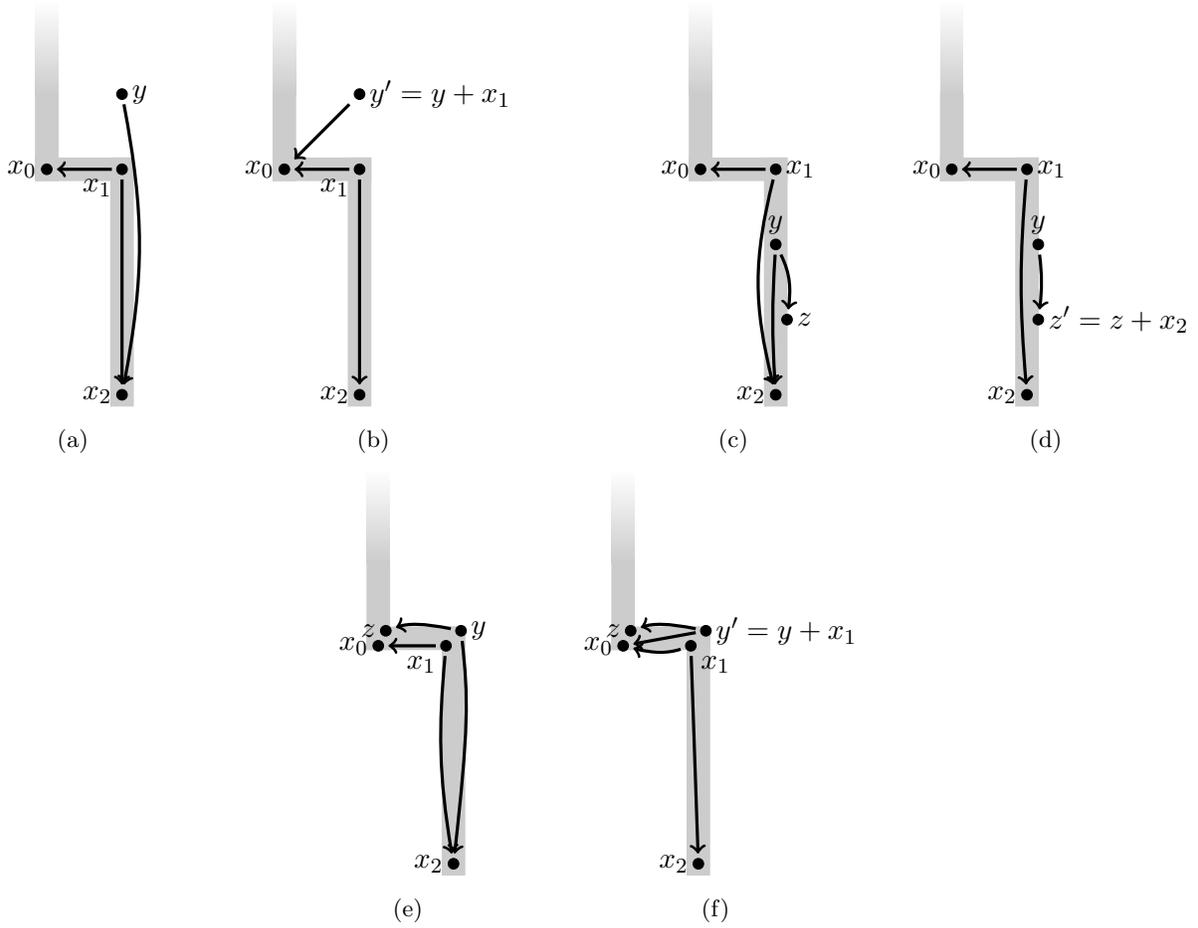
\begin{figure}[htb!]
\vspace{5pt}
\labellist
\small \hair 2pt
\endlabellist
\centering
\subfigure[]{
\begin{tikzpicture}
	\filldraw[black!20!white] (-0.15, 2.85) rectangle (0.15, 4);
	\filldraw[black!20!white] (-0.15, 2.85) rectangle (1.15, 3.15);
	\shade[top color=white, bottom color=black!20!white] (-0.15, 4) rectangle (0.15, 5.25);
	\filldraw[black!20!white] (0.85, -0.15) rectangle (1.15, 3.15);
	\filldraw (0, 3) circle (2pt) node[] (x_0){};
	\filldraw (1, 3) circle (2pt) node[] (x_1){};
	\filldraw (1, 0) circle (2pt) node[] (x_2){};
	\filldraw (1, 4) circle (2pt) node[] (y){};
	\draw [very thick, <-] (x_0) -- (x_1);
	\draw [very thick, <-] (x_2) -- (x_1);
	\draw [very thick, <-] (x_2)  .. controls (1.3, 1.6) and (1.3, 2.4) ..   (y);
	\node [left] at (x_0) {$x_0$};
	\node [below left] at (x_1) {$x_1$};
	\node [left] at (x_2) {$x_2$};
	\node [right] at (y) {$y$};
\end{tikzpicture}
\label{subfig:removeverta}
}
\hspace{15pt}
\subfigure[]{
\begin{tikzpicture}
	\filldraw[black!20!white] (-0.15, 2.85) rectangle (0.15, 4);
	\filldraw[black!20!white] (-0.15, 2.85) rectangle (1.15, 3.15);
	\shade[top color=white, bottom color=black!20!white] (-0.15, 4) rectangle (0.15, 5.25);
	\filldraw[black!20!white] (0.85, -0.15) rectangle (1.15, 3.15);
	\filldraw (0, 3) circle (2pt) node[] (x_0){};
	\filldraw (1, 3) circle (2pt) node[] (x_1){};
	\filldraw (1, 0) circle (2pt) node[] (x_2){};
	\filldraw (1, 4) circle (2pt) node[] (y){};
	\draw [very thick, <-] (x_0) -- (x_1);
	\draw [very thick, <-] (x_2) -- (x_1);
	\draw [very thick, <-] (x_0) -- (y);
	\node [left] at (x_0) {$x_0$};
	\node [below left] at (x_1) {$x_1$};
	\node [left] at (x_2) {$x_2$};
	\node [right] at (y) {$y'=y+x_1$};
\end{tikzpicture}
\label{subfig:removevertb}
}
\hspace{35pt}
\subfigure[]{
\begin{tikzpicture}
	\filldraw[black!20!white] (-0.15, 2.85) rectangle (0.15, 4);
	\filldraw[black!20!white] (-0.15, 2.85) rectangle (1.15, 3.15);
	\shade[top color=white, bottom color=black!20!white] (-0.15, 4) rectangle (0.15, 5.25);
	\filldraw[black!20!white] (0.85, -0.15) rectangle (1.15, 3.15);
	\filldraw (0, 3) circle (2pt) node[] (x_0){};
	\filldraw (1, 3) circle (2pt) node[] (x_1){};
	\filldraw (1, 0) circle (2pt) node[] (x_2){};
	\filldraw (1, 2) circle (2pt) node[] (y){};
	\filldraw (1.15, 1) circle (2pt) node[] (z){};
	\draw [very thick, <-] (x_0) -- (x_1);
	\draw [very thick, <-] (x_2)   .. controls (0.7, 1.2) and (0.7, 1.8) ..  (x_1);
	\draw [very thick, <-] (x_2) .. controls (0.95, 0.8) and (0.95, 1.2) ..  (y);
	\draw [very thick, <-] (z) .. controls (1.2, 1.4) and (1.2, 1.6) ..  (y);
	\node [left] at (x_0) {$x_0$};
	\node [right] at (x_1) {$x_1$};
	\node [left] at (x_2) {$x_2$};
	\node [above] at (y) {$y$};
	\node [right] at (z) {$z$};
\end{tikzpicture}
\label{subfig:removevertc}
}
\hspace{15pt}
\subfigure[]{
\begin{tikzpicture}
	\filldraw[black!20!white] (-0.15, 2.85) rectangle (0.15, 4);
	\filldraw[black!20!white] (-0.15, 2.85) rectangle (1.15, 3.15);
	\shade[top color=white, bottom color=black!20!white] (-0.15, 4) rectangle (0.15, 5.25);
	\filldraw[black!20!white] (0.85, -0.15) rectangle (1.15, 3.15);
	\filldraw (0, 3) circle (2pt) node[] (x_0){};
	\filldraw (1, 3) circle (2pt) node[] (x_1){};
	\filldraw (1, 0) circle (2pt) node[] (x_2){};
	\filldraw (1.15, 2) circle (2pt) node[] (y){};
	\filldraw (1.15, 1) circle (2pt) node[] (z){};
	\draw [very thick, <-] (x_0) -- (x_1);
	\draw [very thick, <-] (x_2)   .. controls (0.9, 1.2) and (0.9, 1.8) ..  (x_1);
	\draw [very thick, <-] (z) .. controls (1.2, 1.4) and (1.2, 1.6) ..  (y);
	\node [left] at (x_0) {$x_0$};
	\node [right] at (x_1) {$x_1$};
	\node [left] at (x_2) {$x_2$};
	\node [above] at (y) {$y$};
	\node [right] at (z) {$z'=z+x_2$};
\end{tikzpicture}
\label{subfig:removevertd}
}
\hspace{15pt}
\subfigure[]{
\begin{tikzpicture}
	\filldraw[black!20!white] (-0.15, 2.85) rectangle (0.15, 4);
	\filldraw[black!20!white] (-0.15, 2.85) rectangle (1.15, 3.15);
	\shade[top color=white, bottom color=black!20!white] (-0.15, 4) rectangle (0.15, 5.25);
	\filldraw[black!20!white] (0.85, -0.15) rectangle (1.15, 3.15);
	\filldraw (0, 2.9) circle (2pt) node[] (x_0){};
	\filldraw (0.9, 2.9) circle (2pt) node[] (x_1){};
	\filldraw (1, 0) circle (2pt) node[] (x_2){};
	\filldraw (1.1, 3.1) circle (2pt) node[] (y){};
	\filldraw (0.1, 3.1) circle (2pt) node[] (z){};
	\draw [very thick, <-] (x_0) -- (x_1);
	\draw [very thick, <-] (x_2)   .. controls (0.8, 1.2) and (0.8, 1.8) ..  (x_1);
	\draw [very thick, <-] (x_2) .. controls (1.2, 1.8) and (1.2, 2.2) ..  (y);
	\draw [very thick, <-] (z) .. controls (0.4, 3.2) and (0.6, 3.2) ..  (y);
	\node [left] at (x_0) {$x_0$};
	\node [below left] at (x_1) {$x_1$};
	\node [left] at (x_2) {$x_2$};
	\node [right] at (y) {$y$};
	\node [left] at (z) {$z$};
\end{tikzpicture}
\label{subfig:removeverte}
}
\hspace{15pt}
\subfigure[]{
\begin{tikzpicture}
	\filldraw[black!20!white] (-0.15, 2.85) rectangle (0.15, 4);
	\filldraw[black!20!white] (-0.15, 2.85) rectangle (1.15, 3.15);
	\shade[top color=white, bottom color=black!20!white] (-0.15, 4) rectangle (0.15, 5.25);
	\filldraw[black!20!white] (0.85, -0.15) rectangle (1.15, 3.15);
	\filldraw (0, 2.9) circle (2pt) node[] (x_0){};
	\filldraw (0.9, 2.9) circle (2pt) node[] (x_1){};
	\filldraw (1, 0) circle (2pt) node[] (x_2){};
	\filldraw (1.1, 3.1) circle (2pt) node[] (y){};
	\filldraw (0.1, 3.1) circle (2pt) node[] (z){};
	\draw [very thick, <-] (x_0) .. controls (0.4, 2.8) and (0.6, 2.8) ..  (x_1);
	\draw [very thick, <-] (x_2) -- (x_1);
	\draw [very thick, <-] (x_0) -- (y);
	\draw [very thick, <-] (z)   .. controls (0.4, 3.2) and (0.6, 3.2) ..  (y);
	\node [left] at (x_0) {$x_0$};
	\node [below right] at (x_1) {$x_1$};
	\node [left] at (x_2) {$x_2$};
	\node [right] at (y) {$y'=y+x_1$};
	\node [left] at (z) {$z$};
\end{tikzpicture}
\label{subfig:removevertf}
}
\caption{Examples of removing unwanted vertical arrows. Here, $n=2$.}
\label{fig:removingverticalarrows}
\end{figure}

We now proceed to remove all horizontal arrows to $x_{n-2}$ other than the horizontal arrow from $x_{n-1}$ with similar changes of bases.
\begin{enumerate}
	\item If there is a horizontal arrow from an element $y$ to $x_{n-2}$ where $y$ has horizontal filtration greater than $x_{n-1}$, then we replace $y$ with $y+x_{n-1}$. This is a filtered change of basis since $x_{n-1}$ has filtration level less than that of $y$.
	\item
	If there is a horizontal arrow from an element $y$ of horizontal filtration less than or equal to that of $x_{n-1}$, then we again consider the other outgoing arrows from $y$. 
	\begin{enumerate}
		\item If there is another horizontal arrow from $y$ to some element $z$ contained in $C\{S(a_1, \ldots, a_n)\}$, then we replace $z$ with $z+x_{n-2}$. This is a filtered change of basis since $z$ is contained in $C\{S(a_1, \ldots, a_n)\}$ and thus has filtration level greater than or equal to that of $x_{n-2}$.
		\item If there is no other horizontal arrow from $y$ to an element contained in $C\{S(a_1, \ldots, a_n)\}$, then there must be a vertical arrow from $y$ to some element $z$, because the map  $H_{a_1, \ldots a_n}$ is non-trivial. In this case, replace $y$ with $y+x_{n-1}$ and $z$ with $z+x_n$. This is a filtered change of basis because $y$ must have the same filtration level as $x_{n-1}$ and $z$ must have filtration level greater than or equal to that of $x_n$ because $z$ is contained in $C\{S(a_1, \ldots, a_n)\}$.
	\end{enumerate}
\end{enumerate}	
All remaining unwanted horizontal arrows to $x_{n-2}$ may be removed in this manner. %, and again by using $\partial^2=0$ considerations, we may conclude that there are no other horizontal arrows to or from $x_{n-2}$ and $x_{n-1}$, besides the unique arrow between them.

Continuing in this manner, we may remove all of the unwanted vertical and horizontal arrows, completing the proof of the lemma.
\end{proof}

\begin{proof}[Proof of Proposition \ref{prop:basis}]
Suppose  $\boldsymbol{a}^+(K)=(a_1, a_2, \ldots, a_n)$ where $n$ is even. We apply Lemmas \ref{lem:basissub} and \ref{lem:arrowremoval} to obtain a basis $\{x_i\}$ for $\CFKi(K)$ 
with a subset of basis elements
\[ x_0, x_1, \ldots, x_n \]
with the following non-zero differentials:
\begin{align*}
	&\partial^\horz x_i = x_{i-1}  && 1\leq i \leq n-1,\ i \textup{ odd} \\
	&\partial^\vert x_i = x_{i+1} && 1\leq i \leq n-1,\ i \textup{ odd},
\end{align*}
such that there are no other incoming vertical arrows to $x_{i+1}$ or incoming horizontal arrows to $x_{i-1}$, for $1\leq i \leq n-1$, $i$ odd.

For $i$ odd, $1 \leq i \leq n-1$, there is a unique outgoing vertical arrow from $x_{i}$, which is the unique incoming vertical arrow to $x_{i+1}$, and since $\partial^\vert$ is a differential, i.e., squares to zero, we may conclude that there are no outgoing vertical arrows from $x_{i+1}$, nor any incoming vertical arrows to $x_i$. Similarly, for $i$ odd, $1 \leq i \leq n-1$, we may conclude that there are no outgoing horizontal arrows from $x_{i-1}$, nor any incoming horizontal arrows to $x_i$.

The proof in the case that $n$ is  odd follows similarly and is left to the reader.
\end{proof}

In the case where $n$ is odd and $a_n$ is well-defined but $a_{n+1}$ is not, we can further modify the basis, as described in the following lemma.

\begin{lemma}
Suppose $\boldsymbol{a}^+(K)=(a_1, a_2, \ldots, a_n)$, where $n$ is odd, and $a_n$ is well-defined but $a_{n+1}$ is not. Then we may assume that for the basis $\{ x_i\}$ from the conclusion of  Proposition \ref{prop:basis}, we have that $\partial^\vert x_n = 0$.
\end{lemma}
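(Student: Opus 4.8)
The plan is to use the hypothesis that $a_{n+1}(K)$ is undefined to exhibit $\partial^\vert x_n$ as $\partial^\vert$ of a chain that lies in filtration level at most that of $x_n$ and that does not involve $x_n$; a single filtered change of basis then arranges $\partial^\vert x_n = 0$, and a final round of changes of basis --- exactly those in the proofs of Lemma~\ref{lem:arrowremoval} and Proposition~\ref{prop:basis} --- restores the remaining conclusions of Proposition~\ref{prop:basis}. Write $\tau = \tau(K)$ and $w := \partial^\vert x_n$; since $(\partial^\vert)^2 = 0$, this is a $\partial^\vert$-cycle supported in the column $\{i = a_\odd^n\}$ at levels $(a_\odd^n, j)$ with $j < \tau - a_\even^n$, that is, strictly below the corner occupied by $x_n$. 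Set $\bfx := x_1 + x_3 + \cdots + x_n$. Using properties (1)--(4) of Proposition~\ref{prop:basis} and the shape of $S(a_1, \ldots, a_n, s)$ --- in particular that every diagonal arrow out of $x_1, x_3, \ldots, x_n$ leaves the region --- one checks, for all sufficiently large $s$, that $\partial \bfx = x_0 + w$ in $C\{S(a_1, \ldots, a_n, s)\}$; this is the same telescoping computation that shows $H_{a_1, \ldots, a_n}$ is trivial, the one new term being the vertical arrows out of $x_n$, which survive once the region is enlarged.

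Because $a_{n+1}(K)$ is undefined, $H_{a_1, \ldots, a_n, s}$ is trivial for every $s$, so $x_0 = \partial \zeta$ for some chain $\zeta$ in $C\{S(a_1, \ldots, a_n, s)\}$. The key point is that $\zeta$ is forced to contain $x_n$: the only arrow into $x_0$ in this complex is the horizontal arrow from $x_1$ (by property (3) of Proposition~\ref{prop:basis}, together with the fact that the region contains nothing at $i \ge 1$, $j > \tau$), so $x_1 \in \zeta$; the only arrows into $x_2$ are those from $x_1$ and $x_3$, so $x_3 \in \zeta$; iterating, $x_5, \ldots, x_{n-2} \in \zeta$, and finally the only arrows into $x_{n-1}$ are the vertical arrow from $x_{n-2}$ and the horizontal arrow from $x_n$, so $x_n \in \zeta$. (No arrow from the newly adjoined cells $\{i = a_\odd^n,\ j < \tau - a_\even^n\}$ reaches any of $x_0, x_2, \ldots, x_{n-1}$, since it would have to raise the Alexander filtration.) Hence $\bfx + \zeta$ does not involve $x_n$, while $\partial(\bfx + \zeta) = (x_0 + w) + x_0 = w$. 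Since $w$ lies in the rightmost column $\{i = a_\odd^n\}$ of the region and only $\partial^\vert$ of the column-$a_\odd^n$ part $P$ of $\bfx + \zeta$ can land there, $\partial^\vert P = w$, with $P$ supported in levels $(a_\odd^n, j)$, $j \le \tau - a_\even^n$, and not involving $x_n$.

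Replacing $x_n$ by $x_n + P$ is then a filtered change of basis which does not disturb $x_0, \ldots, x_{n-1}$ or their differentials (these are supported in columns $i < a_\odd^n$), and the new $x_n$ satisfies $\partial^\vert x_n = w + \partial^\vert P = 0$. The only conclusion of Proposition~\ref{prop:basis} that may now fail is that $\partial^\horz x_n = x_{n-1}$ exactly, since $\partial^\horz x_n$ has acquired the summand $\partial^\horz P$. These unwanted horizontal arrows are removed by the changes of basis from the proofs of Lemma~\ref{lem:arrowremoval} and Proposition~\ref{prop:basis}, each of which replaces a basis element other than $x_n$ by a sum involving $x_n$ or $x_{n-1}$ and therefore leaves $\partial x_n$ unchanged; in particular the identity $\partial^\vert x_n = 0$ persists.

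The main obstacle will be the forcing argument in the second paragraph: establishing, from the arrow pattern provided by Proposition~\ref{prop:basis} and the geometry of the regions $S(a_1, \ldots, a_n, s)$, that every chain bounding $x_0$ in the enlarged complex must involve $x_n$, which is precisely what allows us to produce a $\partial^\vert$-primitive of $\partial^\vert x_n$ that avoids $x_n$. A secondary technical point is to confirm that the final cleanup of horizontal arrows does not reintroduce a vertical arrow emanating from $x_n$.
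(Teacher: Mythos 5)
Your argument is essentially the paper's: both proofs use the triviality of $H_{a_1,\ldots,a_n,s}$ for all $s$ to produce a chain in $C\{S(a_1,\ldots,a_n,s)\}$ whose boundary is $\partial^\vert x_n$, observe that only its rightmost-column part can map into that column (so it furnishes a vertical primitive of $\partial^\vert x_n$ dominated by $x_n$ in both filtrations), add that part to $x_n$ by a filtered change of basis, and then restore the remaining conclusions by re-running the cleanup of Lemmas \ref{lem:basissub} and \ref{lem:arrowremoval}. The differences are minor: you modify only $x_n$, whereas the paper replaces every odd-index $x_i$ by $x_i+y_i$ before re-running that procedure, and your forcing argument showing that any chain $\zeta$ bounding $x_0$ must contain $x_n$ makes explicit a point (that the primitive can be taken to avoid $x_n$, so the basis change is legitimate) which the paper's proof leaves implicit.
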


\begin{proof}
We begin with the basis $\{x_i\}$ from the conclusion of Proposition \ref{prop:basis} and modify it so that $\partial^\vert x_n = 0$. Since $a_{n+1}$ is undefined, the map on homology induced by 
\[ C\{i=0\} \rightarrow C\{ S(a_1, \ldots, a_n, s) \} \]
is trivial for all $s$, and so we have that $x_0$ is nullhomologous in $C\{ S(a_1, \ldots, a_n, s) \}$ for all $s$. 

Based on the differentials described in Proposition \ref{prop:basis}, we have that $x_0$ is homologous to $\partial^\vert x_n$ in $C\{S(a_1, \dots, a_n, s)\}$ for all $s$. Thus $\partial^\vert x_n$ is nullhomologous in $C\{ S(a_1, \ldots, a_n, s) \}$, i.e., there exists some $\bfy$ such that the boundary of $\bfy$ in $C\{ S(a_1, \ldots, a_n, s) \}$ equals $\partial^\vert x_n$. We decompose this chain $\bfy$ as the sum of $y_1+y_3+ \dots + y_n$ where $A(y_i) \leq A(x_i)$ and $w(y_i) \leq w(x_i)$, where $A$ and $w$ denote the two filtrations on $\CFKi(K)$. Note that $\partial^\vert y_n=\partial^\vert x_n$.

By replacing $x_i$ with $x_i+y_i$ for $i$ odd, and following the procedure in the proof of Lemmas \ref{lem:basissub} and \ref{lem:arrowremoval}, we obtain the desired basis.
\end{proof}

When $n$ is odd and $a_n$ is well-defined but $a_{n+1}$ is not, we note the following.
Since the class $[x_0]$ generates the vertical homology (which has rank one), the class $[x_n]$ must be zero in vertical homology, and since $x_n\in \ker \partial^\vert$, it must also be in the image of $\partial^\vert$. We would like to measure the length of the incoming arrow to $x_n$, as follows. Consider the short exact sequence 
\[ 0 \rightarrow C\{S(a_1, \ldots, a_n-1) \} \rightarrow C\{S(a_1, \ldots, a_n) \} \rightarrow C\{ i=a^n_\textup{odd}, j=\tau - a^{n-1}_\textup{even} \} \rightarrow 0, \]
where $a^{n}_\textup{odd}=a_1 + a_3 + \dots + a_{n}$ and $a^{n-1}_\textup{even}=a_2 + a_4 + \dots a_{n-1}$. 
Let $\delta$ denote the connecting homomorphism of the associated long exact sequence. Define
\[ a'_{n+1}(K) = - \min \left\{ A(y)-A(\partial^\vert y) \mid \delta([ \partial^\vert y]) = [x_0] \in H_*(C\{S(a_1, \ldots, a_n-1) \}) \right\}. \]
See Figure \ref{subfig:nodd}. 

Now suppose that $n$ is even and again let $\{ x_i\}_{i=0}^n$ is the basis from Proposition \ref{prop:basis}, i.e., the invariant $a_n$ is well-defined but $a_{n+1}$ is not. Since $a_{n+1}$ is not defined, it follows that $x_n$ is not in the image of $\partial^\horz$. Consider the inclusion
\[ \iota: C\{ i=a^{n-1}_\odd, j=\tau-a^n_\even \} \rightarrow C\{S(a_1, \ldots, a_n) \}. \]
Define
\[ a'_{n+1}(K) = -\min \big\{ w(y)-w(\partial^\horz y) \mid \iota_*([y]) = [x_0] \in H_*(C\{S(a_1, \ldots, a_n) \}) \big\}. \]
See Figure \ref{subfig:neven}. In effect, $a'_{n+1}$ measures the length of the outgoing horizontal arrow from $x_n$. Note that it is possible for $\partial^\horz x_n=0$ (in which case $x_n$ is a generator of the horizontal homology), in which case $a'_{n+1}$ is not defined.

\begin{figure}[htb!]
\vspace{5pt}
\labellist
\small \hair 2pt
\endlabellist
\centering
\subfigure[]{
\begin{tikzpicture}
	\filldraw[black!20!white] (-0.15, 2.85) rectangle (0.15, 4);
	\filldraw[black!20!white] (-0.15, 2.85) rectangle (1.15, 3.15);
	\shade[top color=white, bottom color=black!20!white] (-0.15, 4) rectangle (0.15, 5.25);
	\filldraw[black!20!white] (0.85, 0.85) rectangle (1.15, 3.15);
	\filldraw[black!20!white] (0.85, 0.85) rectangle (3.15, 1.15);
	\begin{scope}[thin, gray]
		\draw [<->] (-1, 0) -- (4, 0);
		\draw [<->] (0, -1) -- (0, 5);
	\end{scope}
	\filldraw (0, 3) circle (2pt) node[] (x_0){};
	\filldraw (1, 3) circle (2pt) node[] (x_1){};
	\filldraw (1, 1) circle (2pt) node[] (x_2){};
	\filldraw (3, 1) circle (2pt) node[] (x_3){};
	\filldraw (3, 4) circle (2pt) node[] (y){};
	\draw [very thick, <-] (x_0) -- (x_1);
	\draw [very thick, <-] (x_2) -- (x_1);
	\draw [very thick, <-] (x_2) -- (x_3);
	\draw [very thick, <-] (x_3) -- (y);
	\node [left] at (x_0) {$x_0$};
	\node [right] at (x_1) {$x_1$};
	\node [left] at (x_2) {$x_2$};
	\node [right] at (x_3) {$x_3$};
	\node [right] at (y) {$y$};
\end{tikzpicture}
\label{subfig:nodd}
}
\hspace{25pt}
\subfigure[]{
\begin{tikzpicture}
	\filldraw[black!20!white] (-0.15, 2.85) rectangle (0.15, 4);
	\filldraw[black!20!white] (-0.15, 2.85) rectangle (1.15, 3.15);
	\shade[top color=white, bottom color=black!20!white] (-0.15, 4) rectangle (0.15, 5.25);
	\filldraw[black!20!white] (0.85, 0.85) rectangle (1.15, 3.15);

	\begin{scope}[thin, gray]
		\draw [<->] (-1, 0) -- (2, 0);
		\draw [<->] (0, -1) -- (0, 5);
	\end{scope}
	\filldraw (0, 3) circle (2pt) node[] (x_0){};
	\filldraw (1, 3) circle (2pt) node[] (x_1){};
	\filldraw (1, 1) circle (2pt) node[] (x_2){};
	\filldraw (0, 1) circle (2pt) node[] (y){};
	\draw [very thick, <-] (x_0) -- (x_1);
	\draw [very thick, <-] (x_2) -- (x_1);
	\draw [very thick, <-] (y) -- (x_2);
	\node [left] at (x_0) {$x_0$};
	\node [right] at (x_1) {$x_1$};
	\node [right] at (x_2) {$x_2$};
	%\node [left] at (y) {$y$};
\end{tikzpicture}
\label{subfig:neven}
}
%\hspace{25pt}
%\subfigure[]{
%\begin{tikzpicture}
%	\filldraw[black!20!white] (-0.15, 0.85) rectangle (0.15, 4);
%	\filldraw[black!20!white] (-0.15, 0.85) rectangle (1.15, 1.15);
%	\shade[top color=white, bottom color=black!20!white] (-0.15, 4) rectangle (0.15, 5.25);
%	\filldraw[black!20!white] (0.85, -0.15) rectangle (1.15, 1.15);
%
%	\begin{scope}[thin, gray]
%		\draw [<->] (-2, 0) -- (2, 0);
%		\draw [<->] (0, -1) -- (0, 5);
%	\end{scope}
%	\filldraw (0, 1) circle (2pt) node[] (x_0){};
%	\filldraw (1, 1) circle (2pt) node[] (x_1){};
%	\filldraw (1, 0) circle (2pt) node[] (x_2){};
%	\filldraw (-1, 0) circle (2pt) node[] (y){};
%	\filldraw (0, -0.1) circle (2pt) node[] (z){};
%	\draw [very thick, <-] (x_0) -- (x_1);
%	\draw [very thick, <-] (x_2) -- (x_1);
%	\draw [very thick, <-] (y) .. controls (-0.3, 0.1) and (0.3, 0.1) .. (x_2);
%	\draw [very thick, <-] (y) .. controls (-0.6, -0.2) and (-0.4, -0.2) .. (z);	
%	\node [left] at (x_0) {$x_0$};
%	\node [right] at (x_1) {$x_1$};
%	\node [right] at (x_2) {$x_2$};
%	\node [right] at (z) {$y$};
%\end{tikzpicture}
%\label{subfig:nevenkernel}
%}
\caption{Example of the bases from Lemma \ref{lem:basis}. Left, $n$ odd, and right, $n$ even.} %Right, an example where $a_3$ and $a'_3$ are both undefined. Note that this figure does not include all of the generators of $\CFKi(K)$; in particular, additional generators and arrows are necessary for $\partial^2$ to equal zero.}
\label{}
\end{figure}
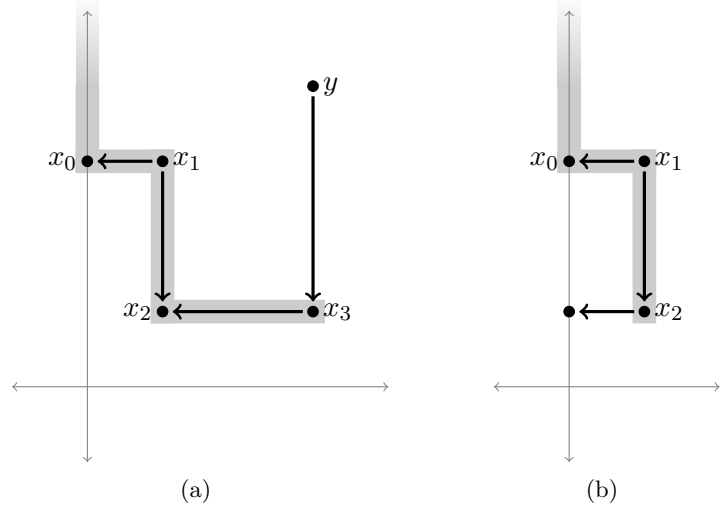

We have the following strengthening of Proposition \ref{prop:basis} when $a'_{n+1}(K)$ is well-defined:
\begin{lemma}
\label{lem:basis}
Suppose $\ba^+(K) = (a_1, a_2, \ldots, a_n)$ and $a'_{n+1}(K) = a'_{n+1}$ is well-defined. Then there exists a basis for $\{ x_i \}$ over $\F[U, U^{-1}]$ for $\CFKi(K)$ with a subcollection of basis elements $x_0, x_1, \ldots, x_{n+1}$ such that
\begin{enumerate}
	\item There is a horizontal arrow of length $a_i$ from $x_i$ to $x_{i-1}$ from $i$ odd, $1\leq i \leq n$.
	\item There is a vertical arrow of length $a_i$ from $x_{i-1}$ to $x_{i}$ for $i$ even, $2 \leq i \leq n$.
	%\item There is a horizontal arrow of length $a_{2i-1}$ from $x_{2i-1}$ to $x_{2i-2}$ for $i=1, \ldots, \lceil \frac{n}{2} \rceil$.
	%\item There is a vertical arrow of length $a_{2i}$ from $x_{2i-1}$ to $x_{2i}$ for $i=1, \ldots, \lfloor \frac{n}{2} \rfloor$.
	\item If $n$ is odd, there is a vertical arrow of length $|a'_{n+1}|$ from $x_{n+1}$ to $x_n$. If $n$ is even, there is a horizontal arrow of length $|a'_{n+1}|$ from $x_n$ to $x_{n+1}$.
	\item There are no other horizontal or vertical arrows to or from $x_0, x_1, \ldots, x_n$.
	\item If $n$ is odd, then there are no other vertical arrows to or from $x_{n+1}$. If $n$ is even, there are no other horizontal arrows to or from $x_{n+1}$.
\end{enumerate}
\end{lemma}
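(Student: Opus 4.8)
The plan is to bootstrap from Proposition \ref{prop:basis} and then re-run, with only cosmetic changes, the change-of-basis arguments used to prove Lemmas \ref{lem:basissub} and \ref{lem:arrowremoval}. I will describe the case $n$ even; the case $n$ odd is completely analogous after interchanging the two filtrations and first applying the lemma that lets us assume $\partial^\vert x_n = 0$, so that $x_n$, being a vertical cycle representing $0$ in the rank-one vertical homology, lies in the image of $\partial^\vert$ and hence carries an incoming vertical arrow whose length is exactly what $a'_{n+1}$ records.

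First I would apply Proposition \ref{prop:basis} to produce a basis $\{x_i\}$ for $\CFKi(K)$ with a subcollection $x_0, \ldots, x_n$ realizing horizontal arrows of length $a_i$ ($i$ odd) and vertical arrows of length $a_i$ ($i$ even), with no other vertical or horizontal arrows touching $x_0, \ldots, x_{n-1}$ and no other vertical arrows touching $x_n$. Since $a_{n+1}$ is undefined, $x_n$ is not in the image of $\partial^\horz$ on $C\{S(a_1, \ldots, a_n)\}$; and since $a'_{n+1}$ is well-defined, $\partial^\horz x_n \neq 0$. The remaining work is to normalize the horizontal arrows at $x_n$ into a single outgoing arrow of length $|a'_{n+1}|$, with nothing else.

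Next I would exploit the fact that $a'_{n+1}$ is defined as a minimum. Choose an element $y$ at the filtration level of $x_n$ with $\iota_*[y] = [x_0]$ achieving the minimum in that definition; since both $x_n$ and $y$ represent the generator of $H_*(C\{S(a_1, \ldots, a_n)\})$, the difference $y - x_n$ is a boundary there, so I would replace $x_n$ by $y$ and adjust $x_{n-1}$ accordingly so that $\partial^\vert x_{n-1} = x_n$ is preserved, decomposing the correcting chain into pieces of the appropriate filtration levels exactly as in the proof of Lemma \ref{lem:basissub}. After this, the highest-$w$ summand of $\partial^\horz x_n$ sits at filtration level $(w(x_n) - |a'_{n+1}|, A(x_n))$; I would call it $x_{n+1}$ (absorbing the remaining lower summands of $\partial^\horz x_n$ into it after an auxiliary change of basis, or carrying them along as permissible lower-filtration terms), so that $x_0, \ldots, x_{n+1}$ is a partial basis which I extend to a basis of $\CFKi(K)$.

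Finally I would run the moves from the proof of Lemma \ref{lem:arrowremoval} to delete every horizontal arrow incident to $x_{n+1}$ other than the one from $x_n$, and every horizontal arrow incident to $x_n$ other than the one to $x_{n+1}$: incoming horizontal arrows are cleared using that $x_n$ is not in the image of $\partial^\horz$ (in place of the non-triviality of $H_{a_1, \ldots, a_n}$ used there), and the arrows at $x_{n+1}$ are cleared by shifting the offending generators by $x_n$ or $x_{n+1}$ according to their filtration level. Each such move replaces a generator by itself plus terms of no greater $w$-filtration at the same Alexander level, hence creates no vertical arrows and leaves the arrows among $x_0, \ldots, x_{n-1}$ untouched. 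The main obstacle is the bookkeeping in this last step — verifying that clearing the unwanted arrows at $x_{n+1}$ does not reintroduce arrows at $x_n$ or at earlier $x_i$ — which is precisely why the moves must be performed in order of decreasing filtration length, exactly as in Lemma \ref{lem:arrowremoval}.
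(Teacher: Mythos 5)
Your proposal follows essentially the same route as the paper's proof: realize the minimizing chain from the definition of $a'_{n+1}$, build it into the locally simplified basis of Proposition \ref{prop:basis}, designate $x_{n+1}$, and then delete the unwanted arrows by the filtered change-of-basis moves of Lemmas \ref{lem:basissub} and \ref{lem:arrowremoval}, using the minimality of $a'_{n+1}$ (rather than non-triviality of $H_{a_1,\ldots,a_n}$) to supply an auxiliary arrow when an offending generator sits in too low a filtration level, and finishing with $\partial^2=0$ considerations. The only substantive difference is which parity you treat explicitly: you work out $n$ even and defer $n$ odd, while the paper proves $n$ odd and leaves $n$ even to the reader, with the same sort of bookkeeping elided at the corresponding points.
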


\begin{proof}
We first consider the case where $n$ is odd. Let $a'_{n+1}=a'_{n+1}(K)$. Let $\bfy$ be a chain such that $A(\bfy) - A(\partial^\vert \bfy) = |a'_{n+1}|$ and $\delta ([\partial^\vert \bfy] ) = [x_0] \in H_*(C\{S(a_1, \ldots, a_n-1) \})$. Apply Proposition \ref{prop:basis}, letting $x_n=\partial^\vert \bfy$, to obtain a locally simplified basis. 

Let $y$ be a basis element appearing in $\bfy$ with maximal Alexander grading. (Note that $\bfy$ is in general a linear combination of basis elements.) We replace $y$ with $\bfy$, and denote this new basis element by $x_{n+1}$. There is now a vertical arrow from $x_{n+1}$ to $x_n$. 

%We begin with the basis given by Proposition \ref{prop:basis} using $\ba^+(K)$. Let $a'_{n+1}=a'_{n+1}(K)$. Then there is some class $\bfy$ with $A(\bfy)=A(x_n)+|a'_{n+1}|$ and $[\partial^\vert \bfy] = [x_n] \in C\{ i=a^n_\textup{odd}, j=\tau - a^{n-1}_\textup{even} \}$, i.e., $\partial^\vert \bfy$ equals the sum of $x_n$ and terms of lower vertical filtration. Note that $\bfy$ is in general a linear combination of basis elements. We replace $x_n$ with $\partial^\vert \bfy$, and the resulting basis still satisfies the conclusion of Proposition \ref{prop:basis}.

%Let $y$ be a basis element appearing in $\bfy$ with maximal Alexander grading. We replace $y$ with $\bfy$, and denote this new basis element by $x_{n+1}$. There is now a vertical arrow from $x_{n+1}$ to $x_n$. 

We now proceed to remove all other vertical arrows into $x_n$. Suppose there is an incoming arrow from a basis element $z$ to $x_n$. There are two possible cases:
\begin{enumerate}
	\item If $A(z) \geq A(x_{n+1})$, then we replace $z$ with $z+x_{n+1}$. This is a filtered change of basis, and removes the arrow from $z$ to $x_n$.
	\item If $A(z) < A(x_{n+1})$, then there must be another vertical arrow from $z$ to some element $w$. This is because $x_n$ is not in the image of any elements with Alexander grading less than that of $y$. Moreover, $A(w) \geq A(x_{n})$, since otherwise $[\partial^\vert z] = [x_n] \in C\{ i=a^n_\textup{odd}, j=\tau - a^{n-1}_\textup{even} \}$. Thus, we may replace $w$ with $w+x_n$, which removes the vertical arrow from $z$ to $x_n$.
\end{enumerate}
In this manner, we may remove all other incoming vertical arrows to $x_n$.

Since $\partial^\vert x_{n+1}=x_n$, it follows from $\partial^\vert \circ \partial^\vert=0$ that there are no outgoing vertical arrows from $x_n$. Furthermore, since there are no other vertical arrows to $x_n$, it again follows from $\partial^\vert \circ \partial^\vert=0$ that there are no incoming arrows to $x_{n+1}$. This completes the proof of the lemma for $n$ odd.

The case for $n$ even follows similarly and is left to the reader.
\end{proof}

We now show that in certain special cases, the values of $a'_{n+1}$ are limited by the terms in the sequence $\ba^+(K)$. 

\begin{lemma}
\label{lem:a1=1}
If $a_1=1$, then $a_2$ is defined.
\end{lemma}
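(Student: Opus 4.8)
The plan is to argue by contradiction: suppose $a_1 = 1$ but $a_2$ is not defined, and derive a contradiction with $a_1 = 1$ (which forces $\varep(K) = 1$ and $a_1 = \min\{s \mid H_s \text{ trivial}\} = 1$). When $a_1=1$, by Proposition \ref{prop:basis} there is a locally simplified basis with $x_0$ the vertically distinguished element (no vertical arrows in or out), a horizontal arrow of length $1$ from $x_1$ to $x_0$, and no other horizontal arrows to or from $x_0$. If $a_2$ were undefined, then the map $H_{a_1, s} = H_{1, s}$ is trivial for all $s$; combined with the fact that $a'_2$ may or may not be defined, this tells us that $x_0$ is nullhomologous in $C\{S(1, s)\}$ for every $s \geq 0$. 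So there is a chain $\bfx$ whose boundary in $C\{S(1,s)\}$ equals $x_0$.

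First I would analyze where such a chain $\bfx$ can live. Since $S(1,s) = \{\min(i, j-\tau) = 0, i \leq 1\} \cup \{i=1, \tau-s \leq j < \tau\}$, the complex $C\{S(1,s)\}$ consists of the column $i=0$ above height $\tau$, the row $j = \tau$ for $0 \leq i \leq 1$, and the short vertical segment at $i=1$ going down to $j = \tau - s$. The element $x_0$ sits at $(0, \tau)$. Any chain $\bfx$ with $\partial \bfx = x_0$ in this complex must have a component at a filtration level that maps to $(0,\tau)$; I would trace through the possible arrows (vertical arrows down the $i=0$ column, horizontal arrows along $j = \tau$, and diagonal arrows) to see that $\bfx$ must involve either an element strictly above $(0,\tau)$ in the $i=0$ column — contradicting $a_1 = 1$, since that would make $H_1$ non-trivial or force $H_s$ for some $s<1$ to already be trivial — or an element in the row/segment region. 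The key point is that $x_0$ generates the vertical homology, so $\bfx$ cannot consist purely of elements in the $i=0$ column (this would make $[x_0]$ vanish in $\widehat{HF}(S^3)$, impossible); hence $\bfx$ must use the horizontal arrow from $x_1$ or the vertical segment at $i=1$.

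Then I would push the argument to a contradiction by examining the $i=1$ column. The chain $\bfx$, restricted appropriately, must hit $x_1$ (the unique element with a horizontal arrow to $x_0$, since there are no other incoming horizontal arrows to $x_0$), so $x_1$ appears in $\bfx$. But $x_1$ also has a vertical arrow $\partial^\vert x_1 = $ (something at $i=1$, lower $j$), because $a_1 = 1$ with $\varep = 1$ means $x_1$ is part of a vertically simplified pair, and in fact $\partial^\vert x_1$ must be accounted for within $\partial \bfx$. Tracking $\partial^\vert x_1$ through $C\{S(1,s)\}$ for $s$ large enough to contain it, I expect to find that $\partial^\vert x_1$ cannot be cancelled by anything else in $\bfx$ without introducing a new element in the $i=0$ column above $\tau$ (contradicting $a_1=1$) or forcing $\partial \bfx \neq x_0$. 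This yields the contradiction, so $a_2$ must be defined. The main obstacle will be the careful bookkeeping in this last step: controlling all the auxiliary basis elements and diagonal arrows in $C\{S(1,s)\}$ that could in principle help cancel $\partial^\vert x_1$, and showing none of them do without violating $a_1 = 1$. I would handle this by working with the reduced, locally simplified basis from Proposition \ref{prop:basis} and filtering the argument by Alexander grading, exactly as in the proofs of Lemmas \ref{lem:basissub} and \ref{lem:arrowremoval}.
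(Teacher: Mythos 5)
Your proposal takes a genuinely different route from the paper and, as written, contains a gap that would be hard to close along the lines you sketch. The paper's argument is short: assuming $a_2$ is undefined, it first observes that $a'_2$ \emph{must} be defined. This follows from the rank-one condition on vertical homology: after applying the lemma preceding Lemma \ref{lem:basis}, one may assume $\partial^\vert x_1 = 0$, and since $[x_0]$ already generates the vertical homology, the class $[x_1]$ must vanish, so $x_1$ is in the \emph{image} of $\partial^\vert$, i.e., there is a vertical arrow $x_2 \to x_1$. Now Lemma \ref{lem:basis} gives a locally simplified basis with $x_2 \to x_1$ vertical and $x_1 \to x_0$ horizontal of length one, and the contradiction is immediate from $\partial^2 = 0$: the term $x_0$ appears in $\partial^2 x_2$ via $x_2 \to x_1 \to x_0$, and the only other way to produce $x_0$ would be a composite of two \emph{diagonal} arrows $x_2 \to y \to x_0$, which is impossible because the horizontal gap between $x_2$ (at $i=1$) and $x_0$ (at $i=0$) is exactly $a_1=1$, leaving no room for a strictly intermediate $y$. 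No chains $\bfx$, no unbounded-$s$ limiting argument, no cancellation bookkeeping.

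Your proposal has two concrete problems. First, you assert that $x_1$ has a nonzero outgoing vertical arrow $\partial^\vert x_1$, and hang the contradiction on the claim that this term cannot be cancelled. But the lemma immediately preceding Lemma \ref{lem:basis} in the paper shows that when $a_{n+1}$ is undefined one may always change basis so that $\partial^\vert x_n = 0$ (here $n=1$), so this term \emph{can} be absorbed, and the contradiction you are looking for is not there. Second, and more fundamentally, you never locate the obstruction that actually forces a contradiction: it is the \emph{incoming} vertical arrow to $x_1$ (the existence of $x_2$, equivalently the fact that $a'_2$ is defined) together with $\partial^2 x_2 = 0$ that clashes with $a_1=1$. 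You acknowledge the ``careful bookkeeping'' of diagonal arrows as the main obstacle, and that is exactly where the argument would stall: without passing to $a'_2$ and the locally simplified basis, controlling all possible cancellations in $C\{S(1,s)\}$ as $s$ grows requires the same structural input the paper extracts more cleanly via $\partial^2=0$.
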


\begin{proof}
Suppose, for contradiction, that $a_2$ is not defined. Then $a'_2$ must be defined. Consider the basis as in the conclusion of Lemma \ref{lem:basis}, i.e., there is a horizontal arrow of length one from $x_1$ to $x_0$, a vertical arrow of length $|a'_2|$ from $x_2$ to $x_1$, and no other vertical or horizontal arrows to $x_0$ or $x_1$. Furthermore, there are no other vertical arrows from $x_2$. Then $x_0$ appears exactly in $\partial^2 x_2$, since the only other way for $x_0$ to appear in $\partial^2 x_2$ would be for there to be a sequence of two diagonal arrows, first from $x_2$ to some element $y$ and then from $y$ to $x_0$. But this is impossible, since $a_1=1$. See Figure \ref{fig:a1=1}.
\end{proof}

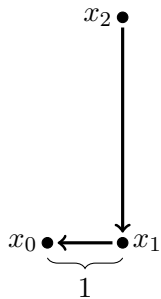
\begin{figure}[htb!]
\vspace{5pt}
\labellist
\small \hair 2pt
\endlabellist
\centering
\begin{tikzpicture}
	\filldraw (0, 0) circle (2pt) node[] (x_0){};
	\filldraw (1, 0) circle (2pt) node[] (x_1){};
	\filldraw (1, 3) circle (2pt) node[] (x_2){};
	\draw [very thick, <-] (x_0) -- (x_1);
	\draw [very thick, <-] (x_1) -- (x_2);
	\node [left] at (x_0) {$x_0$};
	\node [right] at (x_1) {$x_1$};
	\node [left] at (x_2) {$x_2$};
	\draw [decorate,decoration={brace,amplitude=4pt,mirror}] (0, -0.2) -- (1, -0.2) node [midway, below, yshift=-4pt] {$1$};
\end{tikzpicture}
\caption{The basis used in the proof of Lemma \ref{lem:a1=1}.}
\label{fig:a1=1}
\end{figure}

We write $(1, n)^k$ to denote
\[ (\overbrace{1, n, \ldots, 1, n}^{k}). \]

\begin{lemma}
\label{lem:sequences}
Let $k$, $\ell$, and $n$ be positive integers. If $\ba^+(K)=((1, n)^k, 1)$ and $a'_{2k+2}(K)=a'_{2k+2}$, then
\[ a'_{2k+2}< -n. \]
Similarly, if $\ba^+(K)=((1, n)^k, (n, 1)^\ell)$and $a'_{2k+2\ell+1}(K)=a'_{2k+2\ell+1}$, then
\[ a'_{2k+2\ell+1} < -n. \]
\end{lemma}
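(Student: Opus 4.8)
The plan is to feed each hypothesis into Lemma \ref{lem:basis}, record the positions of the distinguished basis elements in the $(w,A)$-plane, and then extract the inequality from $\partial\circ\partial=0$ by identifying a basis element that is forced to cancel inside a square. Throughout I normalize the chosen representative so that $x_0$ sits in filtration level $(0,\tau)$, which determines the positions of all the other distinguished elements from the prescribed arrow lengths.

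\textbf{First case.} Here $\ba^+(K)=((1,n)^k,1)$ has odd length $m=2k+1$ and $a'_{m+1}=a'_{2k+2}$ is well-defined, so Lemma \ref{lem:basis} supplies a locally simplified basis with distinguished elements $x_0,\dots,x_{2k+2}$; the prescribed lengths put $x_{2j}$ at $(j,\tau-jn)$ for $0\le j\le k$, put $x_{2k+1}$ at $(k+1,\tau-kn)$, put $x_{2k-1}$ at $(k,\tau-kn+n)$, and put $x_{2k+2}$ at $(k+1,\tau-kn+|a'_{2k+2}|)$. I would analyze $\partial^2 x_{2k+2}=0$. By conclusion (5) of Lemma \ref{lem:basis} the only vertical arrow out of $x_{2k+2}$ goes to $x_{2k+1}$, so $\partial x_{2k+2}=x_{2k+1}+\sum_\alpha z_\alpha$ with each $z_\alpha$ reached by a horizontal or diagonal arrow and hence $w(z_\alpha)\le k$. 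Since $\partial^\vert x_{2k+1}=0$ and the terms of $\partial x_{2k+1}$ other than $x_{2k}$ lie strictly below the line $A=\tau-kn$, the element $x_{2k}$ must be hit by some $\partial z_\alpha$; then $x_{2k}<z_\alpha$ forces $w(z_\alpha)\ge k$, so $w(z_\alpha)=k$ and the arrow $z_\alpha\to x_{2k}$ is vertical. By conclusion (4) the unique vertical arrow into $x_{2k}$ comes from $x_{2k-1}$, so $z_\alpha=x_{2k-1}$. Finally the arrow $x_{2k+2}\to x_{2k-1}$, which must exist, is forbidden from being vertical (conclusion (5)) or horizontal (conclusion (4) forbids horizontal arrows into $x_{2k-1}$), hence is diagonal; therefore $A(x_{2k-1})<A(x_{2k+2})$, i.e. $\tau-kn+n<\tau-kn+|a'_{2k+2}|$, which is $a'_{2k+2}<-n$.

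\textbf{Second case.} Here $\ba^+(K)=((1,n)^k,(n,1)^\ell)$ has even length $m=2k+2\ell$ and $a'_{m+1}=a'_{2k+2\ell+1}$ is well-defined, so Lemma \ref{lem:basis} gives a basis $x_0,\dots,x_{m+1}$ with a horizontal arrow of length $|a'_{m+1}|$ from $x_m$ to $x_{m+1}$. The arrow lengths put $x_m$ at $(k+\ell n,\tau-kn-\ell)$, $x_{m-1}$ at $(k+\ell n,\tau-kn-\ell+1)$, $x_{m-2}$ at $(k+(\ell-1)n,\tau-kn-\ell+1)$, and $x_{m+1}$ at $(k+\ell n-|a'_{m+1}|,\tau-kn-\ell)$. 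Now I would analyze $\partial^2 x_{m-1}=0$. Conclusion (4) gives $\partial x_{m-1}=x_{m-2}+x_m+\sum_\beta z_\beta$ with the $z_\beta$ diagonal, and $\partial x_m=x_{m+1}+(\text{diagonal terms})$, so $x_{m+1}$ appears in $\partial^2 x_{m-1}$ through the $x_m$ summand and must cancel. It cannot cancel inside any $\partial z_\beta$: such a $z_\beta$ has $A(z_\beta)\le\tau-kn-\ell=A(x_{m+1})$, so $x_{m+1}<z_\beta$ would force $A(z_\beta)=A(x_{m+1})$ and make $z_\beta\to x_{m+1}$ horizontal, contradicting conclusion (5). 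Hence $x_{m+1}$ must appear in $\partial x_{m-2}$; but conclusion (4) forbids horizontal and vertical arrows out of $x_{m-2}$, so the arrow $x_{m-2}\to x_{m+1}$ is diagonal, forcing $w(x_{m+1})<w(x_{m-2})$, i.e. $k+\ell n-|a'_{m+1}|<k+(\ell-1)n$, which is $a'_{2k+2\ell+1}<-n$.

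\textbf{Main obstacle.} The delicate point in both cases is the bookkeeping of diagonal arrows: conclusions (4) and (5) of Lemma \ref{lem:basis} restrict only the horizontal and vertical arrows among the distinguished elements, so I must verify that the term forced to cancel in the square ($x_{2k}$ in the first case, $x_{m+1}$ in the second) can only be reached through an arrow whose source and direction are already pinned down. This is exactly where the ``$(1,n)$-periodicity'' is used: the unit-length arrows make the relevant lattice points adjacent and rule out any competing diagonal cancellation, so the cancellation requirement of $\partial^2=0$ has no choice but to create the long diagonal arrow whose length yields the bound. I would also check the base cases $k=1$ and $\ell=1$, but these are already subsumed in the coordinate computations above.
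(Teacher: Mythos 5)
Your proof is correct and follows essentially the same route as the paper's: apply $\partial^2=0$ to the relevant distinguished element of the locally simplified basis from Lemma \ref{lem:basis}, use conclusions (4)--(5) to pin down the only possible cancelling arrow, and read off $|a'|>n$ from the fact that this arrow must be diagonal. The only differences are presentational: you argue directly (forcing the diagonal arrow to exist) where the paper argues by contradiction on the range $-n\le a'<0$, and you write out the second, even-length case that the paper leaves to the reader.
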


\begin{proof}
We first consider the case $\ba^+(K)=((1, n)^k, 1)$ and $a'_{2k+2}(K)=a'_{2k+2}$. We may assume that we have a basis that satisfies the conclusion of Lemma \ref{lem:basis}.

We proceed by contradiction. Assume that $-n \leq a'_{2k+2} < 0$. See Figure \ref{subfig:contradictiona}. Then 
\[ \partial^\vert x_{2k+2} = x_{2k+1} \quad \textup{and} \quad \partial^\horz x_{2k+1}= x_{2k}. \]
Since $\partial^2=0$, we need to find another appearance of $x_{2k}$ in $\partial^2 x_{2k+2}$ to cancel with the term above. The only horizontal arrow to $x_{2k}$ is from $x_{2k+1}$, and the only vertical arrow to $x_{2k}$ is from $x_{2k-1}$. However, by assumption $A(x_{2k-1})> A(x_{2k+2})$, so there cannot be an arrow from $x_{2k+2}$ to $x_{2k-1}$.

The only remaining possibility is for there to be a diagonal arrow to $x_{2k}$ from some element $y$. But in order for this to contribute to $\partial^2 x_{2k+2}$, the element $y$ must be in the same column as $x_{2k+2}$, since $a_{2k+1}=1$. But the only outgoing vertical arrow from $x_{2k+2}$ is to $x_{2k+1}$.

Thus, if $-n \leq a'_{2k+2} < 0$, it is impossible for $\partial^2$ to equal zero, completing the proof of the first statement of the lemma.

The second statement follows similarly and is left to the reader. See Figure \ref{subfig:contradictionb}.
\end{proof}

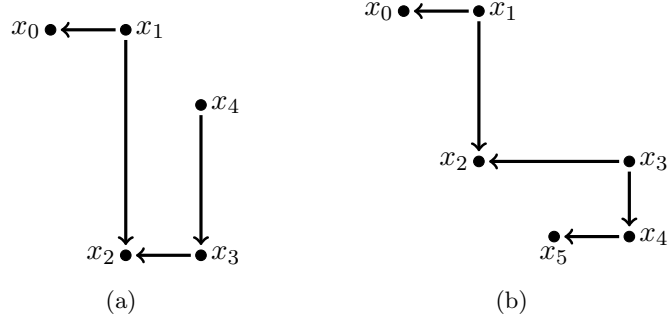
\begin{figure}[htb!]
\vspace{5pt}
\labellist
\small \hair 2pt
\endlabellist
\centering
\subfigure[]{
\begin{tikzpicture}
	\filldraw (0, 4) circle (2pt) node[] (x_0){};
	\filldraw (1, 4) circle (2pt) node[] (x_1){};
	\filldraw (1, 1) circle (2pt) node[] (x_2){};
	\filldraw (2, 1) circle (2pt) node[] (x_3){};
	\filldraw (2, 3) circle (2pt) node[] (x_4){};
	\draw [very thick, <-] (x_0) -- (x_1);
	\draw [very thick, <-] (x_2) -- (x_1);
	\draw [very thick, <-] (x_2) -- (x_3);
	\draw [very thick, <-] (x_3) -- (x_4);
	\node [left] at (x_0) {$x_0$};
	\node [right] at (x_1) {$x_1$};
	\node [left] at (x_2) {$x_2$};
	\node [right] at (x_3) {$x_3$};
	\node [right] at (x_4) {$x_4$};
\end{tikzpicture}
\label{subfig:contradictiona}
}
\hspace{25pt}
\subfigure[]{
\begin{tikzpicture}
	\filldraw (0, 3) circle (2pt) node[] (x_0){};
	\filldraw (1, 3) circle (2pt) node[] (x_1){};
	\filldraw (1, 1) circle (2pt) node[] (x_2){};
	\filldraw (3, 1) circle (2pt) node[] (x_3){};
	\filldraw (3, 0) circle (2pt) node[] (x_4){};
	\filldraw (2, 0) circle (2pt) node[] (x_5){};
	\draw [very thick, <-] (x_0) -- (x_1);
	\draw [very thick, <-] (x_2) -- (x_1);
	\draw [very thick, <-] (x_2) -- (x_3);
	\draw [very thick, <-] (x_4) -- (x_3);
	\draw [very thick, <-] (x_5) -- (x_4);
	\node [left] at (x_0) {$x_0$};
	\node [right] at (x_1) {$x_1$};
	\node [left] at (x_2) {$x_2$};
	\node [right] at (x_3) {$x_3$};
	\node [right] at (x_4) {$x_4$};
	\node [below] at (x_5) {$x_5$};
\end{tikzpicture}
\label{subfig:contradictionb}
}
\caption{Inadmissible complexes from the proof of Lemma \ref{lem:sequences}.}
\label{}
\end{figure}

These invariants are almost sufficient for the purpose of this paper. In the case where $a'_{n+1} = -1$, we must continue one step further. If $\ba^+(K)=(a_1, \ldots, a_n)$, $a'_{n+1}(K)=-1$, and $n$ is odd, define
\[ a'_{n+2} = -\min \big\{ w(y) -w(\partial^\horz y) \mid y \in Y \big\}, \]
where
\[ Y = \big\{ y \mid A(y)-A(\partial^\vert y)=|a'_{n+1}|,\ \delta ( [\partial^\vert y] ) = [x_0] \in H_*(C(S\{a_1 , \ldots, a_n-1)\}) \big\}. \]
In effect, this measures the length of the outgoing horizontal arrow from $x_{n+1}$. If $\ba^+(K)=(a_1, \ldots, a_n)$, $a'_{n+1}(K)=-1$, and $n$ is even, define
\[ a'_{n+2}(K) = -\min \big\{ A(z) - A(\partial^\vert z) \mid \partial^\vert z = \partial^\horz y, y \in Y \big\}, \]
where
\[ Y= \big\{ y \mid w(y)-w(\partial^\horz y) = |a'_{n+1}|, \ \iota_*([y]) = [x_0] \in H_*(C\{S(a_1, \ldots, a_n) \} )  \big\}. \]
In this case, $a'_{n+2}$ measures the length of the incoming vertical arrow to $x_{n+1}$.

In this situation, we again have a lemma about a related basis:

\begin{lemma}
\label{lem:basis2}
If $\ba^+(K)=(a_1, \ldots, a_n)$, $a'_{n+1}(K)=-1$, and $a'_{n+2}(K)=a'_{n+2}$, then there exists a basis $\{x_i\}$ over $\F[U, U^{-1}]$ for $\CFKi(K)$ with basis elements $x_0, \ldots, x_{n+2}$ such that
\begin{enumerate}
	\item There is a horizontal arrow of length $a_i$ from $x_i$ to $x_{i-1}$ from $i$ odd, $1\leq i \leq n$.
	\item There is a vertical arrow of length $a_i$ from $x_{i-1}$ to $x_{i}$ for $i$ even, $2 \leq i \leq n$.
	%\item There is a horizontal arrow of length $a_{2i-1}$ from $x_{2i-1}$ to $x_{2i-2}$ for $i=1, \ldots, \lceil \frac{n}{2} \rceil$.
	%\item There is a vertical arrow of length $a_{2i}$ from $x_{2i-1}$ to $x_{2i}$ for $i=1, \ldots, \lfloor \frac{n}{2} \rfloor$.
	\item If $n$ is odd, there is a vertical arrow of length $|a'_{n+1}|$ from $x_{n+1}$ to $x_n$. If $n$ is even, there is a horizontal arrow of length $|a'_{n+1}|$ from $x_n$ to $x_{n+1}$.
	\item If $n$ is odd, there is a horizontal arrow of length $|a'_{n+2}|$ from $x_{n+1}$ to $x_{n+2}$. If $n$ is even, there is a vertical arrow of length $|a'_{n+2}|$ from $x_{n+2}$ to $x_{n+1}$.
	\item There are no other horizontal or vertical arrows to or from $x_0, x_1, \ldots, x_{n+1}$.
	\item If $n$ is odd, then there are no other horizontal arrows to or from $x_{n+2}$. If $n$ is even, there are no other vertical arrows to or from $x_{n+2}$.
\end{enumerate}
\end{lemma}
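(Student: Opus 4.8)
The plan is to bootstrap from Lemma~\ref{lem:basis} by one further round of changes of basis: we introduce $x_{n+2}$ and isolate the arrow of length $|a'_{n+2}|$. I describe the case $n$ odd; the case $n$ even is entirely analogous, with the two filtrations (and $\partial^\horz$, $\partial^\vert$) interchanged and with the minimality reinterpreted through the inclusion $\iota_*$ and the connecting-homomorphism constraint appearing in the definition of $a'_{n+2}$ for $n$ even.

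First I would fix a chain $\bfy$ realizing the minimum defining $a'_{n+2}$, so $\bfy\in Y$ and $w(\bfy)-w(\partial^\horz\bfy)=|a'_{n+2}|$. Membership in $Y$ forces $A(\bfy)-A(\partial^\vert\bfy)=|a'_{n+1}|$ and $\delta([\partial^\vert\bfy])=[x_0]$, so $\bfy$ is a legitimate input to the proof of Lemma~\ref{lem:basis}. Applying that lemma with $x_n:=\partial^\vert\bfy$ and $x_{n+1}:=\bfy$ (the latter replacing a basis element of maximal Alexander grading occurring in $\bfy$) produces a locally simplified basis $\{x_i\}$ satisfying conclusions (1)--(2) for $x_0,\dots,x_n$, conclusion (3), with no other horizontal or vertical arrows to or from $x_0,\dots,x_n$, and no other vertical arrows to or from $x_{n+1}$.

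Next I would replace a basis element of maximal $w$-grading occurring in $\partial^\horz\bfy$ by the chain $\partial^\horz\bfy$ itself, calling the new basis element $x_{n+2}$; as in the proof of Lemma~\ref{lem:basis}, this is a filtered change of basis, and now $\partial^\horz x_{n+1}=x_{n+2}$ exactly, which is conclusion (4) and also shows that $x_{n+1}$ has no other outgoing horizontal arrow. It then remains only to remove every horizontal arrow into $x_{n+2}$ coming from an element other than $x_{n+1}$. This is carried out exactly as in the proof of Lemma~\ref{lem:arrowremoval}: an unwanted source $z$ of $w$-grading at least that of $x_{n+1}$ is handled by the filtered change of basis $z\mapsto z+x_{n+1}$, while a source $z$ of strictly smaller $w$-grading is handled as in case~2 of that proof, using that $z$ must carry a further outgoing arrow because, by the minimality built into the definition of $a'_{n+2}$, the element $x_{n+2}$ is not the boundary of any element realizing a strictly smaller $w$-drop; column and row considerations ensure these corrections leave the arrows already arranged among $x_0,\dots,x_{n+1}$ untouched. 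Finally, once $x_{n+2}$ has $\partial^\horz x_{n+1}=x_{n+2}$ as its unique incoming horizontal arrow, the relation $\partial^\horz\circ\partial^\horz=0$ forces $\partial^\horz x_{n+2}=0$ and also forces the absence of any horizontal arrow into $x_{n+1}$; these are conclusions (5) and (6), and since the vertical arrows of $x_{n+2}$ are unconstrained, nothing further is needed.

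I expect the main obstacle to be the bookkeeping in this cleanup step. One must order the changes of basis so that eliminating one horizontal arrow never recreates an arrow already removed and never perturbs the arrows among $x_0,\dots,x_{n+1}$ established by Lemma~\ref{lem:basis} --- which is why every correction term is a multiple of one of the low-grading elements $x_{n+1}$, $x_{n+2}$ --- and, for the case of an unwanted source in the extreme column, one must translate the minimality in the definitions of $a'_{n+1}$ and $a'_{n+2}$ into a precise statement about which cycles are, or are not, boundaries in the appropriate subquotient complex, exactly as the non-triviality of $H_{a_1,\dots,a_n}$ is used in the last case of the proof of Lemma~\ref{lem:arrowremoval}. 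Everything else is a routine repetition of the arguments already given for Lemmas~\ref{lem:basissub}, \ref{lem:arrowremoval}, and \ref{lem:basis}.
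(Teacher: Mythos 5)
Your proposal is correct and follows essentially the same route as the paper, whose proof of this lemma is simply the remark that it follows as in Lemma \ref{lem:basis}: you apply Lemma \ref{lem:basis} to a chain realizing the minimum defining $a'_{n+2}$, perform one further filtered change of basis to create $x_{n+2}$, and clean up unwanted arrows exactly as in Lemma \ref{lem:arrowremoval}, with $\partial^\horz\circ\partial^\horz=0$ giving the final conclusions. The bookkeeping caveats you flag (ordering the substitutions and using compensating replacements as in case 2(b) of Lemma \ref{lem:arrowremoval} so as not to disturb the arrows among $x_0,\ldots,x_{n+1}$) are precisely the details the paper leaves implicit.
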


\begin{proof}
The result follows as in the proof of Lemma \ref{lem:basis}.
\end{proof}

\noindent We again have certain restrictions on the possible values for $a'_{n+1}$ and $a'_{n+2}$.

\begin{lemma}
\label{lem:sequences2}
Suppose $\ba^+(\CFKi(K))=((1, n)^k)$ and $a'_{2k+1}=-1$. Then $a'_{2k+2}(K)=a'_{2k+2}$ is well-defined, and satisfies
\[ 0< |a'_{2k+2}| < n. \]
%Similarly, suppose $\ba^+(\CFKi(K))=((1, n)^k, (n, 1)^\ell, n)$ and $a'_{2k+2\ell+2}=-1$. Then $a'_{2k+2\ell+3}(K)=a'_{2k+2\ell+3}$ is well-defined and satisfies
%\[ 0< |a'_{2k+2\ell+3}| < n. \]
\end{lemma}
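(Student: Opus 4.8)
The plan is to work with the basis $\{x_i\}$ for $\CFKi(K)$ furnished by Lemma~\ref{lem:basis}, applied to the sequence $\ba^+(K)=(1,n)^k$ (which has even length $2k$) with $a'_{2k+1}=-1$. This basis has a subcollection $x_0,x_1,\dots,x_{2k+1}$ with horizontal arrows of length $1$ from $x_{2j-1}$ to $x_{2j-2}$ and vertical arrows of length $n$ from $x_{2j-1}$ to $x_{2j}$ for $1\le j\le k$, a horizontal arrow of length $1=|a'_{2k+1}|$ from $x_{2k}$ to $x_{2k+1}$, no other horizontal or vertical arrows to or from $x_0,\dots,x_{2k}$, and no other horizontal arrows to or from $x_{2k+1}$. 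The first step is to record the filtration levels forced by these arrow lengths: $x_{2j-1}$ lies at $(j,\tau-(j-1)n)$ and $x_{2j}$ at $(j,\tau-jn)$, so in particular $x_{2k}$ is at $(k,\tau-kn)$ and $x_{2k+1}$ is at $(k-1,\tau-kn)$. From these arrows one also reads off $\partial x_{2k-2}=D_{2k-2}$, purely diagonal and supported in columns $\le k-2$ (the element $x_{2k-2}$ has no outgoing horizontal or vertical arrow); $\partial x_{2k}=x_{2k+1}+D_{2k}$ with $D_{2k}$ diagonal, supported in columns $\le k-1$ and Alexander grading $\le \tau-kn-1$; and $\partial x_{2k-1}=x_{2k-2}+x_{2k}+D_{2k-1}$ with $D_{2k-1}$ diagonal, supported in columns $\le k-1$ and Alexander grading $\le \tau-(k-1)n-1$.

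The core of the argument is to extract a short incoming vertical arrow to $x_{2k+1}$ from the identity $\partial^2x_{2k-1}=0$. Expanding, $0=\partial(x_{2k-2}+x_{2k}+D_{2k-1})=D_{2k-2}+x_{2k+1}+D_{2k}+\partial D_{2k-1}$, so $x_{2k+1}=D_{2k-2}+D_{2k}+\partial D_{2k-1}$ over $\F$. Projecting onto the span of basis elements at filtration level $(k-1,\tau-kn)$, the terms $D_{2k-2}$ (wrong column) and $D_{2k}$ (Alexander grading too low) contribute nothing, so $x_{2k+1}$ occurs in $\partial D_{2k-1}$. Hence some basis element $d$ appearing in the chain $D_{2k-1}$ has $x_{2k+1}$ in $\partial d$. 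Since $\partial$ preserves or lowers the column and $D_{2k-1}$ lives in columns $\le k-1$, the element $d$ lies in column $k-1$, so the arrow $d\to x_{2k+1}$ is vertical; and since $d$ is a diagonal term of $\partial x_{2k-1}$ we have $A(d)\le \tau-(k-1)n-1$, while reducedness forces $A(d)>A(x_{2k+1})=\tau-kn$. Therefore this vertical arrow into $x_{2k+1}$ has length in $\{1,\dots,n-1\}$.

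This yields both assertions. The existence of $d$ shows that $x_{2k+1}$ receives an incoming vertical arrow, and $x_{2k}$ is a legitimate choice of $y$ in the set $Y$ of the definition of $a'_{2k+2}$ (the condition $\iota_*([x_{2k}])=[x_0]$ being precisely the content of $a'_{2k+1}=-1$ as produced by Lemma~\ref{lem:basis}); thus, after the routine change-of-basis cleanup in the style of Lemma~\ref{lem:arrowremoval} and the proof of Lemma~\ref{lem:basis2} — producing an honest basis element $x_{2k+2}$ with $\partial^\vert x_{2k+2}=x_{2k+1}$ and no other arrows at $x_{2k+2}$ — the invariant $a'_{2k+2}(K)$ is well-defined. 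Since $a'_{2k+2}$ is, up to sign, the minimum over admissible representatives of the length of such an arrow, the arrow coming from $d$ gives $|a'_{2k+2}|\le n-1<n$, while reducedness gives $0<|a'_{2k+2}|$.

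I expect the main obstacle to be the last-paragraph bookkeeping: upgrading "$x_{2k+1}$ occurs in $\partial d$ for some diagonal term $d$ of $\partial x_{2k-1}$" to the exact equation $\partial^\vert z=\partial^\horz y$ demanded by the definition of $a'_{2k+2}$, while keeping the arrow length at most $n-1$. This must follow the template of the cleanup arguments already in the paper (Lemmas~\ref{lem:basissub}, \ref{lem:arrowremoval}, \ref{lem:basis}, \ref{lem:basis2}), but some care is needed because $x_{2k+1}=\partial^\horz x_{2k}$ is not among the basis elements one is free to alter, so the corrections must be made to $d$ and to elements lying below $x_{2k+1}$ in its column rather than to $x_{2k+1}$ or $x_{2k}$ themselves.
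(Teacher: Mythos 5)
Your proof is correct and takes essentially the same approach as the paper: the paper's proof of this lemma is a one-line appeal to the $\partial^2=0$ cancellation argument of Lemma~\ref{lem:sequences} applied to the locally simplified basis of Lemma~\ref{lem:basis}, which is precisely your computation of $\partial^2 x_{2k-1}$ forcing an incoming vertical arrow to $x_{2k+1}$ of length between $1$ and $n-1$. The chain-level bookkeeping you flag at the end (passing between arrows in a locally simplified basis and the formal definition of $a'_{2k+2}$) is likewise left implicit in the paper, handled by the same change-of-basis template you cite.
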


\begin{proof}
The result follows as in the proof of Lemma \ref{lem:sequences}.
\end{proof}

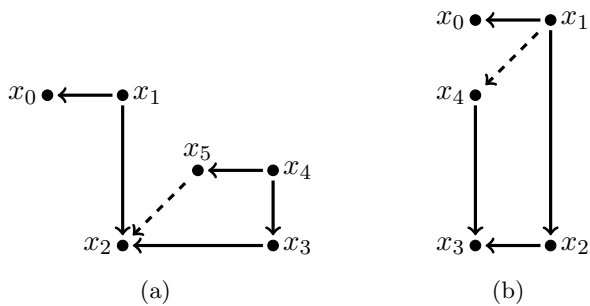
\begin{figure}[htb!]
\vspace{5pt}
\labellist
\small \hair 2pt
\endlabellist
\centering
\subfigure[]{
\begin{tikzpicture}
	\filldraw (0, 3) circle (2pt) node[] (x_0){};
	\filldraw (1, 3) circle (2pt) node[] (x_1){};
	\filldraw (1, 1) circle (2pt) node[] (x_2){};
	\filldraw (3, 1) circle (2pt) node[] (x_3){};
	\filldraw (3, 2) circle (2pt) node[] (x_4){};
	\filldraw (2, 2) circle (2pt) node[] (x_5){};
	\draw [very thick, <-] (x_0) -- (x_1);
	\draw [very thick, <-] (x_2) -- (x_1);
	\draw [very thick, <-] (x_2) -- (x_3);
	\draw [very thick, <-] (x_3) -- (x_4);
	\draw [very thick, <-] (x_5) -- (x_4);
	\draw[dashed, very thick, <-] (x_2) -- (x_5);
	\node [left] at (x_0) {$x_0$};
	\node [right] at (x_1) {$x_1$};
	\node [left] at (x_2) {$x_2$};
	\node [right] at (x_3) {$x_3$};
	\node [right] at (x_4) {$x_4$};
	\node [above] at (x_5) {$x_5$};
\end{tikzpicture}
\label{}
}
\hspace{25pt}
\subfigure[]{
\begin{tikzpicture}
	\filldraw (0, 4) circle (2pt) node[] (x_0){};
	\filldraw (1, 4) circle (2pt) node[] (x_1){};
	\filldraw (1, 1) circle (2pt) node[] (x_2){};
	\filldraw (0, 1) circle (2pt) node[] (x_3){};
	\filldraw (0, 3) circle (2pt) node[] (x_4){};
	\draw [very thick, <-] (x_0) -- (x_1);
	\draw [very thick, <-] (x_2) -- (x_1);
	\draw [very thick, <-] (x_3) -- (x_2);
	\draw [very thick, <-] (x_3) -- (x_4);
	\draw[dashed, very thick, <-] (x_4) -- (x_1);
	\node [left] at (x_0) {$x_0$};
	\node [right] at (x_1) {$x_1$};
	\node [right] at (x_2) {$x_2$};
	\node [left] at (x_3) {$x_3$};
	\node [left] at (x_4) {$x_4$};
\end{tikzpicture}
\label{}
}
\caption{Examples of complexes that satisfy the conclusion of Lemma \ref{lem:sequences2}. We include diagonal arrows that will ensure that $\partial^2=0$, although the lemma does not contain an explicit statement about diagonal arrows.}
\label{}
\end{figure}

We write $\ba(K)$ to denote the sequence of $a_i$'s, followed by $a'_i$'s, if defined. More precisely, let 
\[ \ba(K)=\left\{
	\begin{array}{ll}
	(a_1(K), \ldots, a_n(K))  &\textup{ if neither } a_{n+1}(K) \textup{ nor } a'_{n+1}(K) \textup{ is defined} \\
	(a_1(K), \ldots, a_n(K), a'_{n+1}(K)) &\textup{ if } a'_{n+1}(K) \textup{ is defined and not equal to } -1 \\
	(a_1(K), \ldots, a_n(K), a'_{n+1}(K), a'_{n+2}(K)) &\textup{ if } a'_{n+1}(K)=-1.
\end{array} \right.\]
Often, the first $k$ elements of $\ba(K)$ are sufficient to determine information about the Archimedean equivalence class of $[\CFKi(K)] \in \cCFK$.

We would like to distinguish when the tuple $\ba(K)$ completely determines the $\varepsilon$-equivalence class of $K$. For example, if $K$ is an $L$-space knot (i.e., a knot admitting a positive $L$-space surgery), then the tuple $\ba(K)$ completely describes $\CFKi(K)$, as in Figure \ref{subfig:T_3,4}. More generally, if $\ba(K)=(a_1(K), \ldots, a_n(K))$ and in the locally simplified basis associated to $\ba(K)$, the element $x_n$ is a generator of the horizontal homology, then $\ba(K)$ completely determines the $\varepsilon$-equivalence class of $K$. In this case, we use square brackets to denote $\ba(K)$, i.e., 
\[ \ba(K) = [a_1, \ldots, a_n]. \]
The invariant $\ba(K)$ and the associated locally simplified basis will be of use in better understanding the order type of $\cCFK$.

Note that since the tuple $\ba(K)$ is an invariant of the complex $\CFKi(K)$, we may similarly define $\ba(C)$, where $C \in \mathfrak{C}$, i.e., $C$ satisfies the same symmetry and rank properties that the knot Floer complex does. %If $\ba(C)=(a_1, a_2, \ldots)$, then we say that $C$ is of the form $(a_1, a_2, \ldots)$.

\section{Computations}
\label{sec:comp}
In this section, we study the order type of $\cCFKa$ and find an infinite family  of Archimedean equivalence classes in $\cCFKa$ satisfying Property A.

\begin{proposition}
\label{prop:ArchA}
The Archimedean equivalence class of $C_n=[1, n, n, 1]$, $n\geq 2$, satisfies Property A. Namely, any element in $\cCFKa$ that is Archimedean equivalent to $C_n$ is of the form $k \cdot C_n + C'$ where $|C'| \ll C_n$ and $k \in \Z$.
\end{proposition}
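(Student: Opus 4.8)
The plan is to verify Property A for $C_n$ directly — which, via Proposition \ref{prop:propertyA}, is the crucial input for Theorem \ref{thm:main}. Let $C \in \cCFKa$ with $C \sim_A C_n$. Since $\varepsilon(C^*) = -\varepsilon(C)$, and since $C = k C_n + C'$ with $|C'| \ll C_n$ if and only if $-C = (-k) C_n + (-C')$ with $|-C'| \ll C_n$, I may assume $C \geq 0$; if $C = 0$ take $k = 0$, so assume $C > 0$, i.e.\ $\varepsilon(C) = 1$, so that an initial segment of $\ba(C)$ is defined. First I would record the elementary reduction that Property A for $C_n$ is equivalent to the statement that there is \emph{no} $h \in \cCFKa$ with $0 < h < C_n$, $h \sim_A C_n$, \emph{and} $C_n - h \sim_A C_n$: given any $C \sim_A C_n$ with $C \geq 0$, letting $k$ be maximal with $k C_n \leq C$ produces $C' = C - k C_n$ with $0 \leq C' < C_n$, which is either dominated by $C_n$ (so $C = k C_n + C'$ works) or satisfies $C' \sim_A C_n$, in which case $C_n - C'$ is either dominated (so $C = (k+1)C_n + (C'-C_n)$ works) or itself $\sim_A C_n$ — the remaining forbidden configuration; conversely such an $h$ is a direct counterexample to Property A. So it suffices to rule out such an $h$.

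Second, I would compute the \emph{model}: for every $k \geq 1$ the class $k C_n = C_n^{\otimes k}$ is $\varepsilon$-equivalent to the staircase $[(1,n,n,1)^k]$, with $\ba(k C_n) = (1,n,n,1)^k$. This goes by induction on $k$: starting from a locally simplified basis for $C_n^{\otimes (k-1)}$ realizing $\ba = (1,n,n,1)^{k-1}$ (Proposition \ref{prop:basis}, with the last generator a horizontal homology generator), tensor with the five-generator staircase $C_n$ and run the change-of-basis procedure of Lemmas \ref{lem:basissub} and \ref{lem:arrowremoval} on the top of the tensor product; one more block $(1,n,n,1)$ appears because the only short horizontal arrow emanating from the new distinguished generator $x_4 \otimes x_0$ has length $1$, after which the arrows $x_4 \otimes x_1 \to x_4 \otimes x_2$, $x_4 \otimes x_3 \to x_4 \otimes x_2$, $x_4 \otimes x_3 \to x_4 \otimes x_4$ of $C_n$ contribute lengths $n,n,1$, and $x_4 \otimes x_4$ is a horizontal homology generator by the Künneth theorem.

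Third — the heart of the matter — I would show that Archimedean equivalence with $C_n$ pins down the leading behavior of $\ba$. Suppose $h$ with $0 < h < C_n$, $h \sim_A C_n$, and $C_n - h \sim_A C_n$ exists. Unwinding $h \sim_A C_n$ (there is $N$ with $N h > C_n$ and $N C_n > h$) into (non-)vanishing statements for the maps $H_{a_1,\dots,a_j,s}$ on subquotient complexes of $h$ and of $C_n^{\otimes N}$, together with the model computation, forces $\ba(h)$ to begin with the block $(1,n,n,1)$; and since $h < C_n = [1,n,n,1]$ \emph{strictly}, $h$ is not $\varepsilon$-equivalent to $C_n$, so $\ba(h)$ continues past $(1,n,n,1)$. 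Since $\ba^+(h)$ begins with $((1,n)^1,(n,1)^1)$, the possible continuations are governed by Lemmas \ref{lem:sequences} and \ref{lem:sequences2}: either the next term is a primed invariant with $a'_5(h) < -n$, in which case Lemma \ref{lem:basis} supplies a horizontal arrow of length $|a'_5(h)| > n$ out of the fifth staircase generator and a direct computation of $\varepsilon(h \otimes C_n^*)$ gives $h > C_n$, contradicting $h < C_n$; or the continuation consists only of arrows of length $< n$, in which case $h - C_n$ (equivalently $C_n - h$, up to sign) has only short arrows in its $\ba$ and is therefore dominated by $C_n$ — a staircase all of whose steps are shorter than $n$ is dominated by $C_n = [1,n,n,1]$, the same phenomenon as $C_n \ll C_{n+1}$ in \cite{Homsmooth} — contradicting $C_n - h \sim_A C_n$. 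Either way we reach a contradiction, establishing Property A for $C_n$; combined with the domination chain among the classes $[\CFKi(K_n)]$ from \cite{Homsmooth}, Proposition \ref{prop:propertyA} then yields Theorem \ref{thm:main}.

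The step I expect to be the main obstacle is the leading-segment rigidity just invoked: making rigorous that the truncated invariant $\ba$ behaves like a valuation — unchanged by adding an element dominated by $C_n$, and additive-with-carry in the block structure under $+$ — so that $h \sim_A C_n$ actually forces $\ba(h)$ to open with $(1,n,n,1)$. This requires translating each of the order relations $h \sim_A C_n$, $h < C_n$, and ``dominated by $C_n$'' into concrete statements about the maps $H_{a_1,\dots,a_j,s}$, and then tracking which generators survive in the corresponding subquotient homologies of tensor powers — the same genre of combinatorial bookkeeping as in Lemmas \ref{lem:basissub} and \ref{lem:arrowremoval}, but run for an arbitrary $h$ against the fixed model staircase $[(1,n,n,1)^k]$. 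Once this rigidity is in hand, the endgame (the dichotomy from Lemmas \ref{lem:sequences} and \ref{lem:sequences2}, and the $\varepsilon$-computation turning an over-long continuation into $h > C_n$) is comparatively routine.
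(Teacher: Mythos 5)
Your opening reduction (it suffices to rule out an $h$ with $0<h<C_n$, $h\sim_A C_n$, and $C_n-h\sim_A C_n$) is sound, but the argument you build on it contains two factual errors and defers exactly the part of the proof that carries the content. First, the ``model'' is wrong: the $k$-fold sum of $C_n$ is not $[(1,n,n,1)^k]$ but $[(1,n)^k,(n,1)^k]$ (this is \cite[Lemma 3.1]{HancockHomNewman}, used in the paper); tensor products of staircases do not concatenate blocks, and already for $k=2$ the two complexes have different $\ba$ (third entry $1$ versus $n$). Second, the ``leading-segment rigidity'' is false: being Archimedean equivalent to $C_n$ does \emph{not} force $\ba$ to open with $(1,n,n,1)$. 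For example, $\ba(D_{n,n+1})=(1,n,1,n-2,\ldots)$, and by Lemma \ref{lem:mn} together with Lemma \ref{lem:agg} this class equals $C_n+(1,n-2,\ldots)$, hence is Archimedean equivalent to $C_n$ while its $\ba$ begins $(1,n,1,n-2)$; more generally $((1,n)^k,1,m,\ldots)$ with $m<n$ is $kC_n$ plus a dominated term. Your dichotomy at the end is also miscalibrated: if $\ba(h)=(1,n,n,1)$ with $a_5'<-n$, the correct conclusion (Lemma \ref{lem:last} with $k=1$) is $h=C_n-(\max\{|a_5'|,n\},\ldots)$, i.e.\ $h<C_n$ with dominated difference --- not $h>C_n$ --- so no contradiction arises there, and none is needed.

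The substantive issue is that the step you flag as ``the main obstacle'' --- showing that $\ba$ behaves like a valuation, so that knowing an initial segment of $\ba(h)$ lets you peel off copies of $C_n$ and control the remainder --- is precisely what the paper proves, and it cannot be finessed: it occupies all of Section \ref{sec:comp}. The paper first classifies, via Lemmas \ref{lem:a1=1}, \ref{lem:sequences}, and \ref{lem:sequences2}, the possible shapes of $\ba(C)$ for a positive $C$ not a multiple of $C_n$ (Lemma \ref{lem:forms}), and then, case by case, performs explicit chain-level computations in $C\otimes \ell C_n^*$ (the $u_0$, $u_1$ cycles in Lemmas \ref{lem:m>n}--\ref{lem:last}) to show each shape either dominates $C_n$, is dominated by it, or differs from a multiple of $C_n$ by a dominated term. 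Your proposal replaces this with an unproved additivity principle plus a rigidity claim that the paper's own examples refute, so as written it does not constitute a proof; if you want to pursue your reduction, you would still need the full case analysis and tensor-product computations, at which point you are reproducing the paper's argument.
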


We begin by recalling some relationships between the tuple $\ba(C)$ and the ordering on $\cCFKa$. 
\begin{lemma}[{\cite[Lemmas 6.3 and 6.4]{Homsmooth}}]
\label{lem:agg}
If $\ba(J)=(a_1, \ldots)$ and $\ba(K)=(b_1, \ldots)$ with $a_1 > b_1 >0$, then
\[[\CFKi(J)] \ll [\CFKi(K)] . \]
If $\ba(J)=(a_1, a_2, \ldots)$ and $\ba(K)=(b_1, b_2, \ldots)$ with $a_1=b_1>0$ and $a_2 > b_2>0$, then
\[[\CFKi(J)] \gg [\CFKi(K)] . \]
\end{lemma}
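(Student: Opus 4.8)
The plan is to reduce both inequalities to a short list of computations of $\varepsilon$ for connected sums, each carried out with a locally simplified basis (Proposition \ref{prop:basis}). Since the leading entries of $\ba(J)$ and $\ba(K)$ are positive, $\varepsilon(J)=\varepsilon(K)=1$, so $[\CFKi(J)],[\CFKi(K)]>0$ in $\cCFK$; and for positive elements $g,h$ of a totally ordered abelian group one has $g\ll h$ iff $Ng<h$ for all $N\in\N$. Thus the first assertion is equivalent to $\varepsilon\big(\#^{N}J\,\#\,(-K)\big)=-1$ for all $N$, and the second to $\varepsilon\big(J\,\#\,\#^{N}(-K)\big)=1$ for all $N$.

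\textbf{Reduction to one-step statements.} First I would establish that $\ba$ is stable under taking connected powers: if $\varepsilon(L)=1$ then $a_1(L\#L)=a_1(L)$, and if in addition the first entries of two such $\ba$'s agree then $a_2$ is likewise stable; hence $\ba(\#^{N}J)$ has the same first entry as $\ba(J)$, and, when $a_1(J)=a_1(K)$, $\ba(\#^{N}K)$ has the same first two entries as $\ba(K)$. Granting this, it suffices to prove the two ``one-step'' claims:
\emph{(A)} if $\varepsilon(J')=\varepsilon(K')=1$ and $a_1(J')>a_1(K')>0$, then $\varepsilon(J'\#-K')=-1$;
\emph{(B)} if $\varepsilon(J')=\varepsilon(K')=1$, $a_1(J')=a_1(K')>0$, and $a_2(J')>a_2(K')>0$, then $\varepsilon(J'\#-K')=1$.
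Indeed, applying (A) with $J'=\#^{N}J$, $K'=K$ yields the first statement, and applying (B) with $J'=J$, $K'=\#^{N}K$ yields the second. (Alternatively one can run the tensor computation directly with $N$ factors, as in \cite{Homsmooth}.)

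\textbf{Proving (A) and (B).} For these I would work inside $\CFKi(J'\#-K')\simeq\CFKi(J')\otimes\CFKi(K')^{*}$, with locally simplified bases $\{x_i\}$ for $\CFKi(J')$ and $\{y_i\}$ for $\CFKi(K')$. Then $y_0^{*}$ is the vertically distinguished element of $\CFKi(K')^{*}=\CFKi(-K')$, carrying a single \emph{outgoing} horizontal arrow of length $a_1(K')$, and by the Künneth theorem $z_0=x_0\otimes y_0^{*}$ generates the vertical homology; after clearing its incoming vertical arrows as in Lemma \ref{lem:arrowremoval}, $z_0$ becomes the vertically distinguished element of a vertically simplified basis. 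A direct computation with the tensor-product differential shows that $z_0$ carries an incoming horizontal arrow of length $a_1(J')$ (from $x_1\otimes y_0^{*}$) and an outgoing one of length $a_1(K')$ (from $x_0\otimes\partial^{\horz}y_0^{*}$). In case (A), since $a_1(J')>a_1(K')$, the correction term needed to cancel the outgoing arrow lies strictly below $z_0$ in the $i$-filtration, so a filtered change of basis removes it; one then checks, tracking the lower-order horizontal and vertical terms this introduces, that $z_0$ is left with only an outgoing horizontal arrow, whence $\varepsilon=-1$. In case (B) the two horizontal arrows have equal length $a_1$, so they cancel at the \emph{same} filtration level; the resulting modification of $z_0$ then carries vertical arrows of lengths $a_2(J')$ and $a_2(K')$, and the inequality $a_2(J')>a_2(K')$ — now in the vertical direction — plays the role that $a_1(J')>a_1(K')$ played in (A), so the cancellation can again be pushed through and $z_0$ is left with a single \emph{incoming} horizontal arrow, whence $\varepsilon=1$. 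The passage from a horizontal to a vertical comparison is exactly what produces the reversal of monotonicity in the statement.

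\textbf{Main obstacle.} The delicate part is the bookkeeping in (A) and (B): tracking the $(i,j)$-filtration level of every generator in the tensor product, controlling the diagonal arrows that a locally simplified basis still permits, and checking that the chain of filtered changes of basis needed to clear all unwanted horizontal and vertical arrows terminates with precisely the asserted single arrow on $z_0$. The strict inequalities $a_1(J')>a_1(K')$ (resp.\ $a_2(J')>a_2(K')$) are used simultaneously to guarantee that the correction terms have nonincreasing filtration — so the changes of basis are legitimate — and to decide which of the two competing arrows on $z_0$ survives, hence the value of $\varepsilon$. This is the content of \cite[Lemmas 6.3 and 6.4]{Homsmooth}; I would follow that argument, supplying the bases via Proposition \ref{prop:basis} and its refinements.
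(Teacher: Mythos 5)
First, note that the paper does not prove Lemma \ref{lem:agg} at all: it is imported verbatim from \cite[Lemmas 6.3 and 6.4]{Homsmooth}, where it is established by computing $\varepsilon$ of $N$-fold tensor products directly with explicit chains (exactly the style of Lemmas \ref{lem:m>n}--\ref{lem:last} in Section \ref{sec:comp}). So there is no in-paper argument to match; your proposal has to stand on its own, and as written it has two genuine soft spots.

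The first is your reduction to the one-step claims (A) and (B). You need that the first entry of $\ba$ is unchanged under self connected sums, and, for the second statement, that the first \emph{two} entries of $\ba(\#^{N}K)$ agree with those of $\ba(K)$. The $a_1$ claim is manageable, but the $a_2$ claim is asserted without proof, and it is not a harmless bookkeeping fact: controlling how $a_2$ interacts with tensor powers in the presence of the other generators and diagonal arrows of an arbitrary complex in $\mathfrak{C}$ is essentially the same difficulty as the statement you are trying to prove. This is precisely why \cite{Homsmooth} (and the analogous results here, e.g.\ Lemma \ref{lem:m>n}, which must treat arbitrary $\ell$ versus $k$, and \cite[Lemma 3.1]{HancockHomNewman}, which proves the staircase case $k[1,n,n,1]=[(1,n)^k,(n,1)^k]$ as a separate lemma) run the computation with all $N$ factors at once rather than bootstrapping from $N=1$. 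The second soft spot is the core of (A): you cannot read off $\varepsilon$ of the product from the two horizontal arrows on $z_0=x_0\otimes y_0^*$ alone, because the relevant homology class in $C\{i=0\}$ is represented by $z_0$ plus further terms in that column, and the triviality or nontriviality of $F_*$ and $G_*$ depends on bounding the whole representative inside the hook-shaped subquotients; this is what the explicit chains $u_{-1},u_0,u_1$ accomplish in the model computations. Moreover, your description of the cleanup in case (A) is internally inconsistent as written: you say a filtered change of basis removes the outgoing arrow and then conclude that $z_0$ is left with only an outgoing arrow. To make the argument sound you should either prove the $a_1,a_2$-stability lemmas with the same explicit-chain technique (at which point the reduction buys little) or follow the direct $N$-fold computation of \cite[Lemmas 6.3 and 6.4]{Homsmooth}, which is what the paper implicitly relies on.
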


\noindent In other words, we have that
\begin{align*}
\label{eqn:lemma6.3}	(1, \ldots) &\gg (n, \ldots) \textup{ for } n \geq 2\\
	(1, m, \ldots) &\gg (1, n, \ldots) \textup{ for } m>n. \nonumber
\end{align*}

We will use the notation $k[1, n, n, 1]$ to denote
\[ \overbrace{[1, n, n, 1] + \dots + [1, n, n, 1]}^{k \textup{ times}}.\]
Recall the notation
\[ (1, n)^k = (\overbrace{1, n, \ldots, 1, n}^{k}). \]
By \cite[Lemma 3.1]{HancockHomNewman} we have that
\[ k[1, n, n, 1] = [(1, n)^k, (n, 1)^k]. \]

Fix an integer $n\geq2$, and let $C_n=[1, n, n, 1]$. We will show that for a positive element $C$ in $\cCFKa$, exactly one of the following holds:
\begin{itemize}
	\item $C$ is equal to a multiple of $C_n$, i.e., $C=k C_n$ for some $k \in \N$
	\item $C \gg C_n$
	\item $C = k C_n + C'$, for some $k \in \N \cup \{0\}$ and $C' \in \cCFKa$ with $0<|C'| \ll C_n$.
\end{itemize}
Our first step is to determine the possible forms of $C$ when $C$ is not a multiple of $C_n$.

\begin{lemma}
\label{lem:forms}
Let $C$ be a positive element of $\cCFKa$. If $C$ is not a multiple of $[1, n, n, 1]$ for some integer $n \geq 2$, then $\ba(C)$ is of one of the following forms:
\begin{enumerate}[label=(\alph*)]
	\item \label{item:mgeq2} $(m, \ldots)$ for some $m \geq 2$
	\item \label{item:11} $(1, 1, \ldots)$
	\item \label{item:1n1m} $((1, n)^k, 1, m, \ldots)$ for some $m$ such that $m< -n$, $0<m<n$, or $n < m$  
	\item \label{item:1nm} $((1, n)^k, m, \ldots)$ for some $m$ such that $m < -1$, $1< m <n$, or $n < m$
	\item \label{item:1n-1m} $((1, n)^k, -1, m, \ldots)$ for some $m$ such that $-n<m<0$ 
	\item \label{item:1nn1m} $((1, n)^k, (n, 1)^\ell, m, \ldots)$ for some $1\leq \ell<k$ and $m$ such that $m<-n$, $0<m<n$, or $n < m$
	\item \label{item:1nn1nm} $((1, n)^k, (n,1)^\ell, n, m, \ldots)$ for some $\ell<k$ and $m$ such that $m<0$ or $1 < m$
	%\item $((1, n)^k, (n,1)^\ell, n, -1, b, \ldots)$ for some $\ell<k$ and $b$ such that $-n<b<0$ 
	\item \label{item:1nn1c} $((1, n)^k, (n, 1)^k, m, \ldots)$ for some $m \neq 0$
\end{enumerate}
where $k$ is a positive integer, $\ell$ is a non-negative integer, and $m$ is an integer.
\end{lemma}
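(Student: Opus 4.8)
The plan is to carry out a careful case analysis on the tuple $\ba(C) = (c_1, c_2, \ldots)$, splitting according to how far the initial segment of $\ba(C)$ agrees with a sequence of the form $[(1,n)^j, (n,1)^j, \ldots]$, and at each branch invoking Lemma \ref{lem:a1=1}, Lemma \ref{lem:sequences}, and Lemma \ref{lem:sequences2} to constrain the first place where $\ba(C)$ deviates from such a pattern. First I would dispose of the case $c_1 \geq 2$, which is form \ref{item:mgeq2}, and the case $c_1 = 1, c_2 = 1$, which is form \ref{item:11}. So assume $c_1 = 1$ and $c_2 = n'$ for some $n' \geq 2$. If $n' \neq n$ there is nothing more to prove in the sense that the resulting $\ba(C)$ already falls into form \ref{item:mgeq2} or \ref{item:1nm} read with a different value; the interesting case is $n' = n$, which I treat by tracking the longest prefix $(1,n)^k$ (respectively $(1,n)^k,(n,1)^\ell$) that $\ba(C)$ shares with the canonical sequences, then analyzing the next entry $m$.

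The heart of the argument is the following. Suppose $\ba(C) = ((1,n)^k, m, \ldots)$ where $m$ is the first entry after the prefix $(1,n)^k$. By the structure of $\ba$, the entry $m$ is either some $a_{2k+1}$ (if $a_{2k+1}$ is defined) or an $a'_{2k+1}$ (if $a_{2k+1}$ is undefined but $a'_{2k+1}$ is); in the former case $m > 0$, and if $m = 1$ then by Lemma \ref{lem:a1=1} applied to the appropriate shifted/quotiented subcomplex — or rather by the direct analogue of its proof — the next invariant $a_{2k+2}$ is defined, and one must continue; a value $m \in \{2, \ldots, n-1\} \cup \{n+1, n+2, \ldots\}$ or $m < 0$ other than $-1$ lands us in form \ref{item:1nm} or \ref{item:1n1m}; the value $m = n$ means the prefix actually extends to $(1,n)^{k+1}$ or we enter the $(n,1)$ regime; and the value $m = -1$ forces, by Lemma \ref{lem:sequences2}, that $a'_{2k+2}$ is defined with $0 < |a'_{2k+2}| < n$, giving form \ref{item:1n-1m}. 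The analogous analysis with the prefix $((1,n)^k, (n,1)^\ell)$ and Lemma \ref{lem:sequences} (which gives $a'_{2k+2} < -n$ and $a'_{2k+2\ell+1} < -n$ in the relevant borderline cases, preventing $m = n$ there and routing those into forms \ref{item:1nn1m}, \ref{item:1nn1nm}) handles the remaining branches. When $\ell$ reaches $k$ and the next entry $m$ is nonzero we get form \ref{item:1nn1c}; when $\ell = k$ and all remaining entries vanish, $\ba(C) = [(1,n)^k, (n,1)^k] = k[1,n,n,1]$ by \cite[Lemma 3.1]{HancockHomNewman}, i.e., $C$ is a multiple of $C_n$, which is exactly the excluded case.

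I expect the main obstacle to be bookkeeping: making sure the case split is genuinely exhaustive and that each sub-case produces exactly one of the eight listed forms, with the stated numerical constraints on $m$, without overlap gaps. In particular one has to be careful about what "the next entry of $\ba(C)$" means — it could be an $a_i$, an $a'_i$, or $a'_i = -1$ followed by an $a'_{i+1}$ — and one must verify that in the ambiguous $m = -1$ situations one correctly invokes the two-step extension (Lemma \ref{lem:basis2}, Lemma \ref{lem:sequences2}) rather than stopping prematurely, while in the $m = 1$ situations one correctly invokes the extension guaranteed by the $a_1 = 1$ phenomenon (Lemma \ref{lem:a1=1}) so that $m = 1$ is never actually the terminal deviation. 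A secondary subtlety is that the lemmas \ref{lem:a1=1}, \ref{lem:sequences}, \ref{lem:sequences2} are stated for a knot $K$ with $\ba^+(K)$ of a specific form starting at $a_1$; here I need their evident generalizations where the relevant "first" invariant sits after a prefix, which follow by applying the same subquotient-complex arguments to the pieces of the locally simplified basis past the prefix (this is justified by Proposition \ref{prop:basis} and Lemma \ref{lem:basis}), and I would state this reduction explicitly at the start of the proof.
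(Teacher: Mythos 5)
Your proposal is correct and follows essentially the same route as the paper: the paper's proof also fixes $n=a_2(C)$ after disposing of cases \ref{item:mgeq2} and \ref{item:11}, chooses $k$ so that $\ba(C)$ agrees with $((1,n)^k,(n,1)^k)$ for as long as possible, and then case-analyzes the position of the first deviation, invoking Lemmas \ref{lem:a1=1}, \ref{lem:sequences}, and \ref{lem:sequences2} exactly as you do to pin down the allowed values of $m$. One small simplification over your outline: the ``generalizations after a prefix'' you anticipate needing are largely already available, since Lemmas \ref{lem:sequences} and \ref{lem:sequences2} are stated directly for the prefixes $((1,n)^k,1)$, $((1,n)^k,(n,1)^\ell)$, and $((1,n)^k)$, and a deviation equal to $1$ at an odd position is ruled out simply by the maximality of the chosen prefix rather than by an analogue of Lemma \ref{lem:a1=1}.
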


\begin{proof}
If $a_1(C) \neq 1$, then we are in case \ref{item:mgeq2}. Otherwise, $a_2$ is defined (i.e., $a_2>0$) by Lemma \ref{lem:a1=1}. If $a_2(C)=1$, then we are in case \ref{item:11}, and otherwise, let $n=a_2(C)$. Let $k$ be a positive integer such that $\ba(C)$ and $((1, n)^k, (n, 1)^k)$ agree for as many terms as possible, from the left. If there are multiple values of $k$ for which the sequences agree for the same number of terms, choose the least such $k$. Let $a_p$ be the first place where the two sequences differ. Then exactly one of the following is true:
\begin{itemize}
	\item $p=2k$, in which case $\ba(C)=((1, n)^{k-1}, 1, m, \ldots)$, where $m\neq 0$ or $n$. By Lemma \ref{lem:sequences}, if $m$ is negative, then $m<-n$. This is case \ref{item:1n1m} above.
	\item $p=2k+1$, in which case $\ba(C)=((1, n)^k, m, \ldots)$, where $m\neq 0, 1,$ or $n$. By Lemma \ref{lem:sequences2}, if $m=-1$, then the following term, $a_{p+1}$, must be strictly between $-n$ and zero. This is case \ref{item:1nm} and \ref{item:1n-1m} above.
	\item $2k+1<p\leq 4k$ and $p$ is odd, in which case $\ba(C)=((1,n)^k, (n, 1)^\ell, m, \ldots)$, where $m\neq 0$ or $n$ and $0<\ell<k$. By Lemma \ref{lem:sequences}, if $m$ is negative, then $m<-n$. This is case \ref{item:1nn1m} above.
	\item $2k+1<p \leq 4k$ and $p$ is even, in which case $\ba(C)=((1,n)^k, (n, 1)^\ell, n, m, \ldots)$, where $m \neq 0$ or $1$ and $0\leq \ell <k$. This is case \ref{item:1nn1nm} above. %By Lemma \ref{lem:sequences2}, if $b=-1$, then the following term, $a_{m+1}$, must be strictly between $-n$ and zero.
	\item $p > 4k$, in which case $\ba(C)=((1, n)^k, (n, 1)^k, m, \ldots)$, where $m \neq 0$. This is case \ref{item:1nn1c}  above. %By Lemma \ref{lem:sequences}, if $b$ is negative, then $b<-n$.
\end{itemize}
This concludes the proof.
\end{proof}

We now consider complexes of the forms in Lemma \ref{lem:forms}. Note that in case \ref{item:mgeq2} and \ref{item:11}, we have that $C \ll [1, n, n, 1]$ by Lemma \ref{lem:agg}. We proceed with case \ref{item:1n1m} when $m>n$. We assume throughout that $k$ is a positive integer and that $n \geq 2$.

\begin{lemma}
\label{lem:m>n}
Let $m$ and $n$ be positive integers such that $m>n$. Then 
\[ ((1, n)^k, 1, m, \dots) \gg [1, n, n, 1]. \]
\end{lemma}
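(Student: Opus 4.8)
The plan is to deduce $((1,n)^k,1,m,\dots)\gg[1,n,n,1]$ from the comparison lemma, Lemma~\ref{lem:agg}, after cancelling a common initial segment of the relevant tuples $\ba$. Write $C\in\mathfrak C$ for a complex with $\ba(C)=((1,n)^k,1,m,\dots)$. Since $a_1(C)=1$ is defined, $\varep(C)=1$ and $C$ is positive, as is $[1,n,n,1]$; because the ordering on $\cCFKa$ is total, $C\gg[1,n,n,1]$ is equivalent to $C>N[1,n,n,1]$ for every positive integer $N$. As $[1,n,n,1]>0$, the elements $N[1,n,n,1]$ increase with $N$, so it is enough to prove this for $N\ge k$; for $1\le N<k$ one then gets $N[1,n,n,1]<k[1,n,n,1]<C$ for free. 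Fix $N\ge k$. By \cite[Lemma 3.1]{HancockHomNewman}, $N[1,n,n,1]=[(1,n)^N,(n,1)^N]$, so $\ba(N[1,n,n,1])=((1,n)^N,(n,1)^N)$ shares the prefix $(1,n)^k$ with $\ba(C)$, and the first place the two tuples differ is position $2k+1$ when $N=k$ (there $C$ has $1$ while $N[1,n,n,1]$ has $n$) and position $2k+2$ when $N>k$ (there $C$ has $m$ while $N[1,n,n,1]$ has $n<m$).

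To conclude $C\gg N[1,n,n,1]$ I would show $\varep\big(C\otimes(N[1,n,n,1])^\ast\big)=1$ by a box tensor product computation. Take the locally simplified basis for $\CFKi(K)$ furnished by Lemma~\ref{lem:basis} (or Lemma~\ref{lem:basis2}), in which the generators $x_0,\dots,x_{2k}$ carry the staircase $(1,n)^k$ and $x_{2k}$ is the vertical homology generator of the remainder, together with the explicit staircase model of $(N[1,n,n,1])^\ast$; form the product and identify its vertically distinguished element. The $k$ horizontal arrows of length $1$ and vertical arrows of length $n$ along the common prefix cancel in pairs against the corresponding arrows of the dual staircase, after running the changes of basis of Lemmas~\ref{lem:basissub} and \ref{lem:arrowremoval}, so the sign $\varep$ is read off from the local picture at the first discrepancy: when $N=k$ the distinguished element acquires a unique incoming horizontal arrow because $1<n$ occurs at an odd position (the situation $(1,\dots)\gg(n,\dots)$ of Lemma~\ref{lem:agg}), and when $N>k$ it does so because $m>n$ occurs at an even position (the situation $(1,m,\dots)\gg(1,n,\dots)$). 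In either case $\varep\big(C\otimes(N[1,n,n,1])^\ast\big)=1$, hence $C>N[1,n,n,1]$; as this holds for all $N\ge k$, we obtain $C\gg[1,n,n,1]$.

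The main obstacle is the box tensor bookkeeping in the middle step: checking that the arrows along the shared prefix $(1,n)^k$ cancel cleanly (in particular that no diagonal arrows survive to obstruct the prescribed changes of basis), that the vertically distinguished element of the product is the expected one, and that its horizontal arrow structure is controlled, all uniformly in $N$. This amounts to carrying the argument that proves Lemma~\ref{lem:agg} (namely \cite[Lemmas 6.3 and 6.4]{Homsmooth}) one or two steps deeper into the tuple $\ba$; once that is done, the conclusion follows formally from Lemma~\ref{lem:agg} and \cite[Lemma 3.1]{HancockHomNewman}.
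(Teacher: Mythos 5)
Your overall skeleton does match the paper's: reduce to showing $\varep\big(C\otimes(\ell[1,n,n,1])^*\big)=1$ for all $\ell$, and replace $\ell[1,n,n,1]$ by the staircase $[(1,n)^\ell,(n,1)^\ell]$ via \cite[Lemma 3.1]{HancockHomNewman} (the paper treats both $\ell\le k$ and $k<\ell$ rather than discarding small $\ell$, but that difference is harmless). The problem is the middle step, which you yourself flag as the ``main obstacle'': it is not bookkeeping, it is the entire content of the lemma, and the mechanism you propose for it is not one that works. There is no change of basis in the spirit of Lemmas~\ref{lem:basissub} and \ref{lem:arrowremoval} under which the common prefix $(1,n)^k$ ``cancels in pairs'' so that $\varep$ can be read off from the first discrepancy via Lemma~\ref{lem:agg}: those lemmas produce locally simplified bases for a single complex with given $\ba$, not simplifications of a tensor product, and Lemma~\ref{lem:agg} compares leading entries of $\ba$ for two complexes, saying nothing about deleting a shared prefix. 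Indeed, the neighboring computations in this section show that the behavior after a shared prefix is not a formal lexicographic matter (e.g.\ Lemma~\ref{lem:0nm}: for $m>n$, $((1,n)^k,m,\ldots)$ differs from $[(1,n)^k,(n,1)^k]$ by $-(n,\ldots)$, which is not read off from the discrepancy entry $m$ alone). So the claim that ``the conclusion follows formally from Lemma~\ref{lem:agg}'' assumes exactly what has to be proved.

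What the paper actually supplies at this point, and what your sketch is missing, is an explicit cycle computation in the tensor product: with $\{x_i\}$ a locally simplified basis for $C$ and $\{y_i^*\}$ the dual staircase basis, one forms the diagonal chains $u_0=\sum_i x_iy_i^*$ and $u_1=\sum_{i\ \even} x_{i+1}y_i^*$, checks that $u_0$ (corrected by $x_{2k+2}y^*_{2k+2}$ when $k<\ell$) generates the vertical homology, and then verifies that inside the subquotient $C\{S(1)\}$ one has $\partial^\horz u_1=u_0$, because the offending extra terms leave $C\{S(1)\}$: for $\ell\le k$ the term $x_{2\ell+1}y^*_{2\ell+1}$ sits at horizontal filtration $1-n$, and for $k<\ell$ the generator $x_{2k+2}y^*_{2k+2}$ sits at filtration $(0,\tau_x-\ell\tau_y+n-m)$, which is where the hypothesis $m>n$ actually enters. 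Your proposal never uses $m>n$ in a checkable way (``because $m>n$ occurs at an even position'' is not an argument), and without this filtration-level bookkeeping you have identified neither the vertically distinguished element of the product nor its incoming horizontal arrow. Filling the gap would mean carrying out essentially this computation, i.e.\ reproducing the paper's proof rather than deducing the lemma from Lemma~\ref{lem:agg} and \cite[Lemma 3.1]{HancockHomNewman}.
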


\begin{proof}
Let $C_1 \in \mathfrak{C}$ be a complex such that $\ba(C_1)=((1, n)^k, 1, m, \dots)$ and let $C_2$ be such that $\ba(C_2)=[1, n, n, 1]$. We would like to show that 
\[ \varep(C_1 \otimes \ell C^*_2) =1 \]
for all $\ell>0$. Let $\{x_i\}$ be a locally simplified basis for $C_1$. See Figure \ref{subfig:somebasesa}.

We first consider the case where $k < \ell$. By \cite[Lemma 3.1]{HancockHomNewman}, the complex $\ell C_2=\ell [1, n, n, 1]$ is $\varep$-equivalent to $[(1, n)^\ell, (n, 1)^\ell]$ which has a representative with basis $\{ y_i\}_{i=0}^{4\ell}$. See Figure \ref{subfig:somebasesb}. We may dualize to obtain $\{ y^*_i\}$, a basis for $\ell C^*_2$, where $y^*_0$ generates the vertical homology, and has an outgoing horizontal arrow to $y^*_1$. See Figure \ref{subfig:somebasesc}.

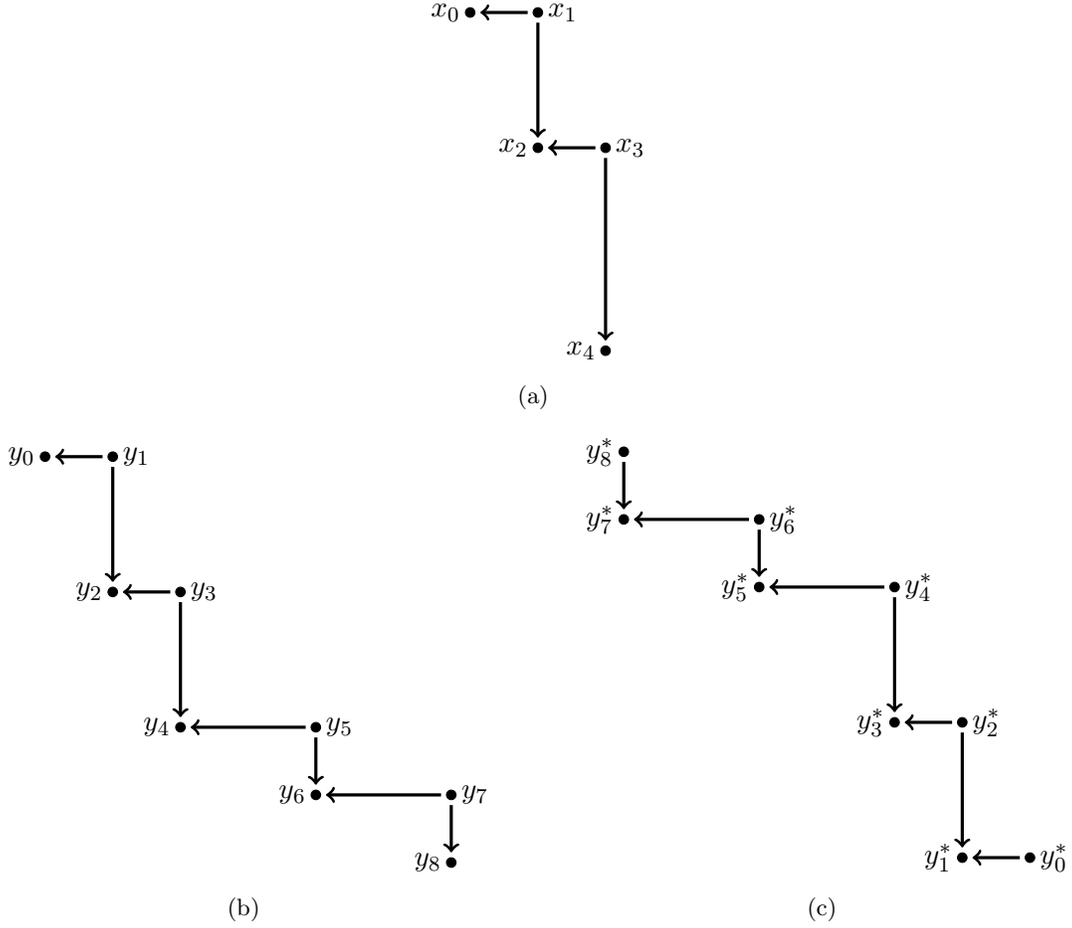
\begin{figure}[htb!]
\vspace{5pt}
\labellist
\small \hair 2pt
\endlabellist
\centering
\subfigure[]{
\begin{tikzpicture}[scale=0.9]
	\filldraw (0, 6) circle (2pt) node[] (x_0){};
	\filldraw (1, 6) circle (2pt) node[] (x_1){};
	\filldraw (1, 4) circle (2pt) node[] (x_2){};
	\filldraw (2, 4) circle (2pt) node[] (x_3){};
	\filldraw (2, 1) circle (2pt) node[] (x_4){};

	\draw [very thick, <-] (x_0) -- (x_1);
	\draw [very thick, <-] (x_2) -- (x_1);
	\draw [very thick, <-] (x_2) -- (x_3);
	\draw [very thick, <-] (x_4) -- (x_3);

	\node [left] at (x_0) {$x_0$};
	\node [right] at (x_1) {$x_1$};
	\node [left] at (x_2) {$x_2$};
	\node [right] at (x_3) {$x_3$};
	\node [left] at (x_4) {$x_4$};

\end{tikzpicture}
\label{subfig:somebasesa}
}
\\
\subfigure[]{
\begin{tikzpicture}[scale=0.9]
	\filldraw (0, 6) circle (2pt) node[] (x_0){};
	\filldraw (1, 6) circle (2pt) node[] (x_1){};
	\filldraw (1, 4) circle (2pt) node[] (x_2){};
	\filldraw (2, 4) circle (2pt) node[] (x_3){};
	\filldraw (2, 2) circle (2pt) node[] (x_4){};
	\filldraw (4, 2) circle (2pt) node[] (x_5){};
	\filldraw (4, 1) circle (2pt) node[] (x_6){};
	\filldraw (6, 1) circle (2pt) node[] (x_7){};
	\filldraw (6, 0) circle (2pt) node[] (x_8){};
	\draw [very thick, <-] (x_0) -- (x_1);
	\draw [very thick, <-] (x_2) -- (x_1);
	\draw [very thick, <-] (x_2) -- (x_3);
	\draw [very thick, <-] (x_4) -- (x_3);
	\draw [very thick, <-] (x_4) -- (x_5);
	\draw [very thick, <-] (x_6) -- (x_5);
	\draw [very thick, <-] (x_6) -- (x_7);
	\draw [very thick, <-] (x_8) -- (x_7);
	\node [left] at (x_0) {$y_0$};
	\node [right] at (x_1) {$y_1$};
	\node [left] at (x_2) {$y_2$};
	\node [right] at (x_3) {$y_3$};
	\node [left] at (x_4) {$y_4$};
	\node [right] at (x_5) {$y_5$};
	\node [left] at (x_6) {$y_6$};
	\node [right] at (x_7) {$y_7$};
	\node [left] at (x_8) {$y_8$};
\end{tikzpicture}
\label{subfig:somebasesb}
}
\hspace{15pt}
\subfigure[]{
\begin{tikzpicture}[scale=0.9]
	\filldraw (0, -6) circle (2pt) node[] (x_0){};
	\filldraw (-1, -6) circle (2pt) node[] (x_1){};
	\filldraw (-1, -4) circle (2pt) node[] (x_2){};
	\filldraw (-2, -4) circle (2pt) node[] (x_3){};
	\filldraw (-2, -2) circle (2pt) node[] (x_4){};
	\filldraw (-4, -2) circle (2pt) node[] (x_5){};
	\filldraw (-4, -1) circle (2pt) node[] (x_6){};
	\filldraw (-6, -1) circle (2pt) node[] (x_7){};
	\filldraw (-6, 0) circle (2pt) node[] (x_8){};
	\draw [very thick, ->] (x_0) -- (x_1);
	\draw [very thick, ->] (x_2) -- (x_1);
	\draw [very thick, ->] (x_2) -- (x_3);
	\draw [very thick, ->] (x_4) -- (x_3);
	\draw [very thick, ->] (x_4) -- (x_5);
	\draw [very thick, ->] (x_6) -- (x_5);
	\draw [very thick, ->] (x_6) -- (x_7);
	\draw [very thick, ->] (x_8) -- (x_7);
	\node [right] at (x_0) {$y^*_0$};
	\node [left] at (x_1) {$y^*_1$};
	\node [right] at (x_2) {$y^*_2$};
	\node [left] at (x_3) {$y^*_3$};
	\node [right] at (x_4) {$y^*_4$};
	\node [left] at (x_5) {$y^*_5$};
	\node [right] at (x_6) {$y^*_6$};
	\node [left] at (x_7) {$y^*_7$};
	\node [left] at (x_8) {$y^*_8$};
\end{tikzpicture}
\label{subfig:somebasesc}
}
\caption{Top, part of the basis $\{x_i\}$ when $k=1$; note that $x_4$ has an incoming or outgoing horizontal arrow that we have not shown. Bottom left, the basis $\{y_i\}$ when $\ell=2$. Bottom right, the dual basis $\{y^*_i\}$.}
\label{fig:somebases}
\end{figure}

We now compute $\varep(C_1 \otimes \ell C^*_2)$. We consider the complex with basis $\{ x_i y^*_j\}$, where for simplicity, we have omitted the tensor product symbol.

Let
\[ u_0 = \sum_{i=0}^{2k+1}x_iy^*_i \qquad \qquad u_1=\sum_{\substack{i=0 \\ i \ \even }}^{2k} x_{i+1} y^*_i. \]
We note the following facts:

\begin{enumerate}
	\item The chain $u_0+x_{2k+2} y^*_{2k+2}$ generates the vertical homology. Indeed, we have that $\partial^\vert u_0 = x_{2k+2}y^*_{2k+1}= \partial^\vert x_{2k+2} y^*_{2k+2}$, since
\begin{itemize}
	\item $\partial^\vert x_0y^*_0=0$
	\item $\partial^\vert x_i y^*_i=x_{i+1}y^*_i$ for $i$ odd, $1\leq i \leq 2k+1$
	\item $\partial^\vert x_i y^*_i=x_i y^*_{i-1}$ for $i$ even, $2 \leq i \leq 2k+2$
\end{itemize}
and we are working modulo $2$. % Moreover, $\partial^\vert x_{2k+2} y^*_{2k+2}=x_{2k+2}y^*_{2k+1}$. 
Hence $u_0+x_{2k+2} y^*_{2k+2}$is in the kernel of $\partial^\vert$. We now show that $u_0+x_{2k+2} y^*_{2k+2}$  is not in the image of $\partial^\vert$. Consider the chain
\[ u_{-1} = \sum_{\substack{i=1 \\ i \ \odd}}^{2k+1} x_i y^*_{i+1}. \]
We have that
\[ \partial^\vert u_{-1}= u_0+x_{2k+2} y^*_{2k+2}-x_0y^*_0, \]
since
\[ \partial^\vert x_i y^*_{i+1} = x_{i+1} y^*_{i+1} + x_i y^*_i, \textup{ for }  i \ \odd, \ 1 \leq i \leq 2k+1. \]
Note that $x_0y^*_0$ is not in the image of $\partial^\vert$ and $u_0+x_{2k+2} y^*_{2k+2}-x_0y^*_0 = \partial^\vert u_{-1}$. Hence $u_0+x_{2k+2} y^*_{2k+2}$ is not in the image of $\partial^\vert$, and thus generates the vertical homology.

	\item We have that $\partial^\horz u_1=u_0$, since
\[ \partial^\horz x_{i+1}y^*_i = x_i y^*_i+x_{i+1}y^*_{i+1} \ \textup{ for } i \textup{ even}, \ \ 0 \leq i \leq 2k .\]
\end{enumerate}

With this two facts, we now consider the following generators, listed with their filtration levels, which form a summand of $C\{S(1)\}$:
\begin{align*}
	& x_iy^*_i, \quad 0\leq i \leq 2k+1  &&(0, \tau_x-\ell \tau_y)  \\
	& x_{i+1}y^*_i, \quad i \textup{ even}, 0\leq i \leq 2k &&(1, \tau_x-\ell \tau_y) \\
	& x_i y_{i+1}^*, \quad i \textup{ odd}, 1\leq i \leq 2k+1  &&(0, \tau_x-\ell\tau_y+n),
\end{align*}
where $\tau_x=\tau(C_1)$ and $\tau_y=\tau(C_2)$. We also consider the element $x_{2k+2} y^*_{2k+2}$, which is in filtration level $(0, \tau_x-\ell \tau_y+n-m)$. Note that $n-m<0$ since $n<m$.

The chain $u_0+x_{2k+2} y^*_{2k+2}$ generates the vertical homology, and the projection of this chain to $C\{S(1)\}$ is $u_0$. Moreover, $\partial^\horz u_1=u_0$. It follows that $\varep(C_1 \otimes \ell C^*_2)=1$ for $k<\ell$. See Figure \ref{subfig:m>nL}.

We now consider the case where $\ell \leq k$. Again, let $\{x_i\}$ be a locally simplified basis for $C_1$ and $\{y_i\}$ a basis for $[(1, n)^\ell, (n, 1)^\ell]$, which is $\varep$-equivalent to $\ell C_2$. We dualize $\{y_i\}$ to obtain a basis $\{y^*_i\}$, where $y^*_0$ generates the vertical homology and has an outgoing horizontal arrow to $y^*_1$.

Let
\[ u_0 = \sum_{i=0}^{2\ell} x_i y^*_i \qquad \qquad u_1= \sum_{\substack{i=0 \\ i \ \even}}^{2\ell} x_{i+1} y^*_i .\]
We note the following:
\begin{enumerate}
	\item The chain $u_0$ is in the kernel of $\partial^\vert$, since
\begin{itemize}
	\item $\partial^\vert x_0y^*_0=0$
	\item $\partial^\vert x_i y^*_i=x_{i+1}y^*_i$ for $i$ odd, $1\leq i \leq 2\ell-1$
	\item $\partial^\vert x_i y^*_i=x_i y^*_{i-1}$ for $i$ even, $2 \leq i \leq 2\ell$
\end{itemize}
and we are working modulo $2$. Furthermore, $u_0$ is not in the image of $\partial^\vert$, since $x_0y^*_0$ is not in the image of $\partial^\vert$ and 
\[ u_0-x_0y^*_0 = \partial^\vert \sum_{\substack{i=1 \\ i \ \odd}}^{2\ell-1} x_i y^*_{i+1}.\]
In particular, $u_0$ generates the vertical homology.
	\item We have that $\partial^\horz u_1 = u_0 + x_{2\ell+1}y^*_{2\ell+1}$ since
\[ \partial^\horz x_{i+1}y^*_i = x_i y^*_i + x_{i+1}y^*_{i+1} \textup{ for } i \ \even, \ 0 \leq i \leq 2\ell. \]
\end{enumerate}

Having established these two facts, we now consider the summand of $C\{S(1)\}$, generated by the following elements, listed with their filtration levels:
\begin{align*}
	& x_iy^*_i, \quad 0\leq i \leq 2k+1  &&(0, \tau_x-\ell \tau_y)  \\
	& x_{i+1}y^*_i, \quad i \textup{ even}, 0\leq i \leq 2\ell&&(1, \tau_x-\ell \tau_y) \\
	& x_i y_{i+1}^*, \quad i \textup{ odd}, 1\leq i \leq 2\ell+1  &&(0, \tau_x-\ell\tau_y+n).
\end{align*}
We also consider $x_{2\ell+1}y^*_{2\ell+1}$ in filtration level $(1-n, \tau_x-\ell \tau_y)$. Let $\partial_1$ denote the differential on $C\{S(1)\}$, and $\partial^\horz_1$ the horizontal differential on $C\{S(1)\}$. Then $ \partial^\horz_1 u_1 = u_0$, since $\partial^\horz u_1 = u_0 + x_{2\ell+1}y^*_{2\ell+1}$ and $x_{2\ell+1}y^*_{2\ell+1}$ lies outside of the subquotient complex $C\{S(1)\}$.

Since $u_0$ generates the vertical homology and $\partial^\horz_1 u_1 = u_0$, it follows that $\varepsilon(C_1 \otimes \ell C^*_2)=1$, as desired. See Figure \ref{subfig:m>nR}.
\end{proof}

\begin{figure}[htb!]
\vspace{5pt}
\labellist
\small \hair 2pt
\endlabellist
\centering
\subfigure[]{
\begin{tikzpicture}
	\filldraw[black!20!white] (-0.4, 2.6) rectangle (0.4, 5);
	\filldraw[black!20!white] (-0.4, 2.6) rectangle (1.4, 3.4);
	\shade[top color=white, bottom color=black!20!white] (-0.4, 5) rectangle (0.4, 6.25);
	\begin{scope}[thin, gray]
		\draw [<->] (-1, 0) -- (3, 0);
		\draw [<->] (0, -1) -- (0, 6);
	\end{scope}
	\filldraw (0.3, 3.3) circle (2pt) node[] (a){};
	\filldraw (0.1, 3.1) circle (2pt) node[] (c){};
	\filldraw (-0.1, 2.9) circle (2pt) node[] (e){};
	\filldraw (-0.3, 2.7) circle (2pt) node[] (g){};
	
	\filldraw (1.3, 3.2) circle (2pt) node[] (b){};
	\filldraw (1.3, 2.8) circle (2pt) node[] (f){};
	
	\filldraw (0, 5) circle (2pt) node[] (d){};
	\filldraw (-0.3, 5) circle (2pt) node[] (h){};
	
	\filldraw (0, 1.7) circle (2pt) node[] (i){};
	
	\draw [very thick, <-] (a) -- (b);
	\draw [very thick, <-] (c) -- (b);
	\draw [very thick, <-] (c) -- (d);
	\draw [very thick, <-] (e) -- (d);
	\draw [very thick, <-] (e) -- (f);
	\draw [very thick, <-] (g) -- (f);
	\draw [very thick, <-] (g) -- (h);
	\draw [very thick, <-] (i) .. controls (-0.7, 2.6) and (-0.7, 4) .. (h);
	
	\node [above right] at (a) {$x_0y^*_0$};
	\node [right] at (b) {$x_1y^*_0$};

	%\node [above left, xshift=-15pt, yshift=5pt] at (c) {$x_1y^*_1$};
	%\node [above left, xshift=-14pt, yshift=-2pt] at (e) {$x_2y^*_2$};

	\node [above right] at (d) {$x_1y^*_2$};
	
	\node [right] at (f) {$x_3y^*_2$};
	\node [below right] at (g) {$x_3y^*_3$};
	\node [above left] at (h) {$x_3y^*_4$};

	\node [below] at (i) {$x_4y^*_4$};

\end{tikzpicture}
\label{subfig:}
}
\hspace{25pt}
\subfigure[]{
\begin{tikzpicture}
	\filldraw[black!20!white] (-0.2, 2.8) rectangle (0.2, 5);
	\filldraw[black!20!white] (-0.2, 2.8) rectangle (1.2, 3.2);
	\shade[top color=white, bottom color=black!20!white] (-0.2, 5) rectangle (0.2, 6.25);
	\begin{scope}[thin, gray]
		\draw [<->] (-1, 0) -- (3, 0);
		\draw [<->] (0, -1) -- (0, 6);
	\end{scope}
	\filldraw (0.1, 3.15) circle (2pt) node[] (x_0y_0){};
	\filldraw (-0.1, 2.85) circle (2pt) node[] (u'_0){};
	\filldraw (0, 2) circle (2pt) node[] (x_y_2k+2){};v
	\filldraw (1, 3) circle (2pt) node[] (u_1){};
	\filldraw (-0.1, 5) circle (2pt) node[] (u_-1){};
	\draw [very thick, <-] (x_0y_0) -- (u_1);
	\draw [very thick, <-] (u'_0) -- (u_1);
	\draw [very thick, <-] (u'_0) -- (u_-1);
	\draw [very thick, <-] (x_y_2k+2) .. controls (-0.4, 3) and (-0.4, 4) .. (u_-1);
	\node [above right, xshift=2pt] at (x_0y_0) {$x_0y^*_0$};
	\node [right] at (u_1) {$u_1$};
	\node [below right] at (u'_0) {$u_0-x_0y^*_0$};
	\node [left] at (x_y_2k+2) {$x_{2k+2}y^*_{2k+2}$};
	\node [above] at (u_-1) {$u_{-1}$};	
	
	\draw [decorate,decoration={brace,amplitude=10pt,mirror}] (-0.4, 5) -- (-0.4, 2) node [midway, left, xshift=-8pt] {$m$};
	\draw [decorate,decoration={brace,amplitude=6pt}] (0.15, 5) -- (0.15, 3.2) node [midway, right, xshift=4pt] {$n$};
\end{tikzpicture}
\label{subfig:m>nL}
}
\caption{The summand involved in the proof of Lemma \ref{lem:m>n} when $k<\ell$. Left, when $k=1$ and $\ell=2$. The two unlabeled generators are $x_1y^*_1$ and $x_2y^*_2$. Right, the general case. We include the arrow to $x_{2k+2}y^*_{2k+2}$ out of subquotient $C\{S(1)\}$ to illustrate the roles of the relative lengths of $m$ and $n$.}
\label{fig:outofsubquotient}
\end{figure}
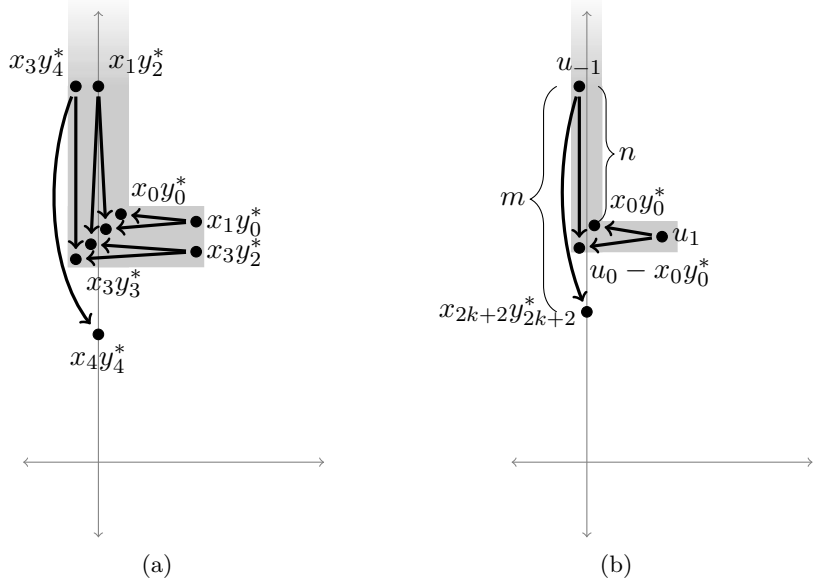

\begin{figure}[htb!]
\vspace{5pt}
\labellist
\small \hair 2pt
\endlabellist
\centering
\subfigure[]{
\begin{tikzpicture}
	\filldraw[black!20!white] (-0.3, 2.7) rectangle (0.3, 5);
	\filldraw[black!20!white] (-0.3, 2.7) rectangle (1.3, 3.3);
	\shade[top color=white, bottom color=black!20!white] (-0.3, 5) rectangle (0.3, 6.25);
	\begin{scope}[thin, gray]
		\draw [<->] (-3, 0) -- (2, 0);
		\draw [<->] (0, -1) -- (0, 6.6);
	\end{scope}
	\filldraw (0.2, 3.2) circle (2pt) node[] (a){};
	\filldraw (0, 3) circle (2pt) node[] (c){};
	\filldraw (-0.2, 2.8) circle (2pt) node[] (e){};
	
	\filldraw (1.2, 3.1) circle (2pt) node[] (b){};
	\filldraw (1.2, 2.8) circle (2pt) node[] (f){};
	
	\filldraw (-0.1, 6) circle (2pt) node[] (d){};
	
	\filldraw (-2.3, 2.8) circle (2pt) node[] (g){};
	
	\draw [very thick, <-] (a) -- (b);
	\draw [very thick, <-] (c) -- (b);
	\draw [very thick, <-] (c) -- (d);
	\draw [very thick, <-] (e) -- (d);
	\draw [very thick, <-] (e) -- (f);
	\draw [very thick, <-] (g) .. controls (-1, 2.5) and (0, 2.5) .. (f);
	
	\node [above right] at (a) {$x_0y^*_0$};
	\node [above right] at (b) {$x_1y^*_0$};

	\node [above right] at (d) {$x_1y^*_2$};
	
	\node [above left, yshift=2pt] at (c) {$x_1y^*_1$};	
	\node [above left, yshift=-3pt] at (e) {$x_2y^*_2$};
	
	\node [below right] at (f) {$x_3y^*_2$};
	\node [below left] at (g) {$x_3y^*_3$};

	%\draw [decorate,decoration={brace,amplitude=10pt,mirror}] (-2, 2.6) -- (1, 2.6) node [midway, below, yshift=-8pt] {$n$};	
\end{tikzpicture}
\label{subfig:}
}
\hspace{25pt}
\subfigure[]{
\begin{tikzpicture}
		\filldraw[black!20!white] (-0.2, 2.8) rectangle (0.2, 5);
	\filldraw[black!20!white] (-0.2, 2.8) rectangle (1.2, 3.2);
	\shade[top color=white, bottom color=black!20!white] (-0.2, 5) rectangle (0.2, 6.25);
	\begin{scope}[thin, gray]
		\draw [<->] (-3, 0) -- (2, 0);
		\draw [<->] (0, -1) -- (0, 6.6);
	\end{scope}
	\filldraw (0, 3) circle (2pt) node[] (u_0){};
	\filldraw (-2, 3) circle (2pt) node[] (x_2l+1y_2l+1){};
	\filldraw (1, 3) circle (2pt) node[] (u_1){};
	\draw [very thick, <-] (u_0) .. controls (.4, 3.1) and (0.6, 3.1) .. (u_1);
	\draw [very thick, <-] (x_2l+1y_2l+1) .. controls (-1, 2.8) and (0, 2.8) .. (u_1);
	\node [above] at (x_2l+1y_2l+1) {$x_{2\ell+1}y^*_{2\ell+1}$};
	\node [right] at (u_1) {$u_1$};
	\node [above] at (u_0) {$u_0$};
	
	\draw [decorate,decoration={brace,amplitude=10pt,mirror}] (-2, 2.75) -- (1, 2.75) node [midway, below, yshift=-8pt] {$n$};
(3.5,0.65) -- (3.5,6.5)
\end{tikzpicture}
\label{subfig:m>nR}
}
\caption{The chains involved in the proof of Lemma \ref{lem:m>n} when $\ell \leq k$. Left, when $k=\ell=1$. Right, the general case. We include the arrow to $x_{2\ell+1}y^*_{2\ell+1}$ out of the subquotient $C\{S(1)\}$ for emphasis.}
\label{fig:outofsubquotient2}
\end{figure}
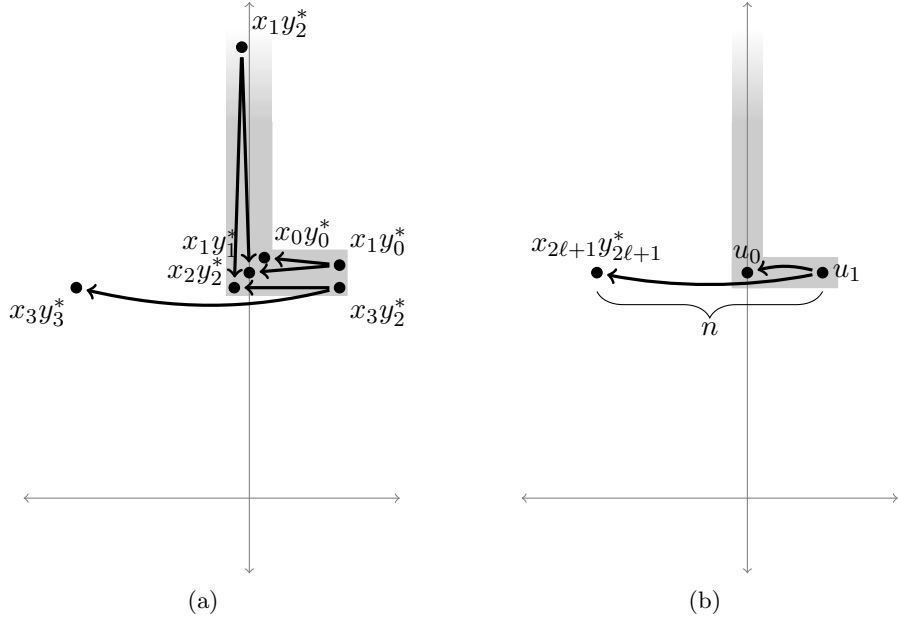

The complexes shown in Figures \ref{fig:outofsubquotient} and \ref{fig:outofsubquotient2} model two types of behavior that we will see again in the tensor products in this section. In Figure \ref{subfig:m>nL}, we identify a summand of the subquotient complex $C\{S(1)\}$. A key ingredient is the arrow from $u_{-1}$ to $x_{2k+2}y^*_{2k+2}$ which leaves the subquotient complex $C\{S(1)\}$, which allows us to find a generator of the vertical homology that is in the image of the horizontal differential on $C\{S(1)\}$. Similarly, in Figure \ref{subfig:m>nR}, we have an arrow from $u_1$ which leaves the complex $C\{S(1)\}$, allowing us to conclude that $u_0$ is in the image of the horizontal differential on $C\{S(1)\}$.

\begin{lemma}
\label{lem:mn}
Let $m$ and $n$ be positive integers such that $m<n$. Then
\[ ((1, n)^k, 1, m, \dots) = [(1, n)^k, (n, 1)^k] + (1, m, \ldots). \]
\end{lemma}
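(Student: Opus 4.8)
The plan is to reduce the asserted equality to a single computation of the invariant $\ba$, carried out in the same spirit as the proof of Lemma \ref{lem:m>n}. Let $C \in \mathfrak{C}$ satisfy $\ba(C) = ((1,n)^k, 1, m, \dots)$ with $0 < m < n$, and set $C' := C \otimes k C_n^{*}$, which again lies in $\mathfrak{C}$. Since $\cCFKa$ is a group and $[kC_n] + [kC_n^{*}] = 0$, the identity $[C] = k[C_n] + [C']$ holds tautologically; the content is to show that $\ba(C')$ begins with $(1, m)$. Granting that, Lemma \ref{lem:agg}, applied with $\ba(C_n) = [1,n,n,1]$ (whose first two terms are $(1,n)$) and $0 < m < n$, gives $C' \ll C_n$, while $a_1(C') = 1$ forces $\varepsilon(C') = 1 > 0$; thus $[C] = k[C_n] + [C']$ is exactly of the asserted form $[(1,n)^k,(n,1)^k] + (1,m,\dots)$. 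In particular only the first two terms of $\ba(C')$ are needed, so the unspecified tail of $\ba(C)$ plays no role.

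To compute $a_1(C')$ and $a_2(C')$ I would first fix a locally simplified basis $\{x_i\}$ for $C$ as furnished by Lemma \ref{lem:basis} and its refinements, containing elements $x_0, x_1, \dots, x_{2k+1}, x_{2k+2}$ with $x_0$ generating the vertical homology, horizontal arrows of length $1$ from $x_{2i-1}$ to $x_{2i-2}$ and vertical arrows of length $n$ from $x_{2i-1}$ to $x_{2i}$ for $1 \le i \le k$, a horizontal arrow of length $1$ from $x_{2k+1}$ to $x_{2k}$, and a vertical arrow of length $m$ from $x_{2k+1}$ to $x_{2k+2}$. By \cite[Lemma 3.1]{HancockHomNewman}, $kC_n$ is $\varepsilon$-equivalent to $[(1,n)^k,(n,1)^k]$, which carries a staircase basis $\{y_i\}_{i=0}^{4k}$; dualizing produces a basis $\{y_i^{*}\}$ with $y_0^{*}$ generating the vertical homology and with an outgoing horizontal arrow of length $1$ to $y_1^{*}$, exactly as in the proof of Lemma \ref{lem:m>n}. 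Working in $C \otimes kC_n^{*}$ with basis $\{x_i y_j^{*}\}$ (suppressing the tensor symbol), I would set
\[ u_0 = \sum_{i=0}^{2k} x_i y_i^{*}, \qquad u_1 = \sum_{\substack{i=0 \\ i \textup{ even}}}^{2k} x_{i+1} y_i^{*}, \]
and follow the $\ell \le k$ branch of the proof of Lemma \ref{lem:m>n}: one verifies that $u_0$ is a $\partial^{\vert}$-cycle generating the vertical homology of $C'$ (not a boundary, since $x_0 y_0^{*}$ is not), that $\partial^{\horz} u_1 = u_0 + x_{2k+1} y_{2k+1}^{*}$ with $u_0$ reached from $u_1$ by horizontal arrows of length $1$, and that $x_{2k+1} y_{2k+1}^{*}$ lies outside the subquotient $C\{S(1)\}$, so that there $\partial^{\horz}_1 u_1 = u_0$. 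This gives $\varepsilon(C') = 1$ and $a_1(C') = 1$. For $a_2(C')$ one then tracks the generator $x_{2k+2} y_{2k}^{*}$, the target of the vertical arrow of length $m$ out of $x_{2k+1} y_{2k}^{*}$; because $0 < m < n$, this is the shortest vertical arrow out of that generator and it lies in the region used to define the maps $H_{1,s}$, so examining how the image of $u_1$ and this generator interact in the enlarged subquotients yields that $H_{1,s}$ first becomes nontrivial at $s = m$, i.e. $a_2(C') = m$.

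The main obstacle is the bookkeeping in this last step: one must compute the precise $(i,j)$-filtration levels of the generators occurring in $u_0$, in $u_1$, and of $x_{2k+2} y_{2k}^{*}$, check that the incoming horizontal arrow to the vertical-homology class of $C'$ has length exactly $1$ with no shorter arrow intervening, check that the ensuing vertical arrow has length exactly $m$, and confirm that no diagonal arrows disturb either count. This is the same kind of analysis as in the proof of Lemma \ref{lem:m>n}, and the hypothesis $0 < m < n$ enters precisely here: where $m > n$ pushed the analogous escaping generator out of the relevant truncations (producing the domination $C' \gg C_n$ for every tensor power), the inequality $m < n$ instead keeps it inside, producing the finite second invariant $a_2(C') = m$ and hence the decomposition $[C] = k[C_n] + (1, m, \dots)$.
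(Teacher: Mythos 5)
Your proposal is correct and follows essentially the same route as the paper's own proof: the paper likewise tensors a locally simplified basis for $((1,n)^k,1,m,\dots)$ with the dual of the staircase $[(1,n)^k,(n,1)^k]$, forms the same chains $u_0$ and $u_1$, shows $u_0$ generates the vertical homology while $\partial^\horz u_1 = u_0 + x_{2k+1}y^*_{2k+1}$ with the extra term (and all vertical targets of length $n$) escaping the relevant subquotient, and reads off $a_1=1$ and $a_2=m$ from the length-$m$ vertical arrow to $x_{2k+2}y^*_{2k}$ inside a direct summand of $C\{S(1,m)\}$. The bookkeeping you defer at the end is precisely the list of filtration levels the paper records, so the two arguments coincide.
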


\begin{proof}
Consider bases $\{ x_i \}$ and $\{ y_i \}$ for $((1, n)^k, 1, m, \dots)$ and $[(1, n)^k, (n, 1)^k]$ respectively. We may dualize $\{ y_i \}$ to obtain $\{ y_i^* \}$, where $y_0^*$ generates the vertical homology and has an outgoing horizontal arrow to $y_1^*$.

Let 
\[ u_0 = \sum_{i=0}^{2k} x_i y_i^* \qquad \qquad u_1= \sum_{\substack{i=1 \\ i \ \odd}}^{2k+1} x_i y_{i-1}^*. \]
Observe the following:
\begin{enumerate}
	\item The chain $u_0$ generates the vertical homology. Indeed, we have that $\partial^\vert u_0 =0$, since 
	\begin{itemize}
		\item $\partial^\vert x_0 y_0^*=0$
		\item $\partial^\vert x_i y_i^* = x_{i+1}y_i^*$ for $i$ odd, $1\leq i \leq 2k-1$
		\item $\partial^\vert x_i y_i^* = x_i y_{i-1}^*$ for $i$ even, $2 \leq i \leq 2k$
	\end{itemize}
	and we are working modulo $2$. Moreover, note that $x_0 y_0^*$ is not in the image of $\partial^\vert$ and that 
	\begin{align*}
		u_0 - x_0 y_0^* &= \sum_{i=1}^{2k} x_i y_i^* \\
		&= \partial^\vert \sum_{\substack{i=1 \\ i \ \odd }}^{2k-1} x_i y_{i+1}^*,
	\end{align*}
	so it follows that $u_0$ is not in the image of $\partial^\vert$. Hence $u_0$ generates the vertical homology, as desired.
	\item We have that $\partial^\horz u_1 = u_0 + x_{2k+1}y_{2k+1}^*$, since
		\[ \partial^\horz x_iy^*_{i-1}=x_{i-1}y^*_{i-1}+x_iy^*_i, \ \textup{for } i \textup{ odd}, \ 1\leq i \leq 2k+1. \]
	\item We have that
		\[ \partial^\vert u_1 = \sum_{i=0}^{2k} x_{i+2} y_i^*, \]
	since 
		\[ \partial^\vert x_iy^*_{i-1} = x_{i+1}y^*_{i-1}, \ \textup{for } i \textup{ odd}, \ 1\leq i \leq 2k+1. \]
\end{enumerate}

Using these three facts, we may consider the direct summand of $C\{S(1,m)\}$ generated by the following generators in the filtration levels listed:
\begin{align*}
	& x_iy^*_i, \quad 0\leq i \leq 2k  &&(0, \tau_x-\tau_y)  \\
	& x_iy^*_{i-1}, \quad i \textup{ odd}, 1\leq i \leq 2k+1 &&(1, \tau_x-\tau_y) \\
	& x_{2k+2}y^*_{2k} && (1, \tau_x-\tau_y-m) \\
	& x_i y_{i+1}^*, \quad i \textup{ odd}, 1\leq i \leq 2k-1  &&(0, \tau_x-\tau_y+n).
\end{align*}

\noindent Note that $x_{2k+1} y^*_{2k+1}$ has filtration level $(1-n, \tau_x-\tau_y)$ and  $x_{i+2}y_i^*$ has filtration level $(1, \tau_x-\tau_y -n)$ for $0 \leq i \leq 2k-1$. In particular, these generators fall outside of $C\{S(1, m)\}$.
Thus, we have that $\partial_{1,m} u_1 = u_0+x_{2k+2}y_{2k} $, where $\partial_{1,m}$ denotes the induced differential on $C\{S(1,m)\}$. More specifically,
\[ \partial_{1,m}^\horz u_1 = u_0 \qquad \partial_{1,m}^\vert u_0 = x_{2k+2}y_{2k}. \]
See Figure \ref{fig:lemmn}.
Thus, it follows that $a_1$ of the tensor product is $1$ and $a_2$ of the tensor product is $m$, as desired.
\end{proof}

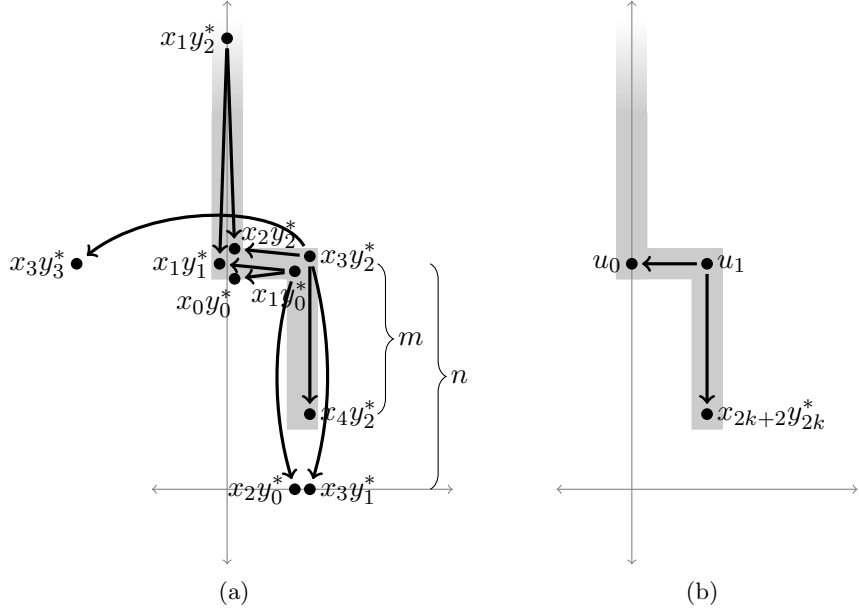
\begin{figure}[htb!]
\vspace{5pt}
\labellist
\small \hair 2pt
\endlabellist
\centering
\subfigure[]{
\begin{tikzpicture}
	\filldraw[black!20!white] (-0.2, 2.8) rectangle (0.2, 5);
	\filldraw[black!20!white] (-0.2, 2.8) rectangle (1.2, 3.2);
	\shade[top color=white, bottom color=black!20!white] (-0.2, 5) rectangle (0.2, 6.25);
	\filldraw[black!20!white] (0.8, 0.8) rectangle (1.2, 3.2);
	\begin{scope}[thin, gray]
		\draw [<->] (-1, 0) -- (3, 0);
		\draw [<->] (0, -1) -- (0, 6.5);
	\end{scope}
	\filldraw (0.1, 2.8) circle (2pt) node[] (x_0y_0){};
	\filldraw (-0.1, 3) circle (2pt) node[] (x_1y_1){};
	\filldraw (0.1, 3.2) circle (2pt) node[] (x_2y_2){};
	\filldraw (0.9, 2.9) circle (2pt) node[] (x_1y_0){};
	\filldraw (1.1, 3.1) circle (2pt) node[] (x_3y_2){};
	\filldraw (1.1, 1) circle (2pt) node[] (x_4y_2){};
	\filldraw (0, 6) circle (2pt) node[] (x_1y_2){};
	
	\filldraw (0.9, 0) circle (2pt) node[] (x_2y_0){};
	\filldraw (1.1, 0) circle (2pt) node[] (x_3y_1){};
	
	\filldraw (-2, 3) circle (2pt) node[] (x_3y_3){};
	
	\draw [very thick, <-] (x_0y_0) -- (x_1y_0);
	\draw [very thick, <-] (x_1y_1) -- (x_1y_0);
	\draw [very thick, <-] (x_1y_1) -- (x_1y_2);
	\draw [very thick, <-] (x_2y_2) -- (x_1y_2);
	\draw [very thick, <-] (x_2y_2) -- (x_3y_2);
	\draw [very thick, <-] (x_4y_2) -- (x_3y_2);
	
	\draw [very thick, <-] (x_2y_0) .. controls (0.6, 1) and (0.6, 2) .. (x_1y_0);
	\draw [very thick, <-] (x_3y_1) .. controls (1.4, 1) and (1.4, 2) .. (x_3y_2);
	
	\draw [very thick, <-] (x_3y_3) .. controls (-1, 3.8) and (0.7, 3.8) .. (x_3y_2);
	
	\node [below] at (-0.3, 2.8) {$x_0y^*_0$};
	\node [below] at (0.7, 2.9) {$x_1y^*_0$};
	\node [left] at (x_1y_1) {$x_1y^*_1$};
	\node [left] at (x_1y_2) {$x_1y^*_2$};
	\node [right] at (0.05, 3.4) {$x_2y^*_2$};
	\node [right] at (x_3y_2) {$x_3y^*_2$};
	\node [right] at (x_4y_2) {$x_4y^*_2$};
	
	\node [left] at (x_2y_0) {$x_2y^*_0$};
	\node [right] at (x_3y_1) {$x_3y^*_1$};
	
	\node [left] at (x_3y_3) {$x_3y^*_3$};
	
	\draw [decorate,decoration={brace,amplitude=6pt,mirror}] (2, 1) -- (2, 3) node [midway, right, xshift=4pt] {$m$};
	\draw [decorate,decoration={brace,amplitude=6pt,mirror}] (2.7, 0) -- (2.7, 3) node [midway, right, xshift=4pt] {$n$};	
	%\draw [decorate,decoration={brace,amplitude=6pt}] (-1.1, 3) -- (-1.1, 6) node [midway, left, xshift=-4pt] {$n$};	
\end{tikzpicture}
\label{subfig:}
}
\hspace{15pt}
\subfigure[]{
\begin{tikzpicture}
	\filldraw[black!20!white] (-0.2, 2.8) rectangle (0.2, 5);
	\filldraw[black!20!white] (-0.2, 2.8) rectangle (1.2, 3.2);
	\shade[top color=white, bottom color=black!20!white] (-0.2, 5) rectangle (0.2, 6.25);
	\filldraw[black!20!white] (0.8, 0.8) rectangle (1.2, 3.2);
	\begin{scope}[thin, gray]
		\draw [<->] (-1, 0) -- (3, 0);
		\draw [<->] (0, -1) -- (0, 6.5);
	\end{scope}
	
	\filldraw (0, 3) circle (2pt) node[] (u_0){};
	\filldraw (1, 3) circle (2pt) node[] (u_1){};
	\filldraw (1, 1) circle (2pt) node[] (xy){};
	
	\draw [very thick, <-] (u_0) -- (u_1);
	\draw [very thick, <-] (xy) -- (u_1);
	
	\node [left] at (u_0) {$u_0$};
	\node [right] at (u_1) {$u_1$};
	\node [right] at (xy) {$x_{2k+2}y^*_{2k}$};
	
\end{tikzpicture}
}
\caption{The summand in the proof of Lemma \ref{lem:mn}. Left, the case where $k=1$. Right, the general case.}
\label{fig:lemmn}
\end{figure}

%The elements $u_0$, $u_1$, and $x_{2k+2}y_{2k}$ have the following filtration levels:
%\begin{align*}
	%& u_0  &(0, \tau_x-\tau_y) \\
	%& u_1 &(1, \tau_x-\tau_y) \\
	%& x_{2k+2}y_{2k} & (1, \tau_x-\tau_y-m), %\\
	%& \sum_{i=1}^{2k-1} x_i y_{i+1}^*  &(0, \tau_x-\tau_y+n)
%\end{align*}

The next several lemmas %(Lemmas \ref{lem:mn0}-\ref{lem:last}) 
are similar to Lemmas \ref{lem:m>n} and \ref{lem:mn}, but concern the other cases of Lemma \ref{lem:forms}. More specifically, Lemma \ref{lem:mn0} completes case \ref{item:1n1m}, Lemmas \ref{lem:0nm}-\ref{lem:-m2} deal with case \ref{item:1nm}, Lemma \ref{lem:1n-1m} with case \ref{item:1n-1m}, Lemmas \ref{lem:1nn1m}-\ref{lem:1nn1mb} with case \ref{item:1nn1m}, Lemmas \ref{lem:1nnnm}-\ref{lem:1nnnmb} with case \ref{item:1nn1nm}, and Lemmas \ref{lem:1nkn1k}-\ref{lem:last} with \ref{item:1nn1c}.
Since the proofs of the lemmas are all similar, we advise the reader to skip to the proof of Proposition \ref{prop:ArchA} at the end of this section.

\begin{lemma}
\label{lem:mn0}
Let $m$ and $n$ be positve integers with $m > n$. Then 
\[ ((1, n)^k, 1, -m, \ldots) \gg [1, n, n, 1]. \]
\end{lemma}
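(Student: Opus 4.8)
The plan is to follow the template of Lemma~\ref{lem:m>n}. Fix $C_1 \in \mathfrak{C}$ with $\ba(C_1) = ((1,n)^k, 1, -m, \ldots)$ and $C_2 \in \mathfrak{C}$ with $\ba(C_2) = [1,n,n,1]$. Since $C_1$ and $C_2$ are positive, it suffices to prove that $\varepsilon(C_1 \otimes \ell C_2^*) = 1$ for every integer $\ell > 0$: this says $C_1 > \ell C_2$ for all $\ell$, which both forbids Archimedean equivalence of $C_1$ and $C_2$ and gives $[1,n,n,1] < C_1$, hence $C_1 \gg [1,n,n,1]$.

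First I would fix the bases. Since $-m < -1$ (in fact $-m < -n$ by Lemma~\ref{lem:sequences}), the entry $-m$ is the invariant $a'_{2k+2}(C_1)$ and $\ba^+(C_1) = ((1,n)^k, 1)$ has odd length $2k+1$; Lemma~\ref{lem:basis} then supplies a locally simplified basis $\{x_i\}$ for $C_1$ with distinguished elements $x_0, \ldots, x_{2k+2}$ forming the usual $((1,n)^k,1)$-staircase, which ends with a horizontal arrow $x_{2k+1} \to x_{2k}$ of length one, together with a vertical arrow of length $m$ from $x_{2k+2}$ \emph{down} to $x_{2k+1}$, and no other horizontal or vertical arrows touching $x_0, \ldots, x_{2k+1}$ and no other vertical arrows touching $x_{2k+2}$. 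For $\ell C_2$ I would use the $\varepsilon$-equivalent representative $[(1,n)^\ell, (n,1)^\ell]$ furnished by \cite[Lemma 3.1]{HancockHomNewman}, with its staircase basis $\{y_i\}_{i=0}^{4\ell}$, and dualize to $\{y_i^*\}$, where $y_0^*$ generates the vertical homology and carries an outgoing horizontal arrow to $y_1^*$ --- exactly the set-up of Lemma~\ref{lem:m>n}.

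Next, splitting into the cases $\ell \leq k$ and $k < \ell$ as in that lemma, I would form the chains
\[ u_0 = \sum_{i=0}^{2k+1} x_i y_i^*, \qquad u_1 = \sum_{\substack{i=0 \\ i\ \even}}^{2k} x_{i+1} y_i^* \]
(truncating both sums at $2\ell$ when $\ell \leq k$, exactly as in Lemma~\ref{lem:m>n}, where the tail element $x_{2k+2}$ never enters and the argument is literally the same). The point worth isolating is the structural difference from Lemma~\ref{lem:m>n}: there the tail arrow of length $m$ pointed \emph{out of} $x_{2k+1}$ (it was the $a$-value $a_{2k+2}$), so $\partial^\vert x_{2k+1} = x_{2k+2}$ and one had to correct $u_0$ by $x_{2k+2} y_{2k+2}^*$ and invoke $m > n$ to place that term outside $C\{S(1)\}$; here the tail arrow points \emph{into} $x_{2k+1}$, so $\partial^\vert x_{2k+1} = 0$, and a direct computation --- the two sums $\sum_{i\ \odd} x_{i+1} y_i^*$ and $\sum_{i\ \even} x_i y_{i-1}^*$ cancel mod $2$ --- shows $\partial^\vert u_0 = 0$ on the nose. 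Since moreover $u_0 - x_0 y_0^* = \partial^\vert\!\big(\sum_{i\ \odd} x_i y_{i+1}^* + x_{2k+2} y_{2k+1}^*\big)$ (using $x_{2k+1} y_{2k+1}^* = \partial^\vert(x_{2k+2} y_{2k+1}^*)$), the chain $u_0$ itself already generates the vertical homology of $C_1 \otimes \ell C_2^*$, with no correction term and no use of $m > n$. Finally $\partial^\horz u_1 = u_0$, with all resulting terms at filtration level $(0,\tau)$ and hence inside $C\{S(1)\}$, while every other term of $\partial u_1$ --- the elements $x_{i+2} y_i^*$ and $x_{i+1} y_{i-1}^*$, the diagonal contributions, and (when $\ell \leq k$) the term $x_{2\ell+1} y_{2\ell+1}^*$ --- lies strictly lower and to the right and so leaves $C\{S(1)\}$. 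Thus $u_0 = \partial_1^\horz u_1$ inside $C\{S(1)\}$, so the map $H_1$ of the tensor product is trivial and $\varepsilon(C_1 \otimes \ell C_2^*) = 1$.

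The only genuine work beyond transcribing Lemma~\ref{lem:m>n} is the filtration bookkeeping: checking precisely which $x_a y_b^*$ lie in $C\{S(1)\}$, confirming $u_0$ and $u_1$ do while the spurious boundary terms do not, together with the elementary verification that $\partial^\vert u_0 = 0$ in this orientation of the tail. Both are routine, so I do not anticipate a real obstacle; the content is mainly recognizing that reversing the tail arrow makes the argument \emph{shorter} than in Lemma~\ref{lem:m>n}, since $u_0$ is outright a vertical homology generator and the hypothesis $m>n$ (which, by Lemma~\ref{lem:sequences}, is in any case automatic once $a'_{2k+2}$ is defined) is never needed.
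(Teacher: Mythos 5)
Your proposal is correct and follows exactly the paper's approach: the same chains $u_0, u_1$ in the same two cases $\ell \le k$ and $k < \ell$, with the conclusion $\varepsilon(C_1 \otimes \ell C_2^*) = 1$ drawn from $u_0$ generating vertical homology and being hit by $\partial_1^{\textup{horz}}u_1$ inside $C\{S(1)\}$. Your observation that the tail arrow now points \emph{into} $x_{2k+1}$ (since $-m$ is $a'_{2k+2}$, not $a_{2k+2}$), so that $\partial^\vert u_0 = 0$ on the nose and no correction term $x_{2k+2}y_{2k+2}^*$ is needed --- and hence that the hypothesis $m>n$ is never used and is, by Lemma \ref{lem:sequences}, automatic anyway --- is accurate and is precisely why the paper's proof of this lemma is shorter than that of Lemma \ref{lem:m>n}.
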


%\noindent Note that this lemma corresponds to a subcase of Lemma \ref{lem:forms}\ref{item:1n1m}, although with different sign conventions.

\begin{proof}
Since the proofs are similar, we adopt the same notation used in the proof of Lemma \ref{lem:m>n}; namely, a locally simplified basis $\{x_i\}$ for the complex $C_1=((1, n)^k, 1, -m, \ldots)$ and a simplified basis $\{y_i\}$ for $[(1, n)^\ell, (n, 1)^\ell]$, which is $\varep$-equivalent to $\ell C_2 = \ell[1, n, n, 1]$. We would like to show that
\[ \varepsilon(C_1 \otimes \ell C^*_2) =1 \]
for all $\ell >0$.

We first consider the case $k < \ell$. Consider 
\[ u_0 = \sum_{i=0}^{2k+1} x_i y^*_i \qquad \qquad u_1 =\sum_{\substack{i=0 \\ i \ \even}}^{2k} x_{i+1}y^*_i,\]
and note that the chain $u_0$ is in the kernel of $\partial^\vert$.
Moreover, the chain $u_0$ is not in the image of $\partial^\vert$, since
\[ \partial^\vert \sum_{\substack{i=1 \\ i \ \odd}}^{2k+1} x_i y^*_{i+1} = u_0-x_0y^*_0, \]
and $x_0y^*_0$ is not in the image of $\partial^\vert$. Thus $u_0$ generates the vertical homology.
We also have that
\[ \partial^\horz u_1 = u_0. \]
We consider the summand of $C\{S(1)\}$ generated by
\begin{align*}
	& x_iy^*_i, \quad 0\leq i \leq 2k+1  &&(0, \tau_x-\ell \tau_y)  \\
	& x_{i+1}y^*_i, \quad i \textup{ even}, 0\leq i \leq 2k &&(1, \tau_x-\ell \tau_y) \\
	& x_i y_{i+1}^*, \quad i \textup{ odd}, 1\leq i \leq 2k+1  &&(0, \tau_x-\ell\tau_y+n),
\end{align*}
where $\tau_x=\tau(C_1)$ and $\tau_y=\tau(C_2)$. Since $u_0$, a generator for the vertical homology, is in the image of the horizontal boundary on $C\{S(1)\}$, it follows that $\varep(C_1 \otimes \ell C^*_2)=1$.

The case $\ell \leq k$ follows similarly; namely, let 
\[ u_0 = \sum_{i=0}^{2\ell} x_i y^*_i \qquad \qquad u_1 =\sum_{\substack{i=0 \\ i \ \even}}^{2\ell} x_{i+1}y^*_i.\]
Then $u_0$ generates the vertical homology and is equal to $\partial_1^\horz u_1$, where $\partial_1$ denotes the induced differential on $C\{S(1)\}$. Hence, $\varep(C_1 \otimes C^*_2)=1$.
\end{proof}

This concludes case \ref{item:1n1m} from Lemma \ref{lem:forms}. The next three lemma concern case \ref{item:1nm} from Lemma \ref{lem:forms}.

\begin{lemma} \label{lem:0nm}
Let $m$ and $n$ be positive integers such that $n<m$. Then 
\[ ((1, n)^k, m, \ldots) = [(1, n)^k, (n, 1)^k] - (n, \ldots). \]
\end{lemma}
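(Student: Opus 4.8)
The plan is to imitate the proofs of Lemmas~\ref{lem:m>n} and~\ref{lem:mn}. Fix a complex $C_1\in\mathfrak C$ with $\ba(C_1)=((1,n)^k,m,\ldots)$ and a complex $C_2$ with $\ba(C_2)=[1,n,n,1]$. In $\cCFKa$ we have
\[
[(1,n)^k,(n,1)^k]-\big((1,n)^k,m,\ldots\big)=k[C_2]-[C_1]=\big[(kC_2)\otimes C_1^*\big],
\]
and by \cite[Lemma~3.1]{HancockHomNewman} the complex $kC_2$ is $\varep$-equivalent to the fully simplified complex with basis $\{y_i\}_{i=0}^{4k}$ recorded by $[(1,n)^k,(n,1)^k]$, in which $y_{4k}$ generates the horizontal homology and $y_{2k}$ receives a horizontal arrow of length $n$ from $y_{2k+1}$, the first arrow of the $(n,1)^k$ block. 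Hence it suffices to show that $a_1\big((kC_2)\otimes C_1^*\big)=n$, the remaining entries of $\ba$ being immaterial to the statement. (Heuristically this is to be expected: the sequences $((1,n)^k,m,\ldots)$ and $[(1,n)^k,(n,1)^k]$ agree for $2k$ terms and first differ, at the odd position $2k+1$, by the former carrying the larger entry $m>n$; as in the proof of Lemma~\ref{lem:forms} this forces $[C_1]<k[C_2]$, and the lemma records the leading term of the positive difference $k[C_2]-[C_1]$.)

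To carry this out I would fix a locally simplified basis $\{x_i\}$ for $C_1$ as in Proposition~\ref{prop:basis}, dualize it to a basis $\{x^*_i\}$ for $C_1^*$, and work in the tensor complex with basis $\{y_ax^*_b\}$, suppressing the $\otimes$. Exactly as in Lemmas~\ref{lem:m>n} and~\ref{lem:mn}, set
\[
u_0=\sum_{i=0}^{2k}y_ix^*_i,\qquad u_1=\sum_{\substack{i=0\\ i\ \even}}^{2k}y_{i+1}x^*_i .
\]
Because $C_1$ and $kC_2$ share the same $(1,n)^k$ block, every term $y_ix^*_i$ of $u_0$ lies in column $i=0$ at Alexander level $\tau(kC_2)-\tau(C_1)=\tau\big((kC_2)\otimes C_1^*\big)$, and a mod-$2$ telescoping along the two zigzags gives $\partial^\vert u_0=0$; moreover $u_0$ is not in the image of $\partial^\vert$, since its term $y_0x^*_0$ receives no vertical arrow, so $u_0$ generates the vertical homology. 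A second telescoping gives $\partial^\horz u_1=u_0+y_{2k+1}x^*_{2k+1}$, where $y_{2k+1}x^*_{2k+1}$ lies in column $i=n-m<0$\,---\,precisely because the horizontal arrow $x_{2k+1}\to x_{2k}$ of $C_1$ has length $m>n$ while $y_{2k+1}\to y_{2k}$ has length $n$. On the other hand $u_1$ lies in the subquotient $C\{S(n)\}$ but in no $C\{S(s)\}$ with $s<n$, since its term $y_{2k+1}x^*_{2k}$ sits in column $i=n$; and the remaining terms of $\partial u_1$ land outside $C\{S(n)\}$. Thus $\partial_{S(n)}u_1=u_0$, so $[u_0]=0$ in $H_*(C\{S(n)\})$, which yields both $\varep\big((kC_2)\otimes C_1^*\big)=1$ and $a_1\big((kC_2)\otimes C_1^*\big)\le n$.

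It remains to prove $a_1\ge n$, that is, that $H_s$ is nontrivial for every $s<n$. Here I would compute $H_*(C\{S(s)\})$ directly from the generators it contains\,---\,those on the line $j=\tau$ with $0\le i\le s$, together with column $i=0$; one finds that the classes $[y_ix^*_i]$ all coincide, that the available boundaries identify them only in pairs, and hence that $[u_0]$ is the nonzero generator. I expect the real work to lie in this step, and in particular in ruling out a bounding chain for $u_0$ built from the unspecified tail of $\{x_i\}$: as in Lemmas~\ref{lem:m>n} and~\ref{lem:mn}, one checks that the tensor factors involving $x^*_b$ with $b>2k+1$ contribute only terms lying outside $C\{S(s)\}$ (again using $m>n$ and the shape of a locally simplified basis) or terms that can be removed by filtered changes of basis, so that no horizontal arrow of length less than $n$ reaches $u_0$ inside $C\{S(s)\}$. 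Granting this, $a_1\big((kC_2)\otimes C_1^*\big)=n$, i.e.\ $[(1,n)^k,(n,1)^k]-((1,n)^k,m,\ldots)=(n,\ldots)$, which rearranges to the asserted identity.
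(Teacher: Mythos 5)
Your argument is essentially the paper's own proof: the paper forms the same tensor product $[(1,n)^k,(n,1)^k]\otimes((1,n)^k,m,\ldots)^*$, uses the same chains $u_0$ and $u_1$ (up to relabeling of the two factors), and likewise observes that the cross term in $\partial^\horz u_1$ exits the subquotient because $m>n$, so that $u_0$ generates the vertical homology and equals $\partial^\horz_n u_1$ with $u_1$ in horizontal filtration level $n$. The lower bound $a_1\geq n$ that you defer is handled in the paper simply by noting that the listed generators span a direct summand of $C\{S(n)\}$, within which no horizontal arrow of length less than $n$ can reach $u_0$.
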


\begin{proof} 
%The proof is similar to the proof of Lemma \ref{lem:m>n}. 
Let $C_1$ be a complex with $\ba(C_1)=[(1, n)^k, (n, 1)^k]$ and $C_2$ a complex with $\ba(C_2)=((1, n)^k, m, \ldots)$. 
Let $\{x_i\}$ and $\{y_i\}$ be locally simplified bases for $C_1$ and $C_2$ respectively. Consider $C_1 \otimes C^*_2$. Let
\[ u_0 = \sum_{i=0}^{2k} x_i y^*_i \qquad \qquad u_1 = \sum_{\substack{i=1 \\ i \ \odd}}^{2k+1} x_i y^*_{i-1}. \]
The elements
\begin{align*}
	& x_iy^*_i, \quad 0\leq i \leq 2k  &&(0, \tau_x-\tau_y)  \\
	& x_iy^*_{i-1}, \quad i \textup{ odd}, 1\leq i \leq 2k-1 &&(1, \tau_x-\tau_y) \\
	& x_{2k+1}y^*_{2k}  &&(n, \tau_x-\tau_y) \\
	& x_i y_{i+1}^*, \quad i \textup{ odd}, 1\leq i \leq 2k-1  &&(0, \tau_x-\tau_y+n).
\end{align*}
generate a summand of $C\{S(n)\}$, where $\tau_x = \tau(C_1)$ and $\tau_y=\tau(C_2)$.
The chain $u_0$ generates the vertical homology and $\partial_n^\horz u_1 = u_0$, where $\partial_n$ denotes the induced differential on $C\{S(n)\}$.  Moreover, the filtration level of $u_0$ is $(0, \tau_x -\tau_y)$ and of $u_1$ is $(n, \tau_x-\tau_y)$. Hence $a_1(C_1 \otimes C^*_2)=n$.
\end{proof}

\begin{lemma} \label{lem:fortorusknots}
Let $m$ and $n$ be positive integers such that $m<n$. Then 
\[ ((1, n)^k, m, \ldots) = [(1, n)^k, (n, 1)^k] + (m, \ldots). \]
\end{lemma}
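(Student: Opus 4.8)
The plan is to mirror the structure of the proof of Lemma \ref{lem:mn}, since the hypothesis $m<n$ puts us in the regime where the tensor product splits as a "leading block" $[(1,n)^k,(n,1)^k]$ plus a lower-order tail $(m,\ldots)$, rather than forcing $\varep=1$. Let $C_1$ be a complex with $\ba(C_1)=[(1,n)^k,(n,1)^k]$, let $C_2$ be a complex with $\ba(C_2)=((1,n)^k,m,\ldots)$, and fix locally simplified bases $\{x_i\}$ for $C_1$ (via Lemma \ref{lem:basis}/Lemma \ref{lem:basis2}) and $\{y_i\}$ for $C_2$. Dualize to get $\{y_i^*\}$, where $y_0^*$ generates the vertical homology of $C_2^*$ and has an outgoing horizontal arrow $y_0^*\to y_1^*$; here the key structural input is that in $C_2$ the element $y_0$ sits at the base of a chain $y_0\xleftarrow{\horz}y_1$, $y_1\xleftarrow{\vert,n}y_2$, $y_2\xleftarrow{\horz}y_3$, \ldots, $y_{2k}\xleftarrow{\vert,m}y_{2k+1}$, so dualizing reverses all arrows and both filtration directions.

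The main computation is to exhibit, inside $C\{S(m)\}$ (the subquotient governing whether $a_1$ of the tensor product equals $m$), a summand in which a generator of the vertical homology is hit by the horizontal differential. I would set
\[ u_0 = \sum_{i=0}^{2k} x_i y_i^* \qquad\qquad u_1 = \sum_{\substack{i=1\\ i\ \odd}}^{2k+1} x_i y_{i-1}^*, \]
exactly as in Lemma \ref{lem:mn}, and verify the three facts: (1) $\partial^\vert u_0 = 0$ and $u_0$ is not a vertical boundary (using $u_0 - x_0y_0^* = \partial^\vert \sum_{i\ \odd} x_i y_{i+1}^*$ and that $x_0y_0^*$ is not a vertical boundary), so $u_0$ generates the vertical homology; (2) $\partial^\horz u_1 = u_0 + x_{2k+1}y_{2k+1}^*$; (3) $\partial^\vert u_1 = \sum_i x_{i+2}y_i^*$. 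Then I would list the relevant generators with their filtration levels: $x_iy_i^*$ at $(0,\tau_x-\tau_y)$, $x_iy_{i-1}^*$ at $(m,\tau_x-\tau_y)$ for $i$ odd, $x_iy_{i+1}^*$ at $(0,\tau_x-\tau_y+n)$, and check that the "escaping" generators $x_{2k+1}y_{2k+1}^*$ (at horizontal coordinate $1-n<m-?$... more precisely at $(m-n,\tau_x-\tau_y)$ after suitable normalization, hence $<m$ in the horizontal direction since $n>0$) and $x_{i+2}y_i^*$ (at vertical coordinate lowered by $n$) fall outside $C\{S(m)\}$. The crucial point is that $x_{2k+1}y_{2k+1}^*$ leaves $C\{S(m)\}$ because its horizontal filtration level is strictly less than $m$ — this is where the hypothesis $m<n$ enters, in contrast to the $m>n$ case of Lemma \ref{lem:m>n} — so that $\partial^\horz_m u_1 = u_0$ in $C\{S(m)\}$, showing $a_1$ of the tensor product is at most $m$, and the reverse inequality follows because nothing of smaller horizontal length maps onto the vertical generator.

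I expect the main obstacle to be bookkeeping the filtration levels precisely enough to confirm that $a_1(C_1\otimes C_2^*)=m$ exactly (not merely $\leq m$), i.e.\ that no horizontal arrow of length $<m$ carries a vertical homology generator — this requires knowing that the chain $u_0$, after projecting to $C\{S(s)\}$ for $s<m$, is genuinely not a horizontal boundary there, which follows from the structure of the locally simplified bases and the fact that $a_{2k+1}(C_2)=m$ is minimal with $H_{\ldots}$ having the relevant (non)triviality. Once $a_1(C_1\otimes C_2^*)=m$ is established, the statement $((1,n)^k,m,\ldots)=[(1,n)^k,(n,1)^k]+(m,\ldots)$ follows by interpreting the identity in $\cCFKa$ via the relation $C_1^* \otimes C_2 \sim$ a complex whose $\ba$ begins $(m,\ldots)$; alternatively, since this is an identity of Archimedean data up to domination, I would phrase the conclusion as: $C_2 - C_1$ has $\ba$ beginning with $m$, hence $C_2 = C_1 + (\text{something with }\ba\text{ beginning }m)$, which is precisely the claimed equality. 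I would keep the diagram (as in Figure \ref{fig:lemmn}) to make the escaping-arrow mechanism visually transparent.
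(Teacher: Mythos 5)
Your overall strategy is the right one — mirror Lemma~\ref{lem:mn}, define $u_0$ and $u_1$, exhibit a direct summand of $C\{S(m)\}$ in which $u_0$ generates the vertical homology and lies in the image of the horizontal differential, and locate the term that escapes the subquotient. However, you have swapped the roles of $C_1$ and $C_2$ relative to the paper, and this is not a cosmetic relabeling: it is a sign error that breaks the computation and, even if the bookkeeping had gone through, would prove the statement with the wrong sign.

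You set $C_1 = [(1,n)^k,(n,1)^k]$ with basis $\{x_i\}$, and $C_2 = ((1,n)^k,m,\ldots)$ with basis $\{y_i\}$, then dualize $\{y_i^*\}$. With these conventions, $x_{2k+1}$ carries a horizontal offset of $+n$ from $x_{2k}$ (since $a_{2k+1}=n$ in $[(1,n)^k,(n,1)^k]$), while $y_{2k+1}^*$ carries a horizontal offset of $-m$ from $y_{2k}^*$ (since $a_{2k+1}=m$ in $((1,n)^k,m,\ldots)$ and dualizing flips signs). Hence $x_{2k+1}y_{2k+1}^*$ sits at $(n-m,\tau_x-\tau_y)$, not at $(m-n,\tau_x-\tau_y)$ as you claim. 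Because $m<n$, this coordinate $n-m$ is \emph{positive}, so the term need not escape $C\{S(m)\}$ at all, and the ``escaping-arrow'' mechanism that is supposed to force $\partial^\horz_m u_1 = u_0$ is broken. Compounding this, with your conventions $x_{2k+1}y_{2k}^*$ sits at $(n,\tau_x-\tau_y)$ rather than $(m,\tau_x-\tau_y)$, so the chain $u_1$ has horizontal filtration $n$, not $m$; you would (at best) be computing in $C\{S(n)\}$. Finally, even supposing the bookkeeping had worked, concluding $a_1(C_1\otimes C_2^*)=m$ with your labeling would mean $C_1-C_2 = (m,\ldots)$, i.e.\ $[(1,n)^k,(n,1)^k] - ((1,n)^k,m,\ldots) = (m,\ldots)$, which is the \emph{opposite} of the lemma. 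This lemma is the symmetric partner of Lemma~\ref{lem:0nm}: when $m>n$ the block $[(1,n)^k,(n,1)^k]$ is the larger of the two complexes and one gets a minus sign, while here (where $m<n$) the complex $((1,n)^k,m,\ldots)$ is larger — so you must take $C_1 = ((1,n)^k,m,\ldots)$ as the paper does, giving $x_{2k+1}y_{2k}^*$ at $(m,\cdot)$ and $x_{2k+1}y_{2k+1}^*$ at $(m-n,\cdot)$, which is negative and genuinely escapes. A secondary slip: in your sketch of the staircase of $C_2$, the even-index arrows should point from $y_{i-1}$ to $y_i$, and the $(2k+1)$st arrow, being odd, is horizontal (not vertical) of length $m$; re-deriving the locally simplified picture from Proposition~\ref{prop:basis} before tensoring would have caught the sign error above.
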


\begin{proof}
%The proof is similar to that of Lemma \ref{lem:mn}.
Let $C_1$ be a complex with $\ba(C_1)=((1, n)^k, m, \ldots)$ and $C_2$ a complex with $\ba(C_2)=[(1, n)^k, (n, 1)^k]$. Let $\{x_i\}$ and $\{y_i\}$ be locally simplified bases for $C_1$ and $C_2$ respectively. Consider $C_1 \otimes C^*_2$. Let
\[ u_0 = \sum_{i=0}^{2k} x_i y^*_i \qquad \qquad u_1 = \sum_{\substack{i=1 \\ i \ \odd}}^{2k+1} x_i y^*_{i-1}. \]
The elements
\begin{align*}
	& x_iy^*_i, \quad 0\leq i \leq 2k  &&(0, \tau_x-\tau_y)  \\
	& x_iy^*_{i-1}, \quad i \textup{ odd}, 1\leq i \leq 2k-1 &&(1, \tau_x-\tau_y) \\
	& x_{2k+1}y^*_{2k}  &&(m, \tau_x-\tau_y) \\
	& x_i y_{i+1}^*, \quad i \textup{ odd}, 1\leq i \leq 2k-1  &&(0, \tau_x-\tau_y+n).
\end{align*}
form a direct summand of $C\{S(m)\}$ and $\partial^\horz_m u_1 = u_0$, where $\partial_m$ is the differential on $C\{S(m)\}$. The chain $u_0$ generates the vertical homology. The filtration level of $u_0$ is $(0, \tau_x-\tau_y)$ and that of $u_1$ is $(m, \tau_x-\tau_y)$. Thus, $a_1(C_1 \otimes C^*_2)=m$.
\end{proof}

\begin{lemma} \label{lem:-m2}
Let $m$ and $n$ be positive integers with $m>1$. Then 
\[ ((1, n)^k, -m, \ldots) = [(1, n)^k, (n, 1)^k] - (\textup{min}\{m, n\}, \ldots). \]
\end{lemma}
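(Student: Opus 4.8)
The plan is to imitate the proofs of Lemmas~\ref{lem:0nm} and~\ref{lem:fortorusknots}: exhibit an explicit direct summand of a subquotient complex of a tensor product and read off $a_1$. Let $C_1$ be the staircase complex with $\ba(C_1)=[(1,n)^k,(n,1)^k]$, so that $[C_1]=k[1,n,n,1]$ by \cite[Lemma 3.1]{HancockHomNewman}, and let $C_2$ be a complex with $\ba(C_2)=((1,n)^k,-m)$, where $m>1$. Since $[C_1]-[C_2]=[C_1\otimes C_2^*]$ in $\cCFKa$, it suffices to prove that $a_1(C_1\otimes C_2^*)=\min\{m,n\}$; this gives $[C_2]=k[1,n,n,1]-(\min\{m,n\},\ldots)$, which is the assertion. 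Fix the standard staircase basis $\{x_i\}_{i=0}^{4k}$ for $C_1$ (with $x_0$ generating the vertical homology), the basis $\{y_i\}$ for $C_2$ from Lemma~\ref{lem:basis} with distinguished elements $y_0,\ldots,y_{2k+1}$ (so the only length-$m$ arrow is $\partial^\horz y_{2k}=y_{2k+1}$), and the dual basis $\{y_i^*\}$ of $C_2^*$, where $y_0^*$ generates the vertical homology and has a length-one outgoing horizontal arrow to $y_1^*$. Observe that, since the staircases of $C_1$ and $C_2$ agree through index $2k$, the element $x_iy_i^*$ lies in filtration level $(0,\tau_x-\tau_y)$ for every $0\le i\le 2k$, where $\tau_x=\tau(C_1)$ and $\tau_y=\tau(C_2)$.

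First I would pin down the generator of the vertical homology of $C_1\otimes C_2^*$. Setting $u_0=\sum_{i=0}^{2k}x_iy_i^*$, the staircase differentials give $\partial^\vert u_0=0$ and $u_0-x_0y_0^*=\partial^\vert\!\big(\sum_{i\ \odd,\,1\le i\le 2k-1}x_iy_{i+1}^*\big)$. Since $x_0\otimes y_0^*$ generates the vertical homology of $C_1\otimes C_2^*$ by the K\"unneth formula (in particular it is not a $\partial^\vert$-boundary), the chain $u_0$ generates it as well.

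Next I would realize $u_0$ as a horizontal boundary inside $C\{S(\min\{m,n\})\}$. If $m\ge n$, take $u_1=\sum_{i\ \odd,\,1\le i\le 2k+1}x_iy_{i-1}^*$; its terms lie in filtration level $(1,\tau_x-\tau_y)$, except for $x_{2k+1}y_{2k}^*$, which lies in $(n,\tau_x-\tau_y)$. If $m\le n$, take instead $u_1=\sum_{i\ \odd,\,1\le i\le 2k-1}x_iy_{i-1}^*+x_{2k}y_{2k+1}^*$, whose exceptional term $x_{2k}y_{2k+1}^*$ lies in $(m,\tau_x-\tau_y)$ because $\partial^\horz y_{2k+1}^*=y_{2k}^*$ has length $m$. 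As in Lemma~\ref{lem:0nm}, I would list the generators $x_iy_i^*$, $x_iy_{i-1}^*$, $x_iy_{i+1}^*$, together with $x_{2k+1}y_{2k}^*$ (resp.\ $x_{2k}y_{2k+1}^*$), with their filtration levels, and check that every other basis element of $C_1\otimes C_2^*$ that could appear in the differential of one of these (such as $x_{i+1}y_{i-1}^*$, or any term produced by a diagonal or vertical arrow into some $y_{i-1}$ or into $y_{2k+1}$, which Lemma~\ref{lem:basis} permits) has Alexander grading strictly below $\tau_x-\tau_y$ or $w$-coordinate outside $\{0\}\cup\{1,\ldots,\min\{m,n\}\}$, hence falls outside $C\{S(\min\{m,n\})\}$. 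Thus these generators span a direct summand of $C\{S(\min\{m,n\})\}$, and in it one computes $\partial^\horz u_1=u_0$.

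This yields $a_1(C_1\otimes C_2^*)\le\min\{m,n\}$. For the reverse inequality I would restrict the same summand to $C\{S(s)\}$ for $s<\min\{m,n\}$: the exceptional term ($x_{2k+1}y_{2k}^*$ or $x_{2k}y_{2k+1}^*$) now disappears, and in the truncated summand --- whose only generators with $w$-coordinate $0$ and Alexander grading $\tau_x-\tau_y$ are the $x_iy_i^*$, which are killed only in consecutive pairs by the $x_iy_{i-1}^*$ and $x_iy_{i+1}^*$ --- the class $[u_0]$ is nonzero; since this is a direct summand, $[u_0]\ne 0$ in $H_*(C\{S(s)\})$. Hence $a_1(C_1\otimes C_2^*)=\min\{m,n\}$, as desired. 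The main obstacle is the bookkeeping in the previous paragraph --- verifying that the possibly-present diagonal and vertical arrows near $y_{2k+1}$ and into the $y_i$ really do push their contributions out of $C\{S(\min\{m,n\})\}$, and keeping the cases $m\ge n$ and $m\le n$ (the source of the $\min$) straight --- but this is entirely parallel to the corresponding verifications in Lemmas~\ref{lem:m>n}, \ref{lem:mn}, and~\ref{lem:0nm}.
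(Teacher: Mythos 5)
Your proposal is correct and follows essentially the same route as the paper: the paper also tensors a locally simplified basis for $[(1,n)^k,(n,1)^k]$ with the dual of a basis for $((1,n)^k,-m,\ldots)$, uses the same cycle $u_0=\sum_{i=0}^{2k}x_iy_i^*$ generating the vertical homology, and the same two choices of $u_1$ (ending in $x_{2k+1}y_{2k}^*$ at $w$-level $n$ when $m\ge n$, and in $x_{2k}y_{2k+1}^*$ at $w$-level $m$ when $m\le n$) inside a direct summand of $C\{S(\min\{m,n\})\}$ to read off $a_1(C_1\otimes C_2^*)=\min\{m,n\}$. Your explicit lower-bound check via $C\{S(s)\}$ for $s<\min\{m,n\}$ and your handling of the boundary case $m=n$ only spell out details the paper leaves implicit.
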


\begin{proof}
%Again, the proof is similar to that of Lemma \ref{lem:mn}.
Let $C_1$ be a complex with $\ba(C_1)=[(1, n)^k, (n, 1)^k]$ and $C_2$ a complex with $\ba(C_2) = ((1, n)^k, -m, \ldots)$. Let $\{x_i\}$ and $\{y_i\}$ be a locally simplified bases for $C_1$ and $C_2$ respectively. Consider $C_1 \otimes C^*_2$.

Suppose $n<m$. Then let
\[ u_0 = \sum_{i=0}^{2k} x_i y^*_i \qquad \qquad u_1 = \sum_{\substack{i=1 \\ i \ \odd}}^{2k+1} x_i y^*_{i-1}. \]
The elements
\begin{align*}
	& x_iy^*_i, \quad 0\leq i \leq 2k  &&(0, \tau_x-\tau_y)  \\
	& x_iy^*_{i-1}, \quad i \textup{ odd}, 1\leq i \leq 2k-1 &&(1, \tau_x-\tau_y) \\
	& x_{2k+1}y^*_{2k}  &&(n, \tau_x-\tau_y) \\
	& x_i y_{i+1}^*, \quad i \textup{ odd}, 1\leq i \leq 2k-1  &&(0, \tau_x-\tau_y+n).
\end{align*}
form a direct summand of $C\{S(n)\}$, where $\tau_x = \tau(C_1)$ and $\tau_y=\tau(C_2)$. The chain $u_0$ generates the vertical homology and $\partial^\horz u_1 = u_0$. The filtration level of $u_0$ is $(0, \tau_x-\tau_y)$ and that of $u_1$ is $(n, \tau_x-\tau_y)$. Thus, $a_1(C_1 \otimes C^*_2)=n$.

Now suppose that $n>m$. Then let
\[ u_0 = \sum_{i=0}^{2k} x_i y^*_i \qquad \qquad u_1 = x_{2k}y_{2k+1} + \sum_{\substack{i=1 \\ i \ \odd}}^{2k-1} x_i y^*_{i-1}. \]
The elements
\begin{align*}
	& x_iy^*_i, \quad 0\leq i \leq 2k  &&(0, \tau_x-\tau_y)  \\
	& x_iy^*_{i-1}, \quad i \textup{ odd}, 1\leq i \leq 2k-1 &&(1, \tau_x-\tau_y) \\
	& x_{2k}y^*_{2k+1}  &&(m, \tau_x-\tau_y) \\
	& x_i y_{i+1}^*, \quad i \textup{ odd}, 1\leq i \leq 2k-1  &&(0, \tau_x-\tau_y+n).
\end{align*}
form a direct summand of $C\{S(m)\}$, where $\tau_x = \tau(C_1)$ and $\tau_y=\tau(C_2)$. The chain $u_0$ generates the vertical homology and $\partial^\horz u_1 = u_0$. The filtration level of $u_0$ is $(0, \tau_x-\tau_y)$ and that of $u_1$ is $(m, \tau_x-\tau_y)$. Thus, $a_1(C_1 \otimes C^*_2)=m$.
\end{proof}

\noindent We now consider case \ref{item:1n-1m} from Lemma \ref{lem:forms}.

\begin{lemma}\label{lem:1n-1m}
Let $m$ and $n$ be positive integers such that $m<n$. Then 
\[ ((1, n)^k, -1, -m, \ldots)=[(1, n)^k, (n, 1)^k] - (1, m, \ldots). \]
\end{lemma}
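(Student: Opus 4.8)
The plan is to prove the identity by a tensor‑product computation in the style of Lemma~\ref{lem:mn}. Choose $C_1,C_2\in\mathfrak{C}$ with $\ba(C_1)=[(1,n)^k,(n,1)^k]$ and $\ba(C_2)=((1,n)^k,-1,-m,\ldots)$, noting that $a'_{2k+2}(C_2)$ is defined by Lemma~\ref{lem:sequences2}. Since $[C_1]=k[1,n,n,1]$ by \cite[Lemma 3.1]{HancockHomNewman} and $[C_1\otimes C_2^*]=[C_1]-[C_2]$ in $\cCFKa$, it suffices to show that $\ba(C_1\otimes C_2^*)$ begins with $(1,m,\ldots)$; that is, $a_1(C_1\otimes C_2^*)=1$ and $a_2(C_1\otimes C_2^*)=m$. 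Fix the staircase basis $\{x_i\}_{i=0}^{4k}$ for $C_1$ from Proposition~\ref{prop:basis} and the basis $\{y_i\}_{i=0}^{2k+2}$ for $C_2$ from Lemma~\ref{lem:basis2}, and dualize to get $\{y_i^*\}$; then $y_0^*$ generates the vertical homology of $C_2^*$ and carries a single outgoing horizontal arrow, of length $1$, to $y_1^*$. Since $C_1$ and $C_2$ both begin with the $(1,n)^k$ staircase, $x_i$ and $y_i$ occupy the same filtration level for $0\le i\le 2k$, so each $x_iy_i^*$ has $w$-coordinate $0$ and Alexander coordinate $\tau:=\tau(C_1)-\tau(C_2)=\tau(C_1\otimes C_2^*)$; I would record the positions of $x_iy_{i-1}^*$, $x_iy_{i+1}^*$, $x_{2k}y_{2k+1}^*$, and $x_{2k}y_{2k+2}^*$ at the same time.

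Now set
\[
 u_0=\sum_{i=0}^{2k}x_iy_i^*,\qquad
 u_1=x_{2k}y_{2k+1}^*+\sum_{\substack{1\le i\le 2k-1\\ i\ \odd}}x_iy_{i-1}^*.
\]
Working over $\F$, the computations to carry out are: (i) $\partial^\vert u_0=0$, and $u_0$ is not a $\partial^\vert$-boundary --- indeed $\partial^\vert\!\left(\sum_{\substack{1\le i\le 2k-1\\ i\ \odd}}x_iy_{i+1}^*\right)=u_0-x_0y_0^*$ and $x_0y_0^*$ is not a $\partial^\vert$-boundary --- so $u_0$ generates the vertical homology and sits at Alexander coordinate $\tau$; (ii) $\partial^\horz u_1=u_0$ exactly (the $x$-part of each $\partial^\horz(x_iy_{i-1}^*)$ supplies an even-indexed $x_jy_j^*$, the $y^*$-part an odd-indexed one, and $\partial^\horz(x_{2k}y_{2k+1}^*)=x_{2k}y_{2k}^*$ supplies the last term); (iii) $\partial^\vert u_1=x_{2k}y_{2k+2}^*+z$, where $x_{2k}y_{2k+2}^*$ has coordinates $(1,\tau-m)$ and every term of $z$ has coordinates $(1,\tau-n)$; and (iv) any diagonal term of $\partial u_1$ has $w$-coordinate $\le 0$ and Alexander coordinate $<\tau$.

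Granting (i)--(iv), the conclusion follows as in Lemma~\ref{lem:mn}. For $0\le s<m$, every term of $\partial u_1$ other than $u_0$ lies outside $C\{S(1,s)\}$ --- the terms of $z$ and $x_{2k}y_{2k+2}^*$ because $\tau-n<\tau-m\le\tau-s$ (this is where $m<n$ is used), the diagonal terms by (iv) --- so $\partial u_1=u_0$ in $C\{S(1,s)\}$. Hence $H_1$ and each $H_{1,s}$ with $s<m$ are trivial, while $H_0$ is nontrivial by the definition of $\tau$, so $a_1=1$ and $a_2\ge m$. For $s=m$ the generator $x_{2k}y_{2k+2}^*$ enters $C\{S(1,m)\}$ and $\partial u_1=u_0+x_{2k}y_{2k+2}^*$ there, so to conclude $a_2=m$ I would show that $[u_0]\ne 0$ in $H_*(C\{S(1,m)\})$ by exhibiting the direct summand of $C\{S(1,m)\}$ spanned by the generators $x_iy_i^*$ ($0\le i\le 2k$), $x_iy_{i-1}^*$ and $x_iy_{i+1}^*$ ($i$ odd, $1\le i\le 2k-1$), $x_{2k}y_{2k+1}^*$, and $x_{2k}y_{2k+2}^*$ --- the $x_iy_{i+1}^*$ lying directly above the $u_0$-column, at Alexander coordinate $\tau+n$ --- and computing that its homology is $\F$, generated by the common class $[x_0y_0^*]=\dots=[x_{2k}y_{2k}^*]=[x_{2k}y_{2k+2}^*]$; since $2k+1$ is odd this class is $[u_0]$, so $H_{1,m}$ is nontrivial.

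The main work is bookkeeping rather than a new idea: dualizing $C_2$ turns its $a'_{2k+1}=-1$, $a'_{2k+2}=-m$ tail into the extra generators $y_{2k+1}^*,y_{2k+2}^*$, so one must carefully track which tensor generators $x_iy_j^*$ enter $u_0$, $u_1$, and the governing summand, and must pin down the length-$m$ descending arrow $\partial^\vert(x_{2k}y_{2k+1}^*)=x_{2k}y_{2k+2}^*$, which is precisely what forces $a_2=m$. The hypothesis $m<n$ enters only in (iii), to keep the $\tau-n$ terms of $\partial u_1$ below the slit that is added in passing from $C\{S(1)\}$ to $C\{S(1,m)\}$; were $n\le m$ these terms would re-enter the subquotient and the answer would change, consistent with the bound $|a'_{2k+2}|<n$ from Lemma~\ref{lem:sequences2}.
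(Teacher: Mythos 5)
Your proposal is correct and follows essentially the same route as the paper's proof: the same choice of $C_1$ and $C_2$, the same chains $u_0=\sum_{i=0}^{2k}x_iy_i^*$ and $u_1=x_{2k}y_{2k+1}^*+\sum_{i\ \odd}x_iy_{i-1}^*$, the same direct summand of $C\{S(1,m)\}$, and the same conclusion $\partial_{1,m}u_1=u_0+x_{2k}y_{2k+2}^*$ giving $a_1=1$ and $a_2=m$. The only difference is that you spell out the verifications (triviality of $H_{1,s}$ for $s<m$, nontriviality at $s=m$ via the homology of the summand) that the paper leaves implicit.
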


\begin{proof}
%The proof is similar to that of Lemma \ref{lem:mn}.
Let $C_1$ be of the form $[(1, n)^k, (n, 1)^k]$ with simplified basis $\{x_i\}$ and $C_2$ be of the form $((1, n)^k, -1, -m, \ldots)$ with locally simplified basis $\{y_i\}$. We will show that $C_1 \otimes C^*_2$ is of the form $(1, m, \ldots)$.

Let
\[ u_0 = \sum_{i=0}^{2k} x_i y^*_i \qquad \qquad u_1= x_{2k}y^*_{2k+1} + \sum_{\substack{i=1 \\ i \ \odd}}^{2k-1} x_i y^*_{i-1}. \]
%We will show that $u_0$ generates the vertical homology. The chain $u_0$ is the kernel of $\partial^\vert$, since
%\[ \partial^\vert x_i y_i=\left\{
%	\begin{array}{ll}
%	x_{i+1}y^*_i &\textup{ if } i \textup{ is odd}, 1 \leq i < 2k \\
%	x_i y^*_{i+1} &\textup{ if } i \textup{ is even}, 2 \leq i \leq 2k \\
%	0 &\textup{ if } i=0.
%\end{array} \right.
%\]
%We also have that $u_0$ is not  in the image of $\partial^\vert$, since $x_0y^*_0$ is not and
%\[ \sum_{i=1}^{2k} x_i y^*_i = \partial^\vert  \sum_{\substack{i=1 \\ i \ \odd}}^{2k-1} x_i y^*_{i+1}. \]
%Thus, $u_0$ generates the vertical homology.

%Next, we observe that $\partial^\horz u_1=u_0$, since $\partial^\horz x_i y^*_{i-1} = x_{i-1}y^*_{i-1} + x_i y^*_i$ if $i$ is odd and $1 \leq i \leq 2k-1$ and $\partial^\horz x_{2k}y^*_{2k+1}=x_{2k}y^*_{2k}$.

%Finally, we note that  $\partial^\vert_{1, m} u_1 = x_{2k} y^*_{2k+2}$, since all of the other terms in $\partial^\vert u_1$ have filtration level strictly lower than that of $x_{2k} y^*_{2k+2}$.

The following generators, listed below with their filtration levels, form a direct summand of $C\{S(1, m)\}$:
\begin{align*}
	& x_iy^*_i, \quad 0\leq i \leq 2k  &&(0, \tau_x-\tau_y)  \\
	& x_i y^*_{i-1}, \quad i \textup{ odd}, 1 \leq i \leq 2k-1 &&(1, \tau_x- \tau_y) \\
	& x_{2k} y^*_{2k+1},  &&(1, \tau_x- \tau_y) \\
	& x_{2k} y^*_{2k+2},  &&(1, \tau_x-\tau_y-m) \\
	& x_i,y^*_{i+1}, \quad i \textup{ odd}, 1 \leq i \leq 2k-1, &&(0, \tau_x-\tau_y+n).
\end{align*}
Moreover, the chain $u_0$ generates the vertical homology and
\[ \partial_{1,m} u_1 = u_0 + x_{2k} y^*_{2k+2}, \]
where $\partial_{1, m}$ denotes the induced differential on $C\{S(1,m)\}$, $u_0$ is in filtration level $(0, \tau_x-\tau_y)$, $u_1$ in $(1, \tau_x-\tau_y)$, and $x_{2k} y^*_{2k+2}$ in $(0, \tau_x-\tau_y-m)$.
Thus,
\[ a_1(C_1 \otimes C^*_2)=1 \qquad \textup{and} \qquad a_2(C_1\otimes C^*_2)=m, \]
completing the proof.
\end{proof}

The following three lemmas concern case \ref{item:1nn1m} from Lemma \ref{lem:forms}. %Throughout, we assume that $\ell$ is a positive integer.

\begin{lemma} \label{lem:1nn1m}
Let $\ell$, $m$, and $n$ be positive integers. %such that $m \neq 0, 1$. 
Suppose $\ell < k$ and $m<n$. Then 
\[ ((1, n)^k, (n, 1)^\ell, m, \ldots) = [(1, n)^k, (n, 1)^k] + (n, \ldots). \]
\end{lemma}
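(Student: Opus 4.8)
The plan is to follow the template of Lemmas~\ref{lem:m>n}, \ref{lem:mn}, \ref{lem:0nm}, and~\ref{lem:fortorusknots}; indeed this statement is essentially Lemma~\ref{lem:fortorusknots} with an extra $(n,1)^\ell$ block inserted just before the term $m$. First I would fix the two complexes and their locally simplified bases. Let $C_1 \in \mathfrak{C}$ have $\ba(C_1) = ((1,n)^k, (n,1)^\ell, m, \ldots)$ with a locally simplified basis $\{x_i\}$ from Lemma~\ref{lem:basis}, so that there are basis elements $x_0, \ldots, x_{2k+2\ell+1}$ with a length-$1$ horizontal arrow $x_i \to x_{i-1}$ for odd $i$ with $1 \le i \le 2k-1$, a length-$n$ horizontal arrow $x_i \to x_{i-1}$ for odd $i$ with $2k+1 \le i \le 2k+2\ell-1$, a length-$m$ horizontal arrow $x_{2k+2\ell+1} \to x_{2k+2\ell}$, a length-$n$ vertical arrow $x_{i-1} \to x_i$ for even $i$ with $2 \le i \le 2k$, a length-$1$ vertical arrow $x_{i-1} \to x_i$ for even $i$ with $2k+2 \le i \le 2k+2\ell$, and no other horizontal or vertical arrows to or from $x_0, \ldots, x_{2k+2\ell}$. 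Let $C_2 \in \mathfrak{C}$ have $\ba(C_2) = [(1,n)^k, (n,1)^k]$ with simplified basis $\{y_i\}_{i=0}^{4k}$; dualizing produces a basis $\{y_i^*\}$ for $C_2^*$ in which $y_0^*$ generates the vertical homology and has a length-$1$ outgoing horizontal arrow to $y_1^*$. The hypotheses $1 \le \ell < k$ ensure that $y_{2k+2\ell}^*$ and $y_{2k+2\ell+1}^*$ lie in the part of $C_2^*$ on which the basis is ``clean'' and that the horizontal arrow $y_{2k+2\ell}^* \to y_{2k+2\ell+1}^*$ of $C_2^*$ has length $n$.

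I would then compute $\varep(C_1 \otimes C_2^*)$ and $a_1(C_1 \otimes C_2^*)$ using the chains
\[ u_0 = \sum_{i=0}^{2k+2\ell} x_i y_i^* \qquad \text{and} \qquad u_1 = x_{2k+2\ell+1}\, y_{2k+2\ell}^* + \sum_{\substack{i \text{ odd} \\ 1 \le i \le 2k+2\ell-1}} x_i\, y_{i-1}^*. \]
Exactly as in the proofs of Lemmas~\ref{lem:0nm} and~\ref{lem:fortorusknots}, a short computation with the differentials above gives $\partial^\vert u_0 = 0$ and $u_0 - x_0 y_0^* = \partial^\vert\big(\sum_{i \text{ odd}} x_i y_{i+1}^*\big)$; since $x_0 y_0^*$ generates the vertical homology of $C_1 \otimes C_2^*$ by the K\"unneth formula and hence is not a boundary, $u_0$ generates the vertical homology. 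The elements $x_i y_i^*$ for $0 \le i \le 2k+2\ell$ (at filtration level $(0, \tau_x - \tau_y)$), the elements $x_i y_{i-1}^*$ for odd $i$ (at $(1, \tau_x - \tau_y)$ if $i \le 2k-1$ and at $(n, \tau_x - \tau_y)$ if $2k+1 \le i \le 2k+2\ell-1$), the element $x_{2k+2\ell+1} y_{2k+2\ell}^*$ (at $(m, \tau_x - \tau_y)$), together with the remaining column generators $x_i y_{i+1}^*$, generate a direct summand of the subquotient complex $C\{S(n)\}$, where $\tau_x = \tau(C_1)$, $\tau_y = \tau(C_2)$, and $\tau(C_1 \otimes C_2^*) = \tau_x - \tau_y$. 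In the full complex one has $\partial^\horz u_1 = u_0 + x_{2k+2\ell+1} y_{2k+2\ell+1}^*$, and the error term $x_{2k+2\ell+1} y_{2k+2\ell+1}^*$ sits at filtration level $(m-n, \tau_x - \tau_y)$ with $m - n < 0$, so it falls outside $C\{S(n)\}$; the vertical and diagonal parts of $\partial u_1$ strictly lower the $j$-coordinate and so also vanish in $C\{S(n)\}$. Therefore $\partial_n^\horz u_1 = u_0$ inside $C\{S(n)\}$, where $\partial_n$ denotes the induced differential. Since $u_0$, a generator of the vertical homology at filtration level $(0, \tau_x - \tau_y)$, is the image under $\partial_n^\horz$ of $u_1$ at filtration level $(n, \tau_x - \tau_y)$, and — as one checks within the summand, using $m < n$ — is not a boundary in $C\{S(s)\}$ for any $s < n$, it follows that $\varep(C_1 \otimes C_2^*) = 1$ and $a_1(C_1 \otimes C_2^*) = n$. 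That is, $((1,n)^k, (n,1)^\ell, m, \ldots) = [(1,n)^k, (n,1)^k] + (n, \ldots)$, as claimed.

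The only real obstacle is the bookkeeping: writing down the two locally simplified bases in full, checking that the listed elements genuinely span a direct summand of $C\{S(n)\}$ with no arrows leaking in or out, and verifying the asserted filtration levels — routine but tedious, and parallel to the earlier lemmas. The conceptual point, as in Lemma~\ref{lem:fortorusknots}, is twofold: the length-$n$ horizontal arrow of $C_2^*$ dual to the first $(n,1)$ block of $C_2$ beyond the shared initial segment forces the error term $x_{2k+2\ell+1} y_{2k+2\ell+1}^*$ out of $C\{S(n)\}$, while the length-$n$ horizontal arrows inside the $(n,1)^\ell$ block of $C_1$ push the ``killing'' chain $u_1$ out to horizontal filtration level $n$, so that $n$ — rather than $m$ — is the resulting value of $a_1$.
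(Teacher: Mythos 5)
Your proposal is correct and follows essentially the same route as the paper: the same chains $u_0=\sum_{i=0}^{2k+2\ell}x_iy_i^*$ and $u_1=\sum_{i\ \mathrm{odd}}^{2k+2\ell+1}x_iy_{i-1}^*$, the same direct summand of $C\{S(n)\}$ with the same filtration levels, and the same conclusion $a_1(C_1\otimes C_2^*)=n$. The only difference is that you spell out why $\partial^\horz u_1=u_0+x_{2k+2\ell+1}y_{2k+2\ell+1}^*$ reduces to $u_0$ in the subquotient (the error term sits at $i=m-n<0$), a point the paper leaves implicit.
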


\begin{proof}
Let $C_1$ be a complex such that $\ba(C_1) = ((1, n)^k, (n, 1)^\ell, m, \ldots)$ and $C_2$ such that $\ba(C_2)=[(1, n)^k, (n, 1)^k]$. Let $\{x_i\}$ and $\{y_i\}$ be locally simplified bases for $C_1$ and $C_2$ respectively. Consider $C_1 \otimes C^*_2$. Let
\[ u_0 = \sum_{i=0}^{2k + 2\ell} x_i y^*_i \qquad \qquad u_1 = \sum_{\substack{i=1 \\ i \ \odd}}^{2k+2\ell+1} x_i y^*_{i-1}. \]
The elements
\begin{align*}
	& x_iy^*_i, \quad 0\leq i \leq 2k+2\ell  &&(0, \tau_x-\tau_y)  \\
	& x_iy^*_{i-1}, \quad i \textup{ odd}, 1\leq i \leq 2k-1 &&(1, \tau_x-\tau_y) \\
	& x_iy^*_{i-1}, \quad i \textup{ odd}, 2k+1 \leq i \leq 2k+2\ell-1 &&(n, \tau_x-\tau_y) \\
	& x_{2k+2\ell+1}y^*_{2k+2\ell},  &&(m, \tau_x-\tau_y) \\
	& x_i y_{i+1}^*, \quad i \textup{ odd}, 1\leq i \leq 2k-1  &&(0, \tau_x-\tau_y+n) \\
	& x_i y_{i+1}^*, \quad i \textup{ odd}, 2k+1 \leq i \leq 2k+2\ell-1  &&(0, \tau_x-\tau_y+1)
\end{align*}
form a direct summand of $C\{S(n)\}$ such that $\partial_n^\horz u_1 = u_0$ (where $\partial_n$ denotes the induced differential on $C\{S(n)\}$) and $u_0$ is a generator of the vertical homology. The filtration level of $u_0$ is $(0, \tau_x-\tau_y)$ and of $u_1$ is $(n, \tau_x-\tau_y)$. Thus $a_1(C_1 \otimes C^*_2)=n$.
\end{proof}

\begin{lemma}
Let $\ell$, $m$, and $n$ be positive integers. % with $n$ positive and $m \neq 0, 1$. 
Suppose $\ell < k$ and $m>n$. Then 
\[ ((1, n)^k, (n, 1)^\ell, m, \ldots) = [(1, n)^k, (n, 1)^k] - (n, \ldots). \]
\end{lemma}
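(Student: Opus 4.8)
The plan is to establish the identity in the same way as Lemma~\ref{lem:1nn1m}, but with the two complexes entering the tensor product in the opposite roles, since now $m>n$ and so $((1,n)^k,(n,1)^\ell,m,\dots)$ lies below $[(1,n)^k,(n,1)^k]$ in the ordering rather than above it. Concretely, I would let $C_1$ be a complex with $\ba(C_1)=[(1,n)^k,(n,1)^k]$ and $C_2$ a complex with $\ba(C_2)=((1,n)^k,(n,1)^\ell,m,\dots)$, fix locally simplified bases $\{x_i\}$ and $\{y_i\}$ (using Lemma~\ref{lem:basis} or Lemma~\ref{lem:basis2}), and dualize to obtain a basis $\{y_i^*\}$ for $C_2^*$ in which $y_0^*$ generates the vertical homology and has an outgoing horizontal arrow to $y_1^*$. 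Since $[C_1\otimes C_2^*]=[C_1]-[C_2]$ in $\cCFKa$, it then suffices to show that $\ba(C_1\otimes C_2^*)=(n,\dots)$, as rewriting this yields $((1,n)^k,(n,1)^\ell,m,\dots)=[(1,n)^k,(n,1)^k]-(n,\dots)$.

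For the computation I would copy the argument of Lemma~\ref{lem:1nn1m} essentially verbatim. Set
\[ u_0=\sum_{i=0}^{2k+2\ell}x_iy_i^* \qquad \textup{and} \qquad u_1=\sum_{\substack{i=1\\ i\ \odd}}^{2k+2\ell+1}x_iy_{i-1}^* . \]
First I would check that $u_0$ lies in the kernel but not the image of $\partial^\vert$ — it equals the vertical-homology generator $x_0y_0^*$ plus $\partial^\vert\!\big(\sum_{i\ \odd}x_iy_{i+1}^*\big)$, and $x_0y_0^*$ is not a $\partial^\vert$-boundary — so $u_0$ generates the vertical homology of $C_1\otimes C_2^*$; and I would compute, by telescoping, that $\partial^\horz u_1=u_0+x_{2k+2\ell+1}y_{2k+2\ell+1}^*$. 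The only place where the hypothesis $m>n$ (rather than $m<n$) enters is in the filtration levels: the extra term $x_{2k+2\ell+1}y_{2k+2\ell+1}^*$ sits in filtration level $(n-m,\tau_x-\tau_y)$, which has negative $w$-coordinate precisely because $m>n$, so it lies outside $C\{S(n)\}$; meanwhile $x_{2k+2\ell+1}y_{2k+2\ell}^*$ sits in filtration level $(n,\tau_x-\tau_y)$ (as $x_{2k+2\ell+1}$ is reached from $x_{2k+2\ell}$ by a horizontal arrow of length $n$ in $C_1$, using $\ell<k$), so it joins the ``$(n,\tau_x-\tau_y)$'' family of generators rather than forming its own ``$(m,\tau_x-\tau_y)$'' entry. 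Identifying the explicit direct summand of $C\{S(n)\}$ spanned by the $x_iy_i^*$, the $x_iy_{i-1}^*$, and the $x_iy_{i+1}^*$, we obtain $\partial_n^\horz u_1=u_0$ for the induced differential $\partial_n$ on $C\{S(n)\}$; since $u_0$ generates the vertical homology, $u_0$ is in filtration level $(0,\tau_x-\tau_y)$, and $u_1$ is in filtration level $(n,\tau_x-\tau_y)$, this gives $\varep(C_1\otimes C_2^*)=1$ and $a_1(C_1\otimes C_2^*)=n$.

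The hard part is nothing more than the filtration-level bookkeeping: I would need to verify carefully that (i) the listed generators genuinely span a direct summand of $C\{S(n)\}$, with no horizontal or vertical arrow entering or leaving it; (ii) the term $x_{2k+2\ell+1}y_{2k+2\ell+1}^*$ really does escape $C\{S(n)\}$, which reduces to the single inequality $m>n$; and (iii) no chain of $w$-width less than $n$ maps to $u_0$ inside this summand, so that $a_1$ is exactly $n$ and not smaller. All three points are routine consequences of the shape of the locally simplified bases — the $\ba$-sequences of $C_1$ and $C_2$ agree through their first $2k+2\ell$ entries, and the hypothesis $\ell<k$ guarantees that $x_{2k+2\ell+1}$ exists in $C_1$ with the required horizontal arrow of length $n$ — and they are handled exactly as in the bookkeeping already carried out in Lemmas~\ref{lem:0nm}, \ref{lem:1nn1m}, and~\ref{lem:m>n}.
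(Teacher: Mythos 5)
Your proposal is correct and follows essentially the same route as the paper: the paper likewise sets $C_1=[(1,n)^k,(n,1)^k]$, $C_2=((1,n)^k,(n,1)^\ell,m,\ldots)$, forms the same chains $u_0$ and $u_1$ in $C_1\otimes C_2^*$, and identifies the same direct summand of $C\{S(n)\}$ in which $x_{2k+2\ell+1}y^*_{2k+2\ell}$ sits at level $(n,\tau_x-\tau_y)$ while the extra boundary term escapes the subquotient, giving $a_1(C_1\otimes C_2^*)=n$. Your filtration-level bookkeeping, including the role of $m>n$ and $\ell<k$, matches the paper's argument.
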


\begin{proof}
%The proof is similar to the proof of Lemma \ref{lem:mn}.
We proceed as in previous lemmas. Let $C_1$ be a complex such that $\ba(C_1)=[(1, n)^k, (n, 1)^k] $ and $C_2$ such that $\ba(C_2)=((1, n)^k, (n, 1)^\ell, m, \ldots)$. Let $\{x_i\}$ and $\{y_i\}$ be locally simplified bases for $C_1$ and $C_2$ respectively. Consider $C_1 \otimes C^*_2$. Let
\[ u_0 = \sum_{i=0}^{2k + 2\ell} x_i y^*_i \qquad \qquad u_1 = \sum_{\substack{i=1 \\ i \ \odd}}^{2k+2\ell+1} x_i y^*_{i-1}. \]
The elements
\begin{align*}
	& x_iy^*_i, \quad 0\leq i \leq 2k+2\ell  &&(0, \tau_x-\tau_y)  \\
	& x_iy^*_{i-1}, \quad i \textup{ odd}, 1\leq i \leq 2k-1 &&(1, \tau_x-\tau_y) \\
	& x_iy^*_{i-1}, \quad i \textup{ odd}, 2k+1 \leq i \leq 2k+2\ell+1 &&(n, \tau_x-\tau_y) \\
	& x_i y_{i+1}^*, \quad i \textup{ odd}, 1\leq i \leq 2k-1  &&(0, \tau_x-\tau_y+n) \\
	& x_i y_{i+1}^*, \quad i \textup{ odd}, 2k+1 \leq i \leq 2k+2\ell-1  &&(0, \tau_x-\tau_y+1)
\end{align*}
form a direct summand of $C\{S(n)\}$ where $\partial^\horz_n u_1 = u_0$ and $u_0$ is a generator of the vertical homology. The filtration level of $u_0$ is $(0, \tau_x-\tau_y)$ and of $u_1$ is $(n, \tau_x-\tau_y)$. Thus $a_1(C_1 \otimes C^*_2)=n$ and so $ [(1, n)^k, (n, 1)^k] - ((1, n)^k, (n, 1)^\ell, m, \ldots) =(n, \ldots)$.
\end{proof}

\begin{lemma}\label{lem:1nn1mb}
Let $\ell$, $m$, and $n$ be positive integers. %with $n$ positive and $m \neq 0, 1$. 
Suppose $\ell < k$ and $m>n$. Then 
\[ ((1, n)^k, (n, 1)^\ell, -m, \ldots) = [(1, n)^k, (n, 1)^k] - (n, \ldots). \]
\end{lemma}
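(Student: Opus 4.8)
The plan is to follow the template of Lemmas \ref{lem:m>n}, \ref{lem:mn}, and \ref{lem:1nn1m}: rephrase the asserted identity as the computation of $\ba$ of a tensor product, exhibit explicit chains $u_0$ and $u_1$ realizing the first invariant $a_1 = n$, and check that these chains live in a direct summand of the appropriate subquotient complex. Since $-m < -n < 0$, this is the case of item \ref{item:1nn1m} of Lemma \ref{lem:forms} in which the last listed value is an $a'$-invariant (here $a'_{2k+2\ell+1}$, legitimate because $\ba^+$ has even length $2k+2\ell$ and, in the even-length case, $a'_{n+1}$ measures an outgoing horizontal arrow and is negative); and since $m > n$, the sign of the answer will be $-(n,\ldots)$ rather than $+(n,\ldots)$, exactly as in the lemma immediately preceding this one.

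Concretely, I would take $C_1$ with $\ba(C_1) = [(1,n)^k,(n,1)^k]$ and $C_2$ with $\ba(C_2) = ((1,n)^k,(n,1)^\ell,-m,\ldots)$, fix locally simplified bases $\{x_i\}$ and $\{y_i\}$, and note that by Lemma \ref{lem:basis} the basis $\{y_i\}$ has an element $y_{2k+2\ell+1}$ reached from $y_{2k+2\ell}$ by a horizontal arrow of length $m$, with no other horizontal arrow at $y_{2k+2\ell+1}$. Dualizing gives a basis $\{y_i^*\}$ of $C_2^*$ in which $y_0^*$ generates the vertical homology and carries an outgoing horizontal arrow to $y_1^*$. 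Then, just as in Lemma \ref{lem:1nn1m}, I would set
\[ u_0 = \sum_{i=0}^{2k+2\ell} x_i y_i^* \qquad\qquad u_1 = \sum_{\substack{i=1 \\ i\ \odd}}^{2k+2\ell+1} x_i y_{i-1}^*, \]
check that $u_0$ is a vertical cycle generating the vertical homology of $C_1 \otimes C_2^*$ (by writing $u_0 - x_0 y_0^*$ as the vertical boundary of $\sum_{i\ \odd,\ 1\le i\le 2k+2\ell-1} x_i y_{i+1}^*$ and noting that $x_0 y_0^*$ is not a vertical boundary), and then identify the direct summand of $C\{S(n)\}$ spanned by the $x_i y_i^*$, the $x_i y_{i-1}^*$, and the $x_i y_{i+1}^*$ in the usual ranges. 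Here the hypothesis $\ell < k$ is used to ensure $a_{2k+2\ell+1}(C_1) = n$, so that $x_{2k+2\ell+1} y_{2k+2\ell}^*$ sits in $w$-filtration level $n$; within this summand one verifies $\partial_n^\horz u_1 = u_0$, which, because $u_1$ reaches $w$-level $n$ while $u_0$ is at $w$-level $0$, yields $a_1(C_1 \otimes C_2^*) = n$. Rearranging gives $[(1,n)^k,(n,1)^k] - ((1,n)^k,(n,1)^\ell,-m,\ldots) = (n,\ldots)$, as claimed.

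I expect the main obstacle to be the bookkeeping that confirms the listed generators really form a direct summand of $C\{S(n)\}$: one must verify that every term produced by the differential on a listed generator---the diagonal $x_{i+2} y_i^*$ coming from the vertical arrows of $C_1$, the term $x_{2k+2\ell} y_{2k+2\ell+1}^*$ coming from the dual of the length-$m$ arrow of $C_2$, and the terms issuing from the length-$n$ arrows of $C_1$---either lies in the list or falls outside $C\{S(n)\}$, and that nothing outside the list maps in. This is where $m > n$ enters: it forces the dual of the length-$m$ arrow (and the chains it touches) out of the $w \le n$ window of $C\{S(n)\}$, exactly as the analogous estimate did in Lemma \ref{lem:1nn1m} and in the preceding lemma. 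Once this is in place, everything else is a routine consequence of $\partial^2 = 0$ applied to the locally simplified bases, so the write-up can be compressed to ``proceed as in the proof of Lemma \ref{lem:1nn1m}.''
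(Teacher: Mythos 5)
Your proposal is correct and follows the paper's own argument essentially verbatim: the same choice of $C_1$ and $C_2$, the same chains $u_0=\sum_{i=0}^{2k+2\ell}x_iy_i^*$ and $u_1=\sum_{i \textup{ odd}}x_iy_{i-1}^*$, the same direct summand of $C\{S(n)\}$, and the same conclusion $a_1(C_1\otimes C_2^*)=n$. The only difference is that you make explicit where $\ell<k$ (so that $x_{2k+2\ell+1}y^*_{2k+2\ell}$ sits at $w$-level $n$) and $m>n$ (so that $x_{2k+2\ell}y^*_{2k+2\ell+1}$ falls outside $C\{S(n)\}$) are used, which the paper leaves implicit.
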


\begin{proof}
%The proof is similar to the proof of Lemma \ref{lem:mn}.
Let $C_1$ be a complex such that $\ba(C_1)= [(1, n)^k, (n, 1)^k]$ and $C_2$ a complex such that $\ba(C_2)=((1, n)^k, (n, 1)^\ell, -m, \ldots)$. Let $\{x_i\}$ and $\{y_i\}$ be locally simplified bases for $C_1$ and $C_2$ respectively. Consider $C_1 \otimes C_2$. Let
\[u_0 = \sum_{i=0}^{2k+2\ell} x_i y^*_i \qquad \qquad u_1 = \sum_{\substack{i=1 \\ i \ \odd}}^{2k+2\ell+1} x_iy^*_{i-1}. \]
The elements
\begin{align*}
	& x_iy^*_i, \quad 0\leq i \leq 2k+2\ell  &&(0, \tau_x-\tau_y)  \\
	& x_iy^*_{i-1}, \quad i \textup{ odd}, 1\leq i \leq 2k-1 &&(1, \tau_x-\tau_y) \\
	& x_iy^*_{i-1}, \quad i \textup{ odd}, 2k+1 \leq i \leq 2k+2\ell+1 &&(n, \tau_x-\tau_y) \\
	& x_i y_{i+1}^*, \quad i \textup{ odd}, 1\leq i \leq 2k-1  &&(0, \tau_x-\tau_y+n) \\
	& x_i y_{i+1}^*, \quad i \textup{ odd}, 2k+1 \leq i \leq 2k+2\ell-1  &&(0, \tau_x-\tau_y+1)
\end{align*}
form a direct summand of $C\{S(n)\}$ where $\partial^\horz u_1 = u_0$ and $u_0$ is a generator of the vertical homology. The filtration level of $u_0$ is $(0, \tau_x-\tau_y)$ and of $u_1$ is $(n, \tau_x-\tau_y)$. Thus $a_1(C_1 \otimes C^*_2)=n$ and so $ [(1, n)^k, (n, 1)^k] - ((1, n)^k, (n, 1)^\ell, -m, \ldots) =(n, \ldots)$.
\end{proof}

%% ok, that finishes the 4th case
\noindent The next two lemmas concern case \ref{item:1nn1nm} from Lemma \ref{lem:forms}.

\begin{lemma}\label{lem:1nnnm}
Let $\ell$ be a non-negative integer, and let $m$ and $n$ be positive integers such that $m\geq 2$. 
Then 
\[ ((1, n)^k, (n, 1)^\ell, n, m, \ldots) = [(1, n)^k, (n, 1)^k] + (n, \ldots). \]
\end{lemma}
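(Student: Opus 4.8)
The plan is to follow the template of Lemmas \ref{lem:1nn1m}--\ref{lem:1nn1mb}: produce explicit chains in the tensor product $C_1\otimes C_2^*$ that pin down the invariant $a_1$, and show $a_1(C_1\otimes C_2^*)=n$; since $n\geq 2$ this gives $\ba(C_1\otimes C_2^*)=(n,\ldots)$, which is the asserted identity.

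First I would fix the data. Let $C_1\in\mathfrak{C}$ have $\ba(C_1)=((1,n)^k,(n,1)^\ell,n,m,\ldots)$ and let $C_2\in\mathfrak{C}$ have $\ba(C_2)=[(1,n)^k,(n,1)^k]$; choose locally simplified bases $\{x_i\}$ for $C_1$ and $\{y_i\}$ for $C_2$, and dualize the latter to $\{y_i^*\}$, so that $y_0^*$ generates the vertical homology of $C_2^*$ and carries an outgoing horizontal arrow to $y_1^*$. The new feature, compared with Lemma \ref{lem:1nn1m}, is that the sequences $\ba(C_1)$ and $\ba(C_2)$ now agree in their first $2k+2\ell+1$ entries, an \emph{odd} number, first differing at entry $2k+2\ell+2$, where $C_1$ has $m\geq 2$ while $C_2$ has $1$; this parity together with the inequality $m\geq 2$ is exactly what the argument will use.

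Next I would introduce the chains
\[ u_0=\sum_{i=0}^{2k+2\ell+1}x_iy_i^*,\qquad u_1=\sum_{\substack{i=1\\ i\ \odd}}^{2k+2\ell+1}x_iy_{i-1}^*, \]
and check, using the arrows prescribed by Proposition \ref{prop:basis} and working over $\F$: (i) $\partial^\horz u_1=u_0$ identically, the telescoping sum leaving no leftover term because the matched staircase ends in a horizontal arrow; (ii) $\partial^\vert u_0=x_{2k+2\ell+2}y_{2k+2\ell+1}^*$, which lies at Alexander level $\tau_x-\tau_y-m<\tau_x-\tau_y$, so $u_0$ is a cycle in $C\{i=0\}/C\{i=0,j<\tau\}$; and (iii) modulo $j<\tau$, the chain $\sum_{i\ \odd}x_iy_{i+1}^*+x_{2k+2\ell+1}y_{2k+2\ell+2}^*$ has vertical boundary $u_0-x_0y_0^*$, where $m\geq 2$ is used to push the correction term $x_{2k+2\ell+2}y_{2k+2\ell+2}^*$ strictly below $\tau$, so that $[u_0]=[x_0y_0^*]$ generates the vertical homology. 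I would then isolate the direct summand of $C\{S(n)\}$ spanned by the $x_iy_i^*$ at $(0,\tau_x-\tau_y)$, the staircase terms $x_iy_{i-1}^*$ at $(1,\tau_x-\tau_y)$ or $(n,\tau_x-\tau_y)$ according to the block containing $i$, the top term $x_{2k+2\ell+1}y_{2k+2\ell}^*$ at $(n,\tau_x-\tau_y)$, and the return terms $x_iy_{i+1}^*$ together with $x_{2k+2\ell+1}y_{2k+2\ell+2}^*$ at filtration levels $(0,\tau_x-\tau_y+n)$ or $(0,\tau_x-\tau_y+1)$, with all remaining images of the differential landing outside $C\{S(n)\}$. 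In this summand $\partial_n u_1=u_0$ with $u_1$ at horizontal filtration level exactly $n$, while in $C\{S(n-1)\}$ the top staircase terms are gone, so no element of horizontal filtration $<n$ witnesses the vanishing of $[x_0y_0^*]$; hence $a_1(C_1\otimes C_2^*)=n$.

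The main obstacle I anticipate is step (iii) and the accompanying direct-summand bookkeeping: unlike in Lemma \ref{lem:1nn1m}, where the error term after $\partial^\horz u_1$ automatically left the subquotient, here the matched staircase reaches one entry further, so $u_0$ no longer literally coincides with $x_0y_0^*$ inside $C\{S(n)\}$, and one must carefully use the length-$m$ vertical arrow out of $x_{2k+2\ell+1}$ — and the hypothesis $m\geq 2$ — to see that the relevant correction terms all fall below $\tau$ while still lying in $C\{S(n-1)\}$, so that $[u_0]=[x_0y_0^*]$ holds in $H_*(C\{S(s)\})$ for every $s\geq 0$. The filtration-level computations themselves are of the same routine character as in the previous lemmas, and the same two model behaviors (a generator of the vertical homology becoming a boundary of the horizontal differential on a subquotient) recur here.
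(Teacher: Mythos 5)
Your proposal is correct and follows essentially the same route as the paper: the same chains $u_0=\sum_{i=0}^{2k+2\ell+1}x_iy_i^*$ and $u_1=\sum_{i\ \odd}x_iy_{i-1}^*$, the same direct summand of $C\{S(n)\}$, and the same use of $m\geq 2$ to push the correction term $x_{2k+2\ell+2}y^*_{2k+2\ell+2}$ (at Alexander level $\tau_x-\tau_y+1-m$) below $\tau$ so that the vertical generator projects to $u_0=\partial^\horz_n u_1$, giving $a_1(C_1\otimes C_2^*)=n$. Your phrasing via $[u_0]=[x_0y_0^*]$ is just a repackaging of the paper's statement that $u_0+x_{2k+2\ell+2}y^*_{2k+2\ell+2}$ generates the vertical homology and projects to $u_0$.
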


\begin{proof}
%The proof is similar to the proof of Lemma \ref{lem:mn}.
Let $C_1$ be a complex such that $\ba(C_1)= ((1, n)^k, (n, 1)^\ell, n, m, \ldots) $ and $C_2$ a complex such that $\ba(C_2)= [(1, n)^k, (n, 1)^k]$. Let $\{x_i\}$ and $\{y_i\}$ be locally simplified bases for $C_1$ and $C_2$ respectively. Consider $C_1 \otimes C_2$. Let
\[u_0 = \sum_{i=0}^{2k+2\ell+1} x_i y^*_i \qquad \qquad u_1 = \sum_{\substack{i=1 \\ i \ \odd}}^{2k+2\ell+1} x_iy^*_{i-1}. \]
The elements
\begin{align*}
	& x_iy^*_i, \quad 0\leq i \leq 2k+2\ell+1  &&(0, \tau_x-\tau_y)  \\
	& x_iy^*_{i-1}, \quad i \textup{ odd}, 1\leq i \leq 2k-1 &&(1, \tau_x-\tau_y) \\
	& x_iy^*_{i-1}, \quad i \textup{ odd}, 2k+1 \leq i \leq 2k+2\ell+1 &&(n, \tau_x-\tau_y) \\
	& x_i y_{i+1}^*, \quad i \textup{ odd}, 1\leq i \leq 2k-1  &&(0, \tau_x-\tau_y+n) \\
	& x_i y_{i+1}^*, \quad i \textup{ odd}, 2k+1 \leq i \leq 2k+2\ell+1  &&(0, \tau_x-\tau_y+1)
\end{align*}
form a direct summand of $C\{S(n)\}$. Consider also the element $x_{2k+2\ell+2}y^*_{2k+2\ell+2}$ in filtration level $(0, \tau_x-\tau_y+1-m)$. The chain $u_0+x_{2k+2\ell+2}y^*_{2k+2\ell+2}$ generates the vertical homology, and the projection of this chain to the subquotient complex $C\{S(n)\}$ is $u_0$. Moreover, $\partial^\horz_n u_1 = u_0$, and so  $a_1(C_1 \otimes C^*_2)=n$. %$u_1$ is the chain of minimal filtration with this property. Thus, $a_1(C_1 \otimes C^*_2)=n$.
\end{proof}

\begin{lemma}\label{lem:1nnnmb}
Let $\ell$ be a non-negative integer, and $m$ and $n$ be positive integers. Then 
\[ ((1, n)^k, (n, 1)^\ell, n, -m, \ldots) = [(1, n)^k, (n, 1)^k] + (n, \ldots). \]
\end{lemma}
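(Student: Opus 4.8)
The plan is to follow the template of Lemma~\ref{lem:1nnnm} almost verbatim; in fact the argument should be slightly simpler, the only structural change being that the distinguished arrow recorded by $a'_{2k+2\ell+2}=-m$ now points \emph{into} $x_{2k+2\ell+1}$ rather than out of it. I would let $C_1$ be a complex with $\ba(C_1)=((1,n)^k,(n,1)^\ell,n,-m,\ldots)$ and $C_2$ a complex with $\ba(C_2)=[(1,n)^k,(n,1)^k]$ (with $\ell<k$, and $n\geq 2$ as throughout this section). Since $\ba^+(C_1)=((1,n)^k,(n,1)^\ell,n)$ has odd length $2k+2\ell+1$ and $a'_{2k+2\ell+2}=-m$, Lemma~\ref{lem:basis} (or Lemma~\ref{lem:basis2} when $-m=-1$) supplies a locally simplified basis $\{x_i\}$ for $C_1$ containing $x_0,\ldots,x_{2k+2\ell+2}$ in which the length-$m$ vertical arrow runs from $x_{2k+2\ell+2}$ to $x_{2k+2\ell+1}$; in particular $\partial^\vert x_{2k+2\ell+1}=0$. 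I would dualize a locally simplified basis $\{y_i\}$ of $C_2$ to $\{y_i^*\}$ with $y_0^*$ generating the vertical homology and $\partial^\horz y_0^*=y_1^*$, work in $C_1\otimes C_2^*$, and set
\[
u_0=\sum_{i=0}^{2k+2\ell+1}x_iy_i^*,\qquad u_1=\sum_{\substack{i=1\\ i\ \odd}}^{2k+2\ell+1}x_iy_{i-1}^*.
\]

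The main step will be to exhibit, as in Lemma~\ref{lem:1nnnm}, the direct summand of $C\{S(n)\}$ generated by
\begin{align*}
	& x_iy^*_i, \quad 0\leq i \leq 2k+2\ell+1 && (0, \tau_x-\tau_y) \\
	& x_iy^*_{i-1}, \quad i\ \odd,\ 1\leq i \leq 2k-1 && (1, \tau_x-\tau_y) \\
	& x_iy^*_{i-1}, \quad i\ \odd,\ 2k+1\leq i \leq 2k+2\ell+1 && (n, \tau_x-\tau_y) \\
	& x_iy^*_{i+1}, \quad i\ \odd,\ 1\leq i\leq 2k-1 && (0, \tau_x-\tau_y+n) \\
	& x_iy^*_{i+1}, \quad i\ \odd,\ 2k+1\leq i\leq 2k+2\ell-1 && (0, \tau_x-\tau_y+1),
\end{align*}
where $\tau_x=\tau(C_1)$, $\tau_y=\tau(C_2)$, the filtration levels being read off from the arrow lengths in $\ba(C_1)$ and $\ba(C_2)$ as in the preceding lemmas. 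Inside this summand I would check, mod $2$, that $\partial^\vert u_0=0$ (the reversed final arrow is precisely what makes the term $x_{2k+2\ell+2}y^*_{2k+2\ell+1}$, which survives in Lemma~\ref{lem:1nnnm}, disappear here) and that $u_0-x_0y_0^*=\partial^\vert\big(\sum_{i\ \odd}x_iy^*_{i+1}\big)$, so that $u_0$ generates the vertical homology; unlike in Lemma~\ref{lem:1nnnm}, no correction term need be adjoined to $u_0$. I would then check that $\partial^\horz u_1=u_0$ while the vertical and diagonal parts of $\partial u_1$ all exit $C\{S(n)\}$, giving $\partial^\horz_n u_1=u_0$ in $C\{S(n)\}$.

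From this I would conclude $a_1(C_1\otimes C_2^*)=n$: the class $[u_0]$ is the image of the vertical homology generator under $C\{i=0\}\to C\{S(n)\}$ and is killed there by $u_1$, while in the summand $[u_0]$ survives in $C\{S(s)\}$ for $s<n$ since the chain bounding it has components in the column $i=n$. This gives
\[
((1,n)^k,(n,1)^\ell,n,-m,\ldots)=[(1,n)^k,(n,1)^k]+(n,\ldots).
\]
The only real labor I anticipate is the filtration-level bookkeeping above and the (routine, via the methods of Lemmas~\ref{lem:basissub} and \ref{lem:arrowremoval}) check that the listed generators split off as a direct summand; the one spot warranting a second look is $-m=-1$, but there the extra basis element $x_{2k+2\ell+3}$ from Lemma~\ref{lem:basis2} carries only a horizontal arrow out of $x_{2k+2\ell+2}$ and is irrelevant to the chains used above.
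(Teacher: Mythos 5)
Your proposal mirrors the paper's proof: same $C_1$, $C_2$, the same chains $u_0$ and $u_1$, the same style of exhibiting a summand of $C\{S(n)\}$, and the same key observation that the reversed final arrow makes $\partial^\vert x_{2k+2\ell+1}=0$, so that $u_0$ itself (with no correction term) is the vertical cycle. However, there is a concrete off-by-one error in your generator list: the last family should be $x_iy^*_{i+1}$ for $i$ odd with $2k+1\leq i\leq 2k+2\ell+1$ (as in the paper and in Lemma \ref{lem:1nnnm}), not $2k+1\leq i\leq 2k+2\ell-1$. You have omitted $x_{2k+2\ell+1}y^*_{2k+2\ell+2}$, which sits at $(0,\tau_x-\tau_y+1)$, i.e.\ in the column $i=0$ inside $C\{S(s)\}$ for every $s$, and which satisfies $\partial^\vert\big(x_{2k+2\ell+1}y^*_{2k+2\ell+2}\big)=x_{2k+2\ell+1}y^*_{2k+2\ell+1}$ (here the contribution comes from the $y^*$ factor: since $\ell<k$, the staircase $C_2$ has a length-one vertical arrow $y_{2k+2\ell+1}\to y_{2k+2\ell+2}$, which dualizes to a length-one vertical arrow $y^*_{2k+2\ell+2}\to y^*_{2k+2\ell+1}$; the vanishing of $\partial^\vert x_{2k+2\ell+1}$ does not make this element inert).

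This omission breaks two of your stated steps. First, your identity $u_0-x_0y^*_0=\partial^\vert\big(\sum_{i\ \odd}x_iy^*_{i+1}\big)$ is false over your truncated index set: the right-hand side equals $u_0-x_0y^*_0-x_{2k+2\ell+1}y^*_{2k+2\ell+1}$, and you need the omitted element to absorb the leftover term before you can conclude that $u_0$ generates the vertical homology. Second, your list is not closed under arrows inside $C\{S(n)\}$ (the omitted element maps into $x_{2k+2\ell+1}y^*_{2k+2\ell+1}$ by a vertical arrow lying entirely in the column $i=0$), so it is not a direct summand, and the survival argument for $[u_0]$ in $C\{S(s)\}$, $s<n$, cannot be run inside it as written. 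Both defects are repaired simultaneously by extending the range to $2k+2\ell+1$, after which your argument coincides with the paper's.
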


\begin{proof}
%Again, the proof is similar to that of Lemma \ref{lem:mn}.
Let $C_1$ be a complex such that $\ba(C_1)= ((1, n)^k, (n, 1)^\ell, n, -m, \ldots) $ and $C_2$ a complex such that $\ba(C_2)= [(1, n)^k, (n, 1)^k]$. Let $\{x_i\}$ and $\{y_i\}$ be locally simplified bases for $C_1$ and $C_2$ respectively. Consider $C_1 \otimes C_2$. Let
\[u_0 = \sum_{i=0}^{2k+2\ell+1} x_i y^*_i \qquad \qquad u_1 = \sum_{\substack{i=1 \\ i \ \odd}}^{2k+2\ell+1} x_iy^*_{i-1}. \]
The elements
\begin{align*}
	& x_iy^*_i, \quad 0\leq i \leq 2k+2\ell+1  &&(0, \tau_x-\tau_y)  \\
	& x_iy^*_{i-1}, \quad i \textup{ odd}, 1\leq i \leq 2k-1 &&(1, \tau_x-\tau_y) \\
	& x_iy^*_{i-1}, \quad i \textup{ odd}, 2k+1 \leq i \leq 2k+2\ell+1 &&(n, \tau_x-\tau_y) \\
	& x_i y_{i+1}^*, \quad i \textup{ odd}, 1\leq i \leq 2k-1  &&(0, \tau_x-\tau_y+n) \\
	& x_i y_{i+1}^*, \quad i \textup{ odd}, 2k+1 \leq i \leq 2k+2\ell+1  &&(0, \tau_x-\tau_y+1)
\end{align*}
form a direct summand of $C\{S(n)\}$. The chain $u_0$ generates the vertical homology, and $\partial^\horz u_1 = u_0$. Thus, $a_1(C_1\otimes C^*_2)=n$.
\end{proof}

\noindent Lastly, in the next two lemmas, we consider case \ref{item:1nn1c} from Lemma \ref{lem:forms}.

\begin{lemma}\label{lem:1nkn1k}
Let $m$ and $n$ be positive integers. %with $n$ positive and $m \neq 0$. 
Then 
\[ ((1, n)^k, (n, 1)^k, m, \ldots) = [(1, n)^k, (n, 1)^k] + (\textup{max}\{m, n\}, \ldots). \]
\end{lemma}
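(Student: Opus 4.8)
The plan is to follow the template of the companion Lemmas \ref{lem:m>n}, \ref{lem:mn}, and \ref{lem:mn0}. Fix $C_1 \in \mathfrak{C}$ with $\ba(C_1) = ((1,n)^k, (n,1)^k, m, \ldots)$ and $C_2 \in \mathfrak{C}$ with $\ba(C_2) = [(1,n)^k, (n,1)^k]$. Since $m$ is positive it is a genuine invariant $a_{4k+1}(C_1)$ (an $a'$-term would be negative), so by Proposition \ref{prop:basis} the complex $C_1$ has a locally simplified basis $\{x_i\}$ whose distinguished elements $x_0, \ldots, x_{4k+1}$ realize the staircase $(1,n)^k, (n,1)^k$ together with a horizontal arrow of length $m$ from $x_{4k+1}$ to $x_{4k}$; take $C_2$ to be the minimal staircase complex for $[(1,n)^k, (n,1)^k]$ with simplified basis $\{y_i\}$ (so $y_{4k}$ generates the horizontal homology), and dualize to $\{y_i^*\}$, where $y_0^*$ generates the vertical homology of $C_2^*$ and has an outgoing horizontal arrow to $y_1^*$. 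Since $[C_1] = [C_2] + [C_1 \otimes C_2^*]$ in $\cCFKa$, the assertion is equivalent to $\varep(C_1 \otimes C_2^*) = 1$ together with $a_1(C_1 \otimes C_2^*) = \textup{max}\{m, n\}$.

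In $C_1 \otimes C_2^*$ set
\[ u_0 = \sum_{i = 0}^{4k} x_i y_i^* \qquad\text{and}\qquad u_1 = \sum_{\substack{i = 1 \\ i \ \odd}}^{4k+1} x_i y_{i-1}^* . \]
A mod $2$ computation with $\partial^\vert$ along the two aligned staircases, identical in form to those in the proofs of Lemmas \ref{lem:m>n} and \ref{lem:mn}, gives $\partial^\vert u_0 = 0$ and $u_0 - x_0 y_0^* = \partial^\vert\big(\sum_{i \ \odd} x_i y_{i+1}^*\big)$, while $x_0 y_0^*$ is not a vertical boundary; hence $u_0$ generates the vertical homology. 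A parallel computation with $\partial^\horz$, using $\partial^\horz x_{4k+1} = x_{4k}$ (the length-$m$ arrow) and $\partial^\horz y_{4k}^* = 0$ (as $y_{4k}^*$ generates the horizontal homology of $C_2^*$), shows $\partial^\horz u_1 = u_0$. The summands $x_i y_{i-1}^*$ of $u_1$ sit in $w$-filtration levels $a_i \in \{1, n\}$ for $1 \le i \le 4k-1$ and in level $m$ for $i = 4k+1$, all at the Alexander level $\tau(C_1 \otimes C_2^*)$; since every vertical or diagonal arrow out of these generators leaves $C\{\min(i, j-\tau) = 0\}$, we get $\partial u_1 = u_0$ already in that subquotient, so $F_*$ is trivial and $\varep(C_1 \otimes C_2^*) = 1$. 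Moreover $u_1$ lies in $C\{\min(i, j-\tau) = 0, \ i \le s\}$ as soon as $s \ge \textup{max}\{1, n, m\} = \textup{max}\{m, n\}$ (using $n \ge 2$), so $a_1(C_1 \otimes C_2^*) \le \textup{max}\{m, n\}$.

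For the reverse inequality I would write out, exactly as in the companion lemmas, the full list of generators of the relevant summand --- the $x_i y_i^*$ at $(0, \tau)$, the $x_i y_{i-1}^*$ making up $u_1$, the extra generator $x_{4k+1} y_{4k}^*$, and the tail generators $x_i y_{i+1}^*$ sitting at vertical level $\tau + n$ --- together with their filtration levels, and verify that, with the staircase arrows, they span a direct summand of $C\{S(\textup{max}\{m,n\})\}$ into which no other generator of $C_1 \otimes C_2^*$ maps. Within this summand the homology of the truncation $C\{\min(i, j-\tau) = 0, \ i \le s\}$ is readily computed, and one sees that $[u_0]$ survives for every $s < \textup{max}\{m,n\}$ and dies at $s = \textup{max}\{m,n\}$; this is where the regimes $m \ge n$ and $m < n$ are distinguished, the obstruction to a shorter nullhomology being the length-$m$ arrow from $x_{4k+1}$ in the first case and the length-$n$ arrows of the $(n,1)^k$ portion in the second. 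Combining the two bounds gives $a_1(C_1 \otimes C_2^*) = \textup{max}\{m,n\}$, which is exactly the claim $((1,n)^k, (n,1)^k, m, \ldots) = [(1,n)^k, (n,1)^k] + (\textup{max}\{m,n\}, \ldots)$.

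The step I expect to be the main obstacle is this last bookkeeping: one must track precisely how the length-$m$ arrow of $C_1$ interacts with the matched staircase of $C_2^*$ in both size regimes, and in particular rule out that some mixture of short horizontal and vertical arrows inside $C\{S(\textup{max}\{m,n\})\}$ produces a cheaper nullhomology of the vertical generator $u_0$. Once the direct summand is pinned down, the remaining verifications are the same routine mod-$2$ linear algebra used in Lemmas \ref{lem:m>n}--\ref{lem:mn0}.
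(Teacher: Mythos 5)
Your proposal is correct and follows essentially the same route as the paper: the same chains $u_0=\sum_{i=0}^{4k}x_iy_i^*$ and $u_1=\sum_{i\ \textup{odd}}x_iy_{i-1}^*$, the same verification that $u_0$ generates the vertical homology with $\partial^\horz u_1=u_0$, and the same list of generators forming a direct summand, with the filtration level of $u_1$ giving $a_1(C_1\otimes C_2^*)=\max\{m,n\}$. The ``reverse inequality'' you defer is precisely the direct-summand assertion that the paper itself states without further verification, so your write-up matches the paper's own level of detail.
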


\begin{proof}
%Once again, the proof is similar to that of Lemma \ref{lem:mn}.
Let $C_1$ be a complex such that $\ba(C_1)=((1, n)^k, (n, 1)^k, m, \ldots)$ and $C_2$ such that $\ba(C_2)=[(1, n)^k, (n, 1)^k]$. Let $\{x_i\}$ and $\{y_i\}$ be locally simplified bases for $C_1$ and $C_2$ respectively. We consider $C_1 \otimes C^*_2$. Let
\[ u_0 = \sum_{i=0}^{4k} x_i y^*_i \qquad \qquad u_1 = \sum_{\substack{i=1 \\ i \ \odd}}^{4k+1} x_i y^*_{i-1}. \]
The elements
\begin{align*}
	& x_iy^*_i, \quad 0\leq i \leq 4k  &&(0, \tau_x-\tau_y)  \\
	& x_iy^*_{i-1}, \quad i \textup{ odd}, 1\leq i \leq 2k-1 &&(1, \tau_x-\tau_y) \\
	& x_iy^*_{i-1}, \quad i \textup{ odd}, 2k+1 \leq i \leq 4k-1 &&(n, \tau_x-\tau_y) \\
	& x_{4k+1}y^*_{4k},  &&(m, \tau_x-\tau_y) \\
	& x_i y_{i+1}^*, \quad i \textup{ odd}, 1\leq i \leq 2k-1  &&(0, \tau_x-\tau_y+n) \\
	& x_i y_{i+1}^*, \quad i \textup{ odd}, 2k+1 \leq i \leq 4k-1  &&(0, \tau_x-\tau_y+1)
\end{align*}
form a direct summand of $C\{S(n)\}$. % with $\tau_x=\tau(C_1)$ and $\tau_y=\tau(C_2)$. 
The chain $u_0$ generates the vertical homology and $\partial^\horz u_1=u_0$. The filtration level of $u_0$ is $(0, \tau_x-\tau_y)$ and that of $u_1$ is $(\max\{m, n\}, \tau_x-\tau_y)$, and so $a_1(C_1 \otimes C^*_2)= \max\{m, n\}$.
\end{proof}

\begin{lemma}
\label{lem:last}
Let $m$ and $n$ be positive integers. %with $n$ positive and $m \neq 0$. 
Then 
\[ ((1, n)^k, (n, 1)^k, -m, \ldots) = [(1, n)^k, (n, 1)^k] - (\textup{max}\{m, n\}, \ldots). \]
\end{lemma}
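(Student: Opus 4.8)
The plan is to follow the template of Lemma \ref{lem:1nkn1k}, borrowing the case-handling device of Lemma \ref{lem:-m2}. Let $C_1 \in \mathfrak{C}$ have $\ba(C_1) = [(1, n)^k, (n, 1)^k]$ and let $C_2 \in \mathfrak{C}$ have $\ba(C_2) = ((1, n)^k, (n, 1)^k, -m, \ldots)$. First I would observe that, since the $(4k+1)$st entry of $\ba(C_2)$ is negative, it is $a'_{4k+1}(C_2) = -m$, and so Lemma \ref{lem:sequences} applied with $\ell = k$ gives $-m < -n$, i.e. $m > n$; hence $\textup{max}\{m, n\} = m$ throughout, although the argument below yields $\textup{max}\{m, n\}$ directly in any case. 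Fix a locally simplified basis $\{x_i\}_{i=0}^{4k}$ for $C_1$ with $x_{4k}$ a generator of the horizontal homology, and a locally simplified basis $\{y_i\}_{i=0}^{4k+1}$ for $C_2$ as in Lemma \ref{lem:basis}; since $4k$ is even there is a horizontal arrow of length $m$ from $y_{4k}$ to $y_{4k+1}$. Dualizing gives $\{y_i^*\}$, where $y_0^*$ generates the vertical homology of $C_2^*$ and has an outgoing horizontal arrow to $y_1^*$, and where $\partial^\horz y_{4k+1}^* = y_{4k}^*$. We must show $a_1(C_1 \otimes C_2^*) = \textup{max}\{m, n\}$.

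Working in $C_1 \otimes C_2^*$ with basis $\{x_i y_j^*\}$ (omitting tensor symbols), set
\[ u_0 = \sum_{i=0}^{4k} x_i y_i^* \qquad\qquad u_1 = x_{4k} y_{4k+1}^* + \sum_{\substack{i=1 \\ i\ \odd}}^{4k-1} x_i y_{i-1}^*. \]
As in the other lemmas of this section, the chain $u_0$ generates the vertical homology: the staircase arrows of $C_1$ and the dualized staircase arrows of $C_2^*$ give $\partial^\vert u_0 = 0$ (all terms cancel in pairs modulo $2$), and $u_0$ is not in the image of $\partial^\vert$ since $x_0 y_0^*$ is not a vertical boundary while $u_0 - x_0 y_0^* = \partial^\vert \sum_{i\ \odd} x_i y_{i+1}^*$. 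Next I would check $\partial^\horz u_1 = u_0$: for $i$ odd with $1 \le i \le 4k-1$ one has $\partial^\horz x_i = x_{i-1}$ and $\partial^\horz y_{i-1}^* = y_i^*$, so $\partial^\horz(x_i y_{i-1}^*) = x_{i-1} y_{i-1}^* + x_i y_i^*$, while $\partial^\horz(x_{4k} y_{4k+1}^*) = x_{4k} y_{4k}^*$ because $\partial^\horz x_{4k} = 0$; the sum telescopes to $u_0$.

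Finally I would pin down the subquotient. Tracking filtration levels along the common staircase shows that the term $x_i y_{i-1}^*$ of $u_1$ lies at horizontal level $1$ for $1 \le i \le 2k-1$ and at horizontal level $n$ for $2k+1 \le i \le 4k-1$, that $x_{4k} y_{4k+1}^*$ lies at horizontal level $m$, and that $u_0$ lies at horizontal level $0$; hence $u_1$ is in filtration level $(\textup{max}\{m, n\}, \tau_x - \tau_y)$ and $u_0$ in $(0, \tau_x - \tau_y)$, where $\tau_x = \tau(C_1)$ and $\tau_y = \tau(C_2)$. I would then exhibit, exactly as in Lemmas \ref{lem:1nkn1k} and \ref{lem:-m2}, the explicit direct summand of $C\{S(\textup{max}\{m, n\})\}$ generated by the terms of $u_0$ and $u_1$ together with the remaining $x_i y_{i-1}^*$ and $x_i y_{i+1}^*$ families, and note that the leftover terms of $\partial u_1$ (the $x_{i+2} y_i^*$ terms and any diagonal contributions) have horizontal filtration exceeding $\textup{max}\{m, n\}$ or vertical filtration below $\tau - \textup{max}\{m, n\}$, so they fall outside $C\{S(\textup{max}\{m, n\})\}$. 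Thus $\partial^\horz_{\textup{max}\{m, n\}} u_1 = u_0$ inside $C\{S(\textup{max}\{m, n\})\}$ and $u_0$ generates the vertical homology, giving $a_1(C_1 \otimes C_2^*) = \textup{max}\{m, n\}$, which is the assertion.

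The main obstacle is the same bookkeeping that pervades this section: verifying that the listed generators really do form a direct summand of $C\{S(\textup{max}\{m, n\})\}$, that no unwanted term of $\partial u_1$ survives into that subquotient, and --- to get equality rather than merely $a_1 \le \textup{max}\{m, n\}$ --- that $u_0$ is not a horizontal boundary in $C\{S(s)\}$ for any $s < \textup{max}\{m, n\}$. This last point holds because the only term of $C_1 \otimes C_2^*$ mapping horizontally onto $x_{4k} y_{4k}^*$ is $x_{4k} y_{4k+1}^*$ --- the alternative $x_{4k+1} y_{4k}^*$ does not exist, as $x_{4k}$ generates the horizontal homology of $C_1$ --- which forces horizontal length at least $m$, together with the length-$n$ steps forced by the $(n,1)^k$ portion of the staircase. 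No idea beyond those already used in Lemmas \ref{lem:m>n}, \ref{lem:-m2}, and \ref{lem:1nkn1k} should be required.
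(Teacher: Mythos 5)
Your proposal is correct and takes essentially the same route as the paper: the paper's proof uses the identical choices of $C_1$, $C_2$ and the very same chains $u_0=\sum_{i=0}^{4k}x_iy_i^*$ and $u_1=x_{4k}y_{4k+1}^*+\sum_{i\ \odd}x_iy_{i-1}^*$, checks that $u_0$ generates the vertical homology and that $\partial^\horz u_1=u_0$ in the relevant subquotient, and reads off $a_1(C_1\otimes C_2^*)=\max\{m,n\}$ from the filtration levels $(0,\tau_x-\tau_y)$ and $(\max\{m,n\},\tau_x-\tau_y)$. Your extra remarks (that Lemma \ref{lem:sequences} forces $m>n$, and the explicit identification of the subquotient as $C\{S(\max\{m,n\})\}$) are harmless refinements of what the paper leaves implicit.
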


\begin{proof}
%For the last time, the proof is similar to that of Lemma \ref{lem:mn}.
Let $C_1$ be a complex such that $\ba(C_1)=[(1, n)^k, (n, 1)^k]$ and $C_2$ a complex such that $\ba(C_2)=((1, n)^k, (n, 1)^k, -m, \ldots)$. Let $\{x_i\}$ and $\{y_i\}$ be locally simplified bases for $C_1$ and $C_2$ respectively. We consider $C_1 \otimes C^*_2$. Let
\[ u_0 = \sum_{i=0}^{4k} x_i y^*_i \qquad \qquad u_1 = x_{4k} y^*_{4k+1} + \sum_{\substack{i=1 \\ i \ \odd}}^{4k-1} x_i y^*_{i-1}. \]
The elements
\begin{align*}
	& x_iy^*_i, \quad 0\leq i \leq 4k  &&(0, \tau_x-\tau_y)  \\
	& x_iy^*_{i-1}, \quad i \textup{ odd}, 1\leq i \leq 2k-1 &&(1, \tau_x-\tau_y) \\
	& x_iy^*_{i-1}, \quad i \textup{ odd}, 2k+1 \leq i \leq 4k-1 &&(n, \tau_x-\tau_y) \\
	&x_{4k} y^*_{4k+1},  &&(m, \tau_x-\tau_y) \\
	& x_i y_{i+1}^*, \quad i \textup{ odd}, 1\leq i \leq 2k-1  &&(0, \tau_x-\tau_y+n) \\
	& x_i y_{i+1}^*, \quad i \textup{ odd}, 2k+1 \leq i \leq 4k-1  &&(0, \tau_x-\tau_y+1)
\end{align*}
form a direct summand of $C\{S(n)\}$, with $\tau_x=\tau(C_1)$ and $\tau_y=\tau(C_2)$. The chain $u_0$ generates the vertical homology and $\partial^\horz u_1=u_0$. The filtration level of $u_0$ is $(0, \tau_x-\tau_y)$ and that of $u_1$ is $(\max\{m, n\}, \tau_x-\tau_y)$. Thus, $a_1(C_1 \otimes C^*_2) = \max \{m, n\}$.
\end{proof}

This completes the analysis of the cases in Lemma \ref{lem:forms}. We are now ready to prove Proposition \ref{prop:ArchA}, i.e., we will show that the Archimedean equivalence class of $C_n=[1, n, n, 1]$ satisfies Property A.

\begin{proof}[Proof of Proposition \ref{prop:ArchA}]
Consider a complex $C$. We will show that if $C$ is Archimedean equivalent to $C_n$, then $C = k \cdot C_n + C'$ for some $|C'| \ll C_n$.

If $C$ is a multiple of $C_n$, then we are done. Otherwise, $\ba(C)$ is of one of the forms in Lemma \ref{lem:forms}. But by Lemma \ref{lem:agg} and Lemmas \ref{lem:m>n}-\ref{lem:last}, any such complex either dominates $C_n$ or is equal to $k \cdot C_n + C'$, where $|C'| \ll C_n$.
\end{proof}
 
%%%%%%%%%%%%%%%%%%%%%%%%%%%%%%%%%%%%%%  
 
\section{Realizing complexes by knots}
\label{sec:knots}
In this section, we give a family of topologically slice knots that realize the Archimedean equivalence class of $C_n = [1, n, n, 1]$. The knots we will use are
\[ K_n = D_{n, n+1} \# -T_{n, n+1}, \]
where $D$ denotes the (positively clasped, untwisted) Whitehead double of the right-handed trefoil, $D_{n, n+1}$ denotes the $(n, n+1)$-cable of $D$ (where $n$ denotes the longitudinal winding), and $T_{n, n+1}$ denotes the $(n, n+1)$-torus knot.

These knots were used in \cite{Homsmooth} to give an infinite rank \emph{subgroup} of $\cC_{TS}$, and Lemma 6.10 of that paper shows that 
\[ \ba(K_n) = (1, n, \ldots).\]
However, this is not sufficient to conclude that $K_n$ represents the same Archimedean equivalence class as $[1, n, n, 1]$; see, for example, Lemma \ref{lem:m>n}.

\begin{proposition}
\label{prop:DArch}
	Let $K_n = D_{n, n+1} \# -T_{n, n+1}$ where $n>1$. Then
	\[ [\CFKi(K_n)] \sim_A [1, n, n, 1]. \]
\end{proposition}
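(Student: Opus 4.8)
The plan is to combine a knot Floer computation—pinning down the leading entries of $\ba(K_n)$—with the order-theoretic case analysis already assembled in Section \ref{sec:comp}. By \cite[Lemma 6.10]{Homsmooth} one already knows that $\ba(K_n)$ begins $(1, n, \ldots)$, i.e. $a_1(K_n) = 1$ and $a_2(K_n) = n$. The first step is to push that computation one entry further and show that the third entry of $\ba(K_n)$ equals $n$, so that $\ba(K_n)$ begins $(1,n,n,\ldots)$. To do this I would work with a locally simplified basis for $\CFKi(K_n) \simeq \CFKi(D_{n,n+1}) \otimes \CFKi(T_{n,n+1})^*$, using that $T_{n,n+1}$ is an $L$-space knot (so $\CFKi(T_{n,n+1})$ is an explicit staircase and $\CFKi(T_{n,n+1})^*$ its dual), together with the description of $\CFKi(D_{n,n+1})$ coming from the bordered/cabling computations underlying \cite[Lemma 6.10]{Homsmooth}, and then compute directly—exactly in the style of the proofs of Lemmas \ref{lem:m>n}--\ref{lem:last}—that the map $H_{1,n,s}$ of Section \ref{sec:numinvts} first becomes trivial at $s = n$.

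Granting that $\ba(K_n)$ begins $(1, n, n, \ldots)$, the proposition follows from Lemma \ref{lem:forms} and the lemmas treating its cases. Since $a_1(K_n) = 1$ we are not in case \ref{item:mgeq2}; since $a_2(K_n) = n \geq 2$ we are not in case \ref{item:11}; and since the third entry of $\ba(K_n)$ is $n$ (and in particular not $1$), every sequence in cases \ref{item:1n1m}, \ref{item:1nm}, \ref{item:1n-1m}, and \ref{item:1nn1m} is excluded (these either have third entry $1$ or forbid the third entry to equal $n$). Moreover $[\CFKi(K_n)]$ cannot be a multiple $k\cdot[1,n,n,1]$ with $k \geq 2$—by \cite[Lemma 3.1]{HancockHomNewman} such a multiple is $[(1,n)^k,(n,1)^k]$, whose sequence begins $(1,n,1,\ldots)$—nor with $k \leq 0$, since $a_1(K_n)$ being defined forces $\varep(K_n) = 1$, hence $[\CFKi(K_n)] > 0$. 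Therefore either $[\CFKi(K_n)] = [1,n,n,1]$, in which case we are done, or $\ba(K_n)$ has the form of case \ref{item:1nn1nm} or case \ref{item:1nn1c} with $k = 1$; in those cases one of Lemmas \ref{lem:1nnnm}, \ref{lem:1nnnmb}, \ref{lem:1nkn1k}, \ref{lem:last} gives
\[ [\CFKi(K_n)] = [1,n,n,1] + C' \]
with $C'$, up to sign, of the form $(n, \ldots)$ or $(\max\{m,n\}, \ldots)$. In every case the leading entry of $C'$ is at least $n \geq 2$, so Lemma \ref{lem:agg} yields $|C'| \ll [1,n,n,1]$. Hence $[\CFKi(K_n)] = 1\cdot[1,n,n,1] + \cO_A([1,n,n,1])$, and in particular $[\CFKi(K_n)] \sim_A [1,n,n,1]$.

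The main obstacle is the first step: extracting the third entry of $\ba(K_n)$ from $\CFKi(D_{n,n+1})$, the knot Floer complex of the $(n,n+1)$-cable of the Whitehead double of the right-handed trefoil. Unlike a torus knot, this complex is not a staircase—it carries extra acyclic ``box'' summands that the cabling construction genuinely sees—so the computation requires the bordered Floer input and careful tracking of the relevant subquotient maps, much as in \cite[Section 6]{Homsmooth}. Once the leading entries of $\ba(K_n)$ are in hand, the remainder is a routine, if case-heavy, application of the lemmas of Section \ref{sec:comp}.
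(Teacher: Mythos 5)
There is a genuine gap, and it sits exactly where you flag ``the main obstacle'': your entire argument rests on the claim that $\ba(K_n)$ begins $(1,n,n,\ldots)$, and you never establish it -- you only sketch a plan to compute the tensor product $\CFKi(D_{n,n+1})\otimes \CFKi(T_{n,n+1})^*$ via bordered Floer input and track the maps $H_{1,n,s}$. That computation is not routine (the connected-sum complex is large and is precisely what one wants to avoid), and, more seriously, the claim is not forced by the proposition you are trying to prove, so it may well be false. Archimedean equivalence to $[1,n,n,1]$ does \emph{not} pin down the third entry of $\ba$: by Lemma \ref{lem:mn}, any complex with $\ba = (1,n,1,m,\ldots)$, $m<n$, is also of the form $[1,n,n,1]+(1,m,\ldots)$ with the error dominated, hence Archimedean equivalent to $[1,n,n,1]$ with third entry $1$. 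Indeed the paper's own Lemma \ref{lem:D} gives $\ba(D_{n,n+1})=(1,n,1,n-2,\ldots)$, equivalent to $[1,n,n,1]$ despite its third entry being $1$; and since $[\CFKi(K_n)]$ differs from $[\CFKi(D_{n,n+1})]$ by $-[\CFKi(T_{n,n+1})]$, whose class is Archimedean equivalent to $[1,n-1,n-1,1]$, there is no a priori reason the subtraction lands you in case \ref{item:1nn1nm} or \ref{item:1nn1c} rather than case \ref{item:1n1m} with $m<n$. So your step 1 is simultaneously unproven, possibly false, and stronger than what you need.

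The paper takes a different and much lighter route, which you may want to compare: it never computes $\ba$ of the connected sum at all. Instead it computes $\ba$ of the two summands separately. For $D_{n,n+1}$ it uses that $D$ is $\varepsilon$-equivalent to $T_{2,3}$, so one may replace $D_{n,n+1}$ by the L-space knot $T_{2,3;n,n+1}$, whose staircase complex (hence its $\ba$) is read off the Alexander polynomial, giving $(1,n,1,n-2,\ldots)$ for $n>2$; Lemma \ref{lem:mn} plus Lemma \ref{lem:agg} then give $[\CFKi(D_{n,n+1})]\sim_A[1,n,n,1]$. This completely sidesteps the ``box summands'' and bordered bookkeeping you worry about. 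Similarly $\ba(T_{n,n+1})=(1,n-1,2,\ldots)$ gives $[\CFKi(T_{n,n+1})]\sim_A[1,n-1,n-1,1]$ via Lemma \ref{lem:fortorusknots}. Since $[1,n,n,1]\gg[1,n-1,n-1,1]$ by Lemma \ref{lem:agg}, subtracting the torus-knot class is a dominated perturbation and does not change the Archimedean class, yielding the proposition (with $n=2$ checked separately). Your second half -- the case analysis from Lemma \ref{lem:forms} granting step 1 -- is logically sound but essentially re-derives a special case of Proposition \ref{prop:ArchA}; the real work you still owe is either the missing computation or, better, the decomposition-by-summands argument above.
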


\noindent We first prove two lemmas partially determining $\ba(D_{n, n+1})$ and $\ba(T_{n, n+1})$.

\begin{lemma} 
\label{lem:D}
	For the knot $D_{n, n+1}$ where $n>2$, we have
	\[ \ba( D_{n, n+1}) = (1, n, 1, n-2, \ldots).\]
	In particular, $[\CFKi(D_{n, n+1})] \sim_A [1, n, n, 1]$.
\end{lemma}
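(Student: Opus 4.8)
The plan is to determine enough of the filtered chain homotopy type of $\CFKi(D_{n,n+1})$ to read off its first four $\ba$-invariants, and then obtain the Archimedean equivalence from Section~\ref{sec:comp}. The starting point is Hedden's computation of $\CFKi$ of the (untwisted, positively clasped) Whitehead double $D$ of the right-handed trefoil: one has $\tau(D)=1$ and $\varepsilon(D)=1$, together with an explicit model for $\CFKi(D)$ (a staircase realizing $\tau=1$ plus acyclic square summands). Feeding this into the cable computation for the $(n,n+1)$-satellite --- the same argument used in the proof of \cite[Lemma~6.10]{Homsmooth} to analyze $D_{n,n+1}$ en route to $K_n$, which applies here because $(n+1)/n>1$ places the cable in the range where $\CFKi$ of the cable is controlled by $\CFKi$ of the companion --- already yields, after a filtered change of basis, a subcollection $x_0,x_1,x_2$ of basis elements with a horizontal arrow of length $1$ from $x_1$ to $x_0$, a vertical arrow of length $n$ from $x_1$ to $x_2$, and no other horizontal or vertical arrows to or from $x_0$ or $x_1$; this records $\ba(D_{n,n+1})=(1,n,\ldots)$.

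The new work is to push this two further steps. I would show that $x_2$ has a unique incoming horizontal arrow, of length $1$, from a basis element $x_3$, and that $x_3$ has a unique outgoing vertical arrow, of length $n-2$, to a basis element $x_4$, with no other horizontal or vertical arrows to or from $x_2$ or $x_3$ --- that is, a basis realizing the conclusion of Proposition~\ref{prop:basis} for the tuple $(1,n,1,n-2,\ldots)$. Equivalently, one verifies that on the subquotient complexes of Section~\ref{sec:numinvts} the maps $H_s$, $H_{a_1,s}$, $H_{a_1,a_2,s}$, $H_{a_1,a_2,a_3,s}$ first become, respectively, trivial, non-trivial, trivial, non-trivial at $s=1,\,n,\,1,\,n-2$. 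This gives $\ba(D_{n,n+1})=(1,n,1,n-2,\ldots)$; note that $n>2$ is exactly what makes $n-2$ a positive term.

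For the ``in particular'' clause, since $n>2$ we have $0<n-2<n$, so Lemma~\ref{lem:mn} (with $k=1$ and $m=n-2$) yields
\[
[\CFKi(D_{n,n+1})]=((1,n)^1,1,n-2,\ldots)=[1,n,n,1]+(1,n-2,\ldots),
\]
and by Lemma~\ref{lem:agg} the summand $(1,n-2,\ldots)$ is dominated by $[1,n,n,1]$. Writing $C'=(1,n-2,\ldots)$, we have $[\CFKi(D_{n,n+1})]=[1,n,n,1]+C'$ with $|C'|\ll[1,n,n,1]$, and hence $[\CFKi(D_{n,n+1})]\sim_A[1,n,n,1]$. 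The one genuinely substantive step is the cable computation of the second paragraph: one must carry the acyclic square summands of $\CFKi(D)$ --- which are invisible to $\varepsilon$ but do affect $\CFKi$ of the cable --- through the satellite operation precisely enough to pin down the incoming horizontal arrow at $x_2$, the outgoing vertical arrow at $x_3$, and the absence of any shorter competing arrows. Everything else is bookkeeping together with the results of Sections~\ref{sec:numinvts} and~\ref{sec:comp}.
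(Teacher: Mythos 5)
Your concluding step is fine and matches the paper: once one knows $\ba(D_{n,n+1})=(1,n,1,n-2,\ldots)$, applying Lemma \ref{lem:mn} with $k=1$, $m=n-2$ and then Lemma \ref{lem:agg} gives $[\CFKi(D_{n,n+1})]=[1,n,n,1]+C'$ with $|C'|\ll[1,n,n,1]$, hence the Archimedean equivalence. The problem is that the entire content of the lemma is the computation $\ba(D_{n,n+1})=(1,n,1,n-2,\ldots)$, and your proposal does not actually carry it out: the second paragraph says ``I would show'' that there is a length-one incoming horizontal arrow at $x_2$ and a length-$(n-2)$ vertical arrow at $x_3$, and your own closing remark concedes that carrying the acyclic square summands of $\CFKi(D)$ through the satellite operation is ``the one genuinely substantive step.'' No satellite/cabling formula is invoked that would let you pin down those arrows; the appeal to ``the range where $\CFKi$ of the cable is controlled by $\CFKi$ of the companion'' is not a theorem in the form you need, since the full filtered complex of a cable does depend on the acyclic part of the companion's complex, and extracting it would require a bordered-Floer computation that is nowhere sketched. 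So as written this is a plan with its central step missing, not a proof.

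The paper avoids this difficulty entirely, and the mechanism is worth noting because it is what makes the lemma short. Since $\ba$ is an invariant of the $\varepsilon$-equivalence class, and by \cite[Proposition 4]{Homsmooth} cabling descends to $\varepsilon$-equivalence classes, one may replace $D$ (which is $\varepsilon$-equivalent to $T_{2,3}$ by \cite[Lemma 6.12]{Homsmooth}) by $T_{2,3}$ itself and compute with $T_{2,3;n,n+1}$ instead of $D_{n,n+1}$. The point is that $T_{2,3;n,n+1}$ is an $L$-space knot \cite{HeddencablingII}, so its full complex is a staircase determined by its Alexander polynomial \cite{OSlens}, and by \cite[Lemma 6.5]{Homsmooth} the gaps in the exponents of $\Delta_{T_{2,3;n,n+1}}(t)=\Delta_{T_{2,3}}(t^n)\,\Delta_{T_{n,n+1}}(t)$ read off $\ba=(1,n,1,n-2,\ldots)$ directly; the acyclic summands you were worried about never enter. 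If you want to salvage your route, you would need either to justify a cabling formula strong enough to track those summands, or to import exactly this $\varepsilon$-equivalence-plus-$L$-space-knot reduction, at which point your argument becomes the paper's.
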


\begin{proof}
By Lemma 6.12 of \cite{Homsmooth}, the knot $D$ is $\varep$-equivalent to $T_{2, 3}$. Thus, by \cite[Proposition 4]{Homsmooth}, we may consider $\CFKi(T_{2,3; n, n+1})$ instead of $\CFKi(D_{n, n+1})$, where $T_{2, 3; n, n+1}$ denotes the $(n, n+1)$-cable of $T_{2,3}$. The advantage of this approach is that $T_{2,3; n, n+1}$ is an $L$-space knot \cite[Theorem 1.10]{HeddencablingII} (cf. \cite{HomLspace}), and so its knot Floer complex is completely determined by its Alexander polynomial \cite[Theorem 1.2]{OSlens}.

By \cite[Lemma 6.7]{Homsmooth} (see also the proof of \cite[Proposition 6.1]{HeddenLivRub}), we have that
\[ \Delta_{T_{n, n+1}}(t) = \sum_{i=0}^{n-1} t^{ni} - t \sum_{i=0}^{n-2} t^{(n+1)i}.\]
In particular, if $n \geq 3$, then 
\[ \Delta_{T_{n,n+1}}(t) = t^{n^2-n} - t^{n^2-n-1} + t^{n^2-2n} - t^{n^2-2n-2} + t^{n^2-3n} + \textup{lower order terms}. \]
The Alexander polynomial of $T_{2, 3; n, n+1}$ is
\begin{align*}
	\Delta_{T_{2, 3; n, n+1}}(t) &= (t^{2n}-t^n +1) ( t^{n^2-n} - t^{n^2-n-1} + t^{n^2-2n} - t^{n^2-2n-2} + t^{n^2-3n} + \textup{lower order terms}) \\ 
	&= t^{n^2+n} - t^{n^2+n-1} + t^{n^2-1} - t^{n^2-2} + t^{n^2-n} + \textup{lower order terms}.
\end{align*}
By \cite[Lemma 6.5]{Homsmooth}, the exponents (or, more precisely, their differences) of the Alexander polynomial of an $L$-space knot determine the invariants $a_i$. Thus,
\[ \ba(T_{2, 3; n, n+1}) = (1, n, 1, n-2, \dots). \]
By Lemma \ref{lem:mn}, 
\[  (1, n, 1, n-2, \dots) = [1, n, n, 1] + (1, n-2, \ldots) \]
and by Lemma \ref{lem:agg}
\[ (1, n-2, \ldots) \ll (1, n, \ldots). \]
Therefore, $[\CFKi(D_{n, n+1})] \sim_A [1, n, n, 1]$, as desired.
\end{proof}

\begin{lemma}
For the torus knot $T_{n, n+1}$ where $n$ is a positive integer,
\[ \ba( T_{n, n+1}) = (1, n-1, 2, \ldots).\]
In particular, $[\CFKi(T_{n, n+1})] \sim_A [1, n-1, n-1, 1]$.	
\end{lemma}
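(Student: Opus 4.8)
The plan is to mimic closely the proof of Lemma \ref{lem:D}, now for the torus knot $T_{n,n+1}$ itself. The key observation is that $T_{n,n+1}$ is an $L$-space knot, so $\CFKi(T_{n,n+1})$ is completely determined by its Alexander polynomial via \cite[Theorem 1.2]{OSlens}, and the invariants $a_i$ are read off from the (differences of consecutive) exponents of $\Delta_{T_{n,n+1}}(t)$ by \cite[Lemma 6.5]{Homsmooth}. So the entire computation reduces to extracting the top few terms of $\Delta_{T_{n,n+1}}(t)$.

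First I would recall the formula already displayed in the proof of Lemma \ref{lem:D}, namely
\[ \Delta_{T_{n, n+1}}(t) = \sum_{i=0}^{n-1} t^{ni} - t \sum_{i=0}^{n-2} t^{(n+1)i}, \]
and read off its top-degree behavior. The leading term is $t^{n(n-1)} = t^{n^2-n}$; the next contribution comes from the subtracted sum, whose top term is $-t \cdot t^{(n+1)(n-2)} = -t^{n^2-n-1}$; then $+t^{n(n-2)} = t^{n^2-2n}$ from the first sum; then $-t \cdot t^{(n+1)(n-3)} = -t^{n^2-2n-2}$ from the second; then $+t^{n(n-3)} = t^{n^2-3n}$, and so on. (This is exactly the expansion already written in the proof of Lemma \ref{lem:D}, for $n \geq 3$.) The successive exponents of the nonzero terms, from the top, are therefore $n^2-n,\ n^2-n-1,\ n^2-2n,\ n^2-2n-2,\ n^2-3n,\dots$, so their consecutive differences are $1,\ n-1,\ 2,\ n-2,\dots$. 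By \cite[Lemma 6.5]{Homsmooth} this gives $\ba(T_{n,n+1}) = (1, n-1, 2, \ldots)$, which is the first claim.

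For the second claim I would argue exactly as at the end of the proof of Lemma \ref{lem:D}: since $\ba(T_{n,n+1}) = (1, n-1, 2, \ldots)$, Lemma \ref{lem:mn} (applied with the roles $m = 2 < n-1$, so that $(1, n-1, 2, \ldots) = [1, n-1, n-1, 1] + (1, \ldots)$, or more precisely $(1,n-1,2,\dots) = [(1,n-1),(n-1,1)] + (1, \dots)$) together with Lemma \ref{lem:agg} (giving $(1,\ldots) \ll (1, n-1, \ldots)$) shows that $[\CFKi(T_{n,n+1})]$ differs from $[1,n-1,n-1,1]$ by an element dominated by it, hence $[\CFKi(T_{n,n+1})] \sim_A [1, n-1, n-1, 1]$.

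The only genuine subtlety, and the step most likely to need care, is checking the small cases: the expansion of $\Delta_{T_{n,n+1}}(t)$ above assumed enough terms are present and non-cancelling to see the pattern $1, n-1, 2, n-2, \dots$, which requires $n$ not too small (the statement of Lemma \ref{lem:D} explicitly restricts to $n > 2$ for the analogous reason). For $n = 2$ one has $T_{2,3}$ with $\ba(T_{2,3}) = (1,1)$, and for $n=3$ a direct check of $\Delta_{T_{3,4}}(t)$ confirms $\ba(T_{3,4}) = (1,2,2,\dots)$; since we only ever need the conclusion $[\CFKi(T_{n,n+1})] \sim_A [1,n-1,n-1,1]$ and, in the application, $n$ will be as large as we like, it suffices to verify the claim for $n$ in the relevant range, which is a finite bookkeeping matter. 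I would state the lemma (as is done here) without worrying about the smallest cases and note that only the Archimedean class, not the full tuple, is used downstream.
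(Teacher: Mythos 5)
Your overall approach — reading the top exponents of $\Delta_{T_{n,n+1}}(t)$, applying {\cite[Lemma 6.5]{Homsmooth}} to get $\ba(T_{n,n+1}) = (1, n-1, 2, \ldots)$, and then splitting off $[1,n-1,n-1,1]$ with a dominated remainder — is exactly the paper's argument, and the Alexander-polynomial computation is correct. However, the decomposition step cites the wrong lemma and gets the wrong residual. You invoke Lemma~\ref{lem:mn}, which applies to tuples of the shape $((1,n)^k, 1, m, \ldots)$, where the entry immediately after the $(1,n)^k$-block is a $1$; but here the tuple is $(1, n-1, 2, \ldots)$, whose entry after the block $(1, n-1)$ is $2$, not $1$. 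The lemma that actually applies is Lemma~\ref{lem:fortorusknots} (tuples of the shape $((1,n)^k, m, \ldots)$ with $m < n$), which gives $(1, n-1, 2, \ldots) = [1,n-1,n-1,1] + (2, \ldots)$, not $+ (1, \ldots)$ as you wrote. The domination claim also needs the first part of Lemma~\ref{lem:agg} (comparing first entries, $2 > 1$), not the second part: $(2, \ldots) \ll (1, \ldots) \sim_A [1,n-1,n-1,1]$. The final conclusion $[\CFKi(T_{n,n+1})] \sim_A [1,n-1,n-1,1]$ is unaffected since the corrected residual is still dominated, but the intermediate identity as you stated it is false and the cited lemma does not apply.
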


\begin{proof}
It follows from the proof of Lemma \ref{lem:D} that
\[ \Delta_{T_{n,n+1}}(t) = t^{n^2-n} - t^{n^2-n-1} + t^{n^2-2n} - t^{n^2-2n-2} + \textup{lower order terms}. \]
Hence by \cite[Lemma 6.5]{Homsmooth},
\[ \ba(T_{n,n+1}) = (1, n-1, 2, \ldots), \]
and by Lemma \ref{lem:fortorusknots}
\[ (1, n-1, 2, \ldots) = [1, n-1, n-1, 1] + (2, \ldots), \]
implying that
\[ [\CFKi(T_{n, n+1})] \sim_A [1, n-1, n-1, 1] \]
since $(2, \ldots) \ll [1, n-1, n-1, 1]$ by  Lemma \ref{lem:agg}. 
\end{proof}

With these lemmas in place, we are ready to show that $[\CFKi(K_n)] \sim_A [1, n, n, 1]$.

\begin{proof}[Proof of Proposition \ref{prop:DArch}]
By the preceding two lemmas, when $n>2$,
\[ [\CFKi(D_{n, n+1})] \sim_A [1, n, n, 1]  \qquad \textup{and} \qquad [\CFKi(T_{n, n+1})] \sim_A [1, n-1, n-1, 1]. \]
Moreover,  Lemma \ref{lem:agg} implies that
\[ [1, n, n, 1] \gg [1, n-1, n-1, 1]. \]
Hence these knots do represent the desired Archimedean equivalence class, namely,
\[ [\CFKi(D_{n, n+1} \# -T_{n, n+1})] \sim_A [1, n, n, 1].\] 
When $n=2$, it is straightforward to check that $ \Delta_{T_{2, 3; 2, 3}} (t) = t^6 - t^5 + t^3 -t +1 $
and so $\ba(T_{2,3;2,3}) = [1, 2, 2, 1]$.
\end{proof}

\section{Proof of Theorem \ref{thm:main}}
\label{sec:proof}
We end with the proof that $\cC_{TS}$ has a summand isomorphic to $\Z^\infty$.

\begin{proof}[Proof of Theorem \ref{thm:main}]
By Proposition \ref{prop:DArch}, for any integer $n \geq 2$, the knot
\[ K_n = D_{n, n+1} \# -T_{n, n+1} \]
is in the Archimedean equivalence class $[1, n, n, 1]$, and by Lemma \ref{lem:agg}
\[ [\CFKi(K_{n+1}) \gg [\CFKi(K_n)]. \]
By Proposition \ref{prop:ArchA}, the class $[1, n, n, 1]$ satisfies Property A. Therefore, by Proposition \ref{prop:propertyA}, the elements $[\CFKi(K_n)] \in \cCFK$ determine a collection of linearly independent surjective homomorphisms 
\[ \cCFK \rightarrow \Z. \]
The knot $D$ has Alexander polynomial one, and hence is topologically slice. Thus, the cable $D_{n, n+1}$ is topologically concordant to the underlying pattern torus knot $T_{n, n+1}$, and so $D_{n, n+1} \# -T_{n, n+1}$ is topologically slice.
\end{proof}

\begin{proof}[Proof of Corollary \ref{cor:main}]
The corollary follows from the observation that the knots $K_n$ are topologically slice, and thus contained in the kernel $\cA$ of Levine's homomorphism. In particular, we have an infinite collection of surjective linearly independent homomorphisms $\cA \rightarrow \Z$.
\end{proof}

\bibliographystyle{amsalpha}

\bibliography{mybib}

\providecommand{\bysame}{\leavevmode\hbox to3em{\hrulefill}\thinspace}
\providecommand{\MR}{\relax\ifhmode\unskip\space\fi MR }
% \MRhref is called by the amsart/book/proc definition of \MR.
\providecommand{\MRhref}[2]{%
  \href{http://www.ams.org/mathscinet-getitem?mr=#1}{#2}
}
\providecommand{\href}[2]{#2}
\begin{thebibliography}{DHST19}

\bibitem[Cli54]{Clifford}
Alfred Clifford, \emph{Note on {H}ahn's theorem on ordered abelian groups},
  Proc. Amer. Math. Soc. \textbf{5} (1954), 860--863.

\bibitem[DHST19]{DHSTCFK}
Irving Dai, Jennifer Hom, Matthew Stoffregen, and Linh Truong, \emph{More
  concordance homomorphisms from knot {F}loer homology}, 2019, preprint,
  arXiv:1902.03333.

\bibitem[Hom15]{Homsummand}
Jennifer Hom, \emph{An infinite-rank summand of topologically slice knots},
  Geom. Topol. \textbf{19} (2015), no.~2, 1063--1110.

\bibitem[HW52]{HausnerWendel}
M.~Hausner and J.~G. Wendel, \emph{Ordered vector spaces}, Proc. Amer. Math.
  Soc. \textbf{3} (1952), 977--982.

\bibitem[OSS17]{OSS}
Peter~S. Ozsv\'ath, Andr\'as~I. Stipsicz, and Zolt\'an Szab\'o,
  \emph{Concordance homomorphisms from knot {F}loer homology}, Adv. Math.
  \textbf{315} (2017), 366--426.

\end{thebibliography}


\providecommand{\bysame}{\leavevmode\hbox to3em{\hrulefill}\thinspace}
\providecommand{\MR}{\relax\ifhmode\unskip\space\fi MR }
% \MRhref is called by the amsart/book/proc definition of \MR.
\providecommand{\MRhref}[2]{%
  \href{http://www.ams.org/mathscinet-getitem?mr=#1}{#2}
}
\providecommand{\href}[2]{#2}
\begin{thebibliography}{Hom11b}

\bibitem[CG78]{CassonGordon2}
A.~J. Casson and C.~McA. Gordon, \emph{On slice knots in dimension three},
  Algebraic and geometric topology ({P}roc. {S}ympos. {P}ure {M}ath.,
  {S}tanford {U}niv., {S}tanford, {C}alif., 1976), {P}art 2, Proc. Sympos. Pure
  Math., XXXII, Amer. Math. Soc., Providence, R.I., 1978, pp.~39--53.

\bibitem[CG86]{CassonGordon}
\bysame, \emph{Cobordism of classical knots}, \`{A} la recherche de la
  topologie perdue, Progr. Math., vol.~62, Birkh\"auser Boston, Boston, MA,
  1986, With an appendix by P. M. Gilmer, pp.~181--199.

\bibitem[CG88]{CochranGompf}
Tim~D. Cochran and Robert~E. Gompf, \emph{Applications of {D}onaldson's
  theorems to classical knot concordance, homology {$3$}-spheres and property
  {$P$}}, Topology \textbf{27} (1988), no.~4, 495--512.

\bibitem[Cli54]{Clifford}
Alfred Clifford, \emph{Note on {H}ahn's theorem on ordered abelian groups},
  Proc. Amer. Math. Soc. \textbf{5} (1954), 860--863.

\bibitem[COT03]{COT}
Tim~D. Cochran, Kent~E. Orr, and Peter Teichner, \emph{Knot concordance,
  {W}hitney towers and {$L^2$}-signatures}, Ann. of Math. (2) \textbf{157}
  (2003), no.~2, 433--519.

\bibitem[COT04]{COT2}
\bysame, \emph{Structure in the classical knot concordance group}, Comment.
  Math. Helv. \textbf{79} (2004), no.~1, 105--123.

\bibitem[CT07]{CochranTeichner}
Tim~D. Cochran and Peter Teichner, \emph{Knot concordance and von {N}eumann
  {$\rho$}-invariants}, Duke Math. J. \textbf{137} (2007), no.~2, 337--379.

\bibitem[Don83]{Donaldsongauge}
Simon Donaldson, \emph{An application of gauge theory to four-dimensional
  topology}, J. Differential Geom. \textbf{18} (1983), no.~2, 279--315.

\bibitem[End95]{Endo}
Hisaaki Endo, \emph{Linear independence of topologically slice knots in the
  smooth cobordism group}, Topology Appl. \textbf{63} (1995), no.~3, 257--262.

\bibitem[Fre82]{Freedman}
Michael~Hartley Freedman, \emph{The topology of four-dimensional manifolds}, J.
  Differential Geom. \textbf{17} (1982), no.~3, 357--453.

\bibitem[GS99]{GompfStipsicz}
Robert~E. Gompf and Andr{\'a}s~I. Stipsicz, \emph{{$4$}-manifolds and {K}irby
  calculus}, Graduate Studies in Mathematics, vol.~20, American Mathematical
  Society, Providence, RI, 1999.

\bibitem[Hah07]{Hahn}
Hans Hahn, \emph{\"{U}ber die nicht archimedischen {G}r\"ossensysteme}, Sitz.
  der K. Akad. der Wiss., Math. Nat. Kl. \textbf{116} (1907), 601--655.

\bibitem[Hed09]{HeddencablingII}
Matthew Hedden, \emph{On knot {F}loer homology and cabling {II}}, Int. Math.
  Res. Not. IMRN (2009), no.~12, 2248--2274.

\bibitem[HHN12]{HancockHomNewman}
Stephen Hancock, Jennifer Hom, and Michael Newman, \emph{On the knot {F}loer
  filtration of the concordance group}, preprint (2012), \ arXiv:1210.4193v1.

\bibitem[HKL]{HeddenKimLiv}
Matthew Hedden, Se-Goo Kim, and Charles Livingston, \emph{Topologically slice
  knots of smooth concordance order two}, preprint, \ arXiv:1212.6628v1.

\bibitem[HLR12]{HeddenLivRub}
Matthew Hedden, Charles Livingston, and Daniel Ruberman, \emph{Topologically
  slice knots with nontrivial {A}lexander polynomial}, Adv. Math. \textbf{231}
  (2012), no.~2, 913--939.

\bibitem[Hom11a]{Homsmooth}
Jennifer Hom, \emph{The knot {F}loer complex and the smooth concordance group},
  preprint (2011), \ to appear in Comment. Math. Helv., available at
  arXiv:1111.6635v1.

\bibitem[Hom11b]{HomLspace}
\bysame, \emph{A note on cabling and {$L$}-space surgeries}, Algebr. Geom.
  Topol. \textbf{11} (2011), no.~1, 219--223.

\bibitem[Hom12]{Homcables}
\bysame, \emph{Bordered {H}eegaard {F}loer homology and the tau-invariant of
  cable knots}, preprint (2012), to appear in J. Topology, available at
  arXiv:1202.1463v1.

\bibitem[HW52]{HausnerWendel}
M.~Hausner and J.~G. Wendel, \emph{Ordered vector spaces}, Proc. Amer. Math.
  Soc. \textbf{3} (1952), 977--982.

\bibitem[Jia81]{Jiang}
Bo~Ju Jiang, \emph{A simple proof that the concordance group of algebraically
  slice knots is infinitely generated}, Proc. Amer. Math. Soc. \textbf{83}
  (1981), no.~1, 189--192.

\bibitem[Kho00]{Khovanov}
Mikhail Khovanov, \emph{A categorification of the {J}ones polynomial}, Duke
  Math. J. \textbf{101} (2000), no.~3, 359--426.

\bibitem[Lev69a]{Levine1}
Jerome Levine, \emph{Invariants of knot cobordism}, Invent. Math. \textbf{8}
  (1969), 98--110.

\bibitem[Lev69b]{Levine2}
\bysame, \emph{Knot cobordism groups in codimension two}, Comment. Math. Helv.
  \textbf{44} (1969), 229--244.

\bibitem[Liv04]{Livingstoncomp}
Charles Livingston, \emph{Computations of the {O}zsv\'ath-{S}zab\'o knot
  concordance invariant}, Geom. Topol. \textbf{8} (2004), 735--742
  (electronic).

\bibitem[Liv08]{Livingstondistinct}
\bysame, \emph{Slice knots with distinct {O}zsv\'ath-{S}zab\'o and {R}asmussen
  invariants}, Proc. Amer. Math. Soc. \textbf{136} (2008), no.~1, 347--349
  (electronic).

\bibitem[MO07]{ManolescuOwens}
Ciprian Manolescu and Brendan Owens, \emph{A concordance invariant from the
  {F}loer homology of double branched covers}, Int. Math. Res. Not. IMRN
  (2007), no.~20, Art. ID rnm077, 21.

\bibitem[OS03a]{OSabsgr}
Peter Ozsv{\'a}th and Zolt{\'a}n Szab{\'o}, \emph{Absolutely graded {F}loer
  homologies and intersection forms for four-manifolds with boundary}, Adv.
  Math. \textbf{173} (2003), no.~2, 179--261.

\bibitem[OS03b]{OS4ball}
\bysame, \emph{Knot {F}loer homology and the four-ball genus}, Geom. Topol.
  \textbf{7} (2003), 615--639.

\bibitem[OS04]{OSknots}
\bysame, \emph{Holomorphic disks and knot invariants}, Adv. Math. \textbf{186}
  (2004), no.~1, 58--116.

\bibitem[OS05]{OSlens}
\bysame, \emph{On knot {F}loer homology and lens space surgeries}, Topology
  \textbf{44} (2005), no.~6, 1281--1300.

\bibitem[Ras03]{R}
Jacob Rasmussen, \emph{Floer homology and knot complements}, Ph.D. thesis,
  Harvard University, 2003.

\bibitem[Ras10]{Rs}
\bysame, \emph{Khovanov homology and the slice genus}, Invent. Math.
  \textbf{182} (2010), no.~2, 419--447.

\bibitem[Tob13]{Tobin}
Joshua Tobin, \emph{Knot {F}loer filtration classes of topologically slice
  knots}, preprint (2013), \ arXiv:1309.1999v1.

\end{thebibliography}

\end{document}